\documentclass[a4paper,reqno,index]{amsart}
\usepackage{amsmath, amsfonts, amsthm, amssymb}
\usepackage[maxbibnames=9]{biblatex}
\usepackage{mathrsfs}
\usepackage{fancyhdr}
 
\usepackage{enumerate}   
\usepackage{blkarray}
\usepackage{mathrsfs}
\usepackage[title]{appendix}
\usepackage{amsthm}
\usepackage{xcolor}
\usepackage{graphicx}
\usepackage[normalem]{ulem}
\usepackage{morefloats}
\usepackage{float}
\usepackage{hyperref}
\usepackage{mathtools}

\addbibresource{Bibliography.bib}
\newcommand{\df}[1]{{\textit{#1}}{\index{#1}}}

 \numberwithin{equation}{section}

\allowdisplaybreaks

\newcommand{\sz}{\mathfrak{s}}
\makeindex

\hypersetup{
    bookmarks=true,         
    unicode=false,          
    pdftoolbar=true,        
    pdfmenubar=true,        
    pdffitwindow=false,     
    pdfstartview={FitH},    
    pdftitle={My title},    
    pdfauthor={Author},     
    pdfsubject={Subject},   
    pdfcreator={Creator},   
    pdfproducer={Producer}, 
    pdfkeywords={keyword1, key2, key3}, 
    pdfnewwindow=true,      
    colorlinks=false,       
    linkcolor=green,          
    citecolor=green,        
    filecolor=green,      
    urlcolor=green,           
    urlbordercolor={1 1 1}  
}

\newtheorem{theorem}{Theorem}[section]
\newtheorem*{theorem*}{Theorem}
\newtheorem*{conjecture*}{Conjecture}
\newtheorem{lemma}[theorem]{Lemma}

 \pagestyle{headings}

\newcommand{\DD}{\mathbb{D}}

\newcommand{\CC}{\mathbb{C}}

\newcommand{\NN}{\mathbb{N}}

\newcommand{\vg}{{\mathsf{g}}}
\newcommand{\bzero}{\mathbf{0}}
\newcommand{\mt}{\vartheta}
\newcommand{\zw}{(z\overline{w})}

\newcommand{\interior}{\operatorname{interior}}
\newcommand{\kk}{f}

\newcommand{\wtk}{\widetilde{\kk}}
\newcommand{\Mult}{\operatorname{Mult}}

\newcommand{\bh}{B}
\newcommand{\hb}{h}

\newcommand{\tOmega}{D}

\newtheorem{proposition}[theorem]{Proposition}

\newtheorem{corollary}[theorem]{Corollary}
\theoremstyle{definition}
\newtheorem{definition}[theorem]{Definition}
\newtheorem{question}[theorem]{Question}
\newtheorem{example}[theorem]{Example}
\theoremstyle{remark}
\newtheorem{remark}[theorem]{Remark}

 \usepackage{etoolbox}

\begin{document}

\title[\small The complete Pick property for pairs]{\large The complete Pick property for pairs of kernels and Shimorin's factorization}

\author[\small MCCULLOUGH]{SCOTT MCCULLOUGH}
\address{DEPARTMENT OF MATHEMATICS, UNIVERSITY OF FLORIDA, GAINESVILLE, FL}
\email{sam@ufl.edu} 
\author[\small TSIKALAS]{GEORGIOS TSIKALAS}
\address{DEPARTMENT OF MATHEMATICS, VANDERBILT UNIVERSITY, NASHVILLE, TN}
\email{georgios.tsikalas@vanderbilt.edu}

\thanks{Tsikalas partially supported by National Science Foundation Grant DMS 2054199. Also funded by the Onassis Foundation - Scholarship ID: F ZR 061-1/2023-2024.}

\subjclass[2010]{46E22} 
\keywords{pairs of reproducing kernels, complete Pick property, complete Carathéodory property, interpolation}
\small
\begin{abstract}
    \small
       Let $(\mathcal{H}_k, \mathcal{H}_{\ell})$ be a pair of Hilbert function spaces with kernels $k, \ell$. In a 2005 paper, Shimorin showed that a certain factorization condition on $(k, \ell)$ yields a commutant lifting theorem for multipliers $\mathcal{H}_k\to\mathcal{H}_{\ell}$, thus unifying and extending previous results due to Ball-Trent-Vinnikov and Volberg-Treil. Our main result is a strong converse to Shimorin's theorem for a large class of holomorphic pairs $(k, \ell),$ which leads to a full characterization of the complete Pick property for such pairs. We also present a short alternative  proof of sufficiency for Shimorin's condition. Finally, we establish necessary conditions for abstract pairs $(k, \ell)$ to satisfy the complete Pick property, further generalizing Shimorin's work with proofs that are new even in the single-kernel case $k=\ell.$ Our  approach differs from Shimorin's in that we do not work with the Nevanlinna-Pick problem directly; instead, we are able to extract vital information for $(k, \ell)$ through Carath\'eodory-Fej\'er interpolation.

\end{abstract}
\maketitle

 \par \normalsize
 \section{Introduction} \label{INTROSEC} \normalsize
Given $n$ points $z_1, \dots, z_n$ in the unit disc {$\DD=\{|z|<1\}$} \index{$\DD$} in the complex plane and $n$ complex numbers $w_1, \dots, w_n,$ when does there exist a holomorphic function $\phi:\DD\to\overline{\DD}$ such that 
\[\phi(z_i)=w_i \hspace{0.1 cm} \text{ for } 1\le i\le n?\]
Pick's theorem  \cite{Pickoriginal} tells us that such a $\phi$ exists if and only if the $n\times n$ self-adjoint matrix 
\begin{equation}
 \bigg[\frac{1-w_i\overline{w_j}}{1-z_i\overline{z_j}}\bigg]
\end{equation}
is positive semi-definite, henceforth simply \df{positive} or \df{PsD}. This result has had a major, and continuing,  impact on both function theory and operator theory. The operator theory approach to interpolation, pioneered by Sarason \cite{Sarasongeneralized}, rests on interpreting Pick's Theorem in terms of a partially defined multiplier.
  \par
 This operator theoretic multiplier view of Sarason  was dramatically expanded upon in unpublished work of Agler by  viewing Pick’s theorem as a special property of multipliers of reproducing kernel Hilbert spaces. 
A reproducing kernel Hilbert space is a \df{complete Pick space} if it hosts a matrix-valued analogue of Pick's theorem (see Definition~\ref{CPdef} below).  Examples include the classical Hardy space $H^2$, standard
weighted Dirichlet spaces on the unit disc, the Sobolev space $W^2_1$ on the
unit interval and the Drury–Arveson space $H^2_d$ on the unit ball of $\mathbb{C}^d.$  A complete characterization of complete Pick spaces was achieved through work of the first author \cite{McCulloughlocal, McCulloughcarath} and Quiggin \cite{Quiggin}, while Agler and McCarthy \cite{Pickbook} proved that $H^2_d$ is universal among such spaces (see subsection~\ref{CPsubsec}). Since their inception, complete Pick spaces have  proven to be a natural venue for a number of function-theoretic operator theory topics such as Carleson measures \cite{Rochbergballs}, interpolating sequences \cite{MarshallSundberg,AHMRinterpol, chalmoukis2023simply}, invariant subspaces \cite{Invsub}, factorizations \cite{AHMRfact, freeouter}, weak product spaces \cite{weakPickprod,AHMRweak}, the column-row property \cite{ColRow} and operator models \cite{hyperrigidity,Pickchar}. The book \cite{Pickbook} is a standard reference (see also the surveys \cite{Hartzsurvey, shalit2014operator}). \par 
Here we will study interpolation for multipliers between spaces. Accordingly, let  $\mathcal{H}_k, \mathcal{H}_{\ell}$  denote reproducing kernel Hilbert  spaces on the non-empty set $X,$ with kernels $k$ and $\ell,$ respectively.  Thus, for instance $k:X\times X\to \CC$ and  $\mathcal{H}_k$ is a Hilbert space of functions on $X$ such that for each $y\in X,$ the function $k_y:X\to  \CC$
defined by $k_y(x)=k(x,y)$ reproduces functions $f\in \mathcal{H}_k$ at $y$ in the sense that $f(y)=\langle f,k_y\rangle.$ Given a positive integer $N$, let {$ \mathbb{M}_N$} \index{$\mathbb{M}_N$} denote the set of all $N\times N$ matrices with complex entries. A function $\Phi: X\to \mathbb{M}_N$ is a \df{multiplier} $\mathcal{H}_k\otimes\mathbb{C}^N\to\mathcal{H}_{\ell}\otimes\mathbb{C}^N$ if $f\in \mathcal{H}_k\otimes \CC^N$ implies $\Phi f\in \mathcal{H}_\ell\otimes \CC^N.$ In this case, the closed graph theorem implies $\Phi$ determines a bounded linear operator $M_\Phi$  \index{$M_\Phi$}  from $\mathcal{H}_k\otimes \CC^N$ to $\mathcal{H}_\ell\otimes \CC^N$ given by $M_\Phi f =\Phi f$ and we let {$\Mult(\mathcal{H}_k\otimes \CC^N,\mathcal{H}_\ell\otimes \CC^N)$}  \index{$\Mult(\mathcal{H}_k\otimes \CC^N,\mathcal{H}_\ell\otimes \CC^N)$} denote the set of these multipliers. In the case that $k=\ell$ and $N=1,$ we write $\Mult(\mathcal{H}_k).$ The \df{norm of a multiplier} refers to the norm of the operator $M_\Phi.$
 \par 
For $w\in X,$ letting $k_w=k(\cdot, w)\in \mathcal{H}_k$\index{$k_w$}, a routine computation shows if $\Phi\in  \Mult\big(\mathcal{H}_k\otimes \CC^N,\mathcal{H}_\ell\otimes \CC^N\big)$ and $h\in \CC^N,$ then $M_\Phi^* k_w\otimes h= k_w\otimes \Phi(w)^*h.$ It is well-known, and not difficult to verify,  that a function $\Phi: X\to \mathbb{M}_N$ is a multiplier  of norm at most one if and only if 
\[
  X\times X\ni (z,w)\mapsto \ell(z, w)I_{N\times N}-k(z, w)\Phi(z)\Phi(w)^* 
\]
is a (positive) kernel. Thus, given an  $n,$ points $z_1, \dots, z_n\in X$ and $n$ matrices $W_1, \dots, W_n\in \mathbb{M}_N$, a necessary condition for the existence of a multiplier $\Phi \in\Mult\big(\mathcal{H}_k\otimes\mathbb{C}^N, \mathcal{H}_{\ell}\otimes \mathbb{C}^N\big)$ of norm at most one that satisfies $\Phi(z_i)=W_i,$ for all $1\le i\le n,$ is that the block matrix 
\begin{equation}  \label{introinitial}
 \begin{bmatrix}\ell(z_i, z_j)I_{N\times N}-k(z_i, z_j)W_iW^*_j \end{bmatrix}_{i,j=1}^n
\end{equation}
 is positive.  Returning to Pick's Theorem, a bounded analytic function $\varphi:\DD\to \CC$ determines a multiplier of Hardy-Hilbert space $H^2(\DD).$ The space $H^2(\DD)$ is a reproducing kernel Hilbert space whose kernel is Szegő's kernel $\sz(z,u) = (1-z\overline{u})^{-1}$ for $z,u\in \DD;$ that is, if  $f\in H^2(\DD)$ and $u\in\DD,$ then, 
\[
 \langle f,\sz_u\rangle =f(u).
\]
The matrix-valued version of Pick's Theorem says, in the case $k=\ell=\sz,$ that the necessary condition above is also sufficient.

\begin{definition} \label{CPdef}
A pair $(k, \ell)$ of kernels on  $X$ is a \df{complete Pick pair}  if, for every positive integer $n,$ for every choice of points $z_1, \dots, z_n\in X,$ for every positive integer $N$ and for every choice of  matrices $W_1, \dots, W_n\in\mathbb{M}_N$  for  which  the matrix in \eqref{introinitial} is positive, there exists a multiplier $\Phi \in\Mult\big(\mathcal{H}_k\otimes\mathbb{C}^N, \mathcal{H}_{\ell}\otimes \mathbb{C}^N\big)$ of norm at most one that satisfies, 
 \begin{align*}
  \Phi(z_i)=W_i,
\end{align*}
for $i=1,2,\dots,n.$ 
\qed
\end{definition} 

 In the special case $k=\ell$, the kernel $k$ is known in the literature  as a \df{complete Pick kernel}. As the prototypical example, Szegő's kernel is a complete Pick kernel.  Since they play an important role in this paper, a further discussion of complete Pick kernels appears in Subsection~\ref{CPsubsec} below.  
The expressions \df{$(k, \ell)$ is a complete Pick pair}, \df{$(\mathcal{H}_k, \mathcal{H}_{\ell})$ is a complete Pick pair}, \df{$(k, \ell)$ has the complete Pick property} will be used interchangeably in the sequel. Also, for brevity, we will often use the terms \df{CP pair} and \df{CP property} instead. \par 
It is expected that many deep properties of complete Pick kernels have natural extensions in the setting of complete Pick pairs. In fact, such extensions have already been worked out for reproducing kernels with a \df{complete Pick factor} (see e.g. \cite{AHMRfact, InvPickfactor, Interpoltsik, Sarkarpaper} and \cite[Section 3.8]{ColRow}). These are kernels $\ell$ with the property that there exist a complete Pick kernel $s$ and a kernel $g$ such that \[\ell=s\cdot g,\] giving rise to the complete Pick pair $(s, \ell)$ (see \cite[Proposition 4.4]{AHMRinterpol}). Another class of examples is implicitly contained in work of Volberg and Treil \cite{Volbtreil}, who proved a commutant lifting theorem for certain pairs of function spaces on the unit disc. A consequence of their results is that if $\mathcal{H}_k$ and $\mathcal{H}_{\ell}$ are (normalized) Hilbert spaces of holomorphic functions on $\DD$ such that the shift $S: z\mapsto zf(z)$ is contractive on $\mathcal{H}_{\ell}$ and expansive on $\mathcal{H}_k,$ then $(\mathcal{H}_k, \mathcal{H}_{\ell})$ is a complete Pick pair. We note that the spaces $\mathcal{H}_k$ in this setting can be viewed as generalized \textit{de Branges-Rovnyak spaces}; see \cite{Higheroder, AleksandrovClark, AlemanBartosz} for recent activity concerning such spaces. \par 
A substantial contribution to the theory of complete Pick pairs can be found in the 2005 paper \cite{Shimorin} of Shimorin, who proved a commutant lifting theorem for pairs of spaces that unifies and extends analogous results found in \cite{BTVCommLift} and \cite{Volbtreil}. In particular, Shimorin was able to show that a certain factorization property of $k$ and $\ell$ is sufficient for $(k, \ell)$ to be a complete Pick pair. 
\begin{definition}  \label{strcertdef}
Assume $k, \ell$ are kernels on $X.$ A kernel $s$ on $X$ will be called a \df{strong Shimorin certificate for $(k, \ell)$} if it is a complete Pick kernel and also there exist kernels $h, g$ on $X$ with
\begin{equation} \label{intshimfact}
 k=(1-h)s \hspace{0.3 cm} \text{ and }  \hspace{0.3 cm} \ell=sg.  
\end{equation}
\end{definition}
\begin{theorem}[\cite{Shimorin}, Theorem 1.3]\label{mainShim}
  Suppose $(k, \ell)$ is a pair of kernels on the set $X.$ If there exists a strong
 Shimorin certificate for $(k,\ell),$  then $(k, \ell)$ is a complete Pick pair.   
\end{theorem}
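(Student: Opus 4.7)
The plan is to use the factorizations $k=(1-h)s$ and $\ell=sg$ to reduce the given Pick-positivity hypothesis for $(k,\ell)$ to a matrix-valued Pick problem for the complete Pick kernel $s$, then transport the resulting multiplier back across the factorization.

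Substituting the factorizations into $\big[\ell(z_i,z_j)I_N-k(z_i,z_j)W_iW_j^*\big]_{i,j=1}^n\ge 0$ yields the equivalent inequality
\[
\big[s(z_i,z_j)\,(A_{ij}-W_iW_j^*)\big]_{i,j=1}^n\ge 0,\qquad A_{ij}:=g(z_i,z_j)I_N+h(z_i,z_j)W_iW_j^*.
\]
The block matrix $[A_{ij}]$ is itself positive semi-definite, being a sum of block Schur products of positive kernels with positive matrices. Using Kolmogorov decompositions $g(z,w)=\langle\gamma(w),\gamma(z)\rangle_{\mathcal{F}_g}$ and $h(z,w)=\langle\eta(w),\eta(z)\rangle_{\mathcal{F}_h}$, I would write $A_{ij}=C_i^*C_j$ by setting $\mathcal{K}:=(\mathcal{F}_g\otimes\mathbb{C}^N)\oplus(\mathcal{F}_h\otimes\mathbb{C}^N)$ and $C_j\xi:=(\gamma(z_j)\otimes\xi)\oplus(\eta(z_j)\otimes W_j^*\xi)$.

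The inequality $\big[s(z_i,z_j)(C_i^*C_j-W_iW_j^*)\big]\ge 0$ is now a Gram-dominance statement between $\mathcal{H}_s\otimes\mathcal{K}$ and $\mathcal{H}_s\otimes\mathbb{C}^N$, providing a contractive partial linear map between the spans of $\{k^s_{z_j}\otimes C_j\xi\}$ and $\{k^s_{z_j}\otimes W_j^*\xi\}$ (where $k^s_{z_j}$ denotes the kernel function of $\mathcal{H}_s$ at $z_j$). Invoking the matrix-valued complete Pick property of $s$, I would upgrade this partial map to a full contractive multiplier $\Psi\in\Mult(\mathcal{H}_s\otimes\mathcal{K},\mathcal{H}_s\otimes\mathbb{C}^N)$ satisfying the compatibility $\Psi(z_i)C_i=W_i$ for each $i$. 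Finally, the factorizations furnish canonical maps tying $\mathcal{H}_s$ to $\mathcal{H}_k$ and $\mathcal{H}_\ell$, namely a coisometry $\mathcal{H}_s\otimes\mathcal{F}_g\twoheadrightarrow\mathcal{H}_\ell$ from $\ell=sg$, and a contractive inclusion $\mathcal{H}_k\hookrightarrow\mathcal{H}_s$ with defect captured by $\mathcal{F}_h$ from $k=(1-h)s$. Composing $\Psi$ along the appropriate tensor factors converts it into the desired $\Phi\in\Mult(\mathcal{H}_k\otimes\mathbb{C}^N,\mathcal{H}_\ell\otimes\mathbb{C}^N)$ of norm at most one, with $\Phi(z_i)=W_i$ dropping out of $\Psi(z_i)C_i=W_i$.

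The main obstacle is the middle step: extracting a multiplier $\Psi$ with the precise interpolation property $\Psi(z_i)C_i=W_i$ from a Gram-dominance inequality requires more than a direct Douglas-type argument, since a naive application of CP to $s$ produces interpolation of abstract values $V_j$ rather than values compatible with the auxiliary $C_j$. Exploiting the explicit splitting of $\mathcal{K}$ dictated by the Kolmogorov vectors of $g$ and $h$, together with the matrix-valued form of the complete Pick property of $s$, is the technical heart of the reduction, and is what allows the resulting $\Psi$ to transport correctly under the canonical maps to a multiplier of $(k,\ell)$ with the right point values.
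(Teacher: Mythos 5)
Your reduction reproduces the first half of the paper's Leech-based proof: after substituting $k=(1-h)s$, $\ell=sg$ and taking Kolmogorov factorizations, your $C_i^*$ is exactly the paper's row $H(z_i)=[\,I_{N\times N}\otimes G(z_i)\ \ W_i\otimes\Psi(z_i)\,]$ (its $G,\Psi$ are your $\gamma,\eta$), and the hypothesis becomes $\big[(H(z_i)H(z_j)^*-W_iW_j^*)s(z_i,z_j)\big]\succeq 0$. From there, however, the proposal has two genuine gaps. First, the "upgrade" in your middle step is not supplied, and as stated it is the wrong-sided statement: what this Gram-dominance over a complete Pick kernel actually yields is Leech's theorem (\cite[Theorem 8.57]{Pickbook}), which produces a contractive \emph{column} multiplier $\Phi\in\Mult\big(\mathcal{H}_s\otimes\mathbb{C}^N,\mathcal{H}_s\otimes\mathcal{K}\big)$ with $W_i=C_i^*\Phi(z_i)$; your desired $\Psi\in\Mult\big(\mathcal{H}_s\otimes\mathcal{K},\mathcal{H}_s\otimes\mathbb{C}^N\big)$ with $\Psi(z_i)C_i=W_i$ does not follow from a Douglas-type argument, since the partial contraction is only defined on vectors of the special form $s_{z_j}\otimes C_j\xi$ rather than on all of $s_{z_j}\otimes\mathcal{K}$. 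You yourself flag this as "the technical heart," but you leave it unproved; it is precisely the point where the paper invokes Leech (and, before doing so, splits $X$ via Lemma~\ref{CPPsplits} so that $k$ and $s$ are non-vanishing).

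Second, and more seriously, the final "transport" step cannot work as described: $C_j$ contains the factor $\eta(z_j)\otimes W_j^*$, so it depends on the unknown interpolant and is defined only at the $n$ data points. Composing an $s$-level multiplier with a coisometry $\mathcal{H}_s\otimes\mathcal{F}_g\twoheadrightarrow\mathcal{H}_\ell$ and an inclusion coming from $k=(1-h)s$ gives at best a bounded operator, with no mechanism forcing it to be a multiplication operator on all of $X$, let alone one with values $W_i$ at the $z_i$. The real content of the paper's proof is how it escapes this circularity: from $W_i=(I\otimes G(z_i))\Phi_1(z_i)+W_i(I\otimes\Psi(z_i))\Phi_2(z_i)$ it solves for $W_i$, using that $I-(I\otimes\Psi(z))\Phi_2(z)$ is invertible because $\|\Psi(z)^*\|^2=h(z,z)<1$ after the non-vanishing reduction; it then defines $R(z)=(I\otimes G(z))\Phi_1(z)\big(I-(I\otimes\Psi(z))\Phi_2(z)\big)^{-1}$ globally on $X$, extends $H$ by replacing $W$ with $R$, and verifies directly that $\ell(z,w)I-k(z,w)R(z)R(w)^*=H(z)\big((I-\Phi(z)\Phi(w)^*)s(z,w)\big)H(w)^*\succeq 0$, so that $R$ itself is the contractive interpolant. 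Without this resolvent trick and the concluding positivity computation, your outline identifies the right algebraic framework but stops short of a proof.
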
 
Setting $X=\DD$ and $s=\sz$ in \eqref{intshimfact} and choosing $\mathcal{H}_{\ell}$ to be a holomorphic space allows one to recover the Volberg-Treil class of pairs of kernels described above, while setting $h\equiv 0$ leads to pairs $(s, \ell)$ where $s$ is a complete Pick factor of $\ell.$ 
Shimorin actually worked with operator-valued pairs $(k, \ell)$. His proof of Theorem~\ref{mainShim} rested on recasting the Pick interpolation problem in this setting as a matrix completion problem, which then allows for the use of Parrott's Lemma, a strategy previously employed in \cite{Quiggin} and \cite{AmcCcompleteNP}, as well as in unpublished work of Agler,  in  the single-kernel setting. We also point out that in order to prove the full commutant lifting theorem for $(\mathcal{H}_k, \mathcal{H}_{\ell})$, Shimorin imposed an additional regularity condition on $k$ (see condition  ``(R)" in the statement of \cite[Theorem 1.1]{Shimorin}), which, roughly, ensures that $\text{Mult}(\mathcal{H}_k)$ shares sufficiently many elements with $\text{Mult}(\mathcal{H}_s)$. An alternative approach to the commutant lifting theorem in the important special case that $k=s$ is the Drury-Arveson kernel can be found in \cite{Sarkarpaper} (if $k=s$, then Shimorin's condition (R) is automatically satisfied; see the discussion in the second half of \cite[p. 139]{Shimorin}).

\par In Section~\ref{suffforCP}, we will give a new, short proof of Theorem~\ref{mainShim} that utilizes a version of Leech's Theorem valid in the context of a complete Pick kernel. Further, in Section~\ref{ShimnstrShim}, we will consider the notion of a \df{Shimorin certificate} for a pair $(k, \ell)$ (see Definition~\ref{mostgeneral}). Our definition is motivated by Shimorin's proof of Theorem~\ref{mainShim} and, in particular, the consideration of the precise conditions that have to be satisfied in order to extend a multiplier $\mathcal{H}_k\to\mathcal{H}_{\ell}$
on a set of points to one more point. Every pair that possesses a strong Shimorin certificate also has a Shimorin certificate, though the converse does not always hold (see Example~\ref{anykernel}). Still, it turns out that the existence of a Shimorin certificate for $(k, \ell)$ continues to be sufficient for the complete Pick property to hold, which leads us to a generalization of Theorem~\ref{mainShim} (see Theorem~\ref{generalsuff}). \par

\subsection{Diagonal complete Pick pairs}\
Before we state our first main result, we establish some notation and terminology.  A kernel $\kk(z,w)$ on a domain\footnote{Following the usual convention, \df{domain} means open,  connected and non-empty.} \index{domain} $\Omega\subseteq \mathbb{C}^\vg$ is a \df{holomorphic kernel} if $\kk:\Omega\times\Omega\to \CC$ is a kernel (a PsD function) that is holomorphic in $z$ and conjugate holomorphic in $w.$   Let $\mathbb{N}$ \index{$\mathbb{N}$} denote the set of all non-negative integers and fix an integer $\vg \ge 1$. Given $a\in\mathbb{N}^{\mathsf{g}}$ and $z\in\mathbb{C}^{\mathsf{g}},$ let \index{$\vert a \vert$} \index{$z^a$}
\[a=(a_1, \dots, a_{\vg}), \hspace{0.4 cm} |a|=|a_1|+\dots+|a_{\vg}|, \hspace{0.4 cm} z^a=z^{a_1}_1\cdots z_{\vg}^{a_{\vg}}.\]
 Also, let $\mathbf{0}=(0,0,\dots,0)\in \mathbb{N}^\vg.$ As a definition, assuming $\mathbf{0}\in\Omega,$  the holomorphic  kernel $\kk$ 
 is a \df{diagonal holomorphic kernel}  if there exists a sequence of positive numbers $\{\kk_a\}_{a\in\mathbb{N}^{\mathsf{g}}}$ such that the power series expansion of $\kk$ at $\mathbf{0}$ takes the form
\begin{equation} \label{serexp}
\kk(z, w)=\sum_{a\in \mathbb{N}^{\mathsf{g}}}\kk_az^a\overline{w}^a, \hspace*{0.3 cm} z, w\in\Omega.
\end{equation}
Finally, a diagonal holomorphic $k$ will be termed \df{normalized} if $k_{\mathbf{0}}=1$ in \eqref{serexp}. A further discussion of such kernels appears in  subsection~\ref{diaghomprelim}. \par

Our main result is a strong converse to Shimorin's Theorem~\ref{mainShim} for diagonal holomorphic kernels. 
\begin{theorem}\label{intromain}
 A  pair of diagonal holomorphic kernels $(k,\ell)$ on a domain  $\mathbf{0}\in \Omega$ is a complete Pick pair if and only if it possesses a strong Shimorin certificate. 

\end{theorem}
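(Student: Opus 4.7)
\textit{Plan of proof.} The ``if'' direction is Shimorin's Theorem~\ref{mainShim}. The content is the converse: assuming $(k,\ell)$ is a complete Pick pair of diagonal holomorphic kernels on $\mathbf{0}\in\Omega$, produce a strong Shimorin certificate $s$. After rescaling I may assume $k_{\mathbf 0}=\ell_{\mathbf 0}=1$. By the McCullough--Quiggin characterization of complete Pick kernels, a normalized diagonal holomorphic CP kernel must take the form $s=1/(1-\eta)$, where $\eta$ is a kernel with diagonal expansion at $\mathbf 0$, $\eta_{\mathbf 0}=0$ and $\eta_a\ge 0$. Substituting, the sought factorizations $k=(1-h)s$ and $\ell=sg$ become the requirements that the diagonal functions $1-k+k\eta$ and $\ell-\ell\eta$ be kernels; matching Taylor coefficients, this is equivalent to $\eta_a\ge 0$ lying, for every $a\ne\mathbf 0$, in the interval
\[
k_a-\!\!\!\sum_{\substack{b+c=a\\ b,c\ne\mathbf 0}}\!\!\! k_b\,\eta_c\;\le\;\eta_a\;\le\;\ell_a-\!\!\!\sum_{\substack{b+c=a\\ b,c\ne\mathbf 0}}\!\!\! \ell_b\,\eta_c.
\]

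The plan is to construct $\{\eta_a\}$ inductively on $|a|$, at each stage choosing any $\eta_a\ge 0$ in this interval. Non-emptiness of the admissible set at stage $a$ amounts to the two scalar inequalities
\[
\sum_{\substack{b+c=a\\ b,c\ne\mathbf 0}}\ell_b\,\eta_c\;\le\;\ell_a,\qquad \sum_{\substack{b+c=a\\ b,c\ne\mathbf 0}}(\ell_b-k_b)\,\eta_c\;\le\;\ell_a-k_a
\]
(the first coming from the upper bound being non-negative, the second from upper $\ge$ lower). Once $\eta$ is built, set $s=\sum_{n\ge 0}\eta^{\odot n}$ via iterated Schur products, equivalently $s=1/(1-\eta)$; since $\eta_a\le \ell_a$ by induction, the Taylor series of $\eta$ is majorized coefficient-wise by that of $\ell$ and hence extends holomorphically to $\Omega\times\Omega$. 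The factored identity $s(1-\eta)=1$ then extends $s$ analytically to all of $\Omega$. By construction $1-1/s=\eta$ is a kernel (so $s$ is complete Pick), while $h=1-k/s$ and $g=\ell/s$ are kernels by the choice of $\eta$, producing the desired strong Shimorin certificate.

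The feasibility step is the heart of the argument, and it is here that Carath\'eodory--Fej\'er interpolation enters: the required inequalities involve Taylor coefficients at the \emph{single} point $\mathbf 0$, which cannot be read off directly from the distinct-point Nevanlinna--Pick formulation of CP. Instead, the strategy is to apply the pair-CP property to Pick data at $n$-tuples of distinct points accumulating at $\mathbf 0$ and pass to a limit, obtaining a confluent pair-CP (i.e.\ Carath\'eodory--Fej\'er) statement: matrix positivity of a suitable confluent pair-Pick matrix formed from $\{k_b\}$, $\{\ell_b\}$ and a prescribed jet $\{w_b\}_{|b|\le N}$ implies the existence of a norm-$\le 1$ multiplier $\mathcal H_k\to\mathcal H_\ell$ whose Taylor expansion at $\mathbf 0$ matches that jet. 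Feeding in a jet tailored to the previously chosen $\eta_c$---so that positivity of $\ell-w\,\overline{w}\,k$ paired with the relevant reproducing-kernel vectors expands into exactly the sums above---then produces the two required inequalities.

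The main obstacle I anticipate is precisely this feasibility step: rigorously establishing the Carath\'eodory--Fej\'er version of the pair-CP property for diagonal holomorphic kernels, and, crucially, designing test jets whose matrix-positivity condition unfolds into each of the two specific numerical inequalities above. A secondary technical point is verifying that the constructed $s$ is a (globally defined) holomorphic kernel on $\Omega$, which should follow from the coefficient-wise domination $\eta_a\le\ell_a$ together with the identity $s(1-\eta)=1$.
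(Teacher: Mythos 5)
Your overall architecture --- reduce the converse to the coefficient-wise feasibility of $h=1-k(1-\eta)\succeq 0$ and $g=\ell(1-\eta)\succeq 0$ for a diagonal certificate $s=1/(1-\eta)$, and extract the needed coefficient inequalities from a confluent (Carath\'eodory--Fej\'er) form of the pair-CP property at $\mathbf{0}$ --- is exactly the paper's strategy. But as written there is a genuine gap, and one step is false as formulated. Your induction allows ``any $\eta_a\ge 0$ in the interval'' at each stage and then claims non-emptiness at the next stage follows from the CP property; it does not, because feasibility at stage $a$ depends on the earlier choices. Take $\vg=1$, $k(z,w)=(1-z\overline{w})^{-1}$, $\ell(z,w)=(1-z\overline{w})^{-2}$, so $k_n=1$, $\ell_n=n+1$, and $(k,\ell)$ is a CP pair ($k$ itself is a strong Shimorin certificate, $\ell=k\cdot(1-z\overline{w})^{-1}$). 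At stage $1$ your interval is $[1,2]$; choosing $\eta_1=2$ makes the stage-$2$ upper bound $\ell_2-\ell_1\eta_1=3-4=-1<0$, so the admissible set $\{\eta_2\ge 0,\ \eta_2\le -1\}$ is empty. Hence the recursion must take the \emph{minimal} admissible value $\eta_a=\max\{0,\,k_a-\sum_{b+c=a,\ b,c\ne\mathbf{0}}k_b\eta_c\}$ --- precisely the master certificate of Definition~\ref{mastercert} --- and the entire content of the converse is that this particular choice never violates the upper bound $g_a\ge 0$.

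That is exactly the step you flag as ``the main obstacle'' and do not carry out, and it is where almost all of the paper's work lies. The passage from CP to the confluent statement is Theorem~\ref{directpassage}, which needs the projection-limit machinery of Lemma~\ref{l:Pconverge:2} together with the fact that polynomials are multipliers (Proposition~\ref{autopolyn}, itself resting on Lemma~\ref{l:Lnkpos} and Theorem~\ref{generalnec}); the derivation of the inequalities $\ell_a-\sum_{0<u\le a}\mt_u\ell_{a-u}\ge 0$ from the Carath\'eodory property is Theorem~\ref{almostCCPtoShim}, proved by encoding the relevant inequalities in contractive matrices (Lemma~\ref{p:multi:need-}), the recursive family of Lemma~\ref{l:someare0}, and a lengthy induction --- none of which is sketched in your proposal beyond ``design test jets.'' Finally, the domain step is too quick: coefficient domination gives convergence of $\eta$ on $\Omega_\ell\supseteq\Omega$, but to get a CP kernel on $\Omega$ you also need $\eta(z\overline{z})<1$ there, which the paper extracts from $g(z,z)=\ell(z,z)\bigl(1-\eta(z,z)\bigr)\ge 1>0$ in Proposition~\ref{l:tlg} (together with the Pringsheim-type Proposition~\ref{prop:Pringsheim}). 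So the proposal matches the paper in outline but leaves its core unproved, and the unrestricted inductive choice it proposes cannot work.
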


In the context of Theorem~\ref{intromain}, more is true. Not only does a complete Pick pair of diagonal holomorphic kernels have a strong Shimorin certificate, but it has a distinguished certificate $s$ that depends only upon $k.$ This kernel $s$ is, in a sense, {\it minimum} among all diagonal holomorphic certificates for $(k,\ell).$ Moreover, there are restrictions placed on the domain $\Omega$ in terms of the domains of convergence of the power series for $k$ and $\ell$ as
 discussed below.  These domain restrictions are  illustrated by  examples involving Bergman-type kernels in Section~\ref{mainmain} (Examples~\ref{finallyanex}, \ref{finallyanex2}) and Section~\ref{closerl}. 
   \par

Before stating Theorem~\ref{introextmain} below, two additional ingredients require an introduction. The first is a canonical domain for holomorphic diagonal kernels.  For  a  diagonal holomorphic kernel $\kk$ on a domain $\mathbf{0}\in\Omega\subseteq \CC^\vg$ with series expansion at the origin as  in equation~\eqref{serexp}, let $\Omega_\kk$ denote the domain of convergence (see \cite[Definition~2.3.11]{MR1871333}) of the series $\sum_a \kk_a x^{2a}.$  In particular, $\Omega_\kk$ is a domain\footnote{It is a logarithmically convex complete Reinhardt domain and is thus, in particular, star-like with respect to the origin.} and 
\begin{equation}
\label{Omegakdef}
 \Omega_\kk
 =\interior \mathcal{C}_\kk =\cup_{r>0} \{x: \sum_a \kk_a|y|^{2a} <\infty, \ \ \text{ for all } \|x-y\|<r\},
\end{equation}
 where $|x|=(|x_1|, \dots, |x|_{\vg})$ and
\[
 \mathcal{C}_\kk=\{x\in\CC^\vg: \sup \kk_a |x|^{2a} <\infty\}.
\]
  In this case, $\Omega\subseteq\Omega_\kk$ and $\kk$ extends to be a diagonal holomorphic kernel on $\Omega_\kk$; see Proposition~\ref{prop:Pringsheim}. 

The second ingredient is the notion of a master certificate. 

\begin{definition} \label{mastercert}
Let $k$ be a normalized diagonal holomorphic kernel on $\Omega\subseteq\mathbb{C}^{\mathsf{g}},$ so that 
\[
 k(z, w)=1+\sum_{a\in\mathbb{N}^{\mathsf{g}}\setminus\{0\}}k_az^a\overline{w}^a.
\]
Define $\mt_{\mathbf{0}}=0$ and $\mt_{e_j}=k_{e_j}$ for all $1\le j\le \vg$. Assuming $M\ge 1$ and $\mt_b$ has been defined for all $|b|\le M$, let, for $|b|=M+1$,
\[\mt_b=\max\Bigg\{0,k_b-\sum_{\substack{w+u=b, \\ w, u\neq \mathbf{0}}}\mt_w k_u\Bigg\}.\]
The \df{master certificate associated with $k$} is the formal power series in $\vg$ complex variables defined as\footnote{If not for the max above, we would have $\mt =1-\frac{1}{k}.$ Moreover, in the case that $k$ is a complete Pick kernel, $\mt=1-\frac{1}{k}.$} 
\[\mt(x)=\sum_{b\in\mathbb{N}^{\mathsf{g}}}\mt_bx^b.\]
Set
\begin{equation}
\label{OmegaTheta1}
  \Omega_\vartheta^1=  \{x\in\Omega_\vartheta: \sum_{a\ne \mathbf{0}} \vartheta_a |x|^{2a}<1\}. \qedhere
\end{equation}
\end{definition}

We can now state a refined version of Theorem~\ref{intromain}.  For convenience, we will work with normalized kernels. 

\begin{theorem} \label{introextmain}
 Let $(k, \ell)$ be a pair of normalized diagonal holomorphic kernels on a domain $\mathbf{0}\in\Omega \subseteq \mathbb{C}^{\mathsf{g}}$ and let $\mt$ denote the master certificate from  Definition~\ref{mastercert}. If $(k,\ell)$ is a complete Pick pair, then
\[
  \Omega  \subseteq \Omega_\ell \subseteq \Omega_\vartheta^1 \subseteq \Omega_k. 
\]
  Moreover,
   the following are equivalent:
 \begin{enumerate}[(i)]
     \item  $(k, \ell)$ is a complete Pick pair on $\Omega;$
     \item  $(k, \ell)$ has a Shimorin certificate on $\Omega;$
     \item  $(k, \ell)$ has a strong Shimorin certificate on $\Omega;$
     \item \label{i:introextmain:4}  $s(z, w)=\frac{1}{1-\mt(z\overline{w})}$ is a strong Shimorin certificate for $(k, \ell)$ on $\Omega;$ 
     \item[(v)] there exists a positive kernel $g$ on $\Omega$ such that
 \[
   \ell(z, w)=\frac{g(z, w)}{1-\mt(z\overline{w})}
\]
 on $\Omega.$
 \end{enumerate}
\end{theorem}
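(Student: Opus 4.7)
The easy implications follow from earlier material: $(iv)\Rightarrow(iii)$ is immediate (the specific $s$ in (iv) is a strong Shimorin certificate), $(iii)\Rightarrow(ii)$ is Proposition~\ref{betweencerts}, and $(ii)\Rightarrow(i)$ is Theorem~\ref{generalsuff}. For $(v)\Rightarrow(iv)$, I would check directly that $s(z,w)=1/(1-\mt(z\overline{w}))$ is a strong Shimorin certificate. The complete Pick property of $s$ is standard since $\mt$ has nonnegative Taylor coefficients with $\mt_{\bzero}=0$. The factorization $k=(1-h)s$ with $h$ a positive diagonal kernel follows from a coefficient-level computation of $h=1-k(1-\mt(z\overline{w}))$: the case analysis built into the max defining $\mt_b$ yields $h_b=0$ whenever $\mt_b>0$ and $h_b\geq 0$ otherwise. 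Finally, $\ell=sg$ with $g$ a positive kernel is precisely hypothesis (v).

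The main content is $(i)\Rightarrow(v)$: showing that the Taylor coefficients
\[
 g_b := \ell_b - \sum_{\substack{u+w=b \\ w\neq\bzero}}\ell_u\mt_w
\]
are all nonnegative. My plan is induction on $|b|$; the base case $g_{\bzero}=1$ is immediate. For the inductive step, the key tool is matrix-valued Carath\'eodory--Fej\'er interpolation derived from the CP hypothesis. Specifically, for a rank-one projection $W\in\mathbb{M}_N$, the one-point Pick problem at $\bzero$ with value $W$ has positive Pick matrix $I-WW^*$, so by the matrix-valued form of CP there exist contractive matrix-valued multipliers $\Phi$ with $\Phi(\bzero)=W$. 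The rank deficiency in the top-left block $C_{\bzero,\bzero}=I-WW^*$ of the Taylor-coefficient matrix of the positive kernel $\ell\cdot I-k\Phi\Phi^*$ propagates via the block-PSD condition, forcing many entries of the higher Taylor coefficients of $\Phi$ to vanish. The remaining free parameters then satisfy matrix inequalities, and examining the diagonal entries at level $|b|$ for carefully chosen families of admissible $\Phi$---corresponding to the branches of the max in $\mt_b$---produces inequalities whose combination yields exactly $g_b\geq 0$.

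The hardest step is this inductive construction. Already the case $|b|=2$ in one variable is instructive: with $W=P$ a rank-one projection, two extremal choices of the first matrix Taylor coefficient $\Phi_1$ (one aligned with the kernel of $P$, one with its range) yield the inequalities $\ell_2\geq\ell_1k_1$ and $\ell_2\geq k_2+k_1(\ell_1-k_1)$, whose maximum is $\ell_2\geq\ell_1k_1+\mt_2$---exactly $g_2\geq 0$, covering both branches of the max in the definition of $\mt_2$. Identifying and bookkeeping the analogous extremal choices at each general multi-index $b$, in interaction with the inductive hypothesis $g_c\geq 0$ for $|c|<|b|$, is the crux of the argument.

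For the domain inclusions $\Omega\subseteq\Omega_\ell\subseteq\Omega_\mt^1\subseteq\Omega_k$: the first is immediate from convergence of $\ell$ on the open set $\Omega$. For $\Omega_\ell\subseteq\Omega_\mt^1$, use the factorization $\ell=sg$ (from (v)) with $g_{\bzero}=1$ and $g$ having nonnegative Taylor coefficients: then $s(x,x)\leq\ell(x,x)<\infty$ on $\Omega_\ell$, and since $s(x,x)=\sum_{n\geq 0}\mt(|x|^2)^n$, finiteness forces $\mt(|x|^2)<1$. For $\Omega_\mt^1\subseteq\Omega_k$, use $k=(1-h)s$: on $\Omega_\mt^1$, $s$ converges; the vanishing of $h_b$ whenever $\mt_b>0$, combined with convergence of $\mt$ there, yields convergence of $h$ and hence of $k$.
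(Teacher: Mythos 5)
Your overall architecture is sound and matches the paper's closing of the cycle (i)$\Rightarrow$(v)$\Rightarrow$(iv)$\Rightarrow$(iii)$\Rightarrow$(ii)$\Rightarrow$(i), with (iii)$\Rightarrow$(ii) from Proposition~\ref{betweencerts}, (ii)$\Rightarrow$(i) from Theorem~\ref{generalsuff}, and the coefficient computation showing $h=1-k(1-\mt)$ has non-negative coefficients (the paper's Theorem~\ref{1-k(1-mt)}). But there is a genuine gap in your main step (i)$\Rightarrow$(v). The complete Pick hypothesis, applied to the one-point problem at $\mathbf{0}$ with value a rank-one projection $W$, only guarantees the \emph{existence of some} contractive multiplier $\Phi$ with $\Phi(\mathbf{0})=W$; it gives you no control over the higher Taylor coefficients of that $\Phi$. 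Your inequalities require \emph{choosing} $\Phi_1$ (and, inductively, higher coefficients): e.g.\ in your $|b|=2$ illustration, the bound $\ell_2\ge k_2+k_1(\ell_1-k_1)$ needs an admissible $\Phi$ whose first coefficient has the extremal size $|x|^2=\ell_1-k_1$ in the range direction of $W$, and nothing in the one-point Pick statement produces such a $\Phi$ — an arbitrary interpolant could have $\Phi_1=0$, in which case the positivity of $\ell I-k\Phi\Phi^*$ only yields $\ell_2\ge k_2$. Prescribing Taylor coefficients of a contractive multiplier at $\mathbf{0}$ and knowing the problem is solvable under the obvious necessary (contractivity) condition is exactly the complete Carath\'eodory property, which is one of the things being proved equivalent, not a tool you may assume. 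In the paper this passage CP$\Rightarrow$CC is a separate, substantial argument (Theorem~\ref{directpassage}): the CP hypothesis is applied to tuples of points tending to $\mathbf{0}$, with the norm-convergence of the associated projections (Lemmas~\ref{l:Pconverge:1}, \ref{l:Pconverge:2}), Lemma~\ref{l:Lnkpos} and Proposition~\ref{autopolyn} (polynomials are multipliers, itself resting on Theorem~\ref{generalnec}) making the limit legitimate. Your proposal contains no such limiting mechanism, so the "carefully chosen families of admissible $\Phi$" cannot be justified from (i). Once CC is available, the remaining bookkeeping you flag as the crux is indeed the heart of the matter — in the paper it is Lemmas~\ref{p:multi:need-}, \ref{l:someare0} and the long induction in Theorem~\ref{almostCCPtoShim}, where the max-branches are encoded by the recursively defined non-negative weights $v_a$ rather than by extremal interpolants — and you have only sketched it.

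A secondary, fixable point: your argument for $\Omega_\mt^1\subseteq\Omega_k$ is circular as stated, since for indices with $\mt_b=0$ one has $h_b=\sum_{0<u\le b}\mt_u k_{b-u}-k_b$, so bounding $h$ on $\Omega_\mt^1$ already requires control of the $k$-coefficients there. A clean repair is to note that $k=s-hs$ as formal power series with $h_a,s_a\ge 0$, so $k_a\le s_a$ and hence $\Omega_s=\Omega_\mt^1\subseteq\Omega_k$ directly; the paper instead factors $h=\Phi\Phi^*$, extends $\Phi$ as a contractive multiplier of $\mathcal{H}_s$ to $\Omega_\mt^1$, and invokes the Pringsheim-type Proposition~\ref{prop:Pringsheim}. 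Your arguments for $\Omega\subseteq\Omega_\ell$ and $\Omega_\ell\subseteq\Omega_\mt^1$ are fine and parallel Proposition~\ref{l:tlg}.
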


Theorem~\ref{introextmain} is expanded upon as Theorem~\ref{mainextmain} and
 proved in Section~\ref{mainmain}.

\subsection{Carath\'eodory  interpolation for diagonal pairs}
Our proof of  Theorem~\ref{mainextmain}, and thus Theorem~\ref{introextmain},  does not proceed directly from the  complete Pick property as Shimorin's proof in \cite[Theorem~1.3]{Shimorin} did. Instead we obtain our necessary and sufficient conditions through consideration of the \textit{Carath\'eodory interpolation problem} for pairs of (normalized) diagonal kernels. 
  \par
Recall that the classical Carath\'eodory interpolation problem for $H^{\infty}$ (also termed the \textit{Carath\'eodory-Fej\'er problem} in the literature), originally introduced and studied by  Carath\'eodory \cite{originalcarathproblem1, originalcarathproblem2}, can be phrased as follows: Given $c_0, c_1, \dots, c_n \in\mathbb{C},$ when do there exist complex numbers $c_{n+1}, c_{n+2}, \dots$ so that $\phi(z)=\sum_{i=0}^{\infty}c_iz^i$ is analytic and satisfies $\sup_{z\in\mathbb{D}}|\phi(z)|\le 1$? This problem has since been studied in many different settings (see \cite[p. 186]{Sarasongeneralized} for additional references, also \cite{Car1, Car2, Car3, Car4, Car5, Car6, Car7}).
 \par
Our  interest lies  in a matrix-valued version of Carath\'eodory interpolation that applies to pairs of kernels. Assume $(k, \ell)$ is a pair of (normalized) diagonal holomorphic kernels on  a domain $\Omega\subseteq \mathbb{C}^{\mathsf{g}}$ and let $F\subseteq\mathbb{N}^{\mathsf{g}}$ be a co-invariant set of indices (see Definition~\ref{coinvdef}). Given a positive integer $J$ and a collection of matrix coefficients $\{c_a : a\in F\}\subseteq\mathbb{M}_J$, when does there exist a collection
$\{c_a : a\in \mathbb{N}^{\mathsf{g}}\setminus F\}\subseteq\mathbb{M}_J$
such that the function
\begin{equation}\label{CCinterpolant}
 \Phi(z)=\sum_{a\in\mathbb{N}^{\mathsf{g}}}z^a\otimes c_a   
\end{equation}
is a contractive multiplier from $\mathcal{H}_k\otimes\mathbb{C}^J$ to $\mathcal{H}_{\ell}\otimes\mathbb{C}^J$? 
A necessary condition for the existence of such a multiplier is that the block upper-triangular matrix $C$ indexed by $F\times F$ with block $J\times J$ entries 
 \begin{equation}\label{data}
     C_{a, b}=\begin{cases}
     c_{b-a}^*\sqrt{\cfrac{k_a}{\ell_b}}, \hspace{0.5 cm} b\ge a, \\
      \mathbf{0},  \hspace{1.05 cm} \text{otherwise,}
 \end{cases}  \end{equation}
is a contraction, where $b\ge a$ means $b_j\ge a_j$ for $a=(a_1,\dots,a_\vg)$ and similarly for $b.$ If, for any $J\ge 1$ and any choice of co-invariant $F\subseteq\mathbb{N}^{\mathsf{g}}$ and matrices $\{c_a : a\in F\}\subseteq\mathbb{M}_J$, this necessary condition is also sufficient for the existence of a contractive multiplier $\Phi$ as in \eqref{CCinterpolant}, we say  $(k, \ell)$ is a \df{complete  Carath\'eodory pair}. In the special case $k=\ell,$ the kernel $k$ is known as a \textit{complete  Carath\'eodory kernel}. For a more extensive discussion of this definition and further background on the classical setting, see subsection~\ref{CCsubs}. As with the complete Pick property, the expressions \textit{$(k, \ell)$ is a complete Carath\'eodory pair}, \textit{$(\mathcal{H}_k, \mathcal{H}_{\ell})$ is a complete Carath\'eodory pair}, \textit{$(k, \ell)$ has the complete Carath\'eodory property} will be used interchangeably in the sequel. We will often replace them with the shorter versions \textit{CC pair} and \textit{CC property}. 
\index{complete Carath\'eodory pair} \index{complete Carath\'eodory property}
\index{CC property}
 \par 
 In \cite{McCulloughcarath}, the first author characterized those kernels that satisfy an abstract version of the complete Carath\'eodory property formulated in terms of a backwards shift operator; see \cite[Theorem 7.2]{McCulloughcarath}. This characterization was further extended in \cite{Mikethesis}, where it was shown that a single holomorphic kernel $k$ (satisfying a few very mild regularity assumptions) is CC if and only if it is CP; see \cite[Corollary 3.3]{Mikethesis}. The fact that this equivalence continues to hold for pairs of kernels will be one of the main components of our proof strategy for Theorem~\ref{introextmain}. First, we show directly that any CP pair of diagonal holomorphic kernels is also a CC pair (Theorem~\ref{directpassage}) and are thus led to  the following question. Does every diagonal CC pair $(k, \ell)$ possess a strong Shimorin certificate? We show that the answer is yes for diagonal kernels and, in fact, in that case we can always choose the $s$ from Theorem~\ref{introextmain}\eqref{i:introextmain:4} as our certificate (Corollaries~\ref{main:formal:CCP} and ~\ref{c:tlg}). The key observation here is that the inequalities obtained by looking at the positivity condition
 \[\text{``$s=(1-\mt)^{-1}$ is a factor of $\ell$"} \]
 coefficient-wise can all be encoded by appropriate choices of contractive block matrices $C$ as in \eqref{data}. We also point out that the factorization $k=(1-h)s$ follows immediately from the definition of $\mt$; see Proposition~\ref{1-k(1-mt)}. After obtaining that $s=(1-\mt)^{-1}$ is a strong Shimorin certificate for the CC pair $(k, \ell),$ an application of Theorem~\ref{mainShim} shows that $(k, \ell)$ is CP, concluding our argument.  In this indirect fashion we obtain the following corollary. 
\begin{theorem} \label{CPisCC}
 A pair of diagonal holomorphic kernels $(k, \ell)$ is a complete Pick pair if and only if it is a  complete Carath\'eodory pair. 
\end{theorem}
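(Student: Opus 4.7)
The plan is to prove both implications by pivoting on the master certificate $\vartheta$ from Definition~\ref{mastercert} and Shimorin's Theorem~\ref{mainShim}, exactly as the authors outline in the discussion preceding the statement.

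\medskip

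\emph{CP $\Rightarrow$ CC.} First I would establish directly that any complete Pick pair of diagonal holomorphic kernels is a complete Carath\'eodory pair; this is advertised in the excerpt as Theorem~\ref{directpassage}. The natural route is to approximate a Carath\'eodory datum $\{c_a : a\in F\}$ on a co-invariant $F\subseteq \mathbb{N}^{\mathsf{g}}$ by Nevanlinna--Pick data at clusters of points coalescing at $\mathbf{0}$: for small $z_1,\dots,z_n$ near the origin, set $W_i = \sum_{a\in F} z_i^a c_a^\ast$ and show that the contractivity of the block matrix $C$ from \eqref{data} forces, in the confluent limit, positivity of the Pick matrix $[\ell(z_i,z_j)I - k(z_i,z_j) W_i W_j^\ast]$. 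One then applies the CP property, and a normal families / compactness argument across a sequence of such clusters produces a contractive multiplier whose Taylor coefficients indexed by $F$ agree with $\{c_a\}$.

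\medskip

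\emph{CC $\Rightarrow$ CP.} Given a CC pair $(k,\ell)$, I would construct the master certificate $\vartheta$ from Definition~\ref{mastercert} and form the candidate strong Shimorin certificate
\[
  s(z,w)=\frac{1}{1-\vartheta(z\overline{w})}.
\]
The goal is then to verify the three ingredients of Definition~\ref{strcertdef}. First, $s$ is a complete Pick kernel: by construction $\vartheta_a\ge 0$, so $\vartheta(z\overline{w}) = \sum_a \vartheta_a z^a\overline{w}^a$ is PsD with diagonal structure, and the Agler--McCarthy characterization (``$1-1/s$ PsD'') applies. Second, the factorization $k=(1-h)s$ with $h$ PsD is essentially built into the recursion defining $\vartheta$; this is flagged as Proposition~\ref{1-k(1-mt)} in the text. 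Concretely, the $\max\{0,\cdot\}$ in the recursion guarantees that $h := 1 - k(1-\vartheta)$ has nonnegative coefficients $h_b$, and being a diagonal series in $z\overline{w}$, it is a positive kernel. Third, and most importantly, I must produce a PsD factor $g$ with $\ell = sg$, equivalently show that every coefficient of $\ell(1-\vartheta)$ in its $z^a\overline{w}^a$ expansion is nonnegative. With all three ingredients in hand, Theorem~\ref{mainShim} closes the loop and delivers that $(k,\ell)$ is CP.

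\medskip

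\emph{Main obstacle.} The crux is the third step, producing the positive factor $g = \ell(1-\vartheta)$. The key is the observation the authors highlight: each coefficient-wise inequality needed for the positivity of $g$ can be repackaged as the assertion that a particular block matrix $C$ of the form \eqref{data} is a contraction. The plan is, for each multi-index $b$, to exhibit a co-invariant set $F_b\subseteq\mathbb{N}^{\mathsf{g}}$ and a family of matrix coefficients $\{c_a\}_{a\in F_b}$ whose associated $C$ is contractive \emph{by construction} (using the definition of $\vartheta$), so that the CC hypothesis supplies an extension to a contractive multiplier $\Phi:\mathcal{H}_k\otimes\mathbb{C}^J\to\mathcal{H}_\ell\otimes\mathbb{C}^J$. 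Reading off what the extension says about the coefficients of $\Phi$ and feeding the resulting operator inequality back into the Taylor expansion of $\ell(1-\vartheta)$ should yield exactly the desired nonnegativity. Identifying the right families $(F_b, \{c_a\})$ and bookkeeping the algebra is the technical heart; once done, everything else is either definitional or an application of Theorem~\ref{mainShim}.
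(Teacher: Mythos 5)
Your overall strategy coincides with the paper's (CP $\Rightarrow$ CC by coalescing interpolation nodes at the origin; CC $\Rightarrow$ CP via the master certificate, the factorization $k=(1-h)s$, and Theorem~\ref{mainShim}), but in both directions the step you defer or assert is precisely the step that carries the proof, so as written there are genuine gaps. For CC $\Rightarrow$ CP, the "main obstacle" you identify — showing every coefficient of $g=\ell(1-\vartheta)$ is nonnegative by encoding the inequalities as contractive matrices of the form \eqref{data} — is exactly the content of Theorem~\ref{almostCCPtoShim}, and it is not a bookkeeping exercise: the paper needs a non-obvious choice of rank-one data (columns with a single entry $\sqrt{v_a}$, Lemma~\ref{p:multi:need-}), a recursive construction of the weights $v_a$ relative to the zero set of $\vartheta$ (Lemma~\ref{l:someare0}), and a lengthy induction identifying $\sum_{u<d}v_u k_{d-u}$ with $\sum_{0<u\le d}\vartheta_u\ell_{d-u}$. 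Your proposal names the target but supplies none of this. You also ignore convergence/domain issues: $\vartheta$ is a priori only a formal series, and one must prove $\Omega\subseteq\Omega_\ell\subseteq\Omega_\vartheta^1$ so that $s=(1-\vartheta(z\overline w))^{-1}$ is actually a kernel on the domain where $k,\ell$ live (Proposition~\ref{l:tlg}, Corollary~\ref{c:tlg}) before Theorem~\ref{mainShim} can be invoked.

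For CP $\Rightarrow$ CC, the mechanism you describe runs in the wrong direction. Contractivity of the Carath\'eodory matrix $C$ does \emph{not} force positivity of the Pick matrices $[\ell(z_i,z_j)I-k(z_i,z_j)W_iW_j^*]$ at nearby points: in the confluent limit the Pick data converges to the coefficient data, so when $\|C\|=1$ the Pick matrices at points near $\mathbf{0}$ may fail to be positive (one would at least need a shrinkage $rC$, $r<1$, and a limiting argument in $r$). Moreover, to compare the point compressions with the coefficient compression one needs the truncated polynomial to be a bounded multiplier $\mathcal{H}_k\to\mathcal{H}_\ell$, which is not free — the paper proves it (Proposition~\ref{autopolyn}) using Lemma~\ref{l:Lnkpos} and Theorem~\ref{generalnec} — and in several variables the convergence of kernel-function spans to monomial spans requires the specific curves $v(t)=(t^{r_1},\dots,t^{r_\vg})$ and lexicographic ordering of Lemma~\ref{l:Pconverge:2}, together with the one-step-extension reformulation (Proposition~\ref{convenient}, Lemma~\ref{lexiconvenient}) to handle an arbitrary co-invariant $F$. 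The paper's Theorem~\ref{directpassage} organizes the limit the other way: it transfers the one-point-extension inequality from Pick data at coalescing points to the Carath\'eodory (coefficient) setting, rather than manufacturing Pick-positive data from the Carath\'eodory hypothesis. So your outline captures the right objects and the right global architecture, but both the coefficient inequality $g\succeq 0$ and the coalescence argument would need to be carried out in full before the proof is complete.
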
  

 It is anticipated that, under fairly mild conditions on $k$ and $\ell,$  a limiting argument will prevail
 to show that generally complete Pick pairs are complete Carath\'eodory pairs. However, the proof of Theorem~\ref{CPisCC} offers little insight into the converse, as it passes through the (specific to diagonal pairs) proof of Theorem \ref{main:formal:CCP}.

 \par 
\subsection{The Bergman kernel as an example}
\label{s:negative}
Recall that the conclusion of Theorem~\ref{introextmain} can be reformulated as follows: for every CP pair of diagonal holomorphic kernels $(k, \ell)$, there exists a \textit{minimum} strong Shimorin certificate that only depends on $k$. Unfortunately, this strong canonicity result does not survive if we drop the assumption that $\ell$ is diagonal.  
\begin{theorem}\label{bergnotmin}
Let  $\mathfrak{b}$ denote the Bergman kernel on the unit disc $\mathbb{D}=\{z\in\mathbb{C}: |z|<1\}.$ There exists an  $0<r<1$, and a one-parameter family of CP kernels $\{s_{\lambda}\}_{\lambda\in\Lambda}$ 
with the following properties:
\begin{enumerate}[(i)]
    \item  For every $\lambda\in\Lambda,$ there exists a kernel $h_{\lambda}$ on $r\mathbb{D}$ such that 
\[\mathfrak{b}=(1-h_{\lambda})s_{\lambda};\]
\item  There does not exist a CP kernel $s$ on the disc  $r\mathbb{D}$ such that $\mathfrak{b}=(1-h)s$ for some kernel $h$ on $r\mathbb{D}$ and with the property that, for every $\lambda\in\Lambda,$ there exists a kernel $g_{\lambda}$ on $r\mathbb{D}$ such that 
\begin{equation}\label{namex}
 s_{\lambda}=sg_{\lambda}.   
\end{equation}
\end{enumerate}

 In particular, $(\mathfrak{b},s_\lambda)$ is a family of complete Pick pairs over $r\, \DD,$ but there does not exist a kernel $s$ that serves simultaneously  as a certificate
 for each pair.  
\end{theorem}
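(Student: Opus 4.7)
The plan is to identify the canonical diagonal candidate for a common Shimorin certificate via the master-certificate construction, produce an explicit family of non-diagonal CP certificates via Drury--Arveson pullbacks, and then obstruct the existence of any common certificate.

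First I would compute $\vartheta$ from Definition~\ref{mastercert} applied to $\mathfrak{b}(z,w)=(1-z\bar w)^{-2}=\sum_{n\ge 0}(n+1)z^n\bar w^n$. The recursion gives $\vartheta_1=2$ and, by induction for $n\ge 2$, only the $j=1$ summand survives in $\sum_{j+k=n,\,j,k\ge 1}\vartheta_j\mathfrak{b}_k=\vartheta_1\mathfrak{b}_{n-1}=2n$, while $\mathfrak{b}_n=n+1<2n$, forcing $\vartheta_n=0$. Hence $\vartheta(x)=2x$ and $s_\ast(z,w)=1/(1-2z\bar w)$ is a CP kernel on $(1/\sqrt 2)\DD$. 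Moreover $s_\ast$ is itself a strong Shimorin certificate for $\mathfrak{b}$ on that disc since
\[
 h_\ast(z,w):=1-\frac{\mathfrak{b}(z,w)}{s_\ast(z,w)}=\frac{z^2\bar w^2}{(1-z\bar w)^2}=z^2\bar w^2\cdot\mathfrak{b}(z,w)
\]
is a Schur product of two positive kernels. Fix $0<r<1/\sqrt 2$.

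For part (i), I would take $\phi_\lambda:r\DD\to\CC^2$ defined by $\phi_\lambda(z)=(\sqrt 2\,z,\,\lambda+z^2)$ for $\lambda$ in a small open interval $\Lambda\subseteq\R\setminus\{0\}$. For $r$ and $|\lambda|$ small enough, $\|\phi_\lambda(z)\|^2<1$ on $r\DD$, so $s_\lambda(z,w)=1/(1-\langle\phi_\lambda(z),\phi_\lambda(w)\rangle)$ is a CP kernel by Agler--McCarthy universality. The cross terms $\lambda\bar w^2+\bar\lambda z^2$ in the denominator make $s_\lambda$ genuinely non-diagonal. Moreover,
\[
 1-\frac{\mathfrak{b}}{s_\lambda}=h_\ast+\mathfrak{b}\cdot\bigl[\langle\phi_\lambda,\phi_\lambda\rangle-2z\bar w\bigr]=h_\ast+\mathfrak{b}\cdot(\lambda+z^2)(\bar\lambda+\bar w^2),
\]
a sum of positive kernels, so $1-\mathfrak{b}/s_\lambda$ is positive and $(\mathfrak{b},s_\lambda)$ admits $s_\lambda$ itself as a strong Shimorin certificate. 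This establishes (i).

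For part (ii), I would proceed by contradiction: assume a CP kernel $s$ on $r\DD$ satisfies $\mathfrak{b}=(1-h)s$ and $s_\lambda=s\,g_\lambda$ for every $\lambda\in\Lambda$. Expand $s$, $h$, and $g_\lambda$ as power series and apply the three positivity conditions---the CP condition on $s$, positivity of $1-\mathfrak{b}/s$, and positivity of $g_\lambda=s_\lambda/s$ for each $\lambda$---to recursively constrain the Taylor coefficients of $s$. The one-parameter variation in $\lambda$ yields an overdetermined system: the non-diagonal content of $s_\lambda$ varies in $\lambda$ (notably at orders $z^0\bar w^2$ and $z^2\bar w^0$), yet $s$ is fixed, so $g_\lambda$ must absorb the variation. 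A coefficient comparison at low order should force certain off-diagonal coefficients of $s$ to vanish, which together with the CP and Shimorin constraints forces $s$ to dominate (coefficient-wise) the master certificate $s_\ast$; then divisibility of the non-diagonal $s_\lambda$ by $s_\ast$ fails for generic $\lambda$, giving the contradiction.

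The main obstacle is part (ii). We cannot directly apply Theorem~\ref{introextmain} because $s$ is not assumed diagonal, and the tempting rotation-averaging shortcut ($s\mapsto\int s(e^{i\theta}z,e^{i\theta}w)\,d\theta/(2\pi)$) does not obviously work, since convex combinations of CP kernels need not be CP. I therefore expect the proof to rely on a direct Taylor-coefficient analysis, with the $\lambda$-variation providing enough rigidity to yield a contradiction at a low order, and with $s_\ast$ playing the role of the unique ``diagonal anchor'' pinned down by Theorem~\ref{introextmain}.
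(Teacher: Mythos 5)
Your part (i) is fine as far as it goes, but the family you chose cannot witness part (ii), so the proposal has a fatal gap rather than a fixable one. For your kernels $s_\lambda(z,w)=\bigl(1-2z\overline{w}-(\lambda+z^2)\overline{(\lambda+w^2)}\bigr)^{-1}$, the diagonal kernel $s_\ast(z,w)=(1-2z\overline{w})^{-1}$ is itself a \emph{common} strong Shimorin certificate: you already noted $\mathfrak{b}=(1-h_\ast)s_\ast$ with $h_\ast=(z\overline{w})^2\mathfrak{b}\succeq 0$, and moreover $s_\lambda/s_\ast=\bigl(1-u_\lambda\bigr)^{-1}$ where $u_\lambda(z,w)=(\lambda+z^2)\overline{(\lambda+w^2)}\,s_\ast(z,w)$ is a positive kernel with $u_\lambda(z,z)<1$ on $r\DD$ (this is exactly your condition $\|\phi_\lambda(z)\|<1$), so $s_\lambda/s_\ast=\sum_{n\ge 0}u_\lambda^n\succeq 0$ by Schur products and pointwise limits. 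Hence $s=s_\ast$ satisfies both $\mathfrak{b}=(1-h)s$ and $s_\lambda=s\,g_\lambda$ for every $\lambda$, i.e. the conclusion you want to disprove actually holds for your family. This is the structural reason: you built $s_\lambda$ by \emph{adding} a positive rank-one term to the symbol $2z\overline{w}$, which automatically makes $s_\ast$ a factor of $s_\lambda$; non-diagonality of $s_\lambda$ by itself provides no obstruction. Your part (ii) is also only a sketch (``should force'', ``I expect''), and its intended endgame --- that divisibility of $s_\lambda$ by $s_\ast$ fails for generic $\lambda$ --- is precisely what the computation above refutes.

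The paper's proof uses a different family designed so that the obstruction is visible at two points. It takes $g_\lambda(z)=\bigl(1-\mathfrak{b}(z,\lambda)^{-1}\bigr)/\sqrt{1-\mathfrak{b}(\lambda,\lambda)^{-1}}$ and $s_\lambda=(1-g_\lambda\overline{g_\lambda})^{-1}$ on $r\DD$ with $r=\tfrac13$, $\Lambda=\{0<|\lambda|<\tfrac13\}$; a direct computation shows $\mathfrak{b}=(1-h_\lambda)s_\lambda$ with $h_\lambda(z,w)=2z\overline{w}(z-\lambda)\overline{(w-\lambda)}\,\mathfrak{b}(z,w)/(2-|\lambda|^2)$, so $h_\lambda$ \emph{vanishes} at the points $0$ and $\lambda$. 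If a common certificate $s$ existed with $\mathfrak{b}=(1-h)s$ and $s_\lambda=s f_\lambda$, then $f_\lambda=(1-h)/(1-h_\lambda)$, and positivity of the $2\times 2$ matrix of $f_\lambda$ at $\{0,\lambda\}$, combined with $h\succeq 0$, forces $h(0,0)=h(0,\lambda)=h(\lambda,0)=h(\lambda,\lambda)$; letting $\lambda$ range over $\Lambda$ makes $h$ constant, so $\mathfrak{b}$ would be a complete Pick kernel on $\tfrac13\DD$, which is false. If you want to salvage your approach you would need a family whose certificates are not all divisible by $s_\ast$ (or by any single CP kernel factoring $\mathfrak{b}$), and some concrete mechanism --- like the paper's zero set of $h_\lambda$ --- to convert the $\lambda$-variation into an actual contradiction; a generic appeal to overdetermined Taylor coefficients does not do this.
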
  
Theorem~\ref{bergnotmin} further implies that, given a CP pair $(\mathfrak{b}, \ell)$, there will not, in general, exist a strong Shimorin certificate that is a factor of every other strong Shimorin certificate for $(\mathfrak{b}, \ell),$ even if $\ell$ is taken to be diagonal; see Corollary~\ref{nonmin}. Thus, the certificate $s$ from Theorem~\ref{introextmain}(iv) will, in general, only be minimum among \df{diagonal holomorphic} strong Shimorin certificates.
\par 
\subsection{Necessary conditions}
Despite the  negative results of subsection~\ref{s:negative}, we do provide new necessary conditions for a pair of abstract kernels to be a CP pair including an  extension of the following theorem of Shimorin.

\begin{theorem}[\cite{Shimorin}, Theorem 1.4]\label{Shimnec}
If $(k, \ell)$ is a complete Pick pair of kernels on $X,$ then there exists a kernel $g$ on $X$ such that $\ell=kg.$   
\end{theorem}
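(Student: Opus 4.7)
The plan is to reduce the statement to finite subsets of $X$: for every finite $F=\{z_1,\dots,z_n\}\subseteq X$ on which $k$ does not vanish, I aim to show that the matrix $G_F:=[\ell(z_i,z_j)/k(z_i,z_j)]\in\mathbb{M}_n$ is positive semi-definite. Assembling these $G_F$'s consistently across all such $F$ (their entries are canonically the pointwise quotients) then produces a PsD kernel $g$ on $X$ with $\ell=kg$.

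The main tool is the CP hypothesis applied at single base points. For each $z_0\in F$ with $k(z_0,z_0),\ell(z_0,z_0)>0$, the one-point CP condition with the choice $W=\sqrt{\ell(z_0,z_0)/k(z_0,z_0)}$ --- which saturates the one-point Pick positivity there --- produces a contractive scalar multiplier $\Phi_{z_0}\in\Mult(\mathcal{H}_k,\mathcal{H}_\ell)$ with $|\Phi_{z_0}(z_0)|^2=\ell(z_0,z_0)/k(z_0,z_0)$. The vanishing of the diagonal Pick entry at $z_0$, combined with PsD of the enlarged two-point Pick matrix at $\{z_0,z\}$, forces the explicit formula $\Phi_{z_0}(z)=\ell(z,z_0)/(k(z,z_0)\overline{\Phi_{z_0}(z_0)})$ for every $z\in X$. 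Substituting into the global contractivity $\ell-k|\Phi_{z_0}|^2\ge 0$ yields the PsD-kernel inequality
\[
 \ell(z,w)\,k(z,z_0)k(z_0,w)\,\ell(z_0,z_0)\;\ge\;k(z,w)\,\ell(z,z_0)\ell(z_0,w)\,k(z_0,z_0)
\]
as kernels in $(z,w)$, valid for every such $z_0$. In the two-point case $n=2$, this specializes directly to $G_F(z_0,z_0)G_F(z,z)\ge|G_F(z,z_0)|^2$, establishing PsD of $G_F$.

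For $n\ge 3$ the main obstacle is that the above inequality, restricted to $F$ with $z_0\in F$, is equivalent only to a Hadamard product of a PsD kernel derived from $k$ with the Schur complement of $G_F$ at the entry $(z_0,z_0)$ being PsD --- this does not in itself force PsD of the Schur complement alone. To close this gap I would invoke the full matrix-valued CP hypothesis with $W_i=v_ie_1^*\in\mathbb{M}_N$ (for $v_i\in\mathbb{C}^N$), so that $W_iW_j^*$ realizes an arbitrary PsD Gram matrix $G_0=[\langle v_j,v_i\rangle]\in\mathbb{M}_n$; this reduces the Pick-positivity condition to the cleaner inequality $B-A\circ G_0\ge 0$ with $A=[k(z_i,z_j)]$, $B=[\ell(z_i,z_j)]$, without the obstructing Hadamard factor. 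The non-empty compact convex set $\mathcal{G}:=\{G_0\ge 0:B-A\circ G_0\ge 0\}$ then admits, by an extremal-point argument anchored on the one-point rank-one saturations $\Phi_{z_0}\overline{\Phi_{z_0}}$, a maximal element $G_0^{*}\in\mathcal{G}$ with $A\circ G_0^{*}=B$; this $G_0^{*}=G_F$ furnishes the desired finite-case positivity.
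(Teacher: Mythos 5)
Your first half is sound: the one-point saturation trick is carried out correctly, the forced formula for $\Phi_{z_0}$ is right (note only that it holds where $k(\cdot,z_0)\neq 0$, and that where $k(z,z_0)=0$ one gets $\ell(z,z_0)=0$ instead), and the resulting kernel inequality is, after removing the non-vanishing dyad $k(z,z_0)\overline{k(w,z_0)}$, exactly the statement that $k\cdot S_{z_0}\succeq 0$, where $S_{z_0}(z,w)=g(z,w)g(z_0,z_0)-g(z,z_0)g(z_0,w)$ and $g=\ell/k$ pointwise; this does settle the $2\times 2$ case. (For context: the paper does not prove this statement at all — it quotes it from Shimorin and builds Theorem~\ref{generalnec} on top of it — and its own necessity arguments, e.g.\ Theorem~\ref{Schurpreserves} and Proposition~\ref{CPsplit}, use the same saturation/rigidity device you use, but only to reduce to Shimorin's theorem, never to reprove it.)

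The genuine gap is the final step for $n\ge 3$, which is where the whole content of the theorem sits. Two problems. Minor: with $W_i=v_ie_1^*$ one has $W_iW_j^*=v_iv_j^*$, and positivity of $\big[\ell_{ij}I_N-k_{ij}v_iv_j^*\big]$ is \emph{not} equivalent to $B-A\circ G_0\succeq 0$; you need $W_i=e_1v_i^*$, so that $W_iW_j^*=\langle v_j,v_i\rangle\, e_1e_1^*$ and the block matrix decomposes into $B-A\circ G_0$ plus copies of $B$. Major: the ``extremal-point argument'' is not an argument. The set $\mathcal{G}=\{G_0\succeq 0: B-A\circ G_0\succeq 0\}$ depends only on the matrices $A,B$; applying the CP hypothesis to data supported on $F$ returns a multiplier whose restriction to $F$ is the data you started with, so it produces no new matrix inequality on $F$; and the only genuinely new inputs you possess are the rank-one saturations $R_{z_0}\in\mathcal{G}$, which encode exactly the conditions $k\circ S_{z_0}\succeq 0$ once more. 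Those conditions are strictly weaker than $G_F\succeq 0$: already for $3\times 3$, $k\circ S_m\succeq 0$ only yields $(S_m)_{ii}(S_m)_{jj}\ge \frac{|k_{ij}|^2}{k_{ii}k_{jj}}|(S_m)_{ij}|^2$, whereas $\det G_F\ge 0$ is equivalent (via Sylvester's identity) to the same inequality \emph{without} the factor $\frac{|k_{ij}|^2}{k_{ii}k_{jj}}\le 1$; one can exhibit PsD non-vanishing $[k_{ij}]$ and Hermitian $[g_{ij}]$ with positive diagonal satisfying every saturation condition yet $\det[g_{ij}]<0$. Hence no convexity or maximality manipulation of $\mathcal{G}$ built from these facts alone can reach $A\circ G_0^*=B$; asserting the existence of such a maximal element is simply restating the theorem. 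To close the gap one must inject the CP hypothesis in a way that couples several points at once (matrix-valued data saturating at more than one node), which is precisely the nontrivial part of Shimorin's proof. Separately, your assembly over finite subsets quietly assumes $k$ non-vanishing; at pairs where $k(z,w)=0$ you need $\ell(z,w)=0$ and a block-structure argument (as in Proposition~\ref{CPsplit}) before a global kernel $g$ with $\ell=kg$ can be produced.
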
 
   \par 

Given a kernel $k$ on a set $X$ and a  non-empty subset $Y\subseteq X$, let  $k^Y$ \index{$k^Y$} denote the reproducing kernel associated with the subspace
\[
 \{f\in\mathcal{H}_k : f(w)=0 \text{ for all }w\in Y\}
\]
 of $\mathcal{H}_k.$
If $Y=\{w\}$ is a singleton, we write $k^w$ in place of $k^{\{w\}}$. \index{$k^w$}  Recall (see also subsection~\ref{CPsubsec}) that  $k$ is a CP kernel if and only if, for every $w\in X,$ there exists a kernel $g_w$ on $X$ such that \begin{equation}\label{singleCPnec}
 k^w=kg_w.   
\end{equation}
Theorem~\ref{generalnecpos} below generalizes this result to pairs of kernels and can be seen as an extension of  \cite[Theorem~1.4]{Shimorin} quoted above as Theorem~\ref{Shimnec}.   Its proof proceeds directly from the complete Pick property. 
\begin{theorem}\label{generalnec}
Let $(k, \ell)$ be a CP pair of kernels on a set $X.$ If  $Y\subseteq X$ is finite, then $(k,\ell^Y)$ is also a CP pair and, consequently, there exists a kernel $g_Y$ on $X$ such that 
\begin{equation} \label{generalnecpos}
 \ell^Y=kg_Y.   
\end{equation}
\end{theorem}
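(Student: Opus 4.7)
The plan is two-fold: first show that $(k, \ell^Y)$ inherits the CP property from $(k, \ell)$, and then invoke Theorem~\ref{Shimnec} applied to $(k, \ell^Y)$ to conclude the existence of $g_Y$ with $\ell^Y = k g_Y$. Only the first step requires real work.

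By iterating, it suffices to handle a singleton $Y = \{y\}$, since $(\ell^{y_1})^{y_2} = \ell^{\{y_1, y_2\}}$ and so on. If $\ell(y, y) = 0$ then every $f \in \mathcal{H}_\ell$ already vanishes at $y$, so $\ell^y = \ell$ and there is nothing to prove. When $\ell(y, y) > 0$, the standard formula
$$\ell^y(z, w) = \ell(z, w) - \frac{\ell(z, y) \ell(y, w)}{\ell(y, y)}$$
identifies $\bigl[\ell^y(z_i, z_j) I_N - k(z_i, z_j) W_i W_j^*\bigr]_{i,j=1}^n$ as the Schur complement, with respect to the $(n+1, n+1)$ corner $\ell(y, y) I_N > 0$, of the augmented block matrix obtained by appending $z_{n+1} = y$ and $W_{n+1} = 0$. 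Consequently, positivity of the data matrix for the pair $(k, \ell^y)$ is equivalent to positivity of the augmented data matrix for $(k, \ell)$.

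Given data satisfying the $(k, \ell^y)$ positivity condition, the CP property of $(k, \ell)$ therefore produces a multiplier $\Phi \in \Mult(\mathcal{H}_k \otimes \mathbb{C}^N, \mathcal{H}_\ell \otimes \mathbb{C}^N)$ of norm at most $1$ with $\Phi(z_i) = W_i$ for $1 \le i \le n$ and $\Phi(y) = 0$. For any $f \in \mathcal{H}_k \otimes \mathbb{C}^N$ one has $(\Phi f)(y) = \Phi(y) f(y) = 0$, so $\Phi f \in \mathcal{H}_{\ell^y} \otimes \mathbb{C}^N$, and since $\mathcal{H}_{\ell^y}$ is isometrically included in $\mathcal{H}_\ell$, the multiplier norm bound is preserved. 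This delivers the CP property of $(k, \ell^y)$; iterating over the points of $Y$ gives the general case, and Theorem~\ref{Shimnec} then supplies the desired factorization $\ell^Y = k g_Y$. The only obstacle worth mentioning is the Schur-complement identification together with the bookkeeping around the degenerate case $\ell(y, y) = 0$, neither of which presents serious difficulty.
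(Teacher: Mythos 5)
Your proof is correct and is essentially the paper's own argument: the paper establishes Theorem~\ref{Schurpreserves} by exactly this device --- reduce to a singleton, append $y$ as an extra node with target matrix $\mathbf{0}$, identify the $(k,\ell^y)$ Pick matrix as the Schur complement of the augmented $(k,\ell)$ Pick matrix with respect to the corner $\ell(y,y)I$, and note that the resulting interpolating multiplier has range in $\mathcal{H}_{\ell^y}\otimes\mathbb{C}^N$ --- and then combines this with Theorem~\ref{Shimnec}, just as you do. The only cosmetic differences are that the paper treats the case $y\in\{z_1,\dots,z_n\}$ as a separate (easy) case, which your augmented-matrix formulation absorbs automatically since positivity there forces the corresponding $W_i=\mathbf{0}$, and that your $\ell(y,y)=0$ digression is vacuous under the paper's standing assumption that kernels do not vanish on the diagonal.
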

 
 In the special case of a complete Pick kernel, where $\ell=k,$ the argument used in the proof  of Theorem~\ref{generalnec} (see Theorem~\ref{Schurpreserves}), while following the same general approach as other proofs (besides those mentioned already  another proof can be found in \cite{Greg}),  yields a new and somewhat simpler proof of the necessity of \eqref{singleCPnec} for the single kernel case.
\par 
As it turns out, conditions of the form~\eqref{generalnecpos} are far from sufficient for a pair $(k, \ell)$ to be CP, even in the diagonal holomorphic case. In fact, we will see that even replacing $\ell^Y$ by the kernel of any multiplier-invariant subspace of $\mathcal{H}_{\ell}$ will, in general, not be enough to guarantee sufficiency; see Proposition~\ref{necbutnotsuf} and the discussion preceding it. \par 
Finally, in Theorem~\ref{sometheorem}, the necessary and sufficient conditions of Theorem~\ref{introextmain} are formulated without reference  to the certificate $s(z, w)=(1-\mt(z\overline{w}))^{-1},$ which we know may not be available in the non-diagonal case. Even though these conditions are still specific to diagonal holomorphic kernels, we hope that they may provide a more practical framework for a future derivation of new necessary conditions for non-diagonal pairs to be CP. 
\par 
 \subsection{Reader's guide} The paper is structured as follows: in Section~\ref{PRELIMSEC}, we review some basic facts concerning reproducing kernels and the complete Carath\'eodory and Pick properties. In Section~\ref{ShimnstrShim}, we introduce the notion of a Shimorin certificate for a pair $(k, \ell)$ and explore some basic consequences of its existence. Our main results are Propositions~\ref{betweencerts} and \ref{Scertforhol}, which illuminate the relation between Shimorin and strong Shimorin certificates in different settings. Next, in Section~\ref{suffforCP}, we give our alternative proof of Theorem~\ref{mainShim}, which also yields a parametrization of all contractive multipliers $\mathcal{H}_k\to\mathcal{H}_{\ell}$; see Corollary~\ref{parametcorol}. In addition, we generalize Theorem~\ref{mainShim} by showing that the existence of Shimorin certificates suffices for the CP property; see Theorem~\ref{generalsuff}. Section~\ref{theCCsection} is exclusively  devoted to the characterization of the CC property for pairs of diagonal holomorphic kernels. First, we reformulate the CC property in terms of one-point extensions (Proposition~\ref{convenient}), and then show that the existence of a  strong Shimorin certificates is both necessary (Theorem~\ref{almostCCPtoShim}) and sufficient (Theorem~\ref{ShimtoCCP}) for a pair to have the CC property. We also present a direct proof that the CP property implies the CC property in this setting; see Theorem~\ref{directpassage}. In Section~\ref{mainmain}, we gather up our results so far to prove Theorem~\ref{mainextmain}, a refined version of Theorem~\ref{introextmain}. We also compute the master certificates associated with certain Bergman-type kernels; see Examples~\ref{finallyanex} and \ref{finallyanex2}. Further, in Section~\ref{generalnecsecti}, we give necessary conditions for general (not necessarily holomorphic) pairs to be CP, our main results being Theorem~\ref{generalnec} and Proposition~\ref{necbutnotsuf}. In Section~\ref{closerl}, we compute several non-diagonal certificates for CP pairs $(\mathfrak{b}, \ell)$, which are then used to demonstrate how Theorem~\ref{introextmain} can fail if $\ell$ is not taken to be diagonal (Theorem~\ref{bergnotmin} and Corollary~\ref{nonmin}). However, we also show that the domain $\Omega_{\mt}=\frac{1}{\sqrt{2}}\DD$ of the master certificate $\mt$  associated with $\mathfrak{b}$ continues to be, in a certain sense, maximal for CP pairs $(\mathfrak{b}, \ell)$ even if $\ell$ is not diagonal. Section~\ref{reform} contains the previously mentioned reformulation of Theorem~\ref{introextmain} and a brief discussion interpreting the new necessary conditions it contains; see Remarks~\ref{remark81}-\ref{remark83}. We close the paper with some open questions, all contained in Section~\ref{openq}.

\normalsize
 \section{Preliminaries} \label{PRELIMSEC} \normalsize
   Background and preliminary results needed in the remainder of the paper are collected here. Vector-valued reproducing kernel Hilbert spaces associated
    to scalar kernel is the subject of Subsection~\ref{s:vvk};  complete Pick kernels are exposited in Subsection~\ref{CPsubsec};  more details on diagonal holomorphic kernels and their domains are contained in Subsection~\ref{diaghomprelim}; and initial results on Carath\'eodory interpolation for pairs of kernesl are provided in Subsection~\ref{CCsubs}.
    
\subsection{Vector-valued kernels}
\label{s:vvk}
Let $X$ be a non-empty set. A function 
\[k: X\times X\to\mathbb{C}\]
 is a \df{positive kernel} or just a \df{kernel}, denoted $k\succeq 0,$  if it is positive semi-definite in the sense that the $n\times n$ complex matrix $[k(z_i, z_j)]$ is positive semi-definite for every choice of points $z_1, \dots, z_n\in X.$ Here we make the standing assumption that our kernels never vanish along the diagonal. The associated reproducing kernel Hilbert space will be denoted by $\mathcal{H}_k$ or $\mathcal{H}_k(X)$  \index{$\mathcal{H}_k$} \index{$\mathcal{H}_k(X)$} to emphasize the domain,  while the algebra of all functions $\phi: X\to\mathbb{C}$ that multiply $\mathcal{H}_k$ into itself will be denoted by $\Mult(\mathcal{H}_k)$. \par 
We will often have to work with vector-valued versions of these objects. Given a Hilbert space $\mathcal{E}$, we regard elements of
$\mathcal{H}_k\otimes \mathcal{E}$ as $\mathcal{E}$-valued functions on $X.$ It is not hard to see that the reproducing kernel of $\mathcal{H}_k\otimes \mathcal{E}$ is equal to $k\otimes I_{\mathcal{E}}$, where $I_{\mathcal{E}}$ is the identity operator on $\mathcal{E}$. If $\mathcal{F}$ is another Hilbert space and $\ell$ is another kernel on $X,$ we write $\Mult(\mathcal{H}_k\otimes\mathcal{E}, \mathcal{H}_{\ell}\otimes\mathcal{F})$ to denote the space of all $B(\mathcal{E}, \mathcal{F})$-valued functions on $X$ that multiply $\mathcal{H}_k\otimes \mathcal{E}$ into $\mathcal{H}_{\ell}\otimes \mathcal{F}$. A multiplier $\Phi\in \Mult(\mathcal{H}_k\otimes\mathcal{E}, \mathcal{H}_{\ell}\otimes\mathcal{F})$
is contractive \index{contractive multiplier} if the corresponding multiplication operator $M_{\Phi}: \mathcal{H}_k\otimes \mathcal{E}\to \mathcal{H}_{\ell}\otimes \mathcal{F}$ is contractive. The following characterization of contractive multipliers is standard (see e.g. \cite[Theorem 6.28]{PaulsenRagh}).

\begin{lemma} \label{multdef}
Let $\mathcal{H}_k, \mathcal{H}_{\ell}$ be  reproducing kernel Hilbert spaces on $X.$ Fix Hilbert spaces $\mathcal{E}, \mathcal{F}.$ A function $\Phi: X\to B(\mathcal{E}, \mathcal{F})$ is a contractive multiplier in $\Mult(\mathcal{H}_k\otimes\mathcal{E}, \mathcal{H}_{\ell}\otimes\mathcal{F})$ if and only if 
\[\ell(z, w)I_{\mathcal{F}}-k(z, w)\Phi(z)\Phi(w)^*\succeq 0 \hspace{0.2 cm} \text{ on } X\times X\]
if and only if the operator $T^*$ densely defined by 
\[T^*(\ell_w\otimes v)=k_w \otimes \Phi(w)^*v \]
extends to be a contractive operator from $\mathcal{H}_{\ell}\otimes\mathcal{F}$ to $\mathcal{H}_k\otimes\mathcal{E}$, in which case $T^*=M^*_{\Phi.}$
\end{lemma}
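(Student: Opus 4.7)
The plan is to establish the triple equivalence by a round-trip, with the positivity condition serving as the bridge. The overall architecture is standard: compute $M_\Phi^*$ on reproducing kernel slices directly, then recognize that the gap between $\|T\xi\|^2$ and $\|\xi\|^2$ on finite sums is exactly the Gram matrix of $\ell(z,w)I_{\mathcal{F}}-k(z,w)\Phi(z)\Phi(w)^*$. There are really only two computations worth doing; everything else is bookkeeping about dense spans and well-definedness.

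First I would verify the key identity $M_\Phi^*(\ell_w\otimes v)=k_w\otimes \Phi(w)^*v$ under the assumption that $\Phi\in\Mult(\mathcal{H}_k\otimes\mathcal{E},\mathcal{H}_\ell\otimes\mathcal{F})$. For arbitrary $f\in\mathcal{H}_k\otimes\mathcal{E}$, the reproducing property of $\mathcal{H}_\ell\otimes\mathcal{F}$ with kernel $\ell\otimes I_{\mathcal{F}}$ gives
\[
 \langle M_\Phi f,\ell_w\otimes v\rangle = \langle \Phi(w)f(w),v\rangle_{\mathcal{F}} = \langle f(w),\Phi(w)^*v\rangle_{\mathcal{E}} = \langle f,k_w\otimes\Phi(w)^*v\rangle,
\]
which yields the formula and, in the contractive case, shows that $T$ is a well-defined contraction on the dense span of $\{\ell_w\otimes v:w\in X, v\in\mathcal{F}\}$ coinciding with $M_\Phi^*$ there. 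This proves that the multiplier condition implies the $T$-contractivity condition.

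Next I would run the bridging calculation that simultaneously yields both the positivity characterization and the reverse implication. For an arbitrary finite combination $\xi=\sum_i \ell_{w_i}\otimes v_i$, direct expansion using the reproducing property gives
\[
 \|\xi\|^2-\|T\xi\|^2=\sum_{i,j}\ell(w_i,w_j)\langle v_j,v_i\rangle-\sum_{i,j}k(w_i,w_j)\langle \Phi(w_i)\Phi(w_j)^*v_j,v_i\rangle,
\]
and this right-hand side is nothing but $\langle [\ell(w_i,w_j)I_{\mathcal{F}}-k(w_i,w_j)\Phi(w_i)\Phi(w_j)^*]\mathbf{v},\mathbf{v}\rangle$ where $\mathbf{v}=(v_i)$. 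Hence the kernel positivity condition is equivalent to $T$ being well-defined and contractive on that dense span, and then extends to a contraction $\mathcal{H}_\ell\otimes\mathcal{F}\to\mathcal{H}_k\otimes\mathcal{E}$.

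To close the circle, assume the positivity condition; then $T$ extends to a contraction, and one recovers the multiplier property by setting $\Psi=T^*$. The identity $\langle \Psi (k_z\otimes e),\ell_w\otimes v\rangle=\langle k_z\otimes e,T(\ell_w\otimes v)\rangle=k(w,z)\langle e,\Phi(w)^*v\rangle$ shows that for each fixed $e\in\mathcal{E}$ the vector $\Psi(k_z\otimes e)$ evaluated at $w$ equals $\Phi(w)e\cdot k(w,z)$, so $\Psi$ is multiplication by $\Phi$ on the dense set of kernel functions, hence on all of $\mathcal{H}_k\otimes\mathcal{E}$. The only subtlety that really needs a word of care is the well-definedness of $T$ when two representations of $\xi$ coincide or when $\ell_w\otimes v=0$, but positivity on all finite point sets handles this uniformly. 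Beyond that the proof is entirely formal inner-product manipulation, so I do not anticipate a serious obstacle.
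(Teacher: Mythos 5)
Your argument is correct: the adjoint computation on kernel slices, the Gram-matrix identity $\|\xi\|^2-\|T\xi\|^2=\langle[\ell(w_i,w_j)I_{\mathcal{F}}-k(w_i,w_j)\Phi(w_i)\Phi(w_j)^*]\mathbf{v},\mathbf{v}\rangle$, and the recovery of the multiplier from $T^*$ via pointwise evaluation together give the three-way equivalence, with the well-definedness issue handled exactly as you indicate. The paper does not prove this lemma but cites it as standard (Paulsen--Raghupathi, Theorem 6.28), and your proof is precisely that standard argument, so there is nothing further to reconcile.
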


The following result will be used repeatedly in the sequel, usually without special mention. Its proof follows from the definition of positivity.
\begin{lemma} \label{resc}
Let $\mathcal{H}_k$ be a reproducing kernel Hilbert space on $X$. If  $\delta: X\to\mathbb{C}$ does not vanish, then  the function 
\[\widetilde{k}(z, w):=\delta(z)\overline{\delta(w)}k(z, w) \]
is a positive kernel on $X.$ Moreover, given any Hilbert spaces $\mathcal{E}, \mathcal{F},$ we have 
\[\Mult(\mathcal{H}_k\otimes \mathcal{E}, \mathcal{H}_k\otimes \mathcal{F})=\Mult(\mathcal{H}_{\widetilde{k}}\otimes \mathcal{E}, \mathcal{H}_{\widetilde{k}}\otimes \mathcal{F}) \]
isometrically.
\end{lemma}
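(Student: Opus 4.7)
The plan is to verify the two assertions in turn using elementary manipulations with Gram matrices followed by two applications of Lemma~\ref{multdef}.

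First, to see that $\widetilde{k}$ is a positive kernel, I would pick arbitrary points $z_1,\dots,z_n\in X$ and observe that the Gram matrix of $\widetilde{k}$ on these points factors as
\[
  \bigl[\widetilde{k}(z_i,z_j)\bigr]_{i,j=1}^n \;=\; D\,\bigl[k(z_i,z_j)\bigr]_{i,j=1}^n\,D^*,
\]
where $D=\operatorname{diag}\bigl(\delta(z_1),\dots,\delta(z_n)\bigr)$. Conjugation preserves positive semi-definiteness, so this is immediate. Equivalently, $\widetilde{k}$ is the Schur product of $k$ with the rank-one positive kernel $(z,w)\mapsto\delta(z)\overline{\delta(w)}$.

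For the multiplier statement I would apply Lemma~\ref{multdef} twice, passing contractivity back and forth. By that lemma, a function $\Phi:X\to B(\mathcal{E},\mathcal{F})$ is a contractive multiplier in $\Mult(\mathcal{H}_k\otimes\mathcal{E},\mathcal{H}_k\otimes\mathcal{F})$ precisely when
\[
  k(z,w)\bigl(I_{\mathcal{F}}-\Phi(z)\Phi(w)^*\bigr)\succeq 0\text{ on }X\times X.
\]
Multiplying this operator-valued kernel pointwise by the positive scalar kernel $(z,w)\mapsto\delta(z)\overline{\delta(w)}$ (again, conjugation by a diagonal) preserves positivity and yields the equivalent condition
\[
  \widetilde{k}(z,w)\bigl(I_{\mathcal{F}}-\Phi(z)\Phi(w)^*\bigr)\succeq 0,
\]
which Lemma~\ref{multdef} identifies with contractivity of $\Phi$ in $\Mult(\mathcal{H}_{\widetilde{k}}\otimes\mathcal{E},\mathcal{H}_{\widetilde{k}}\otimes\mathcal{F})$. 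Since $\delta$ never vanishes, the reverse implication runs identically (one conjugates instead by $\operatorname{diag}(\delta(z_i)^{-1})$). Applying this equivalence to $t^{-1}\Phi$ for arbitrary $t>0$ then upgrades the set-theoretic equality to equality of operator norms, giving the isometric identification.

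There is no real obstacle to overcome: both claims are routine once one recognizes $\widetilde{k}$ as a rank-one rescaling of $k$. The hypothesis that $\delta$ is nowhere vanishing is needed only in the second half, to ensure that the reverse passage from $\widetilde{k}$ back to $k$ preserves positivity and that the multiplier algebra of $\mathcal{H}_{\widetilde{k}}$ is not artificially enlarged by functions that would escape the pointwise constraint at zeros of $\delta$.
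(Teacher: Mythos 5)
Your argument is correct and is essentially the proof the paper has in mind: the paper simply remarks that the lemma ``follows from the definition of positivity,'' and your diagonal-conjugation (Schur product with the rank-one kernel $\delta(z)\overline{\delta(w)}$) argument, combined with the two applications of Lemma~\ref{multdef} and the scaling $t^{-1}\Phi$ to get the isometric identification, is exactly that positivity argument carried out in detail. No gaps; the only alternative worth noting is the equivalent route via the unitary $f\mapsto\delta f$ from $\mathcal{H}_k$ onto $\mathcal{H}_{\widetilde{k}}$, which intertwines the two multiplication operators, but this buys nothing beyond what you already have.
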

We also record the following well-known maximum modulus principle for multipliers. For the readers convenience we provide a proof.
\begin{lemma}[Maximum Modulus for multipliers]\label{maxmod}
    Let $\mathcal{H}_k$ be a reproducing kernel Hilbert space on $X$ such that $k$ is non-vanishing. Given a Hilbert space $\mathcal{E}$, assume the row-valued multiplier $\Phi\in\Mult(\mathcal{H}_k\otimes\mathcal{E}, \mathcal{H}_k)$ is contractive. If there exists $z\in X$ such that $\|\Phi(z)\|=1,$ then $\Phi$ is constant.
     In particular, if $B$ is a positive kernel on $X$ and $(I-B(z,w))k(z,w)$ is a (positive) kernel, then either $|B(z,w)|<1$ for all $z,w\in X$
      of $B(z,w)=1$ for all $z,w\in X.$
\end{lemma}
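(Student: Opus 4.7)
The plan is to reduce the whole statement to the first assertion and then derive it from the standard $2\times 2$ positivity/Cauchy--Schwarz trick. For the first part, Lemma~\ref{multdef} converts contractivity of $\Phi$ into positivity of the kernel
\[
 K(z,w) := \bigl(1-\Phi(z)\Phi(w)^*\bigr)\,k(z,w).
\]
Since $\Phi(z_0)\in B(\mathcal{E},\CC)$ is a row, $\Phi(z_0)\Phi(z_0)^*=\|\Phi(z_0)\|^2=1$, and hence the diagonal entry $K(z_0,z_0)$ vanishes. I would then look at the $2\times 2$ principal submatrix of $K$ at $(z_0,w)$: positivity of this submatrix together with $K(z_0,z_0)=0$ forces $K(z_0,w)=0$, and since $k$ is non-vanishing we obtain $\Phi(z_0)\Phi(w)^*=1$ for every $w\in X$.

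To conclude constancy, I would set $v_z:=\Phi(z)^*1\in\mathcal{E}$, so that $\|v_z\|=\|\Phi(z)\|$ and $\Phi(z)\Phi(w)^*=\langle v_w,v_z\rangle$. The diagonal inequality $K(w,w)\ge 0$ yields $\|v_w\|\le 1$ for every $w$, while the previous step gives $\langle v_w,v_{z_0}\rangle =1$ with $\|v_{z_0}\|=1$. The equality case of Cauchy--Schwarz forces $v_w=v_{z_0}$, and so $\Phi(w)=\Phi(z_0)$ for all $w\in X$.

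For the second assertion, I would package the scalar positive kernel $B$ into a contractive row multiplier and invoke the first part. Take the Kolmogorov factorization $B(z,w)=\Phi(z)\Phi(w)^*$ with $\Phi(z)\in B(\mathcal{H}_B,\CC)$ given by point evaluation; by Lemma~\ref{multdef}, the assumption that $(1-B(z,w))k(z,w)$ is a kernel says precisely that $\Phi\in\Mult(\mathcal{H}_k\otimes\mathcal{H}_B,\mathcal{H}_k)$ is contractive. In particular $B(z,z)=\|\Phi(z)\|^2\le 1$, and positivity of $B$ then gives the pointwise Cauchy--Schwarz bound $|B(z,w)|^2\le B(z,z)B(w,w)\le 1$. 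If some pair $z_0,w_0$ saturates this with $|B(z_0,w_0)|=1$, equality forces $B(z_0,z_0)=1$, i.e.\ $\|\Phi(z_0)\|=1$. The first assertion then makes $\Phi$ constant, and hence $B(z,w)=\Phi(z)\Phi(w)^*\equiv 1$ on $X\times X$.

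There is no serious obstacle here: the only point that requires a little attention is the interplay between the two Cauchy--Schwarz inequalities (the $2\times 2$ one applied to the kernel $K$ at a point where its diagonal entry vanishes, and the pointwise one applied to the scalar positive kernel $B$). Both are entirely standard, and the reduction of the second assertion to the first via the Kolmogorov factorization is the natural move given the form of the hypothesis $(1-B(z,w))k(z,w)\succeq 0$.
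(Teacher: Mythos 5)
Your proposal is correct and follows essentially the same route as the paper: use Lemma~\ref{multdef} to get positivity of $(1-\Phi(z)\Phi(w)^*)k(z,w)$, exploit the vanishing diagonal entry plus the equality case of Cauchy--Schwarz to force $\Phi$ constant, and deduce the scalar statement from a Kolmogorov factorization of $B$. Your second paragraph merely spells out the "follows from the first part" step that the paper leaves implicit, and it does so correctly.
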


\begin{proof}
 Observe,  for $u\in X,$ that the operator $\Phi(u)^*:\CC\to \mathcal{E}$ is naturally identified with an element of $\mathcal{E}$
 and if $v$ is also in $X,$ then 
 \[
  \langle k_u\otimes \Phi(u)^* , k_v\otimes \Phi(v)^* \rangle = \langle \Phi(u)^*,\Phi(v)^*\rangle \, k(u,v).
  \]
  Since  $M_\Phi^*$ is a contraction, the function $F:X\times X\to \CC$ defined by
\[
 F(u,v)=(1-\Phi(v)\Phi(u)^*)k(u,v) = (1-\langle \Phi(u)^*,\Phi(v)^*\rangle )k(u,v)
\]
 is a kernel by Lemma~\ref{multdef}. By assumption $F(z,z)=0.$ By positivity, $F(z,w)=0$ for all $w\in X.$ Thus  $\langle \Phi(z)^*,\Phi(w)^*\rangle =1.$
 Since also $\|\Phi(z)^*\|=1\ge \|\Phi(w)^*\|,$ we conclude $\Phi(w)=\Phi(z).$
 
  To prove the last statement, by standard reproducing kernel Hilbert space considerations, there exists an auxiliarly Hilbert space 
  $\mathscr{E}$ and a function $\Phi:X\to \mathscr{E}$ such that $B(z,w)=\Phi(z)\Phi(w)^*.$ By assumption $(I-\Phi(z)\Phi(w)^*)k(z,w)\succeq 0$
  and thus $\Phi$ is a contractive multiplier by Lemma~\ref{multdef}. Hence the last part of the lemma follows from the first part.
\end{proof}
Finally, we will also make repeated use of a simple formula for the reproducing kernels of zero-based invariant subspaces.
\begin{lemma}\label{l:compress:z}
Let $\mathcal{H}_k$ be a reproducing kernel Hilbert space on $X$ and fix a point $z\in X$ with $k(z, z)\neq 0.$ The reproducing kernel $k^z$ of the subspace $\{f\in\mathcal{H}_k : f(z)=0\}$ of $\mathcal{H}_k$ 
is given by 
\[k^z(w, v)=k(w, v)-\frac{k(w, z)k(z, v)}{k(z, z)}.\]
\end{lemma}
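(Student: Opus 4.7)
The plan is to identify $\mathcal{M}:=\{f\in\mathcal{H}_k : f(z)=0\}$ as the orthogonal complement in $\mathcal{H}_k$ of the one-dimensional subspace $\operatorname{span}\{k_z\}$, and then compute the reproducing kernel of $\mathcal{M}$ directly via the orthogonal projection onto $\mathcal{M}$. Since the paper assumes $k$ is nonvanishing on the diagonal, $k(z,z)>0$, so $k_z\ne 0$ and this one-dimensional subspace is well-defined and closed.

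First I would observe that, by the reproducing property, $f(z)=\langle f,k_z\rangle$, so $f\in\mathcal{M}$ iff $f\perp k_z$. Hence $\mathcal{M}^\perp=\operatorname{span}\{k_z\}$, and $\mathcal{M}$ is a closed subspace of $\mathcal{H}_k$. Writing $P$ for the orthogonal projection of $\mathcal{H}_k$ onto $\mathcal{M}$, one has $I-P$ equal to the rank-one projection onto $\operatorname{span}\{k_z\}$, namely
\[
 (I-P)f=\frac{\langle f,k_z\rangle}{\|k_z\|^2}\,k_z=\frac{f(z)}{k(z,z)}\,k_z.
\]

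Next I would invoke the standard fact that for any closed subspace $\mathcal{M}\subseteq\mathcal{H}_k$, the reproducing kernel of $\mathcal{M}$ (viewed as its own RKHS with the inherited inner product) at the point $v\in X$ is the function $P k_v\in\mathcal{M}$. This is immediate: for $f\in\mathcal{M}$, $\langle f,Pk_v\rangle=\langle Pf,k_v\rangle=\langle f,k_v\rangle=f(v)$. Applying this with our $P$, I get
\[
 Pk_v=k_v-\frac{k_v(z)}{k(z,z)}\,k_z=k_v-\frac{k(z,v)}{k(z,z)}\,k_z,
\]
and evaluating at $w$ yields
\[
 k^z(w,v)=(Pk_v)(w)=k(w,v)-\frac{k(w,z)\,k(z,v)}{k(z,z)},
\]
which is the desired formula.

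There is no serious obstacle here; the only point requiring a bit of care is the general identity $k^{\mathcal{M}}(w,v)=(P_{\mathcal{M}}k_v)(w)$, which I would either cite or justify on the spot in one line as above. The argument works verbatim for any closed subspace cut out by vanishing at a single point, and the formula displays the desired kernel as a difference of a kernel and a rank-one kernel, so positive semi-definiteness of $k^z$ is automatic from the construction as a compression.
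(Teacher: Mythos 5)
Your proof is correct: identifying $\{f\in\mathcal{H}_k: f(z)=0\}$ as the orthogonal complement of $\operatorname{span}\{k_z\}$ (using $k(z,z)>0$ from the standing nonvanishing assumption) and computing $k^z(\cdot,v)=Pk_v$ via the rank-one projection is exactly the standard argument. The paper states this lemma without proof as a well-known fact, and your write-up, including the one-line justification that the kernel of a closed subspace $\mathcal{M}$ is $P_{\mathcal{M}}k_v$, supplies precisely the omitted details.
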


\subsection{The complete Pick property}\label{CPsubsec}
A reproducing kernel Hilbert space $\mathcal{H}_k$ on a set $X$ is said to be \df{irreducible} if $X$ cannot be partitioned into two non-empty disjoint sets $X_1, X_2$ so that $k(z_1, z_2)=0$ for all $z_1\in X_1$ and $z_2\in X_2.$ The kernel of an irreducible complete Pick space satisfies $k(z, w)\neq 0$ for all $z, w\in X,$ see \cite[Lemma 1.1]{Pickbook}.  By the Agler-McCarthy universality theorem \cite[Theorem 3.1]{Pickbook},  the space $\mathcal{H}_k$ is an irreducible complete Pick space if and only if there exist a function $\delta: X\to\mathbb{C}\setminus\{0\}$, an integer $1\le d\le \infty$ and a function $b: X\to\mathbb{B}_d$, where $\mathbb{B}_d$
denotes the open unit ball of $\mathbb{C}^d$, so that
\begin{equation}\label{univCP1}
k(z, w)=\frac{\delta(z)\overline{\delta(w)}}{1-\langle b(z), b(w)\rangle}, \hspace{0.2 cm} \text{ for all }z,w\in X.
\end{equation} 
Equivalently (see \cite{Pickbook}), $\mathcal{H}_k$ is an irreducible complete Pick space if and only if $k$ is non-vanishing and there exists $z\in X$ such that 
\begin{equation} \label{univCP2}
  \frac{k^z}{k}\succeq 0.  
\end{equation}
We note that, in view of Lemma~\ref{resc}, one may rescale any irreducible complete Pick kernel $k$ so that $\delta\equiv 1$ in \eqref{univCP1}. Also, given any $z_0\in X$, one may choose
\[\delta(z)=\frac{\sqrt{k(z_0, z_0)}}{k(z, z_0)},\]
so that $k(z, z_0)=1$  for all $z\in X.$ \par 
The study of general complete Pick kernels can be reduced to the study of irreducible ones. 

\begin{lemma}[{\cite[Theorem 1.1]{AmcCcompleteNP}}] \label{CPsplits}
Every complete Pick space can be decomposed as an orthogonal
direct sum of irreducible complete Pick spaces. Explicitly, if $s$ is a complete
Pick kernel on a nonempty set $X,$ then there is a unique partition $X=\cup X_i$
 such that for each $i$ the function $s|_{X_i\times X_i}$ is non-vanishing and 
  for each $i\ne j$ the function $s|_{X_i\times X_j}$ is identically $0.$
\end{lemma}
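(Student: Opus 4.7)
The plan is to build the partition directly from the nonvanishing locus of $s$ and then use the characterization of irreducible complete Pick spaces quoted just above the lemma, together with the hypothesis that kernels do not vanish on the diagonal, to force non-vanishing on each class.

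First I would define a relation on $X$ by declaring $z \approx w$ if there exist $z = z_0, z_1, \dots, z_n = w$ in $X$ with $s(z_{j-1}, z_j) \ne 0$ for every $1 \le j \le n$. Reflexivity uses the standing assumption that $s$ is nonzero on the diagonal; symmetry follows from $s(w,z) = \overline{s(z,w)}$; transitivity is by concatenation. Let $\{X_i\}$ denote the equivalence classes. By construction, if $i \ne j$ and $z \in X_i$, $w \in X_j$, then $s(z,w) = 0$, otherwise the one-step chain $z, w$ would put them in the same class.

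Next I would verify that $s_i := s|_{X_i \times X_i}$ is non-vanishing. Clearly $s_i$ is again a complete Pick kernel (the Pick condition for any finite subset of $X_i$ is inherited, and interpolants produced on $X$ restrict to $X_i$). I claim $\mathcal{H}_{s_i}$ is irreducible in the sense of Subsection~\ref{CPsubsec}. Suppose for contradiction that $X_i = A \sqcup B$ with $A, B$ nonempty and $s(a,b) = 0$ for all $a \in A$, $b \in B$. Pick $a \in A$ and $b \in B$; since $a \approx b$, there is a chain $a = z_0, \dots, z_n = b$ with $s(z_{j-1}, z_j) \ne 0$ at every step. Because the chain starts in $A$ and ends in $B$, some consecutive pair straddles the partition, giving a nonzero value of $s$ across $A$ and $B$, a contradiction. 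Hence $s_i$ is irreducible, and the fact (recorded in Subsection~\ref{CPsubsec}, and also visible from the universal form \eqref{univCP1}) that the kernel of an irreducible complete Pick space is non-vanishing yields $s_i(z,w) \ne 0$ for all $z,w \in X_i$.

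For uniqueness, suppose $\{Y_j\}$ is another partition with the two required properties. If $z, w \in Y_j$, the non-vanishing of $s$ on $Y_j$ gives $s(z,w) \ne 0$, so $z \approx w$ and $Y_j$ lies inside a single $X_i$. Conversely, if $z \approx w$, the chain witnessing this cannot cross between distinct $Y_j$'s, since $s$ would have to vanish at such a crossing; hence each $X_i$ lies inside a single $Y_j$. Combining the two inclusions shows $\{X_i\} = \{Y_j\}$. Finally, the orthogonal direct sum decomposition of $\mathcal{H}_s$ follows formally: writing $s = \sum_i \widehat{s}_i$, where $\widehat{s}_i$ agrees with $s_i$ on $X_i \times X_i$ and is zero elsewhere, the standard identity $\mathcal{H}_{s} = \bigoplus_i \mathcal{H}_{\widehat{s}_i}$ applies (each summand being isometrically the extension by zero of $\mathcal{H}_{s_i}$).

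The only nontrivial step is the irreducibility argument that forces $s|_{X_i \times X_i}$ to be non-vanishing; everything else is bookkeeping. This is where the complete Pick hypothesis is essential, because for a general positive kernel the relation ``$s(\cdot,\cdot) \ne 0$'' need not be transitive (e.g.\ a $3 \times 3$ PsD matrix can have nonzero $(1,2)$ and $(2,3)$ entries but zero $(1,3)$ entry), so one really does need to exploit the structural consequence that an irreducible complete Pick kernel has no zeros at all.
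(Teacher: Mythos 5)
Your proof is correct, and since the paper does not prove this lemma itself (it is imported from Agler--McCarthy \cite{AmcCcompleteNP}), the relevant comparison is with the standard argument rather than with anything in the text. Your route --- define the chain relation generated by $s(\cdot,\cdot)\ne 0$, note that each restriction $s|_{X_i\times X_i}$ is again a complete Pick kernel (Pick data and contractive multipliers restrict), show irreducibility of each class by the chain-crossing argument, and then invoke the fact quoted in Subsection~\ref{CPsubsec} that an irreducible complete Pick kernel is non-vanishing --- checks out in every step, including reflexivity via the standing diagonal non-vanishing assumption, uniqueness, and the orthogonality bookkeeping (for $w\in X_i$ the kernel function $s_w$ vanishes off $X_i$, so the subspaces spanned by $\{s_w: w\in X_i\}$ are mutually orthogonal and together span $\mathcal{H}_s$). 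The only caveat is one of logical economy rather than correctness: all of the analytic content is outsourced to the quoted non-vanishing statement \cite[Lemma 1.1]{Pickbook}, which in the literature is itself extracted from the same zero-structure analysis that yields the splitting, namely that $s(z,w)=0$ forces, for every $v$, $s(z,v)=0$ or $s(w,v)=0$ --- this is precisely the single-kernel case ($k=\ell=s$) of Proposition~\ref{CPsplit} proved later in the paper from small Pick matrices. So what you have is a clean reduction of the splitting theorem to that quoted lemma, perfectly legitimate within the paper's framework; a fully self-contained proof would simply replace the appeal to \cite[Lemma 1.1]{Pickbook} by the three-point argument of Proposition~\ref{CPsplit}, after which your transitivity/irreducibility argument goes through verbatim, and this buys the chain construction its key property that non-vanishing propagates across each equivalence class.
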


Complete Pick kernels actually satisfy a stronger positivity condition than \eqref{univCP2}. Given two kernels $\ell, k$ on a set $X,$ we will say that \df{$k$ is a factor of $\ell$} if there exists a kernel $g$ on $X$ such that $\ell=gk$.

\begin{lemma}[{\cite[Lemma 2.2]{InvPickfactor}}] \label{invfactors}
Let $\ell$ be a kernel with an irreducible complete Pick factor $s.$ If $M$ is a  $\Mult(\mathcal{H}_{\ell})$-invariant subspace of $\mathcal{H}_{\ell},$ and $\ell_M$ is the reproducing kernel for $M,$ then
\[\frac{\ell_M}{s}\succeq 0.\]
\end{lemma}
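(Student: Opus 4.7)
The plan is to leverage the Agler--McCarthy realization \eqref{univCP1} of the irreducible complete Pick factor $s(z,w) = \delta(z)\overline{\delta(w)}/(1-\langle b(z),b(w)\rangle)$ and to interpret the associated map $b:X\to \mathbb{B}_d$ as a row-contractive multiplier on $\mathcal{H}_\ell$ whose defect encodes the factorization $\ell = sg$.  Rearranging the formula for $s$ gives
\[
 1-\langle b(z),b(w)\rangle = \frac{\delta(z)\overline{\delta(w)}}{s(z,w)}
\]
(note that $s$ is non-vanishing, being an irreducible complete Pick kernel), and multiplying through by $\ell = sg$ yields
\[
 \ell(z,w) - \ell(z,w)\langle b(z), b(w)\rangle = \delta(z)\overline{\delta(w)}\, g(z,w) \succeq 0.
\]
By Lemma~\ref{multdef}, this says precisely that $b$ is a contractive row-multiplier in $\Mult(\mathcal{H}_\ell \otimes \mathbb{C}^d, \mathcal{H}_\ell)$; in particular each component $b_i$ lies in $\Mult(\mathcal{H}_\ell)$.

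The next step is to transfer this row-contractivity to the invariant subspace $M$.  Because $M$ is $\Mult(\mathcal{H}_\ell)$-invariant, each $b_i$ leaves $M$ invariant, so the tuple $(M_{b_i}|_M)_{i=1}^d$ is a well-defined $d$-tuple of operators on $M$.  A routine calculation using $(M_{b_i}|_M)^* = P_M M_{b_i}^*|_M$ shows that restricting a row contraction to a joint invariant subspace preserves row-contractivity: for $f\in M$,
\[
 \sum_i \bigl\| (M_{b_i}|_M)^* f \bigr\|^2 \le \sum_i \bigl\| M_{b_i}^* f \bigr\|^2 \le \|f\|^2.
\]
Applying Lemma~\ref{multdef} in reverse, now with the reproducing kernel $\ell_M$ of $M$ in place of $\ell$, produces
\[
 \ell_M(z,w)\bigl(1 - \langle b(z), b(w)\rangle\bigr) \succeq 0.
\]

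To finish, divide by $\delta(z)\overline{\delta(w)}$.  Since $\delta$ is non-vanishing, the function $1/(\delta(z)\overline{\delta(w)})$ is the rank-one Gram kernel of $1/\delta$ and is therefore positive, so the Schur product theorem yields
\[
 \frac{\ell_M(z,w)}{s(z,w)} = \frac{\ell_M(z,w)\,(1-\langle b(z), b(w)\rangle)}{\delta(z)\overline{\delta(w)}} \succeq 0,
\]
as desired.  The main structural input is the Agler--McCarthy description of an irreducible complete Pick kernel; once that is in hand, the proof is a short manipulation of the positivity characterization of multipliers and needs neither a Beurling-type invariant subspace theorem nor an embedding of $\mathcal{H}_\ell$ into a CP space.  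The conceptual point is that an irreducible complete Pick factor of $\ell$ is tantamount to a row-contractive multiplier $b$ on $\mathcal{H}_\ell$ with prescribed defect, and that this defect survives restriction to any multiplier-invariant subspace.
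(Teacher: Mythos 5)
Your argument is correct. Note first that the paper does not prove this lemma at all: it is quoted from \cite{InvPickfactor}, so there is no in-paper proof to compare against. Your proof is a legitimate, self-contained derivation, and it runs along the route one would expect from the literature: write the irreducible CP factor via the Agler--McCarthy realization $s=\delta\overline{\delta}/(1-\langle b,b\rangle)$, observe that the factorization $\ell=sg$ forces $\ell(z,w)\bigl(1-\langle b(z),b(w)\rangle\bigr)=\delta(z)\overline{\delta(w)}g(z,w)\succeq 0$, so that $b$ is a contractive row multiplier of $\mathcal{H}_\ell$ (equivalently, contractive multipliers of $\mathcal{H}_s$ Schur-multiply into contractive multipliers of $\mathcal{H}_\ell$); then use $\Mult(\mathcal{H}_\ell)$-invariance of $M$ to restrict, check row-contractivity of the compression via $(M_{b_i}|_M)^*=P_MM_{b_i}^*|_M$, read off $\ell_M(z,w)\bigl(1-\langle b(z),b(w)\rangle\bigr)\succeq 0$ from the easy direction of Lemma~\ref{multdef} applied to $M$ with kernel $\ell_M$, and Schur-multiply by the rank-one kernel $1/(\delta(z)\overline{\delta(w)})$. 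Two small points worth making explicit but not gaps: when $d=\infty$ the coefficient space $\mathbb{C}^d$ should be $\ell^2$, and the closedness of $M$ is what keeps the (possibly infinite) row sums inside $M$; and the direction of Lemma~\ref{multdef} you invoke for $M$ (contractive multiplication operator $\Rightarrow$ kernel positivity) does not require $\ell_M$ to be non-vanishing on the diagonal, which matters since $\ell_M(z,z)$ may vanish even though the paper's standing assumption excludes this for $k$ and $\ell$.
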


\subsection{Diagonal holomorphic kernels} \label{diaghomprelim}
 Given $z,w\in \CC^{\mathsf{g}},$ let \index{$z\overline{w}$}
\[ 
 z\overline{w}  =  (z_1\overline{w_1}, z_2\overline{w_2},  \cdots, z_{\mathsf{g}} \overline{w_{\mathsf{g}}}).
\]
Recall, from the introduction, that a kernel $\kk$ on a domain
 $\Omega\subseteq \CC^\vg$ containing $\mathbf{0}$ in its interior is 
 a \df{normalized diagonal holomorphic kernel} if its power series expansion at $\mathbf{0}$ has the form
\[
 \kk(z,w) =\sum_{a\in \NN^\vg}\kk_a (z\overline{w})^a
\]
 with $\kk_\mathbf{0}=1$ and $\kk_a>0$ for all $a.$ 
   \par
The diagonal holomorphic kernel $\kk$ has its canonical domain $\Omega_f$ as described in equation~\eqref{Omegakdef} and thus it gives rise to the reproducing kernel Hilbert spaces $\mathcal{H}_\kk(\Omega)$ and $\mathcal{H}_\kk(\Omega_f).$ Among other things, Lemma~\ref{restrbasic} below says these two spaces, as well as their spaces of multipliers, are canonically isometrically isomorphic.  Indeed, throughout this paper, we will often have to work with restrictions of holomorphic kernels to open subsets of their domain of convergence. Fortunately, no important  information is lost by doing so. 
\begin{lemma} \label{restrbasic}
Let $k$ be a holomorphic kernel on a domain $\Delta\subseteq\mathbb{C}^{\mathsf{g}}.$  If  $\Omega\subseteq\Delta$ is  a non-trivial open subset, then  the restriction  mapping 
\[\iota: f\mapsto f|_{\Omega} \]
maps $\mathcal{H}_k(\Delta)$ unitarily onto $\mathcal{H}_{k}(\Omega).$  Moreover, $\iota$ is an isometric isomorphism from $\Mult(\mathcal{H}_k(\Delta))$ onto $\Mult(\mathcal{H}_{k}(\Omega)).$
\end{lemma}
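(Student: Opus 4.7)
The plan is to pass from kernel functions to arbitrary elements by density, leveraging the identity theorem on the connected domain $\Delta.$ For the unitary claim I would first identify the spans of kernel functions in both spaces via restriction, then argue density in $\mathcal{H}_k(\Delta)$ using holomorphy. For the multiplier claim I would transfer a given multiplier on $\mathcal{H}_k(\Omega)$ to a bounded operator on $\mathcal{H}_k(\Delta)$ via the unitary and recover a multiplying function as a ratio.

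Concretely, let $V \subseteq \mathcal{H}_k(\Delta)$ denote the linear span of $\{k(\cdot, w) : w \in \Omega\},$ and $W \subseteq \mathcal{H}_k(\Omega)$ the analogous span. The assignment $k(\cdot, w) \mapsto k(\cdot, w)|_\Omega$ is isometric from $V$ onto $W$ because $\langle k(\cdot, w), k(\cdot, v)\rangle = k(v, w)$ in either ambient Hilbert space, and $W$ is dense in $\mathcal{H}_k(\Omega)$ by definition. What remains is to show $V$ is dense in $\mathcal{H}_k(\Delta).$ The estimate $|f(z)| \le \|f\|\sqrt{k(z,z)}$ together with continuity of $k$ on $\Delta\times\Delta$ yields that norm convergence in $\mathcal{H}_k(\Delta)$ forces locally uniform convergence, so every element of $\mathcal{H}_k(\Delta)$ is holomorphic on $\Delta.$ If $f$ is orthogonal to $V,$ then $f(w) = \langle f, k(\cdot, w)\rangle = 0$ for every $w \in \Omega,$ and the identity theorem on the connected domain $\Delta$ forces $f \equiv 0.$ The isometric identification therefore extends to a unitary; and because its action coincides with restriction on the dense subspace $V$ and pointwise evaluation is continuous in either RKHS norm, this unitary is exactly $\iota.$

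For the multiplier part, the direction $\Mult(\mathcal{H}_k(\Delta)) \to \Mult(\mathcal{H}_k(\Omega))$ is immediate, since restricting the positivity condition in Lemma~\ref{multdef} from $\Delta \times \Delta$ to $\Omega \times \Omega$ preserves it together with the norm bound. Conversely, given $\psi \in \Mult(\mathcal{H}_k(\Omega)),$ set $T := \iota^{-1} M_\psi \iota,$ a bounded operator on $\mathcal{H}_k(\Delta)$ with $\|T\| = \|M_\psi\|.$ For $f \in \mathcal{H}_k(\Delta)$ and $w \in \Omega,$ a direct computation using $\iota f = f|_\Omega$ gives $(Tf)(w) = \psi(w) f(w).$ Consequently, for any $f_1, f_2 \in \mathcal{H}_k(\Delta),$ the holomorphic function $(Tf_1) f_2 - (Tf_2) f_1$ vanishes on $\Omega$ and hence on all of $\Delta.$ I would then define $\phi(w) = (Tf)(w)/f(w)$ for any $f$ with $f(w) \ne 0$ (such as $f = k(\cdot, w),$ which is nonzero at $w$ by the standing assumption $k(w,w) > 0$); the identity above ensures $\phi$ is well-defined, and a comparison with a witness $g$ with $g(w) \ne 0$ shows $Tf = \phi f$ holds for \emph{every} $f \in \mathcal{H}_k(\Delta),$ including those vanishing at $w.$ Therefore $\phi \in \Mult(\mathcal{H}_k(\Delta))$ with $\phi|_\Omega = \psi$ and $\|M_\phi\| = \|M_\psi\|.$

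The main delicate point will be the density of $V$ in $\mathcal{H}_k(\Delta),$ which hinges on realizing elements of the RKHS as honest holomorphic functions so that the identity theorem can be invoked on the connected domain $\Delta.$ Once this is established, the rest is routine operator-theoretic unwinding, and the treatment of the multiplier extension reduces to exploiting the same identity-theorem principle a second time.
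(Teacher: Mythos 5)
Your proof is correct and takes essentially the route the paper intends: the paper's own proof is just a citation of the Identity Principle together with the standard restriction theory for reproducing kernels (Paulsen--Raghupathi, Section 5.4), and your write-up supplies exactly those standard details. Specifically, your density argument for the span of $\{k(\cdot,w):w\in\Omega\}$ via holomorphy of RKHS elements and the identity theorem, followed by transferring multipliers through the resulting unitary, is the same mechanism the cited facts encapsulate, so there is nothing to correct.
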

\begin{proof}
    This result follows from combining the Identity Principle with basic facts regarding restrictions of reproducing kernels; see \cite[Section 5.4]{PaulsenRagh}.
\end{proof}

The following proposition says more is true for a diagonal holomorphic kernel $\kk.$ Namely,  the domain  $\Omega_\kk$ is the maximum domain (containing $\mathbf{0}$) for $\kk.$  

\begin{proposition}
\label{prop:Pringsheim}
 Suppose $\kk$ is a holomorphic diagonal kernel and $\wtk$ is a holomorphic kernel  on a domain $\Omega\subseteq\CC^\vg.$ Thus,
it is assumed that $\mathbf{0}\in \Omega$ is open and connected and contains $\mathbf{0}$ and moreover that $\wtk:\Omega\times\Omega\to\CC$ is analytic 
 in the first coordinate  and conjugate analytic  in the second.  If $\kk(z,w)=\wtk(z,w)$ for 
 $z,w\in \Omega\cap \Omega_\kk,$ then $\Omega\subseteq \Omega_\kk.$
\end{proposition}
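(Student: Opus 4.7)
The plan is to carry the orthogonal basis of monomials for $\mathcal{H}_\kk(\Omega_\kk)$ over to $\mathcal{H}_{\wtk}(\Omega)$ by means of Lemma~\ref{restrbasic}, and then read off the convergence of $\sum_a \kk_a|w|^{2a}$ directly from Parseval's identity applied to the reproducing kernel $\wtk_w$.

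Since $\mathbf{0}$ lies in the open sets $\Omega$ and $\Omega_\kk$, the intersection $\Omega\cap\Omega_\kk$ is a non-empty open subset on which $\wtk=\kk$ by hypothesis. Two applications of Lemma~\ref{restrbasic} produce a chain of isometric isomorphisms
\[
  \mathcal{H}_{\wtk}(\Omega)\;\xrightarrow{\;\cong\;}\;\mathcal{H}_{\wtk}(\Omega\cap\Omega_\kk)\;=\;\mathcal{H}_\kk(\Omega\cap\Omega_\kk)\;\xleftarrow{\;\cong\;}\;\mathcal{H}_\kk(\Omega_\kk)
\]
via restriction; and since elements of $\mathcal{H}_{\wtk}(\Omega)$ are holomorphic on the connected set $\Omega$, the identity principle forces the composite isomorphism to send each monomial $u^a\in\mathcal{H}_\kk(\Omega_\kk)$ to the same polynomial $u^a$, now realized as an element of $\mathcal{H}_{\wtk}(\Omega)$. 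It is standard that the monomials $\{u^a:a\in\NN^\vg\}$ form an orthogonal basis of $\mathcal{H}_\kk(\Omega_\kk)$ with $\|u^a\|^2=1/\kk_a$, so the isomorphism transports the same orthogonal basis, with the same norms, into $\mathcal{H}_{\wtk}(\Omega)$.

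Now fix $w\in\Omega$ and expand $\wtk_w\in\mathcal{H}_{\wtk}(\Omega)$ in this basis. The reproducing identity $\langle u^a,\wtk_w\rangle=\overline{u^a(w)}=\bar w^a$ yields Fourier coefficients $c_a(w)=\kk_a\bar w^a$, and Parseval's theorem then delivers
\[
  \wtk(w,w)\;=\;\|\wtk_w\|^2\;=\;\sum_{a\in\NN^\vg}|c_a(w)|^2\,\|u^a\|^2\;=\;\sum_{a\in\NN^\vg}\kk_a\,|w|^{2a}.
\]
Since the left side is finite, the series converges, so $w\in\mathcal{C}_\kk$. Openness of $\Omega$ then puts $w$ in the interior of $\mathcal{C}_\kk$, i.e., $w\in\Omega_\kk$, whence $\Omega\subseteq\Omega_\kk$.

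The whole argument collapses to this short Parseval computation once the monomial basis identification is in place, and no classical Pringsheim-style analytic-continuation argument is needed: the positivity that drives Pringsheim in one variable is here packaged into the explicit norms $\|u^a\|^2=1/\kk_a$ coming from the diagonal structure. The only step that merits careful bookkeeping is the transfer of the basis from $\Omega_\kk$ to $\Omega$, which follows cleanly from Lemma~\ref{restrbasic} and the identity principle, so I do not anticipate any genuine obstacle.
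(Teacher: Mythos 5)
Your argument is correct, but it takes a genuinely different route from the paper. The paper proves Proposition~\ref{prop:Pringsheim} by a classical several-variables Pringsheim-type continuation argument: assume $\Omega\not\subseteq\Omega_\kk$, locate a boundary point of $\Omega_\kk$ inside $\Omega$ with all coordinates nonzero (this is where the careful zero-coordinate bookkeeping enters), and then combine Abel's Lemma (Lemma~\ref{Abellemma}) with the Pringsheim variant Lemma~\ref{lem:Pringsheim} to push the series past that boundary point, a contradiction. You instead exploit the reproducing-kernel structure: Lemma~\ref{restrbasic} makes the two restriction maps unitary, the identity principle identifies the transported basis with the monomials $u^a$ with $\|u^a\|^2=1/\kk_a$ inside $\mathcal{H}_{\wtk}(\Omega)$, and Parseval at each $w\in\Omega$ gives $\wtk(w,w)=\sum_a\kk_a|w|^{2a}<\infty$, so $\Omega\subseteq\mathcal{C}_\kk$ and, being open, $\Omega\subseteq\interior\mathcal{C}_\kk=\Omega_\kk$. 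The trade-off is instructive: your proof is shorter, avoids Lemmas~\ref{lem:Pringsheim} and~\ref{Abellemma} entirely, and even yields the bonus identity $\wtk(w,w)=\sum_a\kk_a|w|^{2a}$ on $\Omega$; but it uses in an essential way that $\wtk$ is positive semi-definite (to form $\mathcal{H}_{\wtk}(\Omega)$ and to know $\|\wtk_w\|^2<\infty$), whereas the paper's proof never invokes positivity of $\wtk$ and so actually covers any sesqui-holomorphic extension of $\kk$. Three small points of hygiene: the orthogonal-basis description of $\mathcal{H}_\kk(\Omega_\kk)$ that you import is the standard $\ell^2(\NN^\vg)$ realization recorded at the end of Subsection~\ref{diaghomprelim} and is independent of the proposition, so there is no circularity, but you should say so; the reproducing property gives $\langle u^a,\wtk_w\rangle=u^a(w)=w^a$, so the conjugate belongs in $\langle\wtk_w,u^a\rangle$ (your coefficients $c_a(w)=\kk_a\overline{w}^a$ are nevertheless right); and in fact Bessel's inequality for the orthogonal system $\{u^a\}$ already suffices, so completeness of the transferred basis, while free from unitarity, is not even needed for the inclusion you want.
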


The proof will use the following variant of a theorem of Pringsheim. See for instance \cite[p. 15]{Boas}.
 For the record, we provide a proof, borrowing heavily from the arguments from \cite{Boas} in both the proof of Lemma~\ref{lem:Pringsheim}
  and Proposition~\ref{prop:Pringsheim} below. Recall $\mathbb{D}=\{z\in\mathbb{C}: |z|<1\}$ denotes the (open) unit disc in the complex plane.

\begin{lemma}
\label{lem:Pringsheim}
 Suppose 
\[
 f(\zeta)=\sum_a f_a \zeta^a
\]
 is a power series in $\vg$ complex variables and $f_a\ge 0$ for all $a.$ If the domain of convergence of $f$ contains the polydisc $\mathbb{D}^\vg$ and if there exists a neighborhood $N$ of $e=(1,1,\dots,1)$ and a holomorphic function $h:N\to \mathbb{C}$ such that $f(\zeta)=h(\zeta)$ for $\zeta\in N\cap \mathbb{D}^\vg,$ then there is a $t>1$ such that $\sum f_a (te)^{a}$ converges.
\end{lemma}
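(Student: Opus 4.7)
The plan is to exploit the non-negativity of the coefficients $f_a$ via the standard Pringsheim-style trick: expand both $f$ and $h$ at a point slightly inside the polydisc $\mathbb{D}^\vg$ on the diagonal, show that this local Taylor series has non-negative coefficients (coming from $f$) and large radius of polydisc convergence (coming from $h$), and then compare the two to obtain absolute convergence of $\sum f_a (te)^a$ for some $t>1$.

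First, since $N$ is open and contains $e$, I would fix $\epsilon\in(0,1)$ small enough that the closed polydisc $\{\zeta:|\zeta_i-1|\le\epsilon, \ 1\le i\le \vg\}$ is contained in $N$. Then I would pick $\rho\in(1-\tfrac{\epsilon}{3},1)$ so that the polydisc
\[
  P=\{\zeta\in\mathbb{C}^{\vg}: |\zeta_i-\rho|<\tfrac{2\epsilon}{3},\ 1\le i\le \vg\}
\]
lies in $N$, while its polyradius $\tfrac{2\epsilon}{3}$ strictly exceeds $1-\rho$. Because $\rho e\in N\cap\mathbb{D}^\vg$ and $f=h$ on the open set $N\cap\mathbb{D}^\vg$, the two functions agree in a neighborhood of $\rho e$ and so their Taylor series at $\rho e$ coincide term by term. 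Computing from $f(\zeta)=\sum_a f_a\zeta^a$ directly,
\[
 c_b:=\frac{1}{b!}\,\partial^b f(\rho e)=\sum_{a\ge b} f_a \binom{a}{b}\rho^{|a-b|}\ge 0,
\]
so the Taylor expansion of $h$ at $\rho e$ has non-negative coefficients.

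Next, since $h$ is holomorphic on the polydisc $P$, its Taylor series at $\rho e$ converges absolutely throughout $P$. Choose $\eta\in(1-\rho,\tfrac{2\epsilon}{3})$ and set $t=\rho+\eta>1$; then $te\in P$, so
\[
 \sum_b c_b\,\eta^{|b|}=h(te)<\infty.
\]
As every term is non-negative, Tonelli's theorem allows us to interchange sums:
\[
  \sum_b c_b\,\eta^{|b|}
  =\sum_b \eta^{|b|}\sum_{a\ge b} f_a\binom{a}{b}\rho^{|a-b|}
  =\sum_a f_a\sum_{b\le a}\binom{a}{b}\eta^{|b|}\rho^{|a-b|}
  =\sum_a f_a\prod_{i=1}^\vg(\eta+\rho)^{a_i}
  =\sum_a f_a\,t^{|a|}.
\]
Thus $\sum_a f_a(te)^a=\sum_a f_a t^{|a|}<\infty$, which is what we want.

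The argument is essentially bookkeeping once the geometric set-up is right; the only delicate point is choosing $\rho$ so that the polydisc of holomorphy of $h$ around $\rho e$ \emph{strictly} overshoots the unit polydisc in the diagonal direction, and I expect this is where the proof has to be most careful. The other mild subtlety is the justification of the interchange of summation, which is immediate from Tonelli's theorem thanks to the non-negativity hypothesis $f_a\ge 0$; no separate absolute convergence needs to be argued.
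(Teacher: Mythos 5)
Your proof is correct and follows essentially the same Pringsheim-style argument as the paper: re-expand at a diagonal point just inside $\mathbb{D}^\vg$, observe the Taylor coefficients there are non-negative sums of the $f_a$, use holomorphy of $h$ to get convergence of that expansion at a point $te$ with $t>1$, and rearrange by non-negativity via the binomial theorem. The only (harmless) differences are cosmetic: you work with polydisc neighborhoods and cite Tonelli explicitly, whereas the paper uses Euclidean balls centered at $(1-\epsilon/\sqrt{\vg})e$ and rearranges the double sum directly.
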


In the proofs of Lemma~\ref{lem:Pringsheim} and Proposition~\ref{prop:Pringsheim}, we let  $N_\eta(z)$ denote the usual Euclidean $\eta$-neighborhood of a point $z\in\CC^\vg,$ where $\eta>0$. As is customary, $a! = a_1! \, a_2! \, \cdots \, a_\vg !$ for $a=(a_1,\dots,a_\vg)\in\NN^\vg.$

\begin{proof}
There is an $1>\epsilon>0$ such that $N_{3\epsilon}((1-\frac{\epsilon}{\sqrt{\vg}})e) \subseteq N.$  For notational convenience, let $u=\frac{\epsilon}{\sqrt{\vg}}.$ Since $h$ is holomorphic on $N_{3\epsilon}((1-u)e),$  it has a power series expansion centered at $(1-u)e$ that converges in this neighborhood and in particular at  $(1+u)e.$ Letting $h_a$ denote the coefficients of this power series, it follows that 
\[
  h((1+u))e )=\sum_a  h_a \, (2u e)^a
\]
and this series converges. On the other hand, 
\[
  h_a = \frac{1}{a!}\kk_a((1-u)e)  = \sum_{b\ge a} \frac{b!}{a!\, (b-a)!} \kk_b  ((1-u)e)^{b-a}
 =  \sum_{b\ge a} \binom{b}{a}\,  \kk_b ((1-u)e)^{b-a},
\]
 where   $\kk^{(a)}$ is the corresponding partial derivative of the function $\kk.$  Hence, the series
\[
 \sum_a \bigg[\sum_{b\ge a} \binom{b}{a}\,  \kk_b ((1-u)e)^{b-a} \bigg] (2ue)^a
\]
 converges. Since everything in sight is non-negative, this sum converges after rearranging; e.g., using the binomial theorem,
\[
 \sum_b \sum_{a\le b} \binom{b}{a}\,  \kk_b ((1-u)e)^{b-a}\, (2u e)^a
 =\sum_b \kk_b ((1+u)e)^b
\]
 converges. Hence the conclusion of the lemma holds with $t=1+u.$
\end{proof}

Another ingredient in the proof of Proposition~\ref{prop:Pringsheim} is Abel's Lemma. 
 For a reference, see \cite[Proposition~2.3.4]{MR1871333}.

\begin{lemma}[Abel's Lemma]
\label{Abellemma}
  Given a  power series $\sum_a c_a x^a,$  if $0\ne \mathbf{y}\in \CC^\vg$  and there is a $C>0$ such that $|c_a y^a| \le C$ for all $a,$ then the series $\sum c_a x^a$ converges absolutely and uniformly on compact subsets of $\{(\zeta_1y_1,\dots,\zeta_\vg y_\vg): \zeta_j\in\mathbb{D}\}.$
\end{lemma}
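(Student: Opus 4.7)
The plan is to prove this by a straightforward Weierstrass $M$-test argument, which is the standard approach to Abel-type lemmas. The key observation is that the hypothesis $|c_a y^a|\le C$ provides a uniform $C$ that, combined with geometric decay in the $\zeta$ variables, gives an integrable majorant for $|c_a x^a|$ on the relevant compact sets.

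First I would reduce to the case where every coordinate $y_j$ is nonzero. If some $y_j=0,$ then for any $x=(\zeta_1 y_1,\dots,\zeta_\vg y_\vg)$ the $j$-th coordinate of $x$ is $0,$ so $x^a=0$ whenever $a_j>0;$ the remaining terms involve only multi-indices $a$ supported on coordinates where $y_j\ne 0,$ and the argument below applies verbatim to those. So assume $y_j\ne 0$ for every $j.$

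Next, fix a compact subset $K$ of $\{(\zeta_1 y_1,\dots,\zeta_\vg y_\vg):\zeta_j\in\DD\}.$ The coordinate-wise map $(\zeta_1 y_1,\dots,\zeta_\vg y_\vg)\mapsto (\zeta_1,\dots,\zeta_\vg)$ is a homeomorphism onto $\DD^\vg$ (since each $y_j\ne 0$), so $K$ corresponds to a compact subset of $\DD^\vg.$ By compactness there exist radii $r_1,\dots,r_\vg\in[0,1)$ such that every $x\in K$ has $|\zeta_j|\le r_j$ for each $j.$ For such an $x$ I would then estimate
\[
|c_a x^a|=|c_a y^a|\,|\zeta_1|^{a_1}\cdots|\zeta_\vg|^{a_\vg}\le C\, r_1^{a_1}\cdots r_\vg^{a_\vg}.
\]
Summing and using independence of the coordinates gives
\[
\sum_{a\in\NN^\vg}\!|c_a x^a|\le C\sum_{a\in\NN^\vg}\prod_{j=1}^\vg r_j^{a_j}=C\prod_{j=1}^\vg\frac{1}{1-r_j}<\infty,
\]
which proves the absolute convergence at each $x\in K$ and, by the Weierstrass $M$-test with majorant $M_a=C\prod_j r_j^{a_j},$ the uniform convergence of $\sum_a c_a x^a$ on $K.$

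There is no real obstacle: the entire argument is just term-by-term estimation followed by the geometric-series computation. The only place where a reader could trip is the $y_j=0$ reduction and the verification that the map $x\leftrightarrow\zeta$ is a genuine homeomorphism, both of which are entirely elementary.
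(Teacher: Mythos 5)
Your proof is correct: the reduction to nonzero coordinates is handled properly (terms with $a_j>0$ and $y_j=0$ vanish identically on the set in question), and the geometric-majorant/Weierstrass $M$-test estimate on a compact polydisc of radii $r_j<1$ is exactly the standard argument. The paper itself offers no proof of this lemma, only a citation to \cite[Proposition~2.3.4]{MR1871333}, and your argument coincides with that standard textbook proof, so there is nothing further to reconcile.
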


\begin{proof}[Proof of Proposition~\ref{prop:Pringsheim}]
 Arguing by contradiction, suppose $\Omega\not \subseteq \Omega_\kk.$ In this case, by connectedness of $\Omega$ and since $\mathbf{0}\in \Omega\cap \Omega_\kk,$   there exists a point $p\in \Omega\setminus \Omega_\kk$ that lies in the boundary of $\Omega_\kk.$\footnote{If no such $p$ exists, then
$\Omega\setminus \Omega_f  = \Omega\, \cap \, [\operatorname{b}\Omega_f \cup \operatorname{e} \Omega_f] = [\Omega \cap \operatorname{e}\Omega_f],$ where $\operatorname{b}\Omega_f$ and $\operatorname{e}\Omega_f$ are the boundary and exterior of $\Omega_f.$}

 Since $ \mathbf{0}$ is in the interior of $\Omega_\kk$ (by a standing assumption),
  $p\ne \mathbf{0}.$ However, it is still possible that some of the coordinates of $p$ are $0.$  By renaming, we may assume $p=(p_1,\dots,p_a,0,\dots,0)\in \CC^\vg$ where $p_1,\dots,p_a\ne 0.$  Let $\eta=\min\{p_1,\dots,p_a\}>0.$   In what follows we use the max norm (metric)  $\|x\|_\infty =\max\{|x_j|:j\}$  on $\CC^\vg.$ Since $\Omega$ is open, there exists an $\epsilon<\frac{\eta}{2}$ such that $N^\infty_\epsilon(p)=\{x: \|x-p\|_\infty <\epsilon\}\subseteq \Omega.$  It follows that there is a point $q=(q_1,\dots,q_\vg) \in N^\infty_\epsilon(p)\cap \Omega_\kk$ such that $q_{a+1},\dots,q_\vg$ are all non-zero.  Note, at the same time, $|q_j|\ge \frac{\eta}{2}$ for $1\le j\le a$, since $|q_j-p_j|<\frac{\eta}{2}$ and $|p_j|\ge \eta$ for $1\le j\le a.$   Consider the point $z=(p_1,\dots,p_a,q_{a+1},\dots,q_\vg).$ Note $z\in \Omega,$ since  $\|z-p\|_\infty =\max\{|q_j|: a+1\le j\le \vg\} \le \|q-p\|_\infty<\epsilon.$ By 
 construction all the coordinates $z_1,\dots,z_\vg$ of $z$ are non-zero.  Arguing by  contradiction, suppose $z\in\Omega_\kk.$ In this case there is a $t>1$ such that $tz\in \Omega_\kk.$ By Lemma~\ref{Abellemma} applied to the power series $\sum \kk_a x^{2a},$ the set $\Omega_\kk$ contains the polydisc $D=\{t(\zeta_1z_1,\dots,\zeta_\vg z_\vg):  \zeta_j\in \mathbb{D}\}.$ Since none of the coordinates of $z$ are $0,$ this polydisc is an open set.  In particular, choosing $\zeta_j=\frac{1}{t}$ for $j=1,\dots,a$ and $0$ otherwise,  we find $p\in D\subseteq\Omega_\kk,$ which is a contradiction as $\Omega_\kk$ is open and $p$ is in its boundary.  
\par
At this point we have $q\in N_\epsilon(p)\cap \Omega_\kk$ and $z\in N_\epsilon(p)\setminus \Omega_\kk.$ 
 Hence $\{tq+(1-t)z: 0\le t\le 1\}\subseteq N_\epsilon(p)\subseteq \Omega$ and there is a $0< t\le 1$ such that $y=tq+(1-t)z$ is in the boundary of $\Omega_\kk.$
 Note that the coordinates of $y$ are given by $y_j=tq_j+(1-t)p_j$ for $1\le j\le a$ and $y_j=tq_j\ne0$ for $j>a.$  Suppose $y_j=0$ for some $j\le a.$
 Thus, $q_j = -\frac{1-t}{t}p_j$ and therefore, 
\[
 \frac{\eta}{2} \ge  \epsilon > |p_j-q_j| = \frac{|p_j|}{t} \ge \eta,
\]
a contradiction. Hence all the coordinates of $y_j$ are non-zero. 
\par

 Another application of Lemma~\ref{Abellemma} to $\sum \kk_a x^{2a}$ says that the polydisc $D=\{(\zeta_1 y_1,\dots ,\ \zeta_\vg y_\vg): \zeta_j\in \mathbb{D}\}$ lies in $\Omega_\kk.$  Let $g$ denote the power series, for $\zeta\in \CC^\vg,$
\[
 g(\zeta) =\sum_{a\in \NN^\vg} \kk_a|y_a|^{2a} \zeta^{2a}.
\]
In particular, as power series, 
\[
 g(\zeta)=\kk(\zeta y,\overline{\zeta}y)  = \sum \kk_a ((\zeta \, y)_a)^a \overline{((\overline{\zeta}\, y)_z)^a}.
\]
It follows that $\mathbb{D}^\vg$ is contained in the domain of convergence of $g$ and moreover, as functions, $g(\zeta)=\kk(\zeta y,\overline{\zeta}y)$  for $\zeta\in\mathbb{D}^\vg.$ Define $h$ by $h(\zeta)=\wtk(\zeta y,\overline{\zeta}y).$  Since $y\in \Omega,$ the function $h$ is defined in a neighborhod $N$ of $e=(1,\dots,1)$. Moreover, $g(\zeta)=h(\zeta)$ for $\zeta\in \mathbb{D}^\vg \cap N$ and $h$ is analytic. By Lemma~\ref{lem:Pringsheim}, it follows that there exists a $t>1$ such that  $\sum g_a (te)^{a}$ converges.  Thus $\sum_a \kk_a |ty|^{2a}$ converges. By Lemma~\ref{Abellemma}, $\Omega_{\kk}$ contains the polydisc $\{(\zeta_1 ty_1,\dots,\zeta_\vg t y_\vg): \zeta_j\in \mathbb{D}\}$ and, by choosing $\zeta_j=\frac{1}{t},$  we have arrived at the contradiction $y\in\Omega_\kk.$
\end{proof}

The following lemma is used in the proof of Proposition~\ref{l:tlg}.

\begin{lemma}
 \label{Omegafsup1}
    Let  $f=\sum\limits_{|a|>0} f_a z^a\overline{w}^a.$ If $f_a\ge 0$ for all $|a|>0$ and $\Omega_f\ne \emptyset,$ then 
 \[
   \Omega_f^1 =\{x\in \Omega_f: \sum f_a|x|^{2a} <1\}
 \]
is non-empty, open and connected; that is, a domain in $\CC^\vg.$ 
\end{lemma}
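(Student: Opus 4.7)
The plan is to verify each of the three properties in turn, exploiting the structure of $\Omega_f$ as a complete Reinhardt domain (hence star-like with respect to $\mathbf{0}$) together with the non-negativity of the coefficients $f_a$. Set $F:\Omega_f\to[0,\infty)$ by $F(x)=\sum_{|a|>0} f_a |x|^{2a}$, so that $\Omega_f^1 = F^{-1}([0,1))$. Non-emptiness will be immediate: since $\mathbf{0}\in\Omega_f$ and every term in the series defining $F(\mathbf{0})$ vanishes (the sum is over $|a|>0$), we have $F(\mathbf{0})=0<1$ and hence $\mathbf{0}\in\Omega_f^1$.

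For openness, the main point will be continuity of $F$ on $\Omega_f$. Given $x_0\in\Omega_f$, openness of $\Omega_f$ together with its description as the interior of the set of absolute convergence of $\sum f_a x^{2a}$ furnishes a point $y$ with $|y_j|>|(x_0)_j|$ for every $j$ and $\sum f_a |y|^{2a}<\infty$. In particular the sequence $\{f_a|y|^{2a}\}_a$ is bounded, so Lemma~\ref{Abellemma} applied to the power series $\sum f_a z^{2a}$ yields absolute and uniform convergence of the partial sums on the polydisc $\{z:|z_j|<|y_j|\}$, a neighborhood of $x_0$. Each partial sum is continuous, so $F$ is continuous at $x_0$. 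Then $\Omega_f^1=F^{-1}([0,1))$ is relatively open in $\Omega_f$, and since $\Omega_f$ is already open in $\CC^\vg$, so is $\Omega_f^1$.

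For connectedness, I will show that $\Omega_f^1$ is in fact star-like with respect to the origin, which gives path-connectedness and hence connectedness. Fix $x\in\Omega_f^1$ and $t\in[0,1]$. Because $\Omega_f$ is a complete Reinhardt domain, the closed segment $\{tx: t\in[0,1]\}$ lies in $\Omega_f$. Using $f_a\ge 0$ and $t^{2|a|}\le 1$ for $|a|\ge 1$,
\[
 F(tx)=\sum_{|a|>0} f_a t^{2|a|}|x|^{2a}\le \sum_{|a|>0} f_a|x|^{2a}=F(x)<1,
\]
so $tx\in\Omega_f^1$, as required.

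There is no real obstacle: the only nontrivial ingredient is the continuity of $F$, and this is a standard consequence of uniform convergence of a power series on compact subsets of its domain of convergence via Abel's Lemma, already stated in the paper. Everything else follows from the non-negativity of the $f_a$ and the Reinhardt structure of $\Omega_f$.
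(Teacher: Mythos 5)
Your proof is correct and follows essentially the same route as the paper: $\mathbf{0}\in\Omega_f^1$ gives non-emptiness, continuity of $x\mapsto\sum f_a|x|^{2a}$ on $\Omega_f$ gives openness, and star-likeness with respect to $\mathbf{0}$ of both $\Omega_f$ and the sublevel set $\{\sum f_a|x|^{2a}<1\}$ (the latter from $f_a\ge 0$) gives connectedness. The only cosmetic difference is that you derive continuity via Lemma~\ref{Abellemma}, whereas the paper simply invokes that $g(\zeta)=\sum_{|a|>0}f_a\zeta^{2a}$ is holomorphic on its domain of convergence; both are fine.
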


\begin{proof}
  Let $g(\zeta)=\sum_{|a|>0} f_a \zeta^{2a}.$ The domain of convergence of the power series $g$ is,
   by definition, $\Omega_f.$ By assumption, $\Omega_f$ contains a neighborhood of the
    origin. The domain of convergence of a power series 
    is open and star-like with respect to the origin. Since $g$ defines a holomorphic function on $\Omega_f,$ the set $\Omega_f^1$
 is open and contains $\mathbf{0}.$ Moreover, $\Omega_f^1$ is the intersection
  of two sets that are star-like with respect to $\mathbf{0},$ namely $\Omega_f$ (\cite[Proposition 2.3.15]{MR1871333})
   and $\{x\in\CC^\vg: \sum f_a|x|^{2a} <1\}.$ Hence, $\Omega_f^1$ is 
   star-like with respect to $\mathbf{0}$ too. In particular, it is connected and thus a domain.
\end{proof}

Informed by Proposition~\ref{prop:Pringsheim}, we conclude this subsection with the following observation.
A normalized holomorphic diagonal kernel  $\kk$ and its  corresponding Hilbert space $\mathcal{H}_\kk$ can also
be understood as follows. A  point  $z\in\Omega_\kk$ gives rise to a vector $E_z=(\sqrt{\kk_a}\overline{z}^a)$ in the Hilbert space $\ell^2(\NN^{\mathsf{g}}).$ Thus we obtain a mapping  $E:\Omega_\kk \to \ell^2(\NN^\vg)$  and,  if $z,w\in \Omega_\kk,$ then 
\begin{equation} \label{anotherlab}
 \langle E_w,E_z\rangle = \sum_{a\in\NN^{\mathsf{g}}} \kk_a z^a \overline{w^a}
 =\kk(z,w).
\end{equation} 
From standard facts about reproducing kernels, the monomials $z^a$ form
an orthogonal basis for $\mathcal{H}_f$  and
\[
 \langle z^a,z^a \rangle =\|z^a\|^2 = \frac{1}{k_a}.
\]

\subsection{The complete Carath\'eodory property} \label{CCsubs}
 As we saw in Section~\ref{INTROSEC}, the classical Carath\'eodory problem asks, given $c_0,\dots,c_n\in \CC,$ does there exist
 an analytic function $g:\DD\to\CC$ such that the function
\[
 f=\sum_{j=0}^n \overline{c}_j z^j + z^{n+1}g
\]
 satisfies $|f(z)|\le 1$ for all $z\in \DD?$  From an  operator-theoretic viewpoint,
 there is a natural necessary condition that, when combined with Parrott's Lemma,
 can be seen to be sufficient. A function $f:\DD\to\DD$ determines an operator $M_f:H^2(\DD)\to H^2(\DD)$
 defined by $M_fh=fh$ for $h\in H^2(\DD)$ and 
 $\|M_f\|\le 1.$  The subspace $M_n$ of $H^2(\DD)$ spanned by the orthonormal
 set  $\{1,z,\dots,z^n\}$ is invariant
 for $M_f^*$ and moreover, the matrix representation of $M_f^*$ with respect to this basis is
\[
 C_n =\begin{pmatrix} c_0 & c_1 & c_2 & \dots & c_{n-1} & c_n \\ 0 & c_0 & c_1 & \dots & c_{n-2} & c_{n-1}
 \\ 0 & 0 & c_1 &\dots & c_{n-3} & c_{n-2} \\ \vdots &  \vdots & \vdots & \ddots & \vdots &\vdots
 \\ 0 & 0&0& \dots &0&c_0 \end{pmatrix}.
\]
 Thus, $\|C_n\|\le 1$ is a necessary condition for a solution to the Carathéodory  problem.
 While not the original proof, 
 sufficiency can be proved using the following version of the Parrott Lemma (see, for instance, \cite[Lemma~B1]{Pickbook}).

\begin{lemma}
\label{l:parrotf}
 Suppose $p,q$ and $m,n$  are positive integers such that $p+q=m+n,$
\begin{enumerate}
 \item[(a)] $A\in M_{p,m};$
 \item[(b)] $C\in M_{q,m};$
 \item[(c)] $D\in M_{q,n};$
\end{enumerate}
such that
\[
 \big \| \begin{pmatrix} A \\ C \end{pmatrix}\big \|, \ \  \big \| \begin{pmatrix} C& D \end{pmatrix} \big \| \le 1,
\]
 then there exists a matrix $B\in M_{p,n}$ such that
\[
 \big \| \begin{pmatrix} A& B \\ C  & D\end{pmatrix} \big \| \le 1.
\]
\end{lemma}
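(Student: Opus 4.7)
The plan is to prove Parrott's Lemma by reducing the completion problem to the Halmos defect unitary dilation of the contraction $C$. The idea is to factor the two given contractive constraints through the defect operators $(I_m-C^*C)^{1/2}$ and $(I_q-CC^*)^{1/2}$ of $C$, and then to define the missing entry $B$ by sandwiching the full Halmos dilation between the two resulting contractive factors.

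First, the column hypothesis rewrites as $A^*A\le I_m-C^*C$, so a standard Douglas-range argument yields a contraction $F\colon\CC^m\to \CC^p$ with $A=F(I_m-C^*C)^{1/2}$; one simply defines $F$ on the range of $(I_m-C^*C)^{1/2}$ by $F(I_m-C^*C)^{1/2}v:=Av$, checks this is well-defined and contractive, and extends by zero. Dually, the row hypothesis $CC^*+DD^*\le I_q$ gives $DD^*\le I_q-CC^*$, hence $D=(I_q-CC^*)^{1/2}\,G$ for some contraction $G\colon\CC^n\to \CC^q$. With these in hand, I would set $B:=-F\,C^*\,G$ and verify the block factorization
\[
 \begin{pmatrix} A & B \\ C & D \end{pmatrix}
 = \begin{pmatrix} F & 0 \\ 0 & I_q \end{pmatrix}
   \begin{pmatrix} (I_m-C^*C)^{1/2} & -C^* \\ C & (I_q-CC^*)^{1/2} \end{pmatrix}
   \begin{pmatrix} I_m & 0 \\ 0 & G \end{pmatrix},
\]
which is immediate from the three defining formulas.

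The outer two factors are block-diagonal with contractive blocks, hence contractions. The middle factor $U$ is the classical Halmos defect dilation of $C$; a direct computation of $UU^*$ and $U^*U$ shows that $U$ is in fact unitary, the only non-obvious cancellations being in the off-diagonal entries, where one needs the intertwining identity $C\,g(C^*C)=g(CC^*)\,C$ applied to $g(t)=\sqrt{1-t}$ via the continuous functional calculus. Once $U$ is known to be unitary, the displayed factorization realizes the completed matrix as a product of three contractions, so the Parrott completion $B$ works. The only real obstacle is the unitarity of $U$; this is routine defect-operator bookkeeping and involves no interpolation-theoretic subtlety, which is why the lemma can be invoked as a black-box tool in the sections that follow.
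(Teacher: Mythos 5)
Your argument is correct and complete: the column hypothesis gives $A^*A\le I_m-C^*C$, the row hypothesis gives $DD^*\le I_q-CC^*$, the Douglas-type factorizations $A=F(I_m-C^*C)^{1/2}$ and $D=(I_q-CC^*)^{1/2}G$ with contractions $F,G$ are legitimate (well-definedness on the range of the defect operator and extension by zero work with no closure issues in finite dimensions), the dimensions in the three-fold factorization all match, and the middle factor is indeed unitary once one invokes the intertwining $Cg(C^*C)=g(CC^*)C$ for $g(t)=\sqrt{1-t}$, so $B=-FC^*G$ solves the completion problem. Note, however, that the paper does not prove this lemma at all; it is quoted as a black box from \cite[Lemma~B1]{Pickbook}, and your Halmos-dilation/defect-operator argument is essentially the classical proof of Parrott's lemma found in that reference, so there is no genuinely different route being taken here --- you have simply supplied the standard proof that the paper chose to cite rather than reproduce.
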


With $c_{n+1}$ to be determined, consider the following partition of the matrix $C_{n+1},$
\[
 C_{n+1} = \left ( \begin{array}{cccccc|c}
       c_0 & c_1 & c_2 & \dots & c_{n-1} & c_n & c_{n+1}  \\
   \hline
        0 & c_0 & c_1 & \dots & c_{n-2} & c_{n-1} & c_n
 \\ 0 & 0 & c_1 &\dots & c_{n-3} & c_{n-2} & c_{n-1}
  \\ \vdots &  \vdots & \vdots & \ddots & \vdots &\vdots  & \vdots 
 \\ 0 & 0&0& \dots &0&c_0 & c_1 \\ 0&0&0 &\dots 0 &0  & 0 &  c_0
    \end{array}\right ).
\] 
 Since $\|C_n\|\le 1,$ 
\[
\big \|  \begin{pmatrix} C_n \\ 0 \end{pmatrix} \big \|, \ \ \big \| 
 \begin{pmatrix} 0 & C_n \end{pmatrix} \big \| \, \le 1.
\]
 Thus, by Parrott's Lemma there is a choice of $c_{n+1}$ such that
 $\|C_{n+1}\|\le 1.$  An induction argument now produces $c_m$ such
 that $\|C_m\|\le 1,$ for all $m$, which in turn implies, with $f=\sum_{j=0}^{m} \overline{c}_jz^j,$
 that the restriction of $M_f^*$ to the subspaces $M_{m}$ is a contraction. Thus, we obtain an $f=\sum_{j=0}^\infty \overline{c_j}z^j$ such that   $M_f^*$ defines an
 operator on $H^2(\DD)$ of norm at most one. It is also not hard to see that $M_f^* \sz_\lambda = \overline{f(\lambda)}\,  \sz_\lambda,$ where $\sz$ is Szego's kernel
  and $\lambda\in \DD.$ 
 Hence $\|f(\lambda)\|\le 1$ for all $\lambda\in \DD$, as desired.
 Note that the argument goes through unchanged if the $c_j$'s are 
 replaced by $J\times J$ matrices for any positive integer $J.$
 
 We highlight the fact that this argument proceeds in two parts. 
 There is the {\it matrix completion problem}: Starting with the contraction  $C_n,$
 find $c_m$ for $m>n$ such that $C_m$ is a contraction for each $m.$ And there
 is the {\it function-theoretic operator theory interpretation}: 
 the assumption that each $C_m$ is a contraction
 implies that the power series $\sum \overline{c}_j z^j$ defines a function 
 $f:\DD\to \DD.$ Equivalently, $f:\DD\to \CC$ and $M_f^*:H^2(\DD)\to H^2(\DD)$
 is a contraction. \par 
 In the remainder of this section, we consider the complete Carath\'eodory property for a pair of kernels $(k,\ell),$
 where $k$ and $\ell$ are (normalized) diagonal holomorphic. First, we will restate the definition given in the introduction. 
 Given $a, b\in\mathbb{N}^{\vg}, $ we will write $a\le b$ to mean $a_i\le b_i$ for $i\in\{1, \dots, \vg\}.$
  \begin{definition} \label{coinvdef}
 A non-empty set $F\subseteq \mathbb{N}^\vg$  is \df{co-invariant} 
 if  $\{a\in\NN^\vg: a\le b\}\subseteq F$ for each $b\in F.$ 
 \qed
\end{definition}
Thus, if $F$ is  co-invariant, then $F$ contains $(0, \dots, 0)=\bzero.$ 

\begin{definition}\label{CCdef} A pair $(k, \ell)$  of (normalized) diagonal holomorphic kernels 
is
a \df{complete Carath\'eodory pair} if, for any positive integer $J$, any finite co-invariant  set of indices $\varnothing \ne F\subseteq\mathbb{N}^{\mathsf{g}}$ and any collection of matrix coefficients $\{c_a : a\in F\}\subseteq\mathbb{M}_J$, the positivity of 
 the block upper-triangular matrix $C$ indexed by $F\times F$ with block $J\times J$ entries 
 \begin{equation}\label{data1}
     C_{a, b}=\begin{cases}
     c_{b-a}\sqrt{\cfrac{k_a}{\ell_b}}, \hspace{0.5 cm} b\ge a, \\
      \mathbf{0},  \hspace{1.05 cm} \text{otherwise,}
 \end{cases}  \end{equation}
is equivalent to the existence of a collection
$\{c_a : a\in \mathbb{N}^{\mathsf{g}}\setminus F\}\subseteq\mathbb{M}_J$
such that the function
\begin{equation}\label{CCinterpolantag}
 \Phi(z)=\sum_{a\in\mathbb{N}^{\mathsf{g}}}z^a\otimes c_a^*   
\end{equation}
is a contractive multiplier from $\mathcal{H}_k\otimes\mathbb{C}^J$ to $\mathcal{H}_{\ell}\otimes\mathbb{C}^J.$   In the special case $k=\ell,$ the kernel $k$ is known as a \df{complete Carath\'eodory kernel}.
\qed
\end{definition}
Just as in the classical Carath\'eodory  problem above, it is possible to reformulate Definition~\ref{CCdef} so that it does not involve  function theory.

\begin{proposition}
\label{d:ccp}
A pair of diagonal holomorphic  kernels  $(k, \ell)$ is a complete  Carath\'eodory pair if and only if, 
  if for each  $J\in\mathbb{N},$ each finite co-invariant $\varnothing \ne F\subseteq\mathbb{N}^{\vg}$ and each collection $\{c_a  : a\in F\}\subseteq \mathbb{M}_J$ such that the block upper-triangular matrix $C$ indexed by $F\times F$ with block $J\times J$ entries as in \eqref{data1}
is a contraction, there exists a collection $\{c_a  :  a\in\mathbb{N}^{\vg}\setminus F\}$ such that the (infinite) block matrix $\mathscr{C}$ indexed by $\mathbb{N}^{\vg}\times\mathbb{N}^{\vg}$ with $\mathscr{C}_{a, b}$ entries
\begin{equation}\label{fullext}
    \mathscr{C}_{a, b}=\begin{cases}
     c_{b-a}\sqrt{\cfrac{k_a}{\ell_b}}, \hspace{0.5 cm} b\ge a, \\
      \mathbf{0},  \hspace{1.05 cm} \text{otherwise,}
 \end{cases} \end{equation}
 is a contraction.
\end{proposition}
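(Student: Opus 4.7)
The plan is to recognize the infinite block matrix $\mathscr{C}$ of \eqref{fullext} as literally the matrix of $M_\Phi^*$ with respect to the canonical orthonormal monomial bases of $\mathcal{H}_k\otimes\CC^J$ and $\mathcal{H}_\ell\otimes\CC^J$. Once this identification is in hand, Proposition~\ref{d:ccp} becomes a transliteration of Definition~\ref{CCdef}: extending $\{c_a\}_{a\in F}$ to all of $\NN^\vg$ so that $\Phi(z)=\sum_a z^a\otimes c_a^*$ is a contractive multiplier is exactly the same as extending $\{c_a\}_{a\in F}$ so that the infinite matrix $\mathscr{C}$ is a contraction.

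By the discussion at the end of Subsection~\ref{diaghomprelim}, $\{\sqrt{k_a}\,z^a\}_{a\in\NN^\vg}$ is an orthonormal basis of $\mathcal{H}_k$, and analogously for $\mathcal{H}_\ell$; tensoring with the standard basis of $\CC^J$ supplies orthonormal bases of the vector-valued spaces. Assuming $\Phi=\sum_a z^a\otimes c_a^*$ is a contractive multiplier, a direct computation of the inner products $\langle M_\Phi(\sqrt{k_a}\,z^a\otimes e_j),\sqrt{\ell_b}\,z^b\otimes e_i\rangle$, using $\langle z^d,z^d\rangle_{\mathcal{H}_\ell}=1/\ell_d$, yields exactly $\sqrt{k_a/\ell_b}(c_{b-a}^*)_{ij}$ when $b\ge a$ and zero otherwise; hence the matrix of $M_\Phi^*$ is precisely $\mathscr{C}$, and in particular $\|\mathscr{C}\|=\|M_\Phi\|\le 1$. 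Conversely, if $\mathscr{C}$ is a contraction it represents a contractive operator $T:\mathcal{H}_\ell\otimes\CC^J\to\mathcal{H}_k\otimes\CC^J$. Expanding $\ell_w=\sum_a \ell_a\overline{w}^a z^a$ so that its orthonormal coefficients are $(\sqrt{\ell_a}\,\overline{w}^a)_a$ and summing the resulting block rows, one obtains $T(\ell_w\otimes v)=k_w\otimes\Phi(w)^*v$ with $\Phi(z)=\sum_a z^a\otimes c_a^*$. Lemma~\ref{multdef} then certifies that $\Phi$ is a contractive multiplier and $T=M_\Phi^*$.

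Finally, the finite matrix $C$ of \eqref{data1} is precisely the principal submatrix of $\mathscr{C}$ indexed by $F\times F$, so contractivity of $\mathscr{C}$ automatically forces contractivity of $C$ (matching the already-noted necessary direction for the multiplier setup). Combining these translations, $(k,\ell)$ is CC in the sense of Definition~\ref{CCdef}, i.e.\ every contractive $C$ admits an extension to a contractive multiplier $\Phi$, if and only if every contractive $C$ admits an extension to a contractive infinite matrix $\mathscr{C}$, which is the content of the proposition. The only mildly technical point is the convergence of the block-row sums used in identifying $T(\ell_w\otimes v)$ with $k_w\otimes\Phi(w)^*v$; this is the main (if minor) obstacle and is handled via Cauchy--Schwarz in $\mathcal{H}_k\otimes\CC^J$, exploiting that $T$ is bounded and the monomial coefficients of $\ell_w$ lie in $\ell^2$.
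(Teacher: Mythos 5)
Your proof is correct and takes essentially the same route as the paper: identifying the infinite block matrix $\mathscr{C}$ with the matrix of $M_\Phi^*$ relative to the orthonormal monomial bases $\{\sqrt{k_a}\,z^a\}$ and $\{\sqrt{\ell_b}\,z^b\}$ (tensored with $\CC^J$), invoking Lemma~\ref{multdef} for the converse direction, and observing that the finite matrix $C$ is the $F\times F$ principal submatrix (co-invariance of $F$ being what makes this restriction well defined). The paper treats the convergence point a bit more explicitly, first extracting $\sum_b \|c_b\|^2/\ell_b<\infty$ from the first row of $\mathscr{C}$ and using Cauchy--Schwarz together with $\Omega\subseteq\Omega_\ell$ to see that $\Phi$ is well defined pointwise before verifying $T(\ell_w\otimes v)=k_w\otimes\Phi(w)^*v$, which is the same estimate you defer to at the end.
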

\begin{remark}
\label{r:ka<=la}
 Assume $k, \ell$ are normalized. By considering $F=\{\mathbf{0}\}$ and choosing $\{c_{\mathbf{0}}=1\}$ we obtain the $1\times 1$
 matrix $\Big(c_{\mathbf{0}} \sqrt{\frac{k_{\mathbf{0}}}{\ell_{\mathbf{0}}}}\Big)=1,$ which is a contraction. Hence,
 assuming that $(k,\ell)$ is a CC pair, there exists
 $c_a$ for $a>0$ such that the matrix $\mathscr{C}$ in \eqref{fullext}
 is a contraction. In particular, its diagonal entries satisfy
 $1 \ge \Big|c_{\mathbf{0}} \sqrt{\frac{k_a}{\ell_a}}\Big| = \sqrt{\frac{k_a}{\ell_a}}$ and thus
 $k_a\le \ell_a,$ an inequality that follows from Theorem~\ref{Shimnec}. 
\qed
\end{remark}
The remainder of this subsection will be devoted to a proof sketch of Proposition~\ref{d:ccp}. We begin with a function-theoretic interpretation of the contractivity of \eqref{data1}.  Let 
\[e_a(z)=\sqrt{k_a}z^a, \hspace{0.2 cm} f_a=\sqrt{\ell_a} z^a, \hspace{0.2 cm} a\in\mathbb{N}^{\vg},\]
so that $\{e_a\}$ and $\{f_a\}$ form orthonormal bases for $\mathcal{H}_k$ and $\mathcal{H}_{\ell}$, respectively. Fix $J\in\mathbb{N}$ and a  co-invariant $F\subseteq\mathbb{N}^{\vg}$ and let $\mathcal{H}_{k,F}$ and $\mathcal{H}_{\ell,F}$ denote the subspaces of $\mathcal{H}_k$ and $\mathcal{H}_\ell$  spanned by $\{e_a : a\in F\}$ and $\{f_a : a\in F\}$, respectively. Further, assume the collection $\{c_a : a\in\mathbb{N}^{\vg}\}\subseteq \mathbb{M}_J$ is such that the function
\[
\Phi(z)=\sum_{a\in\mathbb{N}^{\vg}}z^a\otimes c_a^*
\]
is a multiplier $\mathcal{H}_k \otimes\mathbb{C}^J \to\mathcal{H}_{\ell}\otimes\mathbb{C}^J$. A short computation reveals that 
\begin{equation} \label{adjexp}
 M^*_{\Phi}(f_a\otimes h)=\sum_{u\le a}\sqrt{\frac{k_u}{\ell_a}}e_u\otimes c_{a-u}h ,\hspace{0.2 cm} \text{ for all } a\in\mathbb{N}^{\vg} \text{ and }h\in\mathbb{C}^J.   
\end{equation} 
Thus, $M_{\Phi}^*$ maps $\mathcal{H}_{\ell,F}$ into $\mathcal{H}_{k,F}.$ Moreover, the matrix representation of  the restriction of $M_\Phi^*$ to $\mathcal{H}_{\ell,F}$ is given by the matrix of equation~\eqref{data1}.  Since $M_\Phi^*$ is, by assumption,
 a contraction,  so is $C.$  
 \par 
Next, we will show that if the $\mathbb{N}^{\vg}\times\mathbb{N}^{\vg} $ matrix $\mathscr{C}$ given by \eqref{fullext} 
is a contraction, then the function 
\begin{equation} \label{formalPhidef}
 \Phi(z)=\sum_{a\in\mathbb{N}^{\vg}}z^a\otimes c_a^*    
\end{equation}
is a contractive multiplier $\mathcal{H}_k \otimes\mathbb{C}^J \to\mathcal{H}_{\ell}\otimes\mathbb{C}^J$ (the converse follows immediately from our previous argument). First, observe that, since $\mathscr{C}$ is a contraction, the $\ell^2$-norm of the first row ($a=\mathbf{0}$) is bounded. Thus,
\begin{equation} \label{Phidef}
    \sum_{b\in\mathbb{N}^{\vg}}\frac{\|c_b\|^2}{\ell_b}<\infty.
\end{equation}
If $k, \ell$ are defined on $\Omega,$ then,  by Proposition~\ref{prop:Pringsheim},  $\Omega\subseteq \Omega_{\ell}$, where $\Omega_{\ell}$ is defined as in \eqref{Omegakdef}. Thus, for every $z\in \Omega$, \eqref{Phidef}  and the Cauchy-Schwarz inequality gives us
\[\|\Phi(z)\|^2\le \bigg(\sum_{a\in\mathbb{N}^{\vg}}\|c_a\|\cdot|z|^a\bigg)^2\le \bigg(\sum_{a\in\mathbb{N}^{\vg}}\ell_a|z|^{2a}\bigg)\bigg(\sum_{a\in\mathbb{N}^{\vg}}\frac{\|c_a\|^2}{\ell_a}\bigg)<\infty.  \]
So, $\Phi$ does actually define a function on $\Omega$. Now, arguing as earlier in this proof, we see that $\|\mathscr{C}\|\le 1$ is equivalent to 
\[
f_a\otimes h\mapsto \sum_{u\le a}\sqrt{\frac{k_u}{\ell_a}}e_u\otimes c_{a-u}h 
\]
defining a contractive operator from $\mathcal{H}_{\ell}\otimes\mathbb{C}^J$ to $\mathcal{H}_{k}\otimes\mathbb{C}^J$. Denote that operator by $T$. It is then not hard to verify that 
\[T(\ell_w\otimes v)=k_w\otimes \Phi(w)^*v, \]
for all $w\in\Omega$ and $v\in \mathbb{C}^J.$ In view of Lemma~\ref{multdef}, $M^*_{\Phi}=T$ is a contraction and $\Phi$ is a contractive multiplier, as desired.

\normalsize

\section{Shimorin and Strong Shimorin Certificates} \label{ShimnstrShim}
\normalsize
In this section, we will introduce the notion of a Shimorin certificate  for a pair of abstract (not necessarily holomorphic) kernels. Our motivation behind Definition~\ref{mostgeneral} was to come up with the weakest set of conditions that guarantee the CP property for a general pair $(k, \ell)$.  Further, even though having a Shimorin certificate will turn out to be weaker than having a strong Shimorin certificate, we will see that the two properties coincide in many interesting cases. For convenience, we will make the assumption that all pairs $(k, \ell) $ consist of kernels that are non-vanishing along the diagonal.

\subsection{Shimorin certificates}
In this subsection we define a Shimorin certificate and collect some consequences. Recall that 
\[k^z(w, v)=k(w, v)-\cfrac{k(w, z)k(z, v)}{k(z, z)}.\]

\begin{definition} \label{mostgeneral}
Assume $k, \ell$ are kernels on the non-empty set $X.$ 
A family of kernels $\{p[z]\}_{z\in X}$ on $X$ is a \df{Shimorin certificate} for the pair $(k, \ell)$ if,
 for each $z\in X,$ 
\begin{equation}\label{Shimcertconds}
 \ell^z\succeq p[z]\ell \hspace*{0.3 cm} \text{ and } \hspace*{0.3 cm}  k^z\preceq p[z]k.
\end{equation}
\end{definition}
\begin{remark} A strong Shimorin certificate is a CP kernel $s$ on $X$, while a Shimorin certificate is a family of kernels $\{p[z]\}$ all defined on $X$ and indexed by $X$.
\end{remark}

\begin{proposition}\label{newcerttt}

Let $k, \ell$ be kernels on $X.$  If $(k, \ell)$ has a strong Shimorin certificate, then it also has a Shimorin certificate. 
\end{proposition}
\begin{proof}
    Suppose $(k, \ell)$ has a strong Shimorin certificate $s.$ Thus, there exist kernels $h$ and $g$ on $X$ such  that 
\[
 k=(1-h)s,  \hspace*{0.7 cm} \ell=gs.
\]
By Lemma~\ref{CPsplits}, there exists a decomposition $X=\cup_{i\in I}X_i$ such that $s_i:=s|_{X_i\times X_i}$ never vanishes, 
 $k(z,w)=\ell(z,w)=s(z,w)=0$ for $i\ne j$ and $z\in X_i$ and $w\in X_j,$ and,  for each $i,$ either $k|_{X_i\times X_i}$  is non-vanishing or identically $0.$ 
 Now, set $\ell_i=\ell|_{X_i\times X_i}$ and $g_i=g|_{X_i\times X_i}$ and observe that, for any $i\in I$ and $z\in X_i$, we have 
  \begin{equation}\label{altcert}
      \ell^z_i\succeq \frac{s_i^z}{s_i}\ell_i  \hspace*{0.4 cm}  \text{ and }     \hspace*{0.4 cm}  k^z_i\preceq \frac{s_i^z}{s_i} k_i.
  \end{equation}
Indeed, after a little computation the first inequality is seen to be equivalent to
\[
 \frac{g_i(w,z){g_i(z, v)}}{g_i(z, z)}\preceq g_i(w, v), \hspace{0.3 cm} w, v\in X_i,
\]
which holds because $g$ is positive. The second inequality in \eqref{altcert} is trivial if $1-h \equiv 0.$  If  $1-h$  is non-vanishing on $X_i\times X_i,$ then, setting $t_i=(1-h |_{X_i\times X_i})^{-1}=s_i/k_i,$ the second inequality becomes
\[
 \frac{t_i^{z}}{t_i}\succeq 0,
\]
which holds for all $z\in X_i$ because $t$ is a CP kernel. We now define a Shimorin certificate for $(k, \ell)$. Given $z\in X_i$, we set
\[
 p[z](w, v)=\begin{cases}
 \cfrac{s_i^z(w, v)}{s_i(w, v)}, \hspace{0.38 cm} \text{ if } w, v\in X_i,     \\
 1, \hspace{1.45 cm} \text{ if } w, v\in X_j \text{ with } j\neq i, \\
 0,\hspace{1.48 cm}   \text{ otherwise.}
\end{cases}  
 \]
Combining \eqref{altcert} with the observation that $k^z|_{X_i\times X_i}=k|_{X_i\times X_i}$ and $\ell^z|_{X_i\times X_i}=\ell|_{X_i\times X_i}$ whenever $z\in X_j$ with $i\neq j$, we conclude that $\{p[z]\}_{z\in X}$ is a Shimorin certificate for $(k, \ell)$ and the proof is complete.
\end{proof}

The converse of Proposition \ref{newcerttt} does not hold, as the following example shows. 
\begin{example} \label{anykernel}
Given an arbitrary kernel $k$, we can always find a second kernel $\ell$ with the property that $(k, \ell)$ has a Shimorin certificate. Indeed, choose  $\ell$ to be a kernel on $X$ with $\ell(z, w)=0$ whenever $z\neq w.$ Now, for any $t\in X,$ set
\[p[t](z, w)=\begin{cases}
    0 \hspace*{0.6 cm} \text{ if } z=t \text{ or } w=t, \\
    1, \hspace*{0.65 cm} \text{ otherwise}.
\end{cases} \]
Since $\ell^t =p[t]\ell$ for all $t$ and 
\[
 p[t](z,w)k(z,w)-k^t(z,w)= p[t](z,w) \frac{k(z,t)\, k(t,w)}{k(t,t)} \succeq 0,
\]
 $\{p[t]\}_{t\in X}$ is a Shimorin certificate for $(k, \ell)$.  

 It is now easy to construct an example of a pair $(k,\ell)$ with a Shimorin certificate, but without a strong Shimorin certificate. Let $X=\{1,2,3\}$ and let 
\[
 k=\begin{pmatrix} 1 & 1 & 0\\1&2&1\\0&1 & 2\end{pmatrix},
\]
so that $k(a,b)$ is the $(a,b)$ entry of this matrix. Note $k$ is positive definite. If $B$ and $s$ are positive kernels and $k=(1-B)s,$ then either $B(1,3)=1$ or $s(1,3)=0.$ If $s$ is a complete Pick kernel and $s(1, 3)=0$, then, by Lemma~\ref{CPsplits}, either $s(2,3)=0$ or $s(1,2)=0.$ Either way one obtains a contradiction, since $k(1,2)\ne 0\ne k(2,3).$ If $B(1,3)=1,$ then as $B$ is positive and $B(1,1),B(3,3)\le 1$ (since $k(1,1),k(3,3),s(1,1),s(3,3)\ge0$), we obtain $B(1, 1)=1$ and so $k(1,1)=0,$ a contradiction. Thus, choosing an $\ell$ such that $(k,\ell)$ has a Shimorin certificate, which is possible from the discussion above, gives the desired example.
\end{example}

We close this subsection by recording  some basic restrictions that are imposed on a pair of a kernels by the existence of a Shimorin certificate.
\newcounter{zerosl} 
\newcounter{zerosll}
\newcounter{zeroslll}
\begin{lemma}
 \label{l:zeroslemma}
  Suppose $k,\ell,p$  are kernels on $X$ and $v,w,z\in X$ are distinct.
    If  $\ell^z\succeq p\ell,$ then 
\begin{enumerate}[(i)]
 \item   $p(w,w)\le 1$ for all $w\in X$ and thus, by positivity, $|p(w,v)|\le 1$
 for all $w,v\in X;$
 \item \label{i:pz:2} if $p(w,w)=1,$ then $\ell(z,w)=0;$
 \item \label{i:pz:3}  if $p(w,w)=1,$ 
 then either $\ell(w,v)=0$ or $p(v,v)=1=p(w,v).$
\setcounter{zerosl}{\value{enumi}}
\end{enumerate}
 
 If $k^z\preceq pk,$ then 
\begin{enumerate}[(i)]
\setcounter{enumi}{\value{zerosl}}
 \item \label{i:pz:4} if  $k(z,w)=0,$ then $p(w,w)=1;$
 \item \label{i:pz:5} if  $k(z,w)=0,$  
 then either $k(w,v)=0$ or $p(v,v)=1=p(w,v).$
\setcounter{zerosll}{\value{enumi}}
\end{enumerate}
If  $\ell^z\succeq p\ell$ and $k^z\preceq pk$ and if $k(z,w)=0$,  then 
\begin{enumerate}[(i)]
 \setcounter{enumi}{\value{zerosll}}
 \item  \label{i:pz:6} $\ell(z,w)=0.$
 \setcounter{zeroslll}{\value{enumi}}
 \end{enumerate}
   If $(k,\ell)$ has a Shimorin certificate and $k(z,w)=0,$  then 
 \begin{enumerate}[(i)]
 \setcounter{enumi}{\value{zeroslll}}
 \item \label{i:pz:7} at least one of the following holds:
\begin{enumerate}[(a)]
    \item \label{i:zerosl:a}  $\ell(z, v)=\ell(w, v)=0$;
    \item \label{i:zerosl:b} either $k(z, v)=0$ or $k(w, v)=0$.
\end{enumerate}
\end{enumerate}
\end{lemma}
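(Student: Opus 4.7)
My plan is to consolidate the two hypotheses in \eqref{Shimcertconds} into a pair of auxiliary PsD kernels in the variables $w,v$ and then extract every conclusion by inspecting diagonal entries and invoking the elementary fact that a PsD matrix with a zero on its diagonal must have the corresponding row and column entirely zero. Concretely, Lemma~\ref{l:compress:z} identifies $\ell^z \succeq p\ell$ with
\[
 L(w,v) := (1-p(w,v))\ell(w,v) - \frac{\ell(w,z)\ell(z,v)}{\ell(z,z)} \succeq 0,
\]
and $k^z \preceq pk$ with
\[
 K(w,v) := \frac{k(w,z)k(z,v)}{k(z,z)} - (1-p(w,v))k(w,v) \succeq 0.
\]

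Parts (i)--(vi) can then be read straight off these two kernels. For (i), the $(w,w)$ entry of $L$ gives $(1-p(w,w))\ell(w,w) \ge |\ell(w,z)|^2/\ell(z,z) \ge 0$; since $\ell(w,w)>0$, this forces $p(w,w)\le 1$, and the kernel Cauchy--Schwarz bound $|p(w,v)|^2 \le p(w,w)p(v,v)$ then yields $|p(w,v)|\le 1$. For (ii) and (iii), if $p(w,w)=1$ then $L(w,w)=-|\ell(w,z)|^2/\ell(z,z)\ge 0$ forces $\ell(z,w)=0$ and $L(w,w)=0$; the $2{\times}2$ sub-block of $L$ at $\{w,v\}$ is then PsD with a zero diagonal entry, so $L(w,v)=0$, which reduces to $(1-p(w,v))\ell(w,v)=0$ and hence to the dichotomy in (iii) (the equality $p(v,v)=1$ in the second alternative follows again from $|p(w,v)|^2 \le p(w,w)p(v,v)$). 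Parts (iv) and (v) are the mirror argument applied to $K$, and (vi) is the direct chain $k(z,w)=0 \Rightarrow p(w,w)=1 \Rightarrow \ell(z,w)=0$.

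The only part that really uses the presence of a certificate \emph{family} rather than a single pair of inequalities at one base point is (vii). Here the plan is to invoke (v) twice: once with $p = p[z]$ and once with $p = p[w]$, using that $k$ is Hermitian, so $k(w,z)=\overline{k(z,w)}=0$. Starting from $k(z,w)=0$, applying (v) with $p[z]$ to the pair $(w,v)$ yields either $k(w,v)=0$, which is case (b), or $p[z](v,v)=1$; in the latter situation, (ii) with $p[z]$ at $v$ gives $\ell(z,v)=0$. In that subcase we apply (v) once more, now with $p[w]$ at the pair $(z,v)$: either $k(z,v)=0$, which is case (b), or $p[w](v,v)=1$, and then (ii) with $p[w]$ at $v$ delivers $\ell(w,v)=0$, completing case (a).

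The main obstacle is really just the bookkeeping in the case split for (vii); once the kernels $L$ and $K$ are isolated, every other claim is a mechanical consequence of their PsD-ness together with Lemma~\ref{l:compress:z}, the Cauchy--Schwarz-type inequality for kernels, and the zero-diagonal-forces-zero-row fact for PsD matrices. No deeper ingredient is needed.
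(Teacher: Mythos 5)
Your proposal is correct and follows essentially the same route as the paper: items (i)--(vi) are read off the positivity of the kernels $\ell^z - p\ell$ and $pk - k^z$ via diagonal entries, the Cauchy--Schwarz bound for PsD kernels, and the fact that a zero diagonal entry kills the corresponding row, and item (vii) is the same double application of (v) and (ii) with $p[z]$ and $p[w]$, merely organized as a nested case split where the paper argues contrapositively from $k(z,v)\ne 0\ne k(w,v)$.
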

\begin{proof}
 By definition, $\ell^z\succeq p\ell$ means 
\[
 X\times X\ni (x,y) \mapsto  \ell(x,y) -\frac{\ell(x,z) \ell(z,y)}{\ell(z,z)} - p(x,y)\ell(x,y)
\]
 is a kernel (PsD). In particular, $\ell(w,w)(1-p(w,w)) \ge \frac{|\ell(z,w)|^2}{\ell(z,z)}\ge 0.$
 Thus, $p(w,w)\le 1.$  Moreover, if $p(w,w)=1,$ then necessarily, $\ell(z,w)=0.$  
  Assuming $p(w,w)=1,$ from what has already been proved $\ell(z,w)=0$ and thus
 the positivity condition $\ell^z\succeq p\ell$ gives
\[\begin{bmatrix}
\ell^z(w, w) & \ell^z(w, v) \\
\ell^z(v, w) & \ell^z(v, v)
\end{bmatrix}=\begin{bmatrix}
\ell(w, w) & \ell(w, v) \\
\ell(v, w) & \ell^z(v, v)
\end{bmatrix}
\succeq \begin{bmatrix}
 \ell(w, w) & p(w, v)\ell(w, v)   \\
 p(v, w) \ell(v, w) & p(v, v)\ell(v, v)
\end{bmatrix}.\]
Hence
\[ \begin{bmatrix}
    0 & \big(-p(w, v)+1\big)\ell(w, v) \\
\big(-p(v, w)+1\big)\ell(v, w)  & -p(v, v)\ell(v, v)+\ell^z(v, v)
\end{bmatrix}      \succeq 0.\]
Thus, either $\ell(w,v)=0$ or $p(w,v)=1.$ In the later case, by positivity, $p(v,v)=1,$  
concluding the proof of the first three items of the lemma. Items~\eqref{i:pz:4} and \eqref{i:pz:5} are proved in an entirely analogous manner.

Assume $\ell^z \succeq p\ell$ and $k^z \preceq pk.$ If $k(z,w)=0,$ then, by item~\eqref{i:pz:4}, $p(w,w)=1$ and thus, by item~\eqref{i:pz:2},  $\ell(z,w)=0,$ proving item~\eqref{i:pz:6}.

Now suppose $(k,\ell)$ has a Shimorin certificate. In particular, there exist kernels
 $p[z]$ and $p[w]$ such that $\ell^z \succeq p[z]\, \ell$ and $k^z\preceq p[z]\, k$
 as well as $\ell^w \succeq p[w]\, \ell$ and $k^w \preceq p[w]\, k.$ 
If $k(z,v)\ne 0 \ne k(w,v),$ then, by  item~\eqref{i:pz:5}, $p[z](v,v)=p[w](v, v)=1.$ 
Thus, by item~\eqref{i:pz:2} (twice), $\ell(z,v)=0=\ell(w,v)$ and the proof of item~\eqref{i:pz:7}, and thus the lemma, is complete.
\end{proof}
{ We point out that the conclusions of items (vi)-(vii) of Lemma~\ref{l:zeroslemma} continue to hold under the weaker (in view of Theorem~\ref{generalsuff}) assumptions that $(k, \ell)$ have the CP property and $k(z, w)=0$; see Proposition~\ref{CPsplit}.}

\subsection{Shimorin versus strong Shimorin certificates}
In this section we consider two sufficient conditions under which the existence of a Shimorin certificate implies the existence of a strong Shimorin certificate. The first is a general condition that applies in a number of cases of interest. The second includes the assumption that $k$ and $\ell$ are holomorphic kernels
on a domain $\Omega\subseteq \CC^\vg.$

 \begin{proposition} \label{betweencerts}
   Let $k, \ell$ be kernels on $X.$  If $\ell$ is non-vanishing, then $(k, \ell)$ has a Shimorin certificate if and only if $(k,\ell)$  has a strong Shimorin certificate if and only if there exists a kernel $p$ on $X$ and a point $z_0\in X$ with
    \begin{equation}\label{onecert}
        \ell^{z_0}\succeq p\ell  \hspace*{0.4 cm}  \text{ and }     \hspace*{0.4 cm}  k^{z_0}\preceq pk.
    \end{equation}
\end{proposition}

The proof of  Proposition~\ref{betweencerts} will be based on the following two lemmas.

\begin{lemma}\label{CPPsplits}
If $k,\ell$ are kernels over the set $X$ and $s$ is a strong Shimorin certificate for  $(k,\ell),$  then,
 with respect to the unique partition $X=\cup X_i$ for $s$ from Lemma~\ref{CPsplits}, 
\begin{enumerate}[(i)]
 \item $k(z,w)=\ell(z,w)=s(z,w)=0$ whenever $z\in X_i$ and $w\in X_j$ with $i\neq j;$
 \item for each $i,$ the function $s|_{X_i\times X_i}$ is non-vanishing;
 \item \label{i:cpp:3} for each $i,$ either $k|_{X_i\times X_i}$ is non-vanishing or identically $0.$
\end{enumerate}
\end{lemma}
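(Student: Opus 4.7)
The plan is to deduce each item from the structure provided by the factorization $k=(1-h)s$ and $\ell=sg$ and by the decomposition of the complete Pick kernel $s$ given by Lemma~\ref{CPsplits}.

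Items (i) and (ii) should be essentially immediate. Lemma~\ref{CPsplits} applied to the complete Pick kernel $s$ produces the partition $X=\cup X_i$ with $s|_{X_i\times X_i}$ non-vanishing (giving (ii)) and $s|_{X_i\times X_j}\equiv 0$ for $i\ne j$. Since $h,g$ are kernels on $X$ and
\[
 k(z,w)=(1-h(z,w))\,s(z,w), \qquad \ell(z,w)=s(z,w)\,g(z,w),
\]
the vanishing of $s$ off the diagonal blocks forces $k$ and $\ell$ to vanish there as well, proving (i).

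For (iii), fix an index $i$ and restrict everything to $X_i\times X_i$. By (ii), $s|_{X_i\times X_i}$ is non-vanishing, while $h|_{X_i\times X_i}$ is a (positive) kernel on $X_i$ and
\[
 (1-h(z,w))\,s(z,w)=k(z,w)
\]
is a kernel on $X_i\times X_i$. This is exactly the hypothesis of the maximum modulus principle for multipliers (Lemma~\ref{maxmod}) applied with base kernel $s|_{X_i\times X_i}$ and $B=h|_{X_i\times X_i}$. The lemma therefore gives one of two alternatives: either $|h(z,w)|<1$ for all $z,w\in X_i$, in which case $1-h$ is nowhere zero on $X_i\times X_i$ and hence $k|_{X_i\times X_i}=(1-h)\,s|_{X_i\times X_i}$ is non-vanishing; or $h(z,w)=1$ identically on $X_i\times X_i$, in which case $k|_{X_i\times X_i}\equiv 0$. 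This is exactly the dichotomy in (iii).

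The only point that requires care is to verify that Lemma~\ref{maxmod} is indeed applicable on each piece $X_i$ separately; this is where part (ii) is crucial, since the maximum modulus statement requires the underlying kernel to be non-vanishing, and this holds precisely for $s$ restricted to each $X_i\times X_i$.
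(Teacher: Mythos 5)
Your proposal is correct and follows essentially the same route as the paper: both derive items (i) and (ii) directly from Lemma~\ref{CPsplits} applied to $s$ together with the factorizations $k=(1-h)s$ and $\ell=sg$, and both obtain item (iii) by applying the last part of Lemma~\ref{maxmod} to $B=h|_{X_i\times X_i}$ on each block where $s$ is non-vanishing. Your write-up is if anything slightly more careful, since you explicitly note why the non-vanishing of $s|_{X_i\times X_i}$ from (ii) is what licenses the use of Lemma~\ref{maxmod} on each piece.
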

\begin{proof}
 Since $s$ is a strong Shimorin certificate for $(k,\ell),$ there exist kernels $h$ and $g$ on $X$ such  that 
\[
 k=(1-h)s,  \hspace*{0.7 cm} \ell=gs.
\]
By Lemma~\ref{CPsplits}, there exists a unique decomposition $X=\cup_{i\in I}X_i$ such that $s_i:=s|_{X_i\times X_i}$ never vanishes and 
$s|_{X_a\times X_b}\equiv 0$  whenever $a\ne b,$ which also yields $k|_{X_a\times X_b}\equiv 0$ and $\ell|_{X_a\times X_b}\equiv 0$. 
Since $s_i$ never vanishes, Lemma~\ref{maxmod} tells us that $1-h$ is either non-vanishing on $X_i\times X_i$ or identically zero. As $k=(1-h)s,$ we conclude that $k$ splits into a collection of kernels $k_i=k|_{X_i\times X_i}$ that are non-vanishing. 
\end{proof}

\begin{lemma}
 \label{shim=cert}
 Suppose $k,\ell,p$ are kernels over the nonempty set $X.$
 If $\ell^{z_0}\succeq p\ell$ and $k^{z_0}\preceq pk$ for some $z_0\in X$ and $p(x,x)<1$ for all $x\in X,$ then
\[
 s(w,v)=\frac{k(w, z_0)k(z_0, v)}{k(z_0, z_0)}\frac{1}{1-p(w, v)}
\]
 is a strong Shimorin certificate for $(k,\ell).$
\end{lemma} 

\begin{proof}
 By positivity, $|p(w,v)|<1$ for all $w,v\in X$ and it is evident from the discussion at the outset of subsection~\ref{CPsubsec} that $s$ is a complete Pick kernel.  From $k^{z_0}\preceq pk,$ there exists a kernel $C$ on $X$ such that
\[k(w,v)-\frac{k(w, z_0)k(z_0, v)}{k(z_0, z_0)}+C(w, v)=p(w, v)k(w, v),\]
and therefore,
\[ k(w, v)=\frac{k(w, z_0)k(z_0, v)}{k(z_0, z_0)}\frac{\Big(1-\frac{k(z_0, z_0)}{k(w, z_0)k(z_0, v)}C(w, v)\Big)}{1-p(w, v)},\]
for any $w, v\in X$. Similarly, there exists a kernel $D$ such that, for all $w,v\in X,$ 
\[\ell(w, v)=\frac{\frac{\ell(w, z_0)\ell(z_0, v)}{\ell(z_0, z_0)}+D(w, v)}{1-p(w, v)}. \]
Defining $s$ as in the statement of the proposition and setting 
\[
 \begin{split} h(w, v)& =\frac{k(z_0, z_0)}{k(w, z_0)k(z_0, v)}C(w, v)
\\ g(w, v) & =\frac{k(z_0, z_0)}{k(w, z_0)k(z_0, v)}\bigg(\frac{\ell(w, z_0)\ell(z_0, v)}{\ell(z_0, z_0)}+D(w, v)\bigg),
\end{split}
\]
 we see that $k=(1-h)s$ and $\ell=sg$, as desired.
\end{proof}

\begin{proof}[Proof of Proposition~\ref{betweencerts}]

 Since we have already shown that having a strong Shimorin certificate implies the existence of a Shimorin certificate (which, in turn, clearly implies \eqref{onecert}), all that remains is to show that, under the assumption that $\ell$ never vanishes, \eqref{onecert} leads to a strong Shimorin certificate, a conclusion that  follows immediately from Lemma~\ref{shim=cert} after noting that $\ell$ non-vanishing implies $p(x,x)<1$, for all $x$, by Lemma~\ref{l:zeroslemma}~\eqref{i:pz:2}.
\end{proof}

The notions of Shimorin and strong Shimorin certificate also coincide for pairs of holomorphic kernels over  domains $\Omega\subseteq\mathbb{C}^d$, even if $\ell$ has zeros. To prove this claim, we need a few preliminary lemmas.
   \par  
Assume $(k, \ell)$ is a pair of kernels on $X$ with a Shimorin certificate $\{p[z]\}_{z\in X}$. For $w\in X$, let
\begin{equation}\label{certdecomp}
X^w_0:=\{z\in X: p[w](z, z)<1\}, \hspace*{0.4 cm} X^w_1:=\{z\in X: p[w](z, z)=1\}.    
\end{equation}
\begin{lemma}\label{easy}
For every $w\in X,$  the sets $X^w_0, X^w_1$ partition $X.$ Further, $w\in X^w_0,$ so $X^w_0$ is always non-empty.
\end{lemma}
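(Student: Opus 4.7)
The plan is to derive both assertions directly from the defining inequality $\ell^w \succeq p[w]\ell$ of a Shimorin certificate; the companion inequality $k^w \preceq p[w]k$ plays no role here.

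For the partition claim, the sets $X_0^w$ and $X_1^w$ are disjoint by construction, so it suffices to show every $z \in X$ satisfies $p[w](z,z) \le 1$. But that bound is precisely the content of item~(i) of Lemma~\ref{l:zeroslemma}, applied to $\ell^w \succeq p[w]\ell$. Thus every $z \in X$ falls into exactly one of the two sets defined in \eqref{certdecomp}.

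For the non-emptiness of $X_0^w$, I would evaluate the positive kernel $\ell^w - p[w]\ell$ at the diagonal point $(w,w)$. By Lemma~\ref{l:compress:z},
\[
\ell^w(w,w) = \ell(w,w) - \frac{\ell(w,w)\,\ell(w,w)}{\ell(w,w)} = 0,
\]
and positivity on the diagonal then forces $p[w](w,w)\,\ell(w,w) \le 0$. Since $\ell(w,w) > 0$ by the standing assumption that kernels do not vanish on the diagonal, and since $p[w](w,w) \ge 0$ because $p[w]$ is itself a positive kernel, we conclude $p[w](w,w) = 0 < 1$. Hence $w \in X_0^w$.

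There is really no substantive obstacle here: the statement is a one-line consequence of the positivity of $\ell^w - p[w]\ell$ evaluated on the diagonal. The value of the lemma lies not in its proof but in cleanly isolating the dichotomy $p[w](z,z) < 1$ versus $p[w](z,z) = 1$, which should drive the later passage from a Shimorin certificate to a strong Shimorin certificate in the holomorphic setting (notably via Proposition~\ref{shim=cert}, which requires strict inequality on the diagonal).
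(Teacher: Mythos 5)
Your proof is correct, and in fact slightly stronger than needed since it shows $p[w](w,w)=0$; the paper simply omits the argument as straightforward, and your route (Lemma~\ref{l:zeroslemma}(i) for $p[w](z,z)\le 1$, plus evaluating the positive kernel $\ell^w-p[w]\ell$ at $(w,w)$ with $\ell^w(w,w)=0$ and $\ell(w,w)>0$) is exactly the intended one.
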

\begin{proof}
    The proof is straightforward and is omitted.
\end{proof}
\begin{remark}
   As Remark~\ref{anykernel} shows, it could  happen that $X^w_0=\{w\}$ for every $w$. On the other hand, it could also happen that $X^w_0=X$ for every $w$ (this coincides with the existence of a strong Shimorin certificate $s$ that does not vanish - see Lemma~\ref{shim=cert}). 
\end{remark}
  
\begin{lemma} \label{ellsplit}
   Given  $w\in X,$ if   $z\in X^w_0$ and  $v\in X^w_1$, then  $\ell(z, v)=0.$
\end{lemma}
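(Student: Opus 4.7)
The plan is to apply the point-zero structure of Lemma~\ref{l:zeroslemma} to the Shimorin certificate condition $\ell^w \succeq p[w]\,\ell$ anchored at the base point $w$, treating $w$ as the distinguished point (the "$z$" of Lemma~\ref{l:zeroslemma}). Since $v \in X^w_1$ by hypothesis, we have $p[w](v,v)=1$, so the hypothesis triggering items \eqref{i:pz:2} and \eqref{i:pz:3} of Lemma~\ref{l:zeroslemma} is in force. Note also that $z \ne v$ automatically, since $X^w_0 \cap X^w_1 = \varnothing$ by Lemma~\ref{easy}.

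I would split into two short cases according to whether $z$ coincides with the base point $w$. If $z = w$, then item \eqref{i:pz:2} of Lemma~\ref{l:zeroslemma}, applied with its roles $z\mapsto w$ and $w\mapsto v$, gives $\ell(w,v) = 0$, which is exactly $\ell(z,v)=0$. If instead $z \ne w$, then $z, v, w$ are three distinct points and I can invoke item \eqref{i:pz:3} of Lemma~\ref{l:zeroslemma} with the roles $z\mapsto w$, $w\mapsto v$, $v\mapsto z$. That item yields the dichotomy: either $\ell(v,z) = 0$ or $p[w](z,z) = 1 = p[w](v,z)$.

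The key observation — and the only substantive step — is that the second alternative of this dichotomy is ruled out by the assumption $z \in X^w_0$, which is precisely the statement $p[w](z,z) < 1$. Therefore $\ell(v,z) = 0$, and by Hermitian symmetry $\ell(z,v) = 0$, completing the proof. There is no real obstacle here; the lemma is essentially a direct bookkeeping consequence of the zero-propagation results already established in Lemma~\ref{l:zeroslemma}, with the $X^w_0/X^w_1$ partition from \eqref{certdecomp} chosen exactly so that the "bad" branch of the dichotomy in Lemma~\ref{l:zeroslemma}\eqref{i:pz:3} is excluded.
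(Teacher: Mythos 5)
Your proof is correct and follows essentially the same route as the paper: both reduce the claim to Lemma~\ref{l:zeroslemma}\eqref{i:pz:3} applied to the condition $\ell^w\succeq p[w]\,\ell$ with $p[w](v,v)=1$, and rule out the alternative branch using $p[w](z,z)<1$ (the paper phrases this via $|p[w](z,v)|<1$, which is the same point by positivity). Your explicit separate treatment of the degenerate case $z=w$ via item~\eqref{i:pz:2} is a small tidiness improvement, since Lemma~\ref{l:zeroslemma}\eqref{i:pz:3} is stated for distinct points.
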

\begin{proof}
By definition, $p[w](v, v)=1.$  Further, since $p[w](z, z)<1,$ we must have $|p[w](z, v)|<1$. Lemma \ref{l:zeroslemma} item~\eqref{i:pz:3} then implies $\ell(z, v)=0$.
\end{proof}

The following lemma is well-known. We include a proof for the reader's convenience. 
\begin{lemma}
 \label{toomany0}
   Let $\Omega$ denote a domain in $\CC^\vg.$ If $f_1,\dots,f_n:\Omega\to\CC$ are non-zero and holomorphic, then  $\Omega \ne \cup Z(f_j),$  where $Z(f_j)$ denotes the zero set of $f_j$ (in $\Omega$).
\end{lemma}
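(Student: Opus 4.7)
The plan is to reduce to the single-function case via the standard trick of multiplying. Consider the product $f = f_1 f_2 \cdots f_n$, which is again holomorphic on $\Omega$. I would first observe that the zero set of $f$ is exactly $\cup_{j=1}^n Z(f_j)$, so it suffices to show that $Z(f) \ne \Omega$, i.e., that $f \not\equiv 0$ on $\Omega$.

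Next, I would argue that $f \not\equiv 0$ by invoking the identity principle for holomorphic functions of several variables on a connected open set. Since each $f_j$ is assumed nonzero on the connected domain $\Omega$, there is a point $w_j \in \Omega$ and a neighborhood of $w_j$ on which $f_j$ does not vanish. By shrinking and using connectedness one can in fact produce a single point where no $f_j$ vanishes, but the clean way is as follows: if $f \equiv 0$ on $\Omega$, then because $\mathcal{O}(\Omega)$ is an integral domain (a consequence of the identity theorem on a connected complex manifold), one of the factors $f_j$ would have to be identically zero, contradicting the hypothesis. Hence $f \not\equiv 0$.

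Finally, since $f$ is a nonzero holomorphic function on the connected open set $\Omega$, its zero set $Z(f)$ is a proper analytic subset and thus has empty interior (indeed is nowhere dense); in particular $Z(f) \ne \Omega$. Combining with $Z(f) = \cup_j Z(f_j)$ yields $\cup_j Z(f_j) \ne \Omega$, as desired.

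The only real issue is making sure the identity principle / integral-domain fact for $\mathcal{O}(\Omega)$ is applied correctly; this is standard (see, e.g., \cite[Proposition~1.1.14]{MR1871333} or the power series identity theorem applied locally, then propagated by connectedness), so no genuine obstacle is expected.
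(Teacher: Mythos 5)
Your proof is correct, but it takes a different route from the paper. You reduce to a single function by forming the product $f = f_1\cdots f_n$, use the fact that $\mathcal{O}(\Omega)$ is an integral domain for connected $\Omega$ to conclude $f\not\equiv 0$, and then invoke the identity principle to see that $Z(f)=\cup_j Z(f_j)$ has empty interior, hence is not all of $\Omega$. The paper instead argues directly with density: since each $f_j$ is holomorphic and not identically zero, each complement $U_j=\Omega\setminus Z(f_j)$ is open and dense in $\Omega$, so the finite intersection $\cap_j U_j$ is open, dense, and in particular non-empty, which is exactly the claim. The two arguments rest on the same underlying fact (the zero set of a non-zero holomorphic function on a connected domain is nowhere dense), but the paper's version avoids the detour through the product and the integral-domain property, while yours packages the union of zero sets into a single analytic zero set, which some may find cleaner to state; either way the key input is the identity principle, which you apply correctly.
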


\begin{proof}
 Since the $f_j$'s are non-zero and holomorphic, the sets  $U_j = \Omega\setminus Z(f_j)$ are open and dense in $\Omega.$  Hence, so is 
 $\cap_{j=1}^n U_j.$ In particular, $\cap_{j=1}^n U_j\ne\varnothing.$ Equivalently,
 $\cup_{j=1}^n  Z(f_j)\ne \Omega.$
\end{proof}

The following is our main result in this subsection.

\begin{proposition} \label{Scertforhol}
Assuming  $(k, \ell)$ are non-zero holomorphic kernels on the (connected) domain $\Omega\subseteq\mathbb{C}^{\vg},$ the pair  $(k, \ell)$ has a Shimorin certificate if and only if it has a strong Shimorin certificate,  in which case $k$ is  non-vanishing.
\end{proposition}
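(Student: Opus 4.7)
The forward direction is immediate from Proposition~\ref{betweencerts}\eqref{i:btw:1}, so I focus on the converse. Starting with a Shimorin certificate $\{p[z]\}_{z \in \Omega}$, the plan is to apply Proposition~\ref{shim=cert} with $p := p[z_0]$ for a suitable $z_0$: this requires verifying (a) that $k$ is non-vanishing, and (b) that $p[z_0](x, x) < 1$ for every $x \in \Omega$, i.e., that $\Omega_1^{z_0} = \varnothing$. Both (a) and (b) rest on the same underlying tension: a Shimorin certificate forces sweeping pointwise vanishing conditions on $k$ and $\ell$ (via Lemma~\ref{l:zeroslemma} and Lemma~\ref{ellsplit}), while the identity theorem for holomorphic functions refuses to accommodate such vanishing on a connected domain, as encoded by Lemma~\ref{toomany0}. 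Bridging this gap between abstract pointwise data and analytic rigidity is the heart of the argument.

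For (a), suppose toward contradiction that $k(z, w) = 0$ for some $z, w \in \Omega$. By Lemma~\ref{l:zeroslemma}\eqref{i:pz:7}, for every $v \in \Omega$ at least one of $\ell(z, v)$, $\ell(w, v)$, $k(z, v)$, $k(w, v)$ vanishes. As functions of $v$ these are conjugate holomorphic, so their complex conjugates $\ell(v, z)$, $\ell(v, w)$, $k(v, z)$, $k(v, w)$ are holomorphic on $\Omega$, and none is identically zero since the standing diagonal non-vanishing assumption gives $\ell(z,z),\ell(w,w),k(z,z),k(w,w) > 0$. Thus $\Omega$ is covered by the zero sets of four non-zero holomorphic functions, contradicting Lemma~\ref{toomany0}.

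For (b), fix $z_0 \in \Omega$ and suppose $v \in \Omega_1^{z_0}$; note that $z_0 \in \Omega_0^{z_0}$ by Lemma~\ref{easy}. Applying Lemma~\ref{ellsplit} with $w = z_0$ in two different ways: taking the second argument equal to $v$ yields $\ell(z, v) = 0$ for every $z \in \Omega_0^{z_0}$, so $\Omega_0^{z_0}$ lies in the zero set of $z \mapsto \ell(z, v)$; taking the first argument equal to $z_0$ yields $\ell(z_0, v') = 0$ for every $v' \in \Omega_1^{z_0}$, so $\Omega_1^{z_0}$ lies in the zero set of $v' \mapsto \ell(z_0, v')$. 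After conjugation both functions are non-zero holomorphic on $\Omega$ (non-vanishing at $v$ and $z_0$ respectively), so Lemma~\ref{toomany0} forbids $\Omega = \Omega_0^{z_0} \cup \Omega_1^{z_0}$ from being their union. Hence $\Omega_1^{z_0} = \varnothing$, and Proposition~\ref{shim=cert} applied with this $z_0$ and $p = p[z_0]$ produces the desired strong Shimorin certificate
\[
 s(w, v) = \frac{k(w, z_0)\, k(z_0, v)}{k(z_0, z_0)\bigl(1 - p[z_0](w, v)\bigr)}.
\]
The main obstacle is the possibility that $\Omega_0^{z_0}$ and $\Omega_1^{z_0}$ are highly irregular subsets of $\Omega$ (the certificate $\{p[z]\}$ is not assumed continuous); the trick is to note that the dichotomy supplied by Lemma~\ref{ellsplit} already forces both halves of this partition into proper analytic sets, after which Lemma~\ref{toomany0} closes the door.
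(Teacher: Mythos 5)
Your proposal is correct and follows essentially the same route as the paper: the non-vanishing of $k$ via Lemma~\ref{l:zeroslemma}\eqref{i:pz:7} together with Lemma~\ref{toomany0} (the paper covers $\Omega$ by three zero sets rather than your four, which is immaterial), and then, for a fixed base point, using Lemma~\ref{ellsplit} to trap $\Omega_0^{z_0}$ and $\Omega_1^{z_0}$ inside zero sets of the non-zero holomorphic functions $\ell(\cdot,v)$ and $\overline{\ell(z_0,\cdot)}$, so that Lemma~\ref{toomany0} forces $\Omega_1^{z_0}=\varnothing$ and Proposition~\ref{shim=cert} yields the strong certificate. No gaps.
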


\begin{proof}
 We first show the existence of a Shimorin certificate implies $k$ is non-vanishing. Assuming $k$ has a Shimorin certificate and  arguing by contradiction, suppose $z,w\in \Omega$ and $k(z,w)=0$ and consider the sets 
   \begin{equation*}
\begin{split}
\Omega_1&=\{v\in\Omega : k_z(v)=k(v,z)=0\}, \\
\Omega_2&=\{v\in\Omega : k_w(v)=k(v,w)=0\}, \\
\Omega_3&=\{v\in\Omega : \ell_z(v)= \ell(v, z)=0\}.
\end{split}
\end{equation*}
Since the kernels $k,\ell$ are holomorphic in their first argument, the sets  $\Omega_i$ are the zero sets for  (non-zero) holomorphic functions on $\Omega$  and, by  Lemma~\ref{l:zeroslemma}~\eqref{i:pz:7}  it follows that $\cup_i\Omega_i=\Omega,$ contradicting the conclusion of Lemma~\ref{toomany0}.
  \par
Since the existence of a strong Shimorin certificate implies the existence of a Shimorin certificate (Proposition~\ref{newcerttt}), what remains to be proved is that the existence of a Shimorin certificate implies that of a strong Shimorin certificate. In view of Lemma~\ref{shim=cert}, it suffices to show there is a point $z\in X$ and a (PsD) kernel $p$ on $X$ such that $\ell^z\succeq p\ell$ and $k^z\preceq pk$  and with  $p(x,x)<1$ for all $x\in X.$
To this end, fix $w\in \Omega$ and consider the decomposition $\Omega=\Omega^w_0\cup \Omega^w_1$ as in \eqref{certdecomp}. Arguing by contradiction, suppose  $\Omega^w_1$ is non-empty and choose $v\in \Omega^w_1.$ Lemma~\ref{ellsplit} then tells us that $\Omega^w_0$ is contained in the zero set of $\ell_v,$ while $\Omega^w_1$ is contained in the zero set of $\ell_w,$ both of which are non-zero holomorphic functions $\ell_v,\ell_w:\Omega\to\CC.$  Since the union of these two sets is equal to all of $\Omega,$ Lemma~\ref{toomany0} gives a contradiction. Thus, $\Omega^w_1$ is empty and $\Omega=\Omega^w_0$ and consequently,  $p[w](z,z)<1$ for  all $z\in\Omega.$  Hence, by Lemma~\ref{shim=cert}, $(k,\ell)$ has a strong Shimorin certificate.
\end{proof}

\normalsize
\section{Sufficient Conditions for the CP property}\label{suffforCP}
\normalsize
This section contains a new proof of Theorem~\ref{mainShim}, one based upon a general version of the Leech factorization Theorem. It also contains a generalization of Theorem~\ref{mainShim}, Theorem~\ref{generalsuff}, proved using  a one-step extension argument  that replaces the  strong Shimorin certificate assumption of Theorem~\ref{mainShim} with the  weaker assumption of the existence of a Shimorin certificate.

\subsection{Theorem~\ref{mainShim}}

 This subsection begins with our Leech-based proof of Theorem~\ref{mainShim} and concludes with a corollary that generalizes 
 a result of \cite{AHMRinterpol}.

\begin{proof}[Proof of Theorem~\ref{mainShim}]
Let $s$ denote a strong  certificate for $(k,\ell).$ Thus,  $s$ is a CP kernel and there exist kernels $h$ and $g$ on $X$ such that $k=(1-h)s$ and $\ell=sg$.
From Lemma~\ref{CPPsplits}, there exists a decomposition $X=\cup_{i\in I}X_i$ such that $k(z,w)=s(z,w)=\ell(z,w)=0$ whenever $z\in X_i$ and $w\in X_j$ with $i\neq j.$ Further,  $ s|_{X_i\times X_i}$ is non-vanishing and $k|_{X_i\times X_i}$ is non-vanishing. 
 Since $(k, \ell)$ has the CP property if and only if every restriction $(k|_{X_i\times X_i}, \ell|_{X_i\times X_i})$ does, 
 we  assume, without loss of generality, that both $k$ and $s$ are non-vanishing on $X.$ In particular, $|h(z, z)|<1$
 for $z\in X.$ \par 
There exist Hilbert spaces $\mathcal{L}_1, \mathcal{L}_2$ and functions $\Psi: X\to \mathcal{B}(\mathcal{L}_1, \mathbb{C}),$ and  $G: X\to\mathcal{B}(\mathcal{L}_2, \mathbb{C})$ such that $h(z,w)=\Psi(z)\Psi(w)^*$ and $g(z,w)=G(z)G(w)^*$. Since $(1-\Psi\Psi^*)s=k\succeq 0$, it follows that  $\Psi\in\Mult(\mathcal{H}_s\otimes \mathcal{L}_1, \mathcal{H}_s)$ is contractive. Further, $\|\Psi(z)^*\|^2=|h(z, z)| <1.$ 
\par
Now,  assume $z_1, \dots, z_n\in X$  and $W_1, \dots, W_n$ are $N\times N$ matrices that satisfy
 \begin{equation}\label{otherinitial}
\big[\ell(z_i, z_j)I_{N\times N}-k(z_i, z_j)W_i W_j^*\big]_{i, j=1}^n\succeq 0.
 \end{equation}
Rewriting \eqref{otherinitial} as 
\[ g(z_i, z_j)s(z_i, z_j)I_{N\times N}+h(z_i, z_j)s(z_i, z_j)W_iW_j^*-s(z_i, z_j)W_iW_j^*\succeq 0,\]
gives 
\[
 \big(U_i U_j^*-W_iW_j^* \big)s(z_i, z_j)\succeq 0,
\]
where
 \[
 U_j =\begin{bmatrix} I_{N\times N}\otimes G(z_i) & W_i \otimes \Psi(z_i) \end{bmatrix}\in\mathcal{B}\big((\mathbb{C}^N\otimes\mathcal{L}_2) \oplus  (\mathbb{C}^N\otimes\mathcal{L}_1), \mathbb{C}^N\big),
\]
 for all $i.$ Applying the generalized version of Leech's Theorem (\cite[Theorem 8.57]{Pickbook}) produces a contractive multiplier 
\[
\Phi=\begin{bmatrix}
  \Phi_1 \\ \Phi_2  
\end{bmatrix}\in \Mult\big(\mathcal{H}_s\otimes\mathbb{C}^N, \mathcal{H}_s \otimes \big((\mathbb{C}^N\otimes\mathcal{L}_2) \oplus  (\mathbb{C}^N\otimes\mathcal{L}_1)\big)\big)
\]
 such that { $W_i=U_i\Phi(z_i),$} for all $i.$ This last equality can be rewritten as
\begin{equation}\label{WivPhi}
\begin{split}
W_i&=\big(I_{N\times N}\otimes G(z_i)\big)\Phi_1(z_i)+\big(W_i \otimes \Psi(z_i)\big)\Phi_2(z_i)  \\
&=\big(I_{N\times N}\otimes G(z_i)\big)\Phi_1(z_i)+
 W_i \big(I_{N\times N} \otimes \Psi(z_i)\big)\Phi_2(z_i),
\end{split}
\end{equation}
for all $i.$
\par
 Since $\Phi$ is a contractive multiplier, $\|\Phi_2(z)\|\le 1$ for all $z.$ Since also  $\|\Psi(z)^*\|<1,$ it follows that, for
 each $z,$ the $N\times  N$ matrix $(I_{N\times N}\otimes \Psi(z))\Phi_2(z)$ is a strict contraction and therefore 
 $I_{N\times N}-(I_{N\times N}\otimes \Psi(z))\Phi_2(z)$ is pointwise invertible. Define $R:X\to \mathbb{M}_N$ by
\[
R(z)=\big(I_{N\times N}\otimes G(z)\big) \Phi_1(z)\big(I_{N\times N}-\big(I_{N\times N}\otimes \Psi(z)\big) \Phi_2(z)\big)^{-1}.
\]
 Solving equation~\ref{WivPhi} for $W_i$ gives 
\[
 W_i=R(z_i)
\]
for all $i.$  Defining 
$H:X\to \mathcal{B}\big((\mathbb{C}^N\otimes\mathcal{L}_2) \oplus  (\mathbb{C}^N\otimes\mathcal{L}_1), \mathbb{C}^N\big)$
by
\[
 H(z)=\begin{bmatrix} I_{N\times N}\otimes G(z) & R(z) \otimes \Psi(z) \end{bmatrix},
\]
 it follows from the definition of $R$ that $R(z)=H(z) \Phi(z),$ for all $z\in X$,
 {and, from the definition of $U_i,$ that $H(z_i)=U_i.$}
Hence 
\begin{equation*}
\begin{split}
    \ell(z, w)I_{N\times N}-k(z, w)R(z)R(w)^*&=\big(H(z) H(w)^*-R(z)R(w)^* \big)s(z, w) \\
    &=H(z)\big((I-\Phi(z)\Phi(w)^* )s(z, w)\big)H(w)^*\succeq 0,
\end{split}
\end{equation*}
since $(I-\Phi(z)\Phi(w)^* )s(z, w)\succeq 0$. Thus, $R\in\Mult(\mathcal{H}_k \otimes\mathbb{C}^N, \mathcal{H}_{\ell} \otimes\mathbb{C}^N)$ is an interpolating contractive multiplier and our proof is complete. 
\end{proof}

As a consequence of the above proof, we obtain the following parametrization of multipliers between spaces with a strong Shimorin certificate, which generalizes \cite[Proposition 4.10]{AHMRinterpol}.
\begin{corollary} \label{parametcorol}
Let $k, s, \ell, \Psi, G$ be as in the previous proof. A function $R: X\to \mathbb{M}_N$ is a contractive multiplier from $\mathcal{H}_k\otimes\mathbb{C}^N$ to $\mathcal{H}_{\ell}\otimes\mathbb{C}^N$ if and only if there exists  a contractive multiplier \[\Phi=\begin{bmatrix}
  \Phi_1 \\ \Phi_2  
\end{bmatrix}\in \Mult\big(\mathcal{H}_s\otimes\mathbb{C}^N, \mathcal{H}_s \otimes \big((\mathbb{C}^N\otimes\mathcal{L}_2) \oplus  (\mathbb{C}^N\otimes\mathcal{L}_1)\big)\big)\] satisfying 
$$R(z)=\big(I_{N\times N}\otimes G(z)\big) \Phi_1(z)\big(I_{N\times N}-\big(I_{N\times N}\otimes \Psi(z)\big) \Phi_2(z)\big)^{-1},$$ for all $z\in X.$ 
\end{corollary}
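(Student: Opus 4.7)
The plan is to recognize that Corollary~\ref{parametcorol} is a direct repackaging of the argument at the end of the proof of Theorem~\ref{mainShim}, now run in both directions and applied at every point of $X$ simultaneously rather than at a finite set of interpolation nodes. As a preliminary reduction I would invoke the decomposition $X=\cup_i X_i$ from Lemma~\ref{CPPsplits} and work on each diagonal block; away from the diagonal blocks the kernels $k,\ell,s$ vanish and any contractive $R$ must vanish there as well, so it suffices to treat the case in which $k$ and $s$ are non-vanishing, which forces $|h(z,z)|<1$ for all $z\in X.$

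For the ``if'' direction, I would take the given $\Phi,$ define $R$ by the formula in the statement, and set
\[H(z)=\begin{bmatrix} I_{N\times N}\otimes G(z) & R(z)\otimes\Psi(z) \end{bmatrix}.\]
The pointwise invertibility of $I-(I\otimes\Psi(z))\Phi_2(z)$ that makes $R$ well defined follows from $\|\Psi(z)^*\|^2=|h(z,z)|<1$ combined with $\|\Phi_2(z)\|\le 1,$ exactly as in the proof of Theorem~\ref{mainShim}. A rearrangement identical to equation~\eqref{WivPhi} then yields $R(z)=H(z)\Phi(z),$ and the identity
\[\ell(z,w)I-k(z,w)R(z)R(w)^*=H(z)\bigl((I-\Phi(z)\Phi(w)^*)\,s(z,w)\bigr)H(w)^*\]
already established in that proof exhibits the left-hand side as a positive kernel, so $R$ is a contractive multiplier by Lemma~\ref{multdef}.

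For the ``only if'' direction, suppose $R$ is an arbitrary contractive multiplier. Then $\ell(z,w)I-k(z,w)R(z)R(w)^*\succeq 0$ on all of $X\times X$ by Lemma~\ref{multdef}. Substituting $k=(1-h)s$ and $\ell=gs,$ and with $H$ defined from the given $R$ as above, this positivity rewrites as $\bigl(H(z)H(w)^*-R(z)R(w)^*\bigr)s(z,w)\succeq 0.$ The key step is to apply the generalized Leech theorem (\cite[Theorem~8.57]{Pickbook}) on the complete Pick space $\mathcal{H}_s,$ which produces a contractive multiplier $\Phi$ of the advertised form satisfying $R(z)=H(z)\Phi(z)$ for every $z\in X.$ Solving this linear equation for $R(z)$ exactly as in \eqref{WivPhi} recovers the stated formula.

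I expect the only real obstacle to be bookkeeping: verifying the identity $H(z)H(w)^*=g(z,w)I_{N\times N}+h(z,w)R(z)R(w)^*$ by careful tracking of the tensor products, which in turn yields $\bigl(H(z)H(w)^*-R(z)R(w)^*\bigr)s(z,w)=\ell(z,w)I-k(z,w)R(z)R(w)^*.$ Once that identity is confirmed, both implications reduce to a single invocation of Leech together with the pointwise invertibility noted above, both of which are already available from the proof of Theorem~\ref{mainShim}.
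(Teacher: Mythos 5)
Your argument is correct and is exactly the route the paper intends: the ``if'' direction is the closing computation of the proof of Theorem~\ref{mainShim} (via the identity $H(z)H(w)^*=g(z,w)I_{N\times N}+h(z,w)R(z)R(w)^*$), and the ``only if'' direction applies the generalized Leech theorem to $H(z)=\begin{bmatrix} I_{N\times N}\otimes G(z) & R(z)\otimes\Psi(z)\end{bmatrix}$ on all of $X$, using $\|\Psi(z)^*\|<1$ and $\|\Phi_2(z)\|\le 1$ to invert $I-(I_{N\times N}\otimes\Psi(z))\Phi_2(z)$. Your preliminary reduction via Lemma~\ref{CPPsplits} is unnecessary (and the assertion that a contractive $R$ must ``vanish'' off the diagonal blocks is not meaningful, since $R$ is a function of one variable), because the hypothesis ``as in the previous proof'' already places you in the reduced setting where $k$ and $s$ are non-vanishing and $|h(z,z)|<1$.
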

\begin{remark}
A standard modification of the above argument can be used to generalize \cite[Theorem 8.57]{Pickbook} to pairs of kernels possessing a strong Shimorin certificate. Such a generalization has already been obtained by Shimorin as a corollary of his commutant lifting theorem; see \cite[Corollary 2.3]{Shimorin}. 
\end{remark}
\begin{remark}
We will shortly prove Theorem~\ref{generalsuff} (sufficiency for Shimorin certificates), which (in view of Proposition~\ref{newcerttt}) contains Theorem~\ref{mainShim} as a special case. However, we still chose to include the Leech factorization proof of Theorem~\ref{mainShim} because it is of independent interest and  also yields Corollary~\ref{parametcorol}. 
\end{remark}

\subsection{Shimorin Certificates and the CP property for pairs}
Next, we will briefly investigate how the CP property for $(k, \ell)$ places restrictions on the linear independence of kernel functions before turning to generalizing Theorem~\ref{mainShim} to pairs $(k,\ell)$ that have a Shimorin certificate. We again assume that all pairs $(k, \ell) $ consist of kernels that are non-vanishing along the diagonal.

In the case of a single CP kernel $k$ on a set $X,$ it is known 
 (see \cite[Lemma~7.5]{Pickbook}) that if $k_z, k_w$ are linearly independent for every choice of
distinct points $z,w\in X$  then the same must hold for any finite collection $\{k_{z_1}, \dots, k_{z_n}\},$ where $z_1, \dots, z_n\in X$ are $n$ distinct points. Moving to the two-kernel setting, Remark~\ref{anykernel} tells us that, even if, for each $n$ and distinct point $z_1,\dots,z_n\in X$  the collection $\{\ell_{z_1}, \dots, \ell_{z_n}\}$ is  linearly independent,  we cannot draw any conclusions concerning $k_{z_1}, \dots, k_{z_n}$ without further knowledge of $(k, \ell)$. But what if $z_1,\dots,z_n\in X$ are distinct, but $\{\ell_{z_1}, \dots, \ell_{z_n}\}$ is linearly dependent? 
\begin{lemma} \label{deplemma}
Assume $(k, \ell)$ is a CP pair on $X$ and  $n\ge 2.$  Further, assume   $z_1, \dots, z_n$ are distinct points in $X,$ the vectors  $\ell_{z_1}, \dots, \ell_{z_{n-1}}$ are linearly independent and  there exist scalars $c_1, \dots c_{n-1}\in\mathbb{C}$ such that 
\begin{equation}\label{elldep}
  \ell_{z_n}=\sum_{i=1}^{n-1}c_i\ell_{z_i}.  
\end{equation}
If $1\le j\le n-1$ and  $c_j\neq 0$,  then $k_{z_n}, k_{z_j}$ are linearly dependent. 
\end{lemma}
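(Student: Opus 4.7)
The plan is to reduce the lemma to the case $n=2$, which will be handled via Shimorin's necessary condition (Theorem~\ref{Shimnec}); the reduction is carried out by compressing $\ell$ against a carefully chosen finite set of points using Theorem~\ref{generalnec}.

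I would first dispose of the base case $n=2$. Here $\ell_{z_2}=c_1\ell_{z_1}$ with $c_1\ne 0$, which forces the $2\times 2$ Gram matrix $[\ell(z_i,z_j)]$ to have rank one and thus
\[
\ell(z_1,z_1)\ell(z_2,z_2) \;=\; |\ell(z_1,z_2)|^2.
\]
Invoking Theorem~\ref{Shimnec} produces a positive kernel $g$ on $X$ with $\ell=kg$. Substituting, the above equality becomes
\[
k(z_1,z_1)k(z_2,z_2)\,g(z_1,z_1)g(z_2,z_2) \;=\; |k(z_1,z_2)|^2\,|g(z_1,z_2)|^2.
\]
Since $g$ is itself PsD with strictly positive diagonal (by the paper's standing assumption on kernels), the Cauchy--Schwarz inequalities $|k(z_1,z_2)|^2\le k(z_1,z_1)k(z_2,z_2)$ and $|g(z_1,z_2)|^2\le g(z_1,z_1)g(z_2,z_2)$ must both saturate. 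The first saturation is exactly the Cauchy--Schwarz equality case for $k_{z_1},k_{z_2}\in\mathcal{H}_k$, which is the desired linear dependence.

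For the general case $n\ge 3$, I would set $Y=\{z_i : 1\le i\le n-1,\ i\ne j\}$. By Theorem~\ref{generalnec} the pair $(k,\ell^Y)$ is again a CP pair. Writing $V_Y=\operatorname{span}\{\ell_{z_i}:i\in Y\}\subseteq\mathcal{H}_\ell$ and using the identification $\ell^Y_w=\ell_w-P_{V_Y}\ell_w$, I would decompose
\[
\ell_{z_n} \;=\; c_j\ell_{z_j}+\sum_{i\in Y}c_i\ell_{z_i},
\]
note that the second summand already lies in $V_Y$, and compute
\[
\ell^Y_{z_n} \;=\; c_j\bigl(\ell_{z_j}-P_{V_Y}\ell_{z_j}\bigr) \;=\; c_j\,\ell^Y_{z_j}.
\]
Because $\ell_{z_1},\dots,\ell_{z_{n-1}}$ are assumed linearly independent, $\ell_{z_j}\notin V_Y$ and hence $\ell^Y_{z_j}\ne 0$. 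Applying the already-established $n=2$ case to the CP pair $(k,\ell^Y)$ with the dependence $\ell^Y_{z_n}=c_j\ell^Y_{z_j}$ and $c_j\ne 0$, I conclude that $k_{z_j}$ and $k_{z_n}$ are linearly dependent.

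The main obstacle, and the key observation driving the reduction, is the compression identity $\ell^Y_{z_n}=c_j\ell^Y_{z_j}$. The set $Y$ is engineered precisely to absorb every summand other than $c_j\ell_{z_j}$ into $V_Y$, so that the orthogonal projection collapses the $(n-1)$-term dependence into a clean two-term dependence on which the base case bites. The base case itself is essentially a Cauchy--Schwarz rigidity argument enabled by Shimorin's factorization $\ell=kg$, so once the compression step is verified the whole proof reduces to a short piece of linear algebra in $\mathcal{H}_k$.
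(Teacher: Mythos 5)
Your argument is correct, but it takes a genuinely different route from the paper. The paper proves Lemma~\ref{deplemma} directly from the definition of a CP pair: it defines $R\ell_{z_i}=e_ik_{z_i}$ for small free parameters $e_i$, uses the complete Pick property to extend $R$ contractively by $\ell_{z_{n+1}}\mapsto ak_{z_n}$, and observes that consistency with the dependence \eqref{elldep} forces $\sum_i e_ic_ik_{z_i}\in\operatorname{span}\{k_{z_n}\}$; choosing $e_j\neq0$ and the other $e_i=0$ finishes the proof. You instead reduce to $n=2$ by compressing against $Y=\{z_i: i\le n-1,\ i\neq j\}$ via Theorem~\ref{generalnec}, using the identity $\ell^Y_w=\ell_w-P_{V_Y}\ell_w$ to collapse \eqref{elldep} to $\ell^Y_{z_n}=c_j\ell^Y_{z_j}$, and then settle $n=2$ by a Cauchy--Schwarz rigidity argument through Shimorin's factorization $\ell=kg$ (Theorem~\ref{Shimnec}). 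Both steps are sound; in particular there is no circularity, since the proofs of Theorem~\ref{Schurpreserves} and Theorem~\ref{Shimnec} do not use Lemma~\ref{deplemma} or Theorem~\ref{generalsuff} --- though be aware you are invoking Section~\ref{generalnecsecti} material to prove a Section~\ref{suffforCP} lemma, so the paper's ordering would have to be adjusted or the forward reference flagged. Two small points deserve explicit mention in your write-up: when the base case is applied to $(k,\ell^Y)$, the compressed kernel may vanish on the diagonal at points of $Y$, so the positivity of $g(z_j,z_j)$ and $g(z_n,z_n)$ does not come from the standing assumption but from $\ell^Y(z_j,z_j)=\|\ell^Y_{z_j}\|^2>0$ (since $\ell_{z_j}\notin V_Y$) and $\ell^Y(z_n,z_n)=|c_j|^2\|\ell^Y_{z_j}\|^2>0$; and in the saturation argument one needs both diagonal products strictly positive to rule out the degenerate case $g(z_j,z_n)=0$. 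What your approach buys is a conceptually transparent two-point statement (rank-one Gram plus $\ell=kg$ forces Cauchy--Schwarz equality for $k$) at the cost of heavier machinery; the paper's proof buys self-containedness and elementarity, which is why it can sit inside the proof of Theorem~\ref{generalsuff}.
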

\begin{proof}
Assume that there exist scalars $c_1, \dots c_{n-1}$, not all of which can be zero, such that \eqref{elldep} holds. Given 
 $e_1,\dots,e_{n-1}\in \mathbb{C},$ define \[R: \underset{1\le i\le n-1}{\text{span}}\{\ell_{z_i}\}\to \underset{1\le i\le n-1}{\text{span}}\{k_{z_i}\}\]
by $R\ell_{z_i}=e_i k_{z_i}$ (where we suppress the dependence of $R$ on $e$). 
By the independence of $\ell_{z_1},\dots,\ell_{z_{n-1}},$  there exists $\delta>0$ such that $R$ is a contraction whenever $|e_i|<\delta$ for all $i.$ Thus, for any such choice of $e_1, \dots, e_{n-1},$ the CP property of $(k, \ell)$ implies the  existence of $a\in\mathbb{C}$ such that the extension 
\[R_a: \underset{1\le i\le n}{\text{span}}\{\ell_{z_i}\}\to \underset{1\le i\le n}{\text{span}}\{k_{z_i}\}\]
of $R$ determined by $R_a\ell_{z_n}=ak_{z_n}$ is a contraction. Thus, \eqref{elldep} gives us 
\begin{equation}\label{kdep}
\sum_{i=1}^{n-1}e_ic_ik_{z_i}=R\bigg(\sum_{i=1}^{n-1}c_i\ell_{z_i}\bigg)=R\big(\ell_{z_n}\big)\in\text{span}\{k_{z_n}\},    
\end{equation}
for any $e_1, \dots, e_{n-1}$ with modulus less than $\delta.$ Now, assume $c_j\neq 0$ and choose $e_j\neq 0$ sufficiently small with $e_i=0$ whenever $i\neq j.$ The equality \eqref{kdep} implies $k_{z_j}\in\text{span}\{k_{z_n}\},$ as desired.
\end{proof}

We will conclude this section by showing that the existence of a Shimorin certificate is sufficient to guarantee the CP property. The proof employs  the standard one-point-extension argument in combination with Parrott's Lemma, which were also the main ingredients in Shimorin's proof of Theorem~\ref{mainShim}.

\begin{theorem} \label{generalsuff}
If  $(k, \ell)$ is a pair of kernels that possesses a Shimorin certificate, then  $(k, \ell)$ has the CP property. 
\end{theorem}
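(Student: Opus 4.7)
The strategy is a one-point extension argument combined with Parrott's Lemma, paralleling Shimorin's proof of Theorem~\ref{mainShim} but with the family $\{p[z]\}_{z\in X}$ replacing a uniform strong certificate. Once the one-point extension is established, a routine application of Zorn's Lemma to partial interpolants of the original data (ordered by extension) yields a function $\Phi:X\to\mathbb{M}_N$ with $\Phi(z_i)=W_i$ that satisfies the Pick condition on every finite subset of $X$. By Lemma~\ref{multdef}, such a $\Phi$ is automatically a contractive multiplier in $\Mult(\mathcal{H}_k\otimes \CC^N,\mathcal{H}_\ell\otimes \CC^N)$.

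The heart of the argument is the one-point extension: given $(z_i,W_i)_{i\le n}$ satisfying the $n$-point Pick condition and any additional $z_{n+1}\in X$, produce $W_{n+1}\in\mathbb{M}_N$ so that the $(n+1)$-point Pick matrix is positive. I would set $p:=p[z_{n+1}]$, so that by hypothesis $\ell^{z_{n+1}}\succeq p\ell$ and $k^{z_{n+1}}\preceq pk$. Using the kernel decomposition
\[
\ell(z,w)=\ell^{z_{n+1}}(z,w)+\frac{\ell(z,z_{n+1})\ell(z_{n+1},w)}{\ell(z_{n+1},z_{n+1})}
\]
from Lemma~\ref{l:compress:z} (and its $k$-analogue) to split off the $z_{n+1}$-column and row, I would recast the positivity of the augmented Pick matrix as a matrix completion problem: the existence of $D\in\mathbb{M}_N$ making a $2\times 2$ block operator matrix $\bigl(\begin{smallmatrix}A&B\\C&D\end{smallmatrix}\bigr)$ contractive. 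Here $A$ is built from the $n$-point Pick data (together with the kernels $p\ell$ and $pk$ on $\{z_1,\dots,z_n\}$), while $B$ and $C$ are the rank-one corrections contributed by $z_{n+1}$. The inequality $k^{z_{n+1}}\preceq pk$ should yield contractivity of the column $\bigl(\begin{smallmatrix}A\\C\end{smallmatrix}\bigr)$, and $\ell^{z_{n+1}}\succeq p\ell$ the contractivity of the row $(A\;B)$; Parrott's Lemma~\ref{l:parrotf} then furnishes $D$, from which the required $W_{n+1}$ is decoded.

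The main obstacle is making the Parrott setup precise: identifying the blocks $A,B,C$ so that the two marginal contractivity hypotheses of Parrott correspond exactly to the two Shimorin-certificate inequalities at $z_{n+1}$, and verifying that the completion $D$ supplied by Parrott's Lemma genuinely decodes into an $N\times N$ matrix (rather than a bare operator between auxiliary spaces) that serves as a valid $W_{n+1}$. In Shimorin's original proof, the uniform global factorizations $k=(1-h)s$ and $\ell=gs$ make this setup transparent and available at every candidate extension point simultaneously; here only the pointwise data $p[z_{n+1}]$ is available, but this is enough because each one-point extension invokes only a single Parrott application tailored to the single new point. Once the block decomposition is correctly aligned, the rest is the standard one-point-extension template.
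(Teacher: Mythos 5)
Your overall template (one-point extension via Parrott, then a Zorn/transfinite argument) is the same as the paper's, but the crucial step --- how the Shimorin certificate actually enters --- is misaligned, and it is exactly the step you leave unresolved. In the paper, Parrott's Lemma~\ref{l:parrotf} is used only to establish the extension criterion (Lemma~\ref{boringonepoint}): assuming the original data map $R$ determined by $\ell_{z_i}\otimes u\mapsto k_{z_i}\otimes W_i^*u$ is a contraction (equivalent to \eqref{initial}), an admissible $W_{n+1}$ exists if and only if the \emph{compressed} Pick matrix $\big[\ell^{z_{n+1}}(z_i,z_j)I_{N\times N}-k^{z_{n+1}}(z_i,z_j)W_iW_j^*\big]_{i,j=1}^n$ is positive. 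The two Parrott marginals in that argument are the original contraction $R$ and the compression $QR_WL$; both involve the interpolation data $W_i$ together with \emph{both} kernels, so neither marginal corresponds to a single certificate inequality. The certificate does not enter the Parrott step at all; it enters afterwards, through the Schur-product estimate
\[
\ell^{z_{n+1}}(z_i,z_j)I_{N\times N}-k^{z_{n+1}}(z_i,z_j)W_iW_j^*\ \succeq\ p[z_{n+1}](z_i,z_j)\big(\ell(z_i,z_j)I_{N\times N}-k(z_i,z_j)W_iW_j^*\big),
\]
where $\ell^{z_{n+1}}\succeq p[z_{n+1}]\ell$ and $k^{z_{n+1}}\preceq p[z_{n+1}]k$ are used \emph{jointly}, and positivity of the right-hand side holds because it is the Schur product of the positive matrix $\big[p[z_{n+1}](z_i,z_j)\big]$ with the original Pick matrix \eqref{initial}. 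Your proposed alignment (``$k^{z_{n+1}}\preceq pk$ gives the column, $\ell^{z_{n+1}}\succeq p\ell$ gives the row'') cannot work as stated: the certificate inequalities are statements about the kernels alone, while any marginal of a completion problem equivalent to the augmented Pick condition must already encode $W_1,\dots,W_n$; moreover the two inequalities only combine through the Schur product with the positive Pick matrix. The missing idea is precisely this Schur-product step, which is the heart of the proof.

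A secondary gap is your ``routine Zorn'' finish. The operator $R_W$ is freely parametrized by $W_{n+1}$, and Parrott's completion decodes into an $N\times N$ matrix, only when $\ell_{z_{n+1}}\notin\operatorname{span}\{\ell_{z_1},\dots,\ell_{z_n}\}$ (in the decoding one needs the coefficient of $\ell_{z_{n+1}}$ in the dual basis expansion to be nonzero). When the kernel functions are linearly dependent the value at the new point is forced, and one must check both that the forced value is consistent with the data and that the resulting global function interpolates all the prescribed $W_j$; the paper handles this with a separate argument built on Lemma~\ref{deplemma} and a maximal linearly independent subset of $\{\ell_x:x\in X\}$. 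Your proposal does not address this case.
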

\begin{proof} We will first prove the theorem under the assumption that for each $n$ and distinct points $z_1,\dots,z_n,$  the kernel  functions $\ell_{z_1}, \dots, \ell_{z_n},$ are  linearly independent. 
   \par 
 Assume $z_1, \dots, z_n$ are given distinct points in $X$ and $W_1, \dots, W_n$ are $N\times N$ matrices that satisfy
 \begin{equation}\label{initial}
\big[\ell(z_i, z_j)I_{N\times N}-k(z_i, z_j)W_i W_j^*\big]_{i, j=1}^n\succeq 0.
 \end{equation}
We will first show that, for any $z_{n+1}\in X\setminus\{z_1,\dots,z_n\},$ there exists a matrix $W_{n+1}$ so that 
\begin{equation}\label{final}
\big[\ell(z_i, z_j)I_{N\times N}-k(z_i, z_j)W_i W_j^*\big]_{i, j=1}^{n+1}\succeq 0.
 \end{equation}
Let $\{u^{\alpha}\}_{\alpha=1}^N$ be a basis for $\mathbb{C}^N$ and set 
\[\mathcal{M}^k_n=\text{span}\{k_{z_i}\otimes u^{\alpha} : 1\le i\le n, 1\le\alpha\le N\}, \]
\[\mathcal{M}^{\ell}_n=\text{span}\{\ell_{z_i}\otimes u^{\alpha} : 1\le i\le n, 1\le\alpha\le N\}.\]
Define $R: \mathcal{M}_n^{\ell}\to\mathcal{M}_n^{k}$ by
\begin{equation}\label{finiteadj}
R: \ell_{z_i}\otimes u^{\alpha}\mapsto k_{z_i}\otimes W^*_iu^{\alpha}    
\end{equation}
and extend linearly. The operator $R$ is a contraction if and only if for every choice of scalars $\{a^{\alpha}_i\}$ we have 
\[\big\langle\big(I-R^*R\big)\sum_{j, \beta}a^{\beta}_j\ell_{z_j}\otimes u^{\beta}, \sum_{i, \alpha}a^{\alpha}_i\ell_{z_i}\otimes u^{\alpha}\big\rangle\] \[=\sum_{i, j, \alpha, \beta}a^{\beta}_j\overline{a}^{\alpha}_i\Big( \ell(z_i, z_j)\langle u^{\beta}, u^{\alpha}\rangle-k(z_i, z_j)\big\langle W_iW^*_ju^{\beta}, u^{\alpha}\big\rangle\Big)\ge 0,\]
which is equivalent to \eqref{initial}. Thus, $\|R\|\le 1$. \par
Now, by the independence assumption on $\ell_{z_1},\dots,\ell_{z_{n+1}},$  for each choice of $W=W_{n+1}$ there is  an extension $R_W$ of $R$ on $\mathcal{M}_{n+1}^{\ell}$ uniquely determined by
\begin{equation} \label{onepointR}
R_W: \ell_{z_{n+1}}\otimes u^{\alpha}\mapsto k_{z_{n+1}}\otimes W^*u^{\alpha}.    
\end{equation}
When does there exist $W$ so that $\|R_W\|\le 1$ (which is equivalent to \eqref{final})? The answer is given by the following lemma, the proof of which rests on an application of Parrott's Lemma to an appropriate matrix decomposition of $R_W.$ The single-kernel version of this argument is well-known (see e.g. \cite{Quiggin}, \cite{AmcCcompleteNP}, \cite{Pickbook} and the unpublished work of Agler), and there are essentially no new wrinkles in this generalized two-kernel version. Thus, we omit the details. 

\begin{lemma}\label{boringonepoint}
 In the above setting, there exists $W$ so that $\|R_W\|\le 1$ if and only if 
\begin{equation}
    \label{e:boringonepoint}
\big[\ell^{z_{n+1}}(z_i, z_j) I_{N\times N}-k^{z_{n+1}}(z_i, z_j)W_iW_j^* \big]_{i, j=1}^n\succeq 0.
\end{equation}
\end{lemma} 
Now, let $J$ denote the matrix all of whose entries are $1$ with respect to $\{u^{\alpha}\}$. By assumption, there exists a Shimorin certificate $\{p[x]\}_{x\in X}$ for $(k, \ell).$ Thus, 
\begin{equation*}
\begin{split}
&\ell^{z_{n+1}}(z_i, z_j) I_{N\times N}-k^{z_{n+1}}(z_i, z_j)W_iW_j^*  \\ 
=&I_{N\times N} \cdot \big[\ell^{z_{n+1}}(z_i, z_j)\otimes J\big]-W_iW_j^*\cdot \big[k^{z_{n+1}}(z_i, z_j)\otimes J\big] \\
\succeq &I_{N\times N}\cdot\big[p[z_{n+1}](z_i, z_j)\ell(z_i, z_j)\otimes J\big]
-W_iW_j^*\cdot \big[p[z_{n+1}](z_i, z_j)k(z_i, z_j)\otimes J\big] \\ 
=&\big[\ell(z_i, z_j)I_{N\times N}-k(z_i, z_j)W_i W_j^*\big]\cdot\big[p[z_{n+1}](z_i, z_j)\otimes J\big],
\end{split}
\end{equation*}
which is positive, being the Schur product of \eqref{initial} with a positive matrix. Thus, Lemma~\ref{boringonepoint} tells us that, whenever $z_1, \dots, z_n\in X$ and $W_1, \dots, W_n$ are $N\times N$ matrices that satisfy \eqref{initial} and $z_{n+1}$ is any distinct point in $X,$
there exists a matrix $W_{n+1}$ so that \eqref{final} holds. In other words, we have shown that one can always extend a multiplier defined on a finite subset of $X$ to any other point without increasing the norm. We can now employ either transfinite induction (as in \cite[Section 3]{Quiggin}) or a compactness argument (as in \cite[Proposition 2.9]{Serra})
to obtain a contractive multiplier satisfying the initial conditions. We omit the details.
      \par 
 Finally, we prove the general case, where it is possible to have linearly dependent kernel functions. As before, assume $z_1, \dots, z_n$ are given distinct  points in $X$ and $W_1, \dots, W_n$ are $N\times N$ matrices that satisfy \eqref{initial}. Without loss of generality we assume that $\{\ell_{z_1}, \dots, \ell_{z_m}\}$ is a maximal linearly independent subset of $\{\ell_{z_1}, \dots, \ell_{z_n}\}$.
By Zorn's lemma, there exists  a maximal subset $Y\subseteq X$ with the properties that no finite subcollection of $\{\ell_v : v\in Y\}$ is linearly dependent and  $\{z_1, \dots, z_m\}\subseteq Y.$ Let $\widetilde{k}=k|_{Y\times Y}$ and $\widetilde{\ell}=\ell|_{Y\times Y}$. Since $(k, \ell)$ has the CP property, the same must be true for $(\widetilde{k}, \widetilde{\ell})$. Note that \eqref{initial} continues to hold if we restrict to the points $\{z_1, \dots, z_m\}$. Thus, in view of what has already been proved, there exists $\widetilde{\Phi}: Y\to \mathbb{M}_N$ such that $\widetilde{\Phi}$ is a contractive multiplier $\mathcal{H}_{\widetilde{k}}\otimes \mathbb{C}^N\to\mathcal{H}_{\widetilde{\ell}}\otimes \mathbb{C}^N$ that satisfies $\widetilde{\Phi}(z_i)=W_i$ for $1\le i\le m.$ \par
Since the span of $\{\ell_x:x\in X\}$ is dense in  $\mathcal{H}_\ell$, by construction, the span of $\{\ell_y:y\in Y\}$ is dense in $\mathcal{H}_\ell$
and, in fact, the restriction map $\mathcal{H}_{\ell}\to\mathcal{H}_{\widetilde{\ell}}$ is unitary.
Thus $\widetilde{\Phi}$ induces a contractive 
 map $M^*:\mathcal{H}_\ell\otimes \mathbb{C}^N\to \mathcal{H}_k\otimes \mathbb{C}^N$ determined by $M^* \ell_y\otimes u= k_y\otimes \widetilde{\Phi}(y)^*u$ for $y\in Y$ and $u\in \mathbb{C}^N.$  Now, we argue that the function  $\widetilde{\Phi}:Y\to \mathbb{M}_N$ has a unique extension to a function $\Phi:X\to \mathbb{M}_N$ such that  $M^*=M_\Phi^*.$ The function  $\Phi$ so extended satisfies $\Phi(z_j)=W_j$ for all $j$ and thus solves the original interpolation problem. Indeed, given $x\in X\setminus Y,$ we know (by definition of $Y$) that there exist unique points $v_1, \dots ,v_p\in Y$ and unique non-zero scalars $c_1, \dots, c_p$ such that 
 \begin{equation*}\label{ellextend}
     \ell_x=\sum_{i=1}^pc_i\ell_{v_i}.
 \end{equation*}
   In view of Lemma~\ref{deplemma}, there exist, for each $1\le i\le p,$  (non-zero) scalars $d_1, \dots, d_p$ such that $k_{v_i}=d_i k_x.$ Thus,
\[
 M^* \ell_x\otimes u = M^* \bigg(\sum_{i=1}^p c_i \ell_{v_i}\otimes u\bigg) = \sum_{i=1}^p c_i k_{v_i}\otimes \widetilde{\Phi}(v_i)^* u
 = k_x \otimes \bigg[\sum_{i=1}^p c_i d_i \widetilde{\Phi}(v_i)^* u\bigg].
\]
Hence, setting $\Phi^*(x)=\sum_{i=1}^p c_i d_i \widetilde{\Phi}(v_i)^*,$ we have $M^* = M_{\Phi}^*$ and this condition uniquely determines
 $\Phi.$  By construction, $\Phi(z_j)=W_j$ for $1\le j\le m.$  While not as transparent, it is also true that $\Phi(z_j)=W_j$
 for $j>m.$ Indeed, applying the argument above to $j>m=p$ and $\ell_{z_j} =\sum_i c_i \ell_{z_i}$ yields
\[
 k_{z_j}\otimes  \Phi^*(z_j)u = M_\Phi^* \big(\ell_{z_j}\otimes u\big) = k_{z_{j}} \otimes \sum c_i d_i W_i^* u.
\]
 On the other hand, the  matrices $W_j$ for $j>m$ are determined by the $W_j$ for $j\le m.$ To verify this claim,
let $\mathcal{E}$ denote the span of $\{\ell_j:1\le j\le n\}$ and $\mathcal{F}$ the span of $\{k_j:1\le j\le n\}.$ 
The inequality of equation~\ref{initial} is equivalent to the statement that the  mapping $T^*: \mathcal{E}\otimes \CC^N\to \mathcal{F}\otimes  \CC^N$ determined by $T^* \big(\ell_{z_j} \otimes u\big)= k_{z_j}\otimes W_j^*u$ is well-defined and a contraction. In particular,
 arguing as above with the same notations,
\[
 k_{z_j}\otimes W_j^* u =   T^* \big(\ell_{z_j}\otimes u\big)=  \sum_{i=1}^m c_i k_{z_i} \otimes  W_i^* u
  = k_{z_j}\otimes \sum c_id_i W_i^* u.
\]
Thus $W_j^* = \sum_{i=1}^m  c_id_i W_i^* =\Phi(z_j)^*.$
\end{proof}

\normalsize
\section{The CC Property for Diagonal Holomorphic Pairs} \label{theCCsection}
\normalsize
Throughout this section, we work  exclusively  with diagonal holomorphic kernels $k, \ell.$ Unless otherwise noted, they are assumed normalized.
\subsection{One-step extensions}
 Proposition~\ref{convenient} below, whose proof depends upon the following lemma, 
 interprets Proposition~\ref{d:ccp} in terms of one-step extensions.

Recall, for $a=(a_1,\dots,a_\vg)\in \mathbb{N}^\vg,$ the \df{length} of $a$ is $|a|=\sum |a_j|.$ \index{$\vert a\vert$}
\begin{lemma}\label{compatib}
    Assume $\varnothing \ne F\subseteq\mathbb{N}^{\vg}$ is finite and  co-invariant and $d\in\mathbb{N}^{\vg}$ satisfies
\begin{enumerate}[(i)]
    \item  $d\notin F;$
    \item  if $a\in\mathbb{N}^{\vg}$ and $|a|<|d|,$ then $a\in F.$
\end{enumerate}
Setting  $F^+:=(F\cup\{d\})\setminus\{\mathbf{0}\},$ the set $F^+\cup \{\mathbf{0}\}$ is co-invariant  and if  $a,b\in F^+$ and $a\le b,$ then  $b-a\in F.$ 
\end{lemma}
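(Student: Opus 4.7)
The plan is straightforward, built around two observations about $F \cup \{d\}$.

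First, I would note that, since $F$ is non-empty and co-invariant, we have $\mathbf{0}\in F$, so
\[
 F^+\cup\{\mathbf{0}\}=F\cup\{d\}.
\]
Co-invariance of this set reduces to checking that $\{a:a\le b\}\subseteq F\cup\{d\}$ for each $b\in F\cup\{d\}$. When $b\in F$ this is immediate from co-invariance of $F$. When $b=d$ and $a\le d$ with $a\ne d$, some coordinate must be strictly smaller, so $|a|<|d|$, and hypothesis (ii) gives $a\in F$; of course $d\in F\cup\{d\}$. This handles the first claim.

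For the second claim, suppose $a,b\in F^+$ with $a\le b$. Since $F^+$ excludes $\mathbf{0}$, we have $a\ne\mathbf{0}$, hence $|a|\ge 1$, so
\[
 |b-a|=|b|-|a|<|b|.
\]
If $b\in F$, then $b-a\le b$ together with co-invariance of $F$ yields $b-a\in F$. If instead $b=d$, then $|b-a|<|d|$, and hypothesis (ii) delivers $b-a\in F$.

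There is no real obstacle here; the only thing one has to be careful about is the case where $a=\mathbf{0}$, which is precisely ruled out by the definition $F^+=(F\cup\{d\})\setminus\{\mathbf{0}\}$. The whole proof fits in a short paragraph and uses only the non-vanishing of $|a|$ together with co-invariance of $F$ and the length hypothesis (ii) on $d$.
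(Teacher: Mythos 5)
Your proof is correct and follows essentially the same route as the paper: a case split on whether $b\in F$ or $b=d$, using co-invariance of $F$ together with the length hypothesis on $d$ (your use of $|b-a|=|b|-|a|<|d|$ when $b=d$ even absorbs the paper's separate case $a=d$ cleanly). Nothing is missing.
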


\begin{proof}
    If $a, b\in F$, we get the desired conclusion because  $F$ is co-invariant. If $b=d,$ then, since $|a|\ge 1,$ we must have $|b-a|<|b|=|d|,$ hence $b-a\in F$ in view of our hypothesis for $d.$ Finally, if $a=d,$ then we cannot have $b\in F$ (since $d\le b$ would imply that $d\in F$, by co-invariance), and so we must have $b-a=d-d=\textbf{0}.$ \par 
    Now, to check co-invariance, assume $b\in F^+$ and let $a\in \mathbb{N}^{\vg}$ satisfy $a\le b$. If $b\in F,$ then we obtain $a\in F$ since  $F$ is co-invariant.  Thus, either $a\in F^+$ or $a=\{\mathbf{0}\}$. Further, if $b=d,$ then  either  $a=d$ (hence $a\in F^+$) or $|a|<|d|,$ which implies $a\in  F^+\cup\{\mathbf{0}\}$ in view of our hypothesis for $d.$
\end{proof}

\begin{proposition}\label{convenient}
A pair   $(k, \ell)$  of diagonal holomorphic kernels
 has the complete Carath\'eodory property  if and only if for each $J\in\mathbb{N},$ each finite co-invariant $F\subseteq\mathbb{N}^{\vg}$ and each  collection $\{c_a  : a\in F\}\subseteq \mathbb{M}_J$ such that the block (upper-triangular) matrix $C$ indexed by $F\times F$ with block $J\times J$ entries,
\begin{equation}
  \label{e:Cab}
   C_{a, b}=\begin{cases}
     c_{b-a}\sqrt{\cfrac{k_a}{\ell_b}}, \hspace{0.5 cm} b\ge a, \\
      \mathbf{0},  \hspace{1.05 cm} \text{otherwise,}
 \end{cases}
\end{equation}
is a contraction and for every $d\in\mathbb{N}^{\vg}$ such that 
\begin{enumerate}[(i)]
    \item \label{i:convenient:i}  $d\notin F;$ and 
    \item \label{i:convenient:ii} if $a\in\mathbb{N}^{\vg}$ and $|a|<|d|,$ then $a\in F,$
\end{enumerate}
the matrix $C^+$ indexed by $F^+\times F^+,$ where $F^+=(F\cup\{d\})\setminus\{\mathbf{0}\}$, and given by 
\begin{equation}\label{onepoint}
    C^+_{a, b}=\begin{cases}
     c_{b-a}\sqrt{\cfrac{k_a}{\ell_b}}, \hspace{0.5 cm} b\ge a, \\
      \mathbf{0},  \hspace{1.05 cm} \text{otherwise,}
 \end{cases} \end{equation}
 is also a contraction.
\end{proposition}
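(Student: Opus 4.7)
The plan is to reduce the equivalence to a one-point matrix completion solved by Parrott's Lemma (Lemma~\ref{l:parrotf}), then iterate. For the forward direction (CC implies the one-step statement), assume $(k,\ell)$ is CC and let $F$, $\{c_a\}_{a\in F}$, $d$ be given with $C$ contractive. Proposition~\ref{d:ccp} produces coefficients $\{c_a : a\in\mathbb{N}^{\vg}\setminus F\}$ such that the infinite matrix $\mathscr{C}$ is a contraction. The crucial bookkeeping point, guaranteed by Lemma~\ref{compatib}, is that every entry of $C^+$ involves only coefficients $c_{b-a}$ with $b-a\in F$; thus $C^+$ is literally the principal submatrix of $\mathscr{C}$ indexed by $F^+$, and contractivity descends.

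For the converse, the first task is to locate where the free coefficient $c_d$ lives inside the would-be one-element extension $\tilde C$ indexed by $F\cup\{d\}$. A direct check using Lemma~\ref{compatib}, applied to $F\cup\{d\}\setminus\{\mathbf{0}\}$, shows that an entry $\tilde C_{a,b}=c_{b-a}\sqrt{k_a/\ell_b}$ involves $c_d$ only when $a=\mathbf{0}$ and $b=d$. Ordering the rows and columns of $\tilde C$ as $\mathbf{0}$, then $F\setminus\{\mathbf{0}\}$, then $d$, this yields the block form
\[
\tilde C = \begin{pmatrix} c_{\mathbf{0}}I_J & E & B \\ 0 & G & H \\ 0 & 0 & c_{\mathbf{0}}\sqrt{k_d/\ell_d}\,I_J \end{pmatrix},
\]
in which $B=c_d/\sqrt{\ell_d}$ is the only free block. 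Crucially, the upper-left $2\times 2$ block of $\tilde C$ is exactly the given matrix $C$, while the lower-right $2\times 2$ block is exactly $C^+$.

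I would then apply Parrott's Lemma with $B$ placed in the top-right corner: absorbing the zero padding on the left column and bottom row, the required norm hypotheses $\bigl\|\bigl(\begin{smallmatrix}A\\ C\end{smallmatrix}\bigr)\bigr\|\le 1$ and $\|(C\ D)\|\le 1$ translate precisely to $\|C\|\le 1$ (holding by assumption) and $\|C^+\|\le 1$ (holding by the one-step hypothesis). Parrott then produces a $B$---equivalently, a choice of $c_d$---making $\tilde C$ a contraction: a genuine one-point extension from $F$ to $F\cup\{d\}$. To iterate, I would enumerate $\mathbb{N}^{\vg}=\{d_1,d_2,\ldots\}$ in non-decreasing length with $d_1=\mathbf{0}$, so that each initial segment $F_n=\{d_1,\ldots,d_n\}$ is automatically co-invariant and $d_{n+1}$ meets conditions \eqref{i:convenient:i}--\eqref{i:convenient:ii}. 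Starting from $F_N=F$, repeating the Parrott step yields $\{c_{d_n}\}_{n>N}$ such that every finite principal submatrix of the resulting $\mathscr{C}$ is a contraction, hence so is $\mathscr{C}$; Proposition~\ref{d:ccp} then concludes the CC property.

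The main obstacle is purely bookkeeping: verifying via Lemma~\ref{compatib} that $c_d$ is confined to one block of $\tilde C$ and that the one-step hypothesis on $C^+$ matches exactly the row/column norm condition that Parrott's Lemma requires. Once this alignment is made explicit, both implications are essentially immediate; note in particular that the somewhat unusual definition $F^+=(F\cup\{d\})\setminus\{\mathbf{0}\}$ is precisely what makes $C^+$ the lower-right Parrott block, with the $(\mathbf{0},d)$ entry becoming the unknown $B$.
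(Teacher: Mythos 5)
Your proposal is correct and follows essentially the same route as the paper: the forward direction by observing (via Lemma~\ref{compatib}) that $C^+$ involves only the given coefficients and is a principal submatrix of the extended contraction, and the converse by isolating the unknown $c_d$ in the single entry $(\mathbf{0},d)$ of $\widetilde{C}$ and applying Parrott's Lemma, whose two norm hypotheses reduce (after discarding zero rows/columns) exactly to $\|C\|\le 1$ and $\|C^+\|\le 1$, followed by induction and Proposition~\ref{d:ccp}. The only cosmetic caveat is that an arbitrary finite co-invariant $F$ need not be an initial segment of your length-nondecreasing enumeration, so the iteration should be phrased (as in the paper) as repeatedly adjoining a point of minimal length outside the current set, which satisfies conditions \eqref{i:convenient:i}--\eqref{i:convenient:ii} and keeps the set co-invariant by Lemma~\ref{compatib}.
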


\begin{remark}
    The condition of, universally, passing from a contraction $C$ to a contraction $C^+$
 is the \df{one-step extension property}. 
\end{remark}

\begin{remark}\rm
\label{r:more-one-step}
 Observe Lemma~\ref{compatib} is implicitly used in defining  $C^+.$ 
 \end{remark} 
 
\begin{remark}
 The existence of a $d$ satisfying the conditions of items~\eqref{i:convenient:i}
 and~\eqref{i:convenient:ii} of Proposition~\ref{convenient}
 is not in doubt so long as $F\ne \NN^\vg.$  Indeed,
 let $m=\min \{ |f|: f\notin F\}$ and choose $d$ such that $d\notin F$ and $|d|=m.$
 In particular, by Lemma~\ref{compatib}, the set $G=F\cup\{d\}$ is co-invariant.
\end{remark}

\begin{proof}[Proof of Proposition~\ref{convenient}]
 Suppose $(k,\ell)$ has the CC  property and fix  a co-invariant $F$ and  $d\in\mathbb{N}^{\vg}\setminus F$ 
 is such that $F\cup\{d\}$ is also co-invariant. Let $F^+=(F\cup\{d\})\setminus\{\mathbf{0}\}$ 
 and assume $\{c_a:a\in F\}$ is such that the corresponding
 matrix $C$ in equation~\eqref{e:Cab} is a contraction. By assumption, there exists $c_d$
 such that the matrix  indexed by $(F\cup\{d\})\times (F\cup\{d\})$
\[
 \widetilde{C}_{a,b} = \begin{cases}
     c_{b-a}\sqrt{\cfrac{k_a}{\ell_b}}, \hspace{0.5 cm} b\ge a, \\
      \mathbf{0},  \hspace{1.05 cm} \text{otherwise.}
 \end{cases}
\]
 is also a contraction.  By considering the submatrix indexed by $F^+\times F^+,$ 
 we find that the matrix $C^{+}$ (indexed by $F^+\times F^+$),
\begin{equation} \label{anothonepoint}
     C^{+}_{a, b} = \begin{cases}
     c_{b-a}\sqrt{\cfrac{k_a}{\ell_b}}, \hspace{0.5 cm} b\ge a>0, \\
      \mathbf{0},  \hspace{1.7 cm} \text{otherwise,}
 \end{cases}
\end{equation}
 is a contraction.

 It remains to prove, if $(k,\ell)$ satisfies the one-step extension property as in the statement of the proposition,
 then $(k,\ell)$ has the CC property. Accordingly, 
 suppose  $F$ is co-invariant, $d$ satisfying the conditions of items~\eqref{i:convenient:i} and \eqref{i:convenient:ii} of the 
 proposition and $C$ as in equation~\eqref{e:Cab} is $C$ is a contraction. 
 Thus, by the one-step-extension assumption, $C^+$ is also a contraction.
 With $c_d$ to be determined, consider the enlarged $(F\cup\{d\})\times (F\cup\{d\})$ matrix
 \[
 \widetilde{C}_{a, b}=\begin{cases}
     c_{b-a}\sqrt{\cfrac{k_a}{\ell_b}}, \hspace{0.5 cm} b\ge a, \\
      \mathbf{0},  \hspace{1.05 cm} \text{otherwise.}
 \end{cases} 
\] 
 Partitioning 
\[
 \widetilde{C} = \begin{pmatrix} A & X \\   B & D\end{pmatrix},
\]
 where the scalar entry $X$ is to be determined and
\[
 \begin{pmatrix} A\\ B\end{pmatrix} =  \begin{pmatrix} \widetilde{C}_{a,b} \end{pmatrix}_{b\ne d}
    =\begin{pmatrix} C \\ \bzero \end{pmatrix},
\]
 (where the partitionings on the left and right are not the same) and
\[
 \begin{pmatrix} B & D \end{pmatrix} = \begin{pmatrix} \widetilde{C}_{a,b} \end{pmatrix}_{a\ne d}
  =\begin{pmatrix} \bzero & C^+ \end{pmatrix},
\]
 an application of Parrott's Lemma produces an $X$ such that $\widetilde{C}$ is a contraction.
 Setting  $c_d=\sqrt{\ell_d}X$  shows there is a choice of $c_d$ for which $\widetilde{C}$ is a  contraction.

 Since, by Lemma~\ref{compatib},  $F\cup \{d\}$ is also co-invariant, we may now proceed by induction; first, we extend
   $F$ finitely many (possibly zero) times so that it contains every $a\in\mathbb{N}^{\vg}$ with $|a|\le 1,$ then we perform another finite number of extensions to include every $a\in\mathbb{N}^{\vg}$ with $|a|\le 2,$ and so on. Indeed, choosing the new point $d\in\mathbb{N}^{\vg}$ so that it satisfies items~\eqref{i:convenient:i} and~\eqref{i:convenient:ii} at each step guarantees that the extended index set $F\cup\{d\}$ will always be co-invariant.  By induction we obtain $\{c_a\in \mathbb{M}_J: a\notin F\}$ so that 
the infinite matrix $\mathscr{C}$ given by \eqref{fullext} is  a contraction as well, in the sense that each finite submatrix is a contraction,  which is what the CC property requires.
\end{proof}

\subsection{Sufficient conditions for the CC property}

For the purposes of this section, we will temporarily recast Definition~\ref{strcertdef} in terms of formal power series. Domain consideration issues will not trouble us until Section~\ref{mainmain}. 
 \par

Given diagonal holomorphic kernels $(k,\ell)$, 
  a \df{formal Shimorin certificate} for $(k,\ell)$ is a formal power series 
\[
   t(x)=\sum_{|a|>0} t_a x^a,
\]
  where $t_a \ge 0$ for all $a$, for which there exist 
\[
 g(x)=\sum_{a\in\mathbb{N}^{\vg}}g_a x^a \hspace{0.2 cm} \text{ and } \hspace{0.2 cm} \hb(x)=\sum_{a\in\mathbb{N}^{\vg}}h_a x^a
\]
  such that $g_a, \hb_a\ \ge 0$ for all $a$ and 
\[
 \ell(1-t)=g, \ \ \ 1-k(1-t)=\hb,
\]
 in the sense of formal power series. These 
 conditions are equivalent to $g_{\mathbf{0}}=\ell_{\mathbf{0}}$ and $\hb_{\mathbf{0}}=1-k_{\mathbf{0}}$ and, for $|a|>0,$
\begin{equation}
 \label{lfact}
 \ell_a=g_a+\sum_{0<u\le a}t_u\ell_{a-u} 
\end{equation}
and 
\begin{equation} 
 \label{kfact}
  k_a+\hb_a=\sum_{0<u\le a}t_u k_{a-u}.
\end{equation}
\par 
 We will now show that existence of a formal Shimorin certificate guarantees the CC property. Even though this result is subsumed by Theorem~\ref{mainextmain} (to be proved in the next section), we have chosen to include the proof as it requires only a short detour and might be of independent interest. 

\begin{theorem}\label{ShimtoCCP}
 Suppose the kernels  $k(z, w)=1+\sum_{|a|\ge 1}k_az^a\overline{w}^a$ and $\ell(z, w)=1+\sum_{|a|\ge 1}\ell_az^a\overline{w}^a$ are diagonal and  holomorphic. 
 If there is a formal Shimorin certificate for $(k,\ell),$ then $(k,\ell)$ has the CC property. 
   \end{theorem}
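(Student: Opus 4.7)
The plan is to apply Proposition~\ref{convenient} and verify the one-step extension property. Fix a finite co-invariant $F\subseteq\mathbb{N}^{\vg}$, a positive integer $J$, coefficients $\{c_a:a\in F\}\subseteq\mathbb{M}_J$ rendering the matrix $C$ of~\eqref{e:Cab} contractive, and a minimal new index $d\in\mathbb{N}^{\vg}\setminus F$ as in items~\eqref{i:convenient:i}--\eqref{i:convenient:ii} of Proposition~\ref{convenient}; put $F^+=(F\cup\{d\})\setminus\{\mathbf{0}\}$. Conjugating by the positive diagonal $\operatorname{diag}(\sqrt{\ell_a})$, both the hypothesis $\|C\|\leq 1$ and the goal $\|C^+\|\leq 1$ are equivalent to positive semi-definiteness of the block matrices
\[
M_S(a,b)\,:=\,\ell_a\delta_{ab}\,I_J\;-\;\sum_{u\in S,\,u\leq a\wedge b} c_{a-u}^{*}\,c_{b-u}\,k_u,
\]
for $S=F$ and $S=F^+$, respectively. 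So I aim to deduce $M_{F^+}\succeq 0$ from $M_F\succeq 0$ using the formal Shimorin relations.

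Substituting the identities~\eqref{lfact} and~\eqref{kfact} into $M_{F^+}(a,b)$ (valid since $\mathbf{0}\notin F^+$ forces $a,b>0$) and collecting terms should produce the identity
\[
M_{F^+}(a,b)\;=\;g_a\delta_{ab}\,I_J\;+\;\sum_{u\in F^+,\,u\leq a\wedge b} h_u\,c_{a-u}^{*}\,c_{b-u}\;+\;\sum_{v\in F^+,\,v\leq a\wedge b} t_v\,M_{F^+_v}(a-v,b-v),
\]
where $F^+_v:=\{u'\in\mathbb{N}^{\vg}:u'+v\in F^+\}$. The first summand is a nonnegative diagonal, and the second has the form $\sum_u h_u V_u^{*}V_u$ with $V_u:=(c_{a-u})_{a\in F^+,\,a\geq u}$ a row of $J\times J$ blocks; both are manifestly PSD.

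The main point, and the main place where care is required, is to verify that each shifted block $[M_{F^+_v}(a-v,b-v)]_{a,b\in F^+}$ is PSD, equivalently that $M_{F^+_v}\succeq 0$ on $F^+_v$. A short combinatorial analysis based on the minimality of $d$ will show that $F^+_v$ is always a co-invariant subset of $F$: concretely, $F^+_d=\{\mathbf{0}\}$; and for $v\in F\setminus\{\mathbf{0}\}$, $F^+_v$ is either $F_v$ or $F_v\cup\{d-v\}$ (the latter precisely when $v\leq d$, in which case $d-v\in F$ because $|d-v|<|d|$), each option being a co-invariant subset of $F$. Combined with the observation that for $x,y\in F^+_v$ any $u\in F$ with $u\leq x\wedge y$ must already lie in $F^+_v$ by co-invariance, the index sets in the sums defining $M_{F^+_v}(x,y)$ and $M_F(x,y)$ agree; hence $M_{F^+_v}=M_F|_{F^+_v\times F^+_v}$ is PSD as a principal submatrix of $M_F$. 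Summing the three manifestly PSD contributions in the displayed identity then yields $M_{F^+}\succeq 0$, completing the proof.
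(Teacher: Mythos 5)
Your proposal is correct and follows essentially the same route as the paper's proof: reduce to the one-step extension via Proposition~\ref{convenient}, rescale by $\operatorname{diag}(\sqrt{\ell_a})$, substitute \eqref{lfact} and \eqref{kfact}, and decompose $M_{F^+}$ into a nonnegative diagonal coming from $g$, a Gram-type positive term coming from $h$, and $t$-weighted shifted copies of $M_F$. The only minor difference is that you justify positivity of the shifted blocks through the co-invariance of the sets $F^+_v$ and principal submatrices of $M_F$, whereas the paper invokes its Lemma~\ref{shifted}; these amount to the same Gram-factorization observation.
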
  
Before turning to the proof of Theorem~\ref{ShimtoCCP}, we  record
the following lemma.

\begin{lemma}\label{shifted}
Assume $F\subseteq\mathbb{N}^{\vg}$ is co-invariant. If  the block matrix $C=(C_{a, b})_{a, b\in F}$
with entries from $\mathbb{M}_J$  is positive, then, for each  $v\in\mathbb{N}^{\vg}$, the matrix  $\widetilde{C}=(\widetilde{C}_{a, b})_{a, b\in F}$
 defined by
\[
  \widetilde{C}_{a, b}=\begin{cases}
    C_{a-v, b-v}, \hspace{0.3 cm} a, b\ge v \\
    \mathbf{0}, \hspace{1.1 cm} \text{otherwise},
 \end{cases}
\]
is positive. 
\end{lemma}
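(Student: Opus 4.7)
The plan is to verify positivity of $\widetilde{C}$ by a direct quadratic form computation, observing that $\widetilde{C}$ is obtained from $C$ by a ``shift, restrict and pad with zeros'' operation that manifestly preserves positivity.

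First I would record the key consequence of co-invariance: if $a \in F$ and $a \ge v$, then $a - v \le a$, so $a - v \in F$. Hence the entries $C_{a-v, b-v}$ appearing in the definition of $\widetilde{C}$ are in fact indexed by $F \times F$, so the definition makes sense. Let $F_v = \{a \in F : a \ge v\}$ and $G = \{c \in \mathbb{N}^{\vg} : c + v \in F\}$. The map $c \mapsto c+v$ is a bijection $G \to F_v$, and again by co-invariance applied to $c + v \in F$ and $c \le c + v$, we have $G \subseteq F$.

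Next, given any vector $\xi = (\xi_a)_{a \in F}$ with $\xi_a \in \mathbb{C}^J$, compute
\[
\sum_{a, b \in F} \langle \widetilde{C}_{a, b} \xi_b, \xi_a \rangle
= \sum_{a, b \in F_v} \langle C_{a-v, b-v} \xi_b, \xi_a \rangle
= \sum_{c, d \in G} \langle C_{c, d} \xi_{d+v}, \xi_{c+v} \rangle,
\]
after the change of variables $c = a-v$, $d = b-v$. Then define an auxiliary vector $\eta = (\eta_c)_{c \in F}$ by $\eta_c = \xi_{c+v}$ for $c \in G$ and $\eta_c = \mathbf{0}$ for $c \in F \setminus G$. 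Since the matrix entries with $c \in F \setminus G$ contribute zero to the sum, one has
\[
\sum_{c, d \in G} \langle C_{c, d} \xi_{d+v}, \xi_{c+v} \rangle
= \sum_{c, d \in F} \langle C_{c, d} \eta_d, \eta_c \rangle \ge 0,
\]
where the inequality uses the positivity of $C$. This shows $\widetilde{C} \succeq 0$.

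There is no real obstacle; the only point requiring care is checking that the indices $a - v$ and $c + v$ stay in $F$ when needed, which is precisely where co-invariance is used. Alternatively, one can package the argument operator-theoretically by writing $\widetilde{C} = W^* C W$, where $W$ acts on $\bigoplus_{a \in F} \mathbb{C}^J$ by $W(e_a \otimes h) = e_{a-v} \otimes h$ for $a \ge v$ and $W(e_a \otimes h) = 0$ otherwise; positivity of $\widetilde{C}$ then follows immediately from that of $C$.
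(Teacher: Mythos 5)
Your proof is correct, and it rests on the same essential observation as the paper's: the shift-and-pad operation preserves positivity, with co-invariance guaranteeing that the shifted indices $a-v$ (and $c+v$) remain in $F$ so everything is well defined. The only difference is in packaging: the paper factors $C_{a,b}=G(a)G(b)^*$ and shifts the factor, defining $\widetilde{G}(a)=G(a-v)$ for $a\ge v$ and $\mathbf{0}$ otherwise, so that $\widetilde{C}_{a,b}=\widetilde{G}(a)\widetilde{G}(b)^*$; your congruence $\widetilde{C}=W^*CW$ (equivalently, the direct quadratic-form computation with the padded vector $\eta$) is just the dual formulation of the same argument, since $\widetilde{G}=W^*G$ gives $\widetilde{C}=W^*GG^*W=W^*CW$. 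Either version is complete; yours has the mild advantage of not invoking a finite Gram factorization and so applies verbatim to infinite co-invariant $F$ (interpreting positivity via finitely supported vectors), though that generality is not needed in the paper.
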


\begin{remark}\rm
 The statement of Lemma~\ref{shifted} has used the assumption $F$ is coinvariant to guarantee 
  that $a,b\ge v$ implies $a-v,b-v\in F.$
\end{remark}

\begin{proof}
  Since $C$ is positive, there exists $M\ge 1$ and $G: F\to \mathbb{M}_{J\times M}$ such that $C_{a, b}=G(a)G(b)^*$ for all $a, b\in F.$ Define $\widetilde{G}: F\to \mathbb{M}_{J\times M}$ by setting $\widetilde{G}(a)=G(a-v)$ if $a\ge v$ and $G(a)=\mathbf{0}$ otherwise. Then, $\widetilde{C}_{a, b}=\widetilde{G}(a)\widetilde{G}(b)^*,$ for all $a, b\in F$, which implies that $\widetilde{C}$ is positive. 
\end{proof}

\begin{proof}[Proof of Theorem~\ref{ShimtoCCP}]
 By assumption, there exist power series $t,g,h$ satisfying the conditions of equations~\eqref{kfact} and~\eqref{lfact}.
We will prove that $(k,\ell)$ has the CC property as per Proposition~\ref{convenient}. Suppose $J\in\mathbb{N},$  the set  $F\subseteq\mathbb{N}^{\vg}$ is co-invariant,  the collection of coefficients $\{c_a  : a\in F\}\subseteq \mathbb{M}_J$ is such that the block matrix $C=(c_{a, b})_{a, b\in F}$ defined as in \eqref{data} is a contraction and  $d\in\mathbb{N}^{\vg}$ satisfies
\begin{enumerate}[(i)]
    \item $d\notin F;$
    \item if $a\in\mathbb{N}^{\vg}$ and $|a|<|d|,$ then $a\in F.$
\end{enumerate}
It suffices to show 
 that the matrix $C^+$ indexed by $F^+\times F^+$ (where $F^+=(F\cup\{d\})\setminus\{\mathbf{0}\}$) and defined as in \eqref{onepoint} is a contraction.

Now, $C$ being a contraction implies that $X:=I-C^*C$ is positive. Letting $I_J$ denote the $J\times J$ identity matrix, we obtain, for every $a, b\in F,$
\[X_{a, b}=\delta_{a, b}I_J-(C^*C)_{a, b} \]
\[=\delta_{a, b}I_J-\sum_{u\in F}(C^*)_{a, u}c_{u, b} \]
\[=\delta_{a, b}I_J-\sum_{u\in F}(c_{u, a})^*c_{u, b} \]
\[=\delta_{a, b}I_J-\sum_{u\in F}c^*_{a-u}c_{b-u}\frac{k_u}{\sqrt{\ell_a\ell_b}} \]
\[=\delta_{a, b}I_J-\sum_{u\le a,b }c^*_{a-u}c_{b-u}\frac{k_u}{\sqrt{\ell_a\ell_b}}, \]
where the last equality holds because $F$ is co-invariant. Taking, for convenience, the Schur product of $X$ with the dyad $(\sqrt{\ell_a}\sqrt{\ell_b})_{a, b\in F},$ we obtain that the matrix with entries 
\begin{equation}\label{X}
    \sqrt{\ell_a\ell_b}X_{a, b}=\ell_a\delta_{a, b}I_J-\sum_{u\le a,b }k_uc^*_{a-u}c_{b-u},
    \end{equation}
$a, b\in F,$ is positive. Our goal now is to show that $X^+:=I-(C^+)^*C^+$ is positive. Arguing as before, $X^+$ is positive if and only if the matrix with entries 
\[\sqrt{\ell_a\ell_b}X^+_{a, b}=\ell_a\delta_{a, b}I_J-\sum_{u\in F^+ }k_uc^*_{a-u}c_{b-u} \]
\begin{equation}\label{Xplus}
    =\ell_a\delta_{a, b}I_J-\sum_{0<u\le a,b }k_uc^*_{a-u}c_{b-u},
    \end{equation}
$a, b\in F^+$, is positive. In view of \eqref{kfact}, \eqref{lfact}, 
\[ \begin{split}
&\sqrt{\ell_a \ell_b} X^+_{a, b}
\\ &=g_a\delta_{a, b}I_J+\sum_{0<v\le a}t_v\ell_{a-v}\delta_{a, b}I_J-\sum_{0<u\le a, b}\sum_{0<v\le u}t_vk_{u-v}c^*_{a-u}c_{b-u}+\sum_{0<u\le a, b}h_uc^*_{a-u}c_{b-u} 
\end{split}\]
\[=g_a\delta_{a, b}I_J+\sum_{0<v\le a}t_v\ell_{a-v}\delta_{a, b}I_J-\sum_{0<v\le a, b}\sum_{v\le u\le a, b}t_vk_{u-v}c^*_{a-u}c_{b-u}+\sum_{0<u\le a, b}h_uc^*_{a-u}c_{b-u} \]
\[=g_a\delta_{a, b}I_J+\sum_{0<u\le a, b}h_uc^*_{a-u}c_{b-u}+\sum_{0<v\le a}t_v\Big(\ell_{a-v}\delta_{a, b}I_J-\sum_{v\le u\le a, b}k_{u-v}c^*_{a-u}c_{b-u}\Big) \]
\[=g_a\delta_{a, b}I_J+\sum_{0<u\le a, b}h_uc^*_{a-u}c_{b-u}+\sum_{0<v\le a}t_v\Big(\ell_{a-v}\delta_{a, b}I_J-\sum_{w\le a-v, b-v}k_{w}c^*_{a-v-w}c_{b-v-w}\Big) \]
\[=g_a\delta_{a, b}I_J+\sum_{0<u\le a, b}h_uc^*_{a-u}c_{b-u}+\sum_{0<v\le a}t_v\sqrt{\ell_{a-v}\ell_{b-v}}X_{a-v, b-v}, \]
for all $a, b\in F^+,$ where for the last equality we have used 
\eqref{X} (note that, since $v>0,$ both $a-v, b-v\in F$ assuming they are defined).  Thus, we obtain 
\begin{equation}\label{decomp}
X^+=G+H+\sum_{v\in F^+}t_v X_v,    
\end{equation}
where the $\mathbb{M}_J$-block matrices $G=(G_{a, b}), H=(H_{a, b}),$ and $X_v=\big((X_v)_{a, b}\big)$ are indexed by $F^+\times F^+$ and defined as follows:
\begin{enumerate}[(i)]
    \item  $G_{a, b}=\frac{g_a}{\ell_a}\delta_{a, b}I_J$;
    \item  $H_{a, b}=\sum_{0<u\le a, b}\frac{h_u}{\sqrt{\ell_a\ell_b}}c^*_{a-u}c_{b-u}$;
    \item $(X_v)_{a, b}=X_{a-v, b-v}$, for any $v\in F^+,$ where $X_{a-v, b-v}=\mathbf{0}$  if either $v\nleq a$ or $v\nleq b$,
\end{enumerate}
for all $a, b\in F^+.$ Clearly, $G$ is positive, while a short computation reveals that $H=(C_h)^*C_h,$ where 
\[
   (C_h)_{a, b}=\frac{\sqrt{h_a}}{\sqrt{\ell_b}}c_{b-a}, 
\]
for all $a, b\in F^+.$ Thus, $H$ must be positive as well. Finally, every $X_v$ is positive because of Lemma~\ref{shifted} and the fact that $X$ was positive to begin with. Hence, \eqref{decomp} allows us to conclude that $X^+$ is positive, as desired. 
\end{proof}

\subsection{Existence of a master certificate and a necessary condition for the CP property}
\label{yesmaster}
Let $k$ be a normalized diagonal holomorphic kernel with coefficients $\{k_a\}_{a\in\mathbb{N}^{\vg}}.$ We recall Definition~\ref{mastercert} from the introduction: the \df{master certificate associated with $k$} is the formal power series in $\vg$ complex variables defined as 
\[
 \mt(x)=\sum_{b\in\mathbb{N}^{\mathsf{g}}}\mt_bx^b,
\]
where $\mt_{\mathbf{0}}=0$, $\mt_{e_j}=k_{e_j}$ for all $j$ (recall that $e_j\in \NN^\vg$ denotes the element with $1$ in the $j$-th entry and $0$
 elsewhere) and
\begin{equation}
\label{def:mtb}
  \mt_b=\max\Bigg\{0,k_b-\sum_{\substack{w+u=b, \\ w, u\neq \mathbf{0}}}\mt_w k_u\Bigg\},
\end{equation}
for all $b\in\NN^{\vg}$ with $|b|>1.$
 In this subsection, we show that, for any diagonal holomorphic $\ell,$ if $(k, \ell)$ has the CC property, then $\mt$ is a formal Shimorin certificate for $(k, \ell).$ Thus, in combination with our results from the previous subsection, we  obtain a complete characterization of those pairs of diagonal holomorphic kernels possessing the CC property. Recall the standing assumption that $k$ and $\ell$ are normalized. 
             \par 
   Two things are immediate from the construction of $\mt$. First, $k_b\ge \mt_b\ge 0$ for all $b$  and  second
\begin{equation}
\label{e:why-t-works}
 k_b-\sum_{\substack{w+u=b, \\ w\neq \mathbf{0}}}  k_u \mt_w\le 0
\end{equation}
 for all $\mathbf{0}\ne b\in \NN^\vg,$ so that, as a formal power series, the coefficients of $h=1-k(1-\mt)$ are all  non-negative. These observations are
 summarized in the following theorem.
 
 \begin{theorem}
 \label{1-k(1-mt)}
  The master certificate $\theta$ associated to  a normalized diagonal holomorphic kernel $k$
  satisfies $k_b\ge \mt_b$ for all $b$ and the coefficients of the formal power series
  $h=1-k(1-\mt)$ are all non-negative.
 \end{theorem}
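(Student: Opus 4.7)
The plan is to verify both assertions by direct computation from the recursive definition of $\theta$, handling the base cases separately and then using the max-with-zero structure to extract the desired inequalities.

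First I would dispense with the inequality $k_b \ge \theta_b$. By a short induction on $|b|$, the max-with-zero clause in \eqref{def:mtb} forces $\theta_w \ge 0$ for all $w$, since the base cases $\theta_\mathbf{0}=0$ and $\theta_{e_j}=k_{e_j}>0$ are non-negative, and the inductive step gives $\theta_b = \max\{0,\,\cdot\,\} \ge 0.$ Once this is in hand, $k_b \ge \theta_b$ is immediate: the case $|b|=0$ reads $1 \ge 0$, the case $|b|=1$ gives equality, and for $|b|\ge 2$ either $\theta_b=0\le k_b$ (using $k_b>0$) or $\theta_b = k_b - \sum_{w+u=b,\,w,u\ne\mathbf{0}} \theta_w k_u \le k_b$ because the subtracted sum is a non-negative combination of non-negative terms.

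For the second assertion, I would compute the coefficient $h_b$ of $x^b$ in $h = 1 - k + k\theta$ explicitly. At $b=\mathbf{0}$ one gets $h_\mathbf{0} = 1 - k_\mathbf{0} + k_\mathbf{0}\theta_\mathbf{0} = 0$. At $b=e_j$ one gets $h_{e_j} = -k_{e_j} + k_\mathbf{0}\theta_{e_j} + k_{e_j}\theta_\mathbf{0} = -k_{e_j}+k_{e_j}=0$. For $|b|\ge 2$, I split the Cauchy product $\sum_{u+w=b} k_u\theta_w$ according to whether $u$ or $w$ equals $\mathbf{0}$. The term with $w=\mathbf{0}$ vanishes (since $\theta_\mathbf{0}=0$), the term with $u=\mathbf{0}$ contributes $\theta_b$, and the rest is $\sum_{u+w=b,\,u,w\ne\mathbf{0}} k_u \theta_w$. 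Therefore
\[
  h_b = -k_b + \theta_b + \sum_{\substack{u+w=b\\ u,w\ne \mathbf{0}}} k_u \theta_w.
\]

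The final step is the observation that the right-hand side is $\ge 0$ essentially by the very definition of $\theta_b$: the second argument of the max in \eqref{def:mtb} gives
\[
  \theta_b \;\ge\; k_b - \sum_{\substack{u+w=b\\ u,w\ne \mathbf{0}}} k_u \theta_w,
\]
and rearranging is exactly $h_b \ge 0$. This is precisely the intent behind inequality~\eqref{e:why-t-works} noted in the prose just before the theorem. There is no real obstacle here; the theorem is essentially a formal book-keeping consequence of packaging the defining recursion as a max, and the only thing that needs care is correctly accounting for the $u=\mathbf{0}$ and $w=\mathbf{0}$ boundary terms in the Cauchy product so that one recognizes the right-hand side as exactly the quantity that the max clause controls.
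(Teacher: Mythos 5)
Your proposal is correct and follows the same route as the paper: the paper simply records (as equation~\eqref{e:why-t-works}) that the max-clause in the definition of $\mt_b$ yields $k_b-\sum_{w+u=b,\,w\neq\mathbf{0}}k_u\mt_w\le 0$, which is exactly your rearranged inequality $h_b\ge 0$, and $k_b\ge\mt_b\ge 0$ is likewise read off directly from the recursion. Your write-up just makes explicit the boundary-term bookkeeping in the Cauchy product that the paper treats as immediate.
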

  
   \par

\begin{theorem}
\label{almostCCPtoShim}
 Suppose $\ell$ is a normalized diagonal holomorphic kernel.  If $(k,\ell)$ 
 has the CC property, then the coefficients of the formal power series $g=\ell(1-\mt)$ are all non-negative;
  that is,  $h_0=0$ and 
\[
g_a=\ell_a - \sum_{\substack{u\le a, \\ u\neq \mathbf{0}}}\mt_u \ell_{a-u} \ge 0
\]
 for $ \mathbf{0} \ne a\in \NN^\vg.$
 \end{theorem}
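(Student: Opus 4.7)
The plan is to proceed by induction on $n=|a|,$ using the one-step extension reformulation of the CC property established in Proposition~\ref{convenient}.

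\emph{Base case} $(n=1)$: I take $a=e_j,$ $F=\{\mathbf{0}\},$ $d=e_j,$ and $c_{\mathbf{0}}=1.$ Then $C=(1)$ is a contraction, so Proposition~\ref{convenient} forces $C^{+}=(\sqrt{k_{e_j}/\ell_{e_j}})$ to be a contraction, i.e., $\ell_{e_j}\ge k_{e_j}=\mt_{e_j}.$ Since $g_{e_j}=\ell_{e_j}-\mt_{e_j},$ this is exactly $g_{e_j}\ge 0.$

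\emph{Inductive step}: Fix $|a|=n\ge 2$ and assume $g_b\ge 0$ for all $|b|<n.$ Take the co-invariant set $F=\{b\in\NN^{\vg}:|b|<n\}$ and $d=a,$ so that $F^{+}=(F\cup\{a\})\setminus\{\mathbf{0}\}.$ The goal is to construct a collection $\{c_b:b\in F\}\subseteq\mathbb{M}_J$ (for an appropriately chosen $J$) such that the Carath\'eodory matrix $C$ on $F\times F$ is a contraction, and such that the extension $C^{+}$ on $F^{+}\times F^{+}$ (which is then forced to be a contraction by CC) carries as its key PSD constraint precisely the inequality $g_a\ge 0.$ The design of $\{c_b\}$ is guided, read in reverse, by the identity that drives the proof of Theorem~\ref{ShimtoCCP}:
\[
\sqrt{\ell_a\ell_b}\,X^{+}_{a,b} = g_a\,\delta_{a,b} + \sum_{0<u\le a\wedge b} h_u\,c^{*}_{a-u}c_{b-u} + \sum_{0<v\le a\wedge b}\mt_v\,\sqrt{\ell_{a-v}\ell_{b-v}}\,X_{a-v,b-v},
\]
with $X=I-C^{*}C$ and $X^{+}=I-(C^{+})^{*}C^{+}.$ The coefficients $\{c_b\}$ are to be selected so that, at the $(a,a)$-entry, the contributions from $h_u,$ $\mt_v$ and the off-diagonal parts of $X$ organize into manifestly non-negative quantities (using Theorem~\ref{1-k(1-mt)} and the inductive non-negativity of $g_{b'}$ for $|b'|<n$), leaving $g_a$ as the only ``free'' term; the PSD condition $X^{+}\succeq 0$ supplied by CC then forces $g_a\ge 0.$

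The main obstacle will be engineering these coefficients so that $C$ is genuinely contractive at every inductive stage. A first attempt using the scalar Shimorin-factor ansatz $|c_b|^2=q_b$ with $q=\ell/k$ (non-negative by Theorem~\ref{Shimnec}) does make the diagonal of $X$ vanish, via the identity $\sum_{w\le b}q_w k_{b-w}=\ell_b,$ and in the ``pure Case A'' regime $\mt_b=k_b-\sum_{0<u<b}\mt_u k_{b-u}>0$ (valid for every $b\le a$) the formal identity $k(1-\mt)=1$ holds up to degree $|a|,$ so $g_a=q_a$ and Shimorin's theorem completes the step. In general, however, the off-diagonals of $X$ need not vanish with this scalar choice, and when some $b\le a$ lies in Case B (so $h_b>0$), the $H$-contribution at $(a,a)$ does not cancel automatically. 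I therefore expect to lift the construction to matrix-valued coefficients of size $J\ge 2$ (available since Definition~\ref{CCdef} allows arbitrary $J$), building each $c_b$ inductively from the data $\{g_{b'}:|b'|<n\}$ together with the recursive structure of $\mt,$ using auxiliary orthogonal components to absorb the problematic off-diagonals of $X$ and the residual $H$-term at $(a,a)$ without destroying contractivity. Verifying the existence of such coefficients and that the induction closes --- effectively inverting the decomposition used in the proof of Theorem~\ref{ShimtoCCP} --- is the technical heart of the argument.
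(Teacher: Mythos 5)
Your base case is fine and agrees with the paper. But the inductive step is not a proof: you explicitly leave the construction of the matrix coefficients $\{c_b\}$ --- the step that would make $C$ contractive at every stage while forcing the extension to encode $g_a\ge 0$ --- as something you ``expect'' to be able to do with auxiliary orthogonal components, and you acknowledge that verifying this is ``the technical heart of the argument.'' That heart is missing, and it is not a routine verification. Trying to run the decomposition from the proof of Theorem~\ref{ShimtoCCP} in reverse is genuinely problematic: once some $b\le a$ has $h_b>0$ (your ``Case B''), the $H$-term and the off-diagonal blocks of $X$ at the $(a,a)$ entry do not cancel for scalar choices, and there is no indication of how matrix-valued $c_b$ built ``inductively from the data $\{g_{b'}\}$'' would both preserve contractivity of $C$ and isolate $g_a$ as the only free term. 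As written, the argument establishes only the case in which $\mt_b>0$ for all $b\le a$, which is the easy regime where $\mt$ agrees with $1-1/k$ up to degree $|a|$.

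The paper closes this gap by a different mechanism, and it is worth seeing why it avoids your difficulty. Instead of engineering cancellations in $X^+$, it tests the CC property only against data for which $C^*C$ is automatically \emph{diagonal}: for each index $a\in F$ it takes $c_a$ to be the matrix with a single nonzero entry $\sqrt{v_a}$ in the $(a,\mathbf{0})$ position, so $c_a^*c_b=\mathbf{0}$ for $a\ne b$ and contractivity of $C$ reduces to the scalar inequalities $\sum_{u\le a}v_{a-u}k_u\le \ell_a$ (Lemma~\ref{p:multi:need-}). The nonnegative weights $v_a$ are then defined recursively, with $v_a$ set to $0$ precisely on $S=\{a\le d:\mt_{d-a}=0\}$ and $v_a=\ell_a-\sum_{u<a}v_uk_{a-u}$ otherwise; Lemma~\ref{l:someare0} shows by induction (using the one-step extension exactly as you do) that these are admissible, and the extension at $d$ yields $\ell_d\ge\sum_{u<d}v_uk_{d-u}$. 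The remaining work is not operator-theoretic at all but a purely combinatorial identity,
\[
 \sum_{a<d} v_a k_{d-a} \;=\; \sum_{\mathbf{0}<u\le d}\mt_u\,\ell_{d-u},
\]
proved by a lengthy induction on the truncation level, which converts the extension inequality into $g_d=v_d\ge 0$. So the ``orthogonality'' you hoped to introduce via larger $J$ is indeed the right instinct, but the paper uses it to trivialize $X$ and $X^+$ (both diagonal) rather than to absorb error terms, and the case distinction $\mt_b=0$ versus $\mt_b>0$ is handled by where the zeros of $v$ are placed, not by modifying the matrices. Without either this construction or a worked-out substitute, your induction does not close.
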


The proof of  Theorem~\ref{almostCCPtoShim} is postponed in favor of 
the following key result.

\begin{theorem}
 \label{main:formal:CCP}
   A pair of diagonal holomorphic kernels $(k,\ell)$ has the CC property if and only if
   the master certificate $\mt$ for $k$ is a formal Shimorin certificate for $(k,\ell).$
 \end{theorem}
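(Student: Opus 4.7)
The plan is to combine three earlier results. The ``$\Leftarrow$'' direction is an immediate application of Theorem~\ref{ShimtoCCP}: any formal Shimorin certificate for $(k,\ell)$, including $\mt$ should it qualify, forces $(k,\ell)$ to have the CC property. For the converse, assuming $(k,\ell)$ is CC, one must verify the three conditions defining a formal Shimorin certificate for $\mt$: (a) $\mt_a\ge 0$ for $|a|\ge 1$, (b) the coefficients of $h:=1-k(1-\mt)$ are non-negative, and (c) the coefficients of $g:=\ell(1-\mt)$ are non-negative. Condition (a) is built into the $\max$ in Definition~\ref{mastercert} and condition (b) is precisely Theorem~\ref{1-k(1-mt)}, neither requiring CC.

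The substantive content is therefore condition (c), which is the statement of Theorem~\ref{almostCCPtoShim}: $g_a=\ell_a-\sum_{0<u\le a}\mt_u\ell_{a-u}\ge 0$ for every $|a|\ge 1$. I would prove this by strong induction on $n=|a|$. The base case $n=1$ is immediate from Remark~\ref{r:ka<=la}, since $\mt_{e_j}=k_{e_j}$ yields $g_{e_j}=\ell_{e_j}-k_{e_j}\ge 0$. For the inductive step, fix $a$ with $|a|=n$ and assume $g_b\ge 0$ for $0<|b|<n$. I would then invoke the one-step extension reformulation of CC furnished by Proposition~\ref{convenient} with $F=\{b\in\mathbb{N}^{\vg}:|b|<n\}$ (co-invariant) and $d=a$ (satisfying the two hypotheses of that proposition).

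The goal within the inductive step is to construct scalar or matrix data $\{c_b:b\in F\}$ whose associated matrix $C$ in \eqref{e:Cab} is contractive---its contractivity ensured by the inductive hypothesis---in such a way that the CC-guaranteed contractivity of the one-step extension $C^+$ in \eqref{onepoint} translates precisely into the single inequality $g_a\ge 0$. The construction is guided by the formal identity
\[
\sqrt{\ell_a\ell_b}\,X^+_{a,b}=g_a\delta_{a,b}I_J+\sum_{0<u\le a,\,b}h_u\,c_{a-u}^*c_{b-u}+\sum_{0<v\le a,\,b}\mt_v\sqrt{\ell_{a-v}\ell_{b-v}}\,X_{a-v,b-v},
\]
derived (without sign assumptions) within the proof of Theorem~\ref{ShimtoCCP}, where $X=I-C^*C$ and $X^+=I-(C^+)^*C^+$. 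This identity is purely algebraic in $g,h,\mt,$ and the $c_b$'s and so remains valid for $\mt$ the master certificate and $g,h$ defined accordingly, regardless of their signs.

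The main obstacle is engineering the data $\{c_b\}$ so that the $h_u$- and $\mt_v$-contributions on the right-hand side can be controlled and the positivity of $X^+$ delivered by CC isolates $g_a\ge 0$ at position $(a,a)$. A natural recipe is a Cauchy-type recursion, taking $c_b$ to be a ``square root'' of $g_b$ built compatibly with the multiplicative relation $\ell=g\cdot(1-\mt)^{-1}$, which is well defined by the inductive hypothesis and makes contractivity of $C$ equivalent to already-proven inequalities. Verifying that this choice sufficiently annihilates the shifted $X$-terms and that the remaining $h_u$-block (a Gram matrix) can be neutralized by pairing $X^+$ against an appropriate test vector, together with the management of the multi-index combinatorics, is where the technical weight of the argument lies.
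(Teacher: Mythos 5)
Your reduction is sound as far as it goes: the sufficiency direction is indeed just Theorem~\ref{ShimtoCCP}, the non-negativity of $\mt$ and of $h=1-k(1-\mt)$ require no CC hypothesis (Theorem~\ref{1-k(1-mt)}), and the base case $\ell_{e_j}\ge k_{e_j}=\mt_{e_j}$ is available; this is exactly how the paper assembles Theorem~\ref{main:formal:CCP} from Theorems~\ref{1-k(1-mt)}, \ref{almostCCPtoShim} and \ref{ShimtoCCP}. But the entire substance of the necessity direction is Theorem~\ref{almostCCPtoShim}, and there your plan has a genuine gap rather than missing routine details. The identity you propose to exploit,
\[
\sqrt{\ell_a\ell_b}\,X^+_{a,b}=g_a\delta_{a,b}I_J+\sum_{0<u\le a,b}h_u\,c_{a-u}^*c_{b-u}+\sum_{0<v\le a}\mt_v\sqrt{\ell_{a-v}\ell_{b-v}}\,X_{a-v,b-v},
\]
cannot be inverted to give $g_a\ge 0$: at the $(a,a)$ entry every term other than $g_a$ is automatically non-negative ($h_u\ge 0$ by Theorem~\ref{1-k(1-mt)}, $\mt_v\ge 0$ by construction, $c^*c\succeq 0$, and the diagonal entries of $X=I-C^*C$ are non-negative because $C$ is a contraction), so positivity of $X^+$ only yields $g_a\ge -(\text{non-negative quantity})$, which is vacuous. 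Choosing $c_b$ as a ``square root of $g_b$'' and pairing against a test vector does not repair this sign problem; you would need the $h$- and shifted-$X$-contributions to vanish exactly at the relevant entry, and you supply no mechanism for that. Since you explicitly defer this verification (``where the technical weight of the argument lies''), the inductive step does not close.

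The paper's necessity argument is structured quite differently. For a fixed target index $d$ it uses rank-one column data: $c_a$ is the $F\times F$ matrix whose only nonzero entry is $\sqrt{v_a}$ in position $(a,\mathbf{0})$, so that $C^*C$ is diagonal and contractivity of $C$ is precisely the family of scalar inequalities $\ell_a\ge\sum_{u\le a}v_{a-u}k_u$; the CC-provided one-step extension then yields the new scalar inequalities $\ell_a\ge\sum_{u<a}v_u k_{a-u}$ (Lemma~\ref{p:multi:need-}). The coefficients $v_a$ are defined recursively by $v_a=\ell_a-\sum_{u<a}v_u k_{a-u}$, except that they are forced to be $0$ on the set $S=\{a\le d:\mt_{d-a}=0\}$, and Lemma~\ref{l:someare0} shows by induction that all $v_a\ge 0$ and $\ell_a-\sum_{u\le a}v_u k_{a-u}\ge 0$. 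The heart of the proof is then the combinatorial identity $\sum_{u<d}v_u k_{d-u}=\sum_{\mathbf{0}<u\le d}\mt_u\ell_{d-u}$, established by a lengthy induction on the degree, which gives $\ell_d-\sum_{\mathbf{0}<u\le d}\mt_u\ell_{d-u}=v_d\ge 0$, i.e.\ $g_d\ge 0$. This recursion-plus-identity is the missing idea in your proposal; without it, or a fully worked substitute, the claim that CC forces $g=\ell(1-\mt)$ to have non-negative coefficients is not proved.
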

 
 \begin{proof}
   Suppose $(k,\ell)$ has the CP property. By Theorems~\ref{1-k(1-mt)} and \ref{almostCCPtoShim},
   the formal power series $h=1-k(1-\mt)$ and $g=\ell(1-\mt)$ have non-negative coefficients.  
   Hence, $\mt$ is a formal certificate for $(k,\ell).$ 
   
   The converse is Theorem~\ref{ShimtoCCP}.
 \end{proof}

The proof of  Theorem~\ref{almostCCPtoShim} consumes the remainder
of this subsection. It 
 uses the Lemmas~\ref{p:multi:need-} and~\ref{l:someare0}
 immediately below.

\begin{lemma}
\label{p:multi:need-}
Suppose $(k, \ell)$ is a normalized diagonal holomorphic CC pair. Suppose further, $\varnothing \ne F\subseteq \mathbb{N}^{\vg}$ is finite,   $d\in\mathbb{N}^{\vg}\setminus F$ and both $F$ and  $F^+=F\cup\{d\}$ are co-invariant. 
If  $\{v_a: a\in F \}$ is a set of non-negative real numbers such that
\begin{equation}
\label{e:multi:necu-}
 \ell_a \ge \sum_{u\le a} v_{a-u} k_{u} \ge 0,
\end{equation}
 for each $a\in F,$   then for all $a\in F^+,$  
\begin{equation}
 \label{e:multi:necu+}
 \ell_{a} \ge \sum_{u< a} v_{u} k_{a-u} = \sum_{0<u\le a} v_{a-u}k_u.
\end{equation}
\end{lemma}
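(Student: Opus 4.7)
The inequality~\eqref{e:multi:necu+} for $a \in F$ is immediate from the hypothesis~\eqref{e:multi:necu-}: dropping the non-negative $u = \mathbf{0}$ term $v_a k_{\mathbf{0}} = v_a$ from $\sum_{u \le a} v_{a-u} k_u$ leaves $\sum_{0 < u \le a} v_{a-u} k_u \le \ell_a.$ The substantive content of the lemma is thus the single inequality at $a = d,$ to be extracted from the CC property applied to a carefully engineered contraction.

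\textbf{Construction of coefficients.} Let $J = |F|$ and equip $\mathbb{C}^J$ with an orthonormal basis $\{e_a\}_{a \in F},$ writing $e_0$ for the basis vector indexed by $\mathbf{0} \in F.$ For each $a \in F$ set
\[
 c_a = \sqrt{v_a}\, e_a e_0^* \in \mathbb{M}_J.
\]
The crucial orthogonality $c_a^* c_b = v_a\, \delta_{a,b}\, e_0 e_0^*$ forces the block matrix $C$ on $F \times F$ defined by~\eqref{data1} to satisfy
\[
 (C^* C)_{a,b} = \delta_{a,b}\cdot \frac{1}{\ell_a}\bigg(\sum_{u \le a} v_{a-u} k_u\bigg) e_0 e_0^*.
\]
Since $I_J - \alpha\, e_0 e_0^* \succeq 0$ if and only if $\alpha \le 1,$ the matrix $C$ is a contraction precisely when the hypothesis~\eqref{e:multi:necu-} holds; so $C$ is contractive by assumption.

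\textbf{Applying CC and extracting the inequality.} Since $(k,\ell)$ is CC, Proposition~\ref{d:ccp} furnishes an extension $\{c_a\}_{a \in \mathbb{N}^{\vg}\setminus F}$ making the infinite matrix $\mathscr{C}$ of~\eqref{fullext} a contraction. Its submatrix $S$ on $G := (F \cup \{d\})\setminus\{\mathbf{0}\}$ is therefore also a contraction. This submatrix involves only the prescribed $\{c_a : a \in F\}$: for $a, b \in G$ with $b \ge a,$ the fact that $a > \mathbf{0}$ gives $b - a < b,$ and co-invariance of $F \cup \{d\}$ (when $b = d$) or of $F$ (when $b \in F$) places $b - a$ in $F.$ Repeating the orthogonality computation at the $(d,d)$-block of $S^* S$ yields
\[
 (S^* S)_{d,d} = \frac{1}{\ell_d}\bigg(\sum_{0 < u \le d} v_{d-u}\, k_u\bigg)\, e_0 e_0^*,
\]
where the $u = d$ summand arises from $c_0^* c_0 = v_0\, e_0 e_0^*$ and the $0 < u < d$ summands (which force $u \in F$ by co-invariance, hence $d-u \in F$ by the same argument) contribute $v_{d-u}\, e_0 e_0^*.$ The PSD condition on this diagonal block forces the scalar coefficient to be at most $1,$ delivering $\ell_d \ge \sum_{0 < u \le d} v_{d-u} k_u.$

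\textbf{Main obstacle.} The non-obvious engineering is choosing coefficients that lift the diagonal-only hypothesis into full operator contractivity of $C.$ A scalar choice $c_a = \sqrt{v_a} \in \mathbb{C}$ controls only the diagonal of $C^* C$ and fails on off-diagonals. The rank-one tensor $c_a = \sqrt{v_a}\, e_a e_0^*$ is designed so that $c_a^* c_b$ vanishes for $a \ne b,$ collapsing all off-diagonal blocks of $C^* C$ to zero and reducing contractivity of $C$ precisely to the scalar diagonal inequalities supplied by the hypothesis.
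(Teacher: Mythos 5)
Your proof is correct and follows essentially the same route as the paper: the same rank-one choice $c_a=\sqrt{v_a}\,e_ae_0^*$ (the paper's $F\times F$ matrix with $\sqrt{v_a}$ in the $(a,\mathbf{0})$ entry), which makes $C^*C$ block-diagonal so contractivity is exactly \eqref{e:multi:necu-}, followed by the CC/one-step extension and reading off the diagonal block at $d$. The only cosmetic difference is that you invoke Proposition~\ref{d:ccp} and pass to the submatrix on $(F\cup\{d\})\setminus\{\mathbf{0}\}$ directly (and dispose of $a\in F$ trivially from the hypothesis), whereas the paper cites the one-step-extension argument from the proof of Proposition~\ref{convenient}.
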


\begin{proof} 
  Let $c_a$ denote the $F\times F$ matrix  with 
 $(a,\mathbf{0})$ entry $\sqrt{v_{a}}$ and also let $E_{1, 1}$ denote the $F\times F$ matrix with $1$ in the $(\mathbf{0}, \mathbf{0})$ entry and $0$ everywhere else. Note 
\[
 c_a^*c_b =\begin{cases}  \mathbf{0} & \mbox{ if } a\ne b \\
                         v_aE_{1, 1} & \mbox{ if } a=b. \end{cases}
\]
  The conditions of equation~\eqref{e:multi:necu-}
 imply the corresponding  $F\times F$ block matrix $C=C(\{c_a\})$ as in \eqref{e:Cab} is 
 a contraction, since
\[
 (C^*C)_{a,b} = \sum_{u\le a,b} c_{a-u}^* c_{b-u} \frac{k_u}{\sqrt{\ell_a\ell_b}}
  = \begin{cases}  \Big(\sum_{u \le a} v_{a-u} \frac{k_u}{\ell_a}\Big)E_{1,1}  & \hspace{0.12 cm} a=b 
    \\ \mathbf{0} & \mbox{ otherwise}.  \end{cases}
\]

In view of the discussion preceding the proof of Proposition~\ref{convenient},  the $F^+\times F^+$ matrix $C^{+}$ as in \eqref{anothonepoint} is a contraction.
 Moreover, for  $a,b\in F^+$ 
\[
\begin{split}
 ((C^+)^* \, C^+)_{a,b} & = 
 \begin{cases} \Big(\sum \{ c_{a-u}^* c_{a-u} \frac{k_{u}}{\ell_{a}}  \mid F_N^+ \ni u \le a \}\Big)E_{1, 1}
     & \mbox{ if } a =b  
  \\ \mathbf{0} & \mbox{ otherwise}.  \end{cases}
 \\ & = \begin{cases}  \Big(\sum_{u<a}  v_{u} \frac{k_{a-u}}{\ell_{a}}\Big)E_{1, 1} & \mbox{ if } a= b
    \\ \mathbf{0} & \mbox{ otherwise}, \end{cases}
\end{split}
\]
 from which the inequalities of equation~\eqref{e:multi:necu+} follow.
\end{proof}


\begin{lemma}
 \label{l:someare0}
 Given normalized diagonal holomorphic $k,\ell$ in $\vg$ variables, 
 $N\in\NN$ and  $d\in \mathbb{N}^{\vg}$ with $|d|=N+1$ and $S\subseteq \{a\le d\}$, define  $v_a$ for $a\le d$
 recursively as follows. Let  $v_{\mathbf{0}}=0$ if $0\in S,$ and  $v_{\mathbf{0}}=1$  
 if $0\notin S$ and, assuming $0\le M\le N$ and $v_u$ have been defined for $|u|\le M$ and $u\le d,$  let
\[
  v_a =\begin{cases}  0 &  a\in S
   \\ \ell_a -  \sum_{u< a} v_u k_{a-u}  & a\notin S\end{cases}
\]
for  $|a|=M+1$ and $a\le d.$ If $(k,\ell)$ is a CC pair, then 
\begin{enumerate}[(i)]
 \item \label{i:someare0:1} $v_a\ge 0$ for all $a\le d,$ and
 \item \label{i:someare0:2}  $\ell_a - \sum_{u\le a} v_uk_{a-u} \ge 0$ for all $a\le d.$
\end{enumerate}
\end{lemma}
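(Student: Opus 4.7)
The plan is to prove (i) and (ii) simultaneously by induction on $M := |a|$, where the inductive hypothesis reads: for every $a \le d$ with $|a| \le M$, one has $v_a \ge 0$ and $\ell_a - \sum_{u \le a} v_u k_{a-u} \ge 0$. The base case $M=0$ is immediate: $v_{\mathbf{0}} \in \{0,1\}$ gives (i), and $\ell_{\mathbf{0}} - v_{\mathbf{0}} k_{\mathbf{0}} = 1 - v_{\mathbf{0}} \ge 0$ gives (ii) since $k$ and $\ell$ are normalized.

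The main step is a bridge from level $M$ to level $M+1$ via Lemma~\ref{p:multi:need-}. Fix any $d' \le d$ with $|d'| = M+1$ and set
\[
 F := \{a \in \NN^\vg : a \le d, \ |a| \le M\}, \qquad F^+ := F \cup \{d'\}.
\]
A short verification shows both $F$ and $F^+$ are finite and co-invariant: if $b \in F^+$ and $a \le b$, then $a \le b \le d$, and either $|b| \le M$ (so $|a| \le M$ and $a \in F$) or $b = d'$ with $|a| \le M+1$ (so $a \in F$ when $|a| \le M$, and $a = d'$ when $|a| = M+1$). The inductive hypothesis provides exactly the input needed by Lemma~\ref{p:multi:need-}: (i) gives non-negativity of $\{v_a : a \in F\}$, and (ii), after the change of variable $u \mapsto a-u$, yields $\ell_a \ge \sum_{u \le a} v_{a-u} k_u \ge 0$ for each $a \in F$. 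Applying the lemma and specializing the conclusion to $a = d'$ produces the key inequality
\[
 \ell_{d'} \ge \sum_{u < d'} v_u\, k_{d'-u}.
\]

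From this inequality both (i) and (ii) for $d'$ follow by a case split on whether $d' \in S$. If $d' \notin S$, the recursive definition gives $v_{d'} = \ell_{d'} - \sum_{u < d'} v_u k_{d'-u}$, which is $\ge 0$ by the displayed inequality, proving (i); and (ii) at $d'$ is then $\ell_{d'} - \sum_{u < d'} v_u k_{d'-u} - v_{d'} k_{\mathbf{0}} = v_{d'} - v_{d'} = 0$. If $d' \in S$, then $v_{d'} = 0$ gives (i) trivially, and (ii) at $d'$ collapses to the inequality just obtained. Ranging over all $d' \le d$ with $|d'| = M+1$ completes the induction step.

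I do not expect a genuine obstacle; the only delicate point is getting co-invariance of $F$ and $F^+$ correct as subsets of $\NN^\vg$ (not merely of $\{a \le d\}$), since Lemma~\ref{p:multi:need-} is stated for co-invariant subsets of $\NN^\vg$. The layer-by-layer structure of the induction is precisely tailored so that this co-invariance holds, and the CC assumption enters only through Lemma~\ref{p:multi:need-}.
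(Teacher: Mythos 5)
Your proposal is correct and follows essentially the same route as the paper: induction on the length $|a|$, taking $F=\{u\le d:|u|\le M\}$ and the new point of length $M+1$, invoking Lemma~\ref{p:multi:need-} to get $\ell_{d'}\ge\sum_{u<d'}v_uk_{d'-u}$, and then the same case split on $d'\in S$. The only differences are cosmetic (explicit verification of co-invariance and of the change of variable $u\mapsto a-u$, which the paper leaves implicit).
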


\begin{proof} Let $d, S$ and $v_a$ be as above. We will proceed by induction on $|a|.$ By definition, $v_{\mathbf{0}}\ge 0$ and $\ell_{\mathbf{0}}-v_{\mathbf{0}}k_{\mathbf{0}}=1-v_{\mathbf{0}}\ge 0,$ so the result holds if $|a|=0.$ Now, let $0\le M\le N$ and assume that the conditions of both item~\eqref{i:someare0:1} and item~\eqref{i:someare0:2} hold whenever $|a|\le M$ and $a\le d.$ We will show that they continue to hold if $|a|=M+1$ and $a\le d$. \par 
Fix an arbitrary $a\in\mathbb{N}^{\vg}$ with $|a|=M+1$ and $a\le d.$
Set $F=\{u\in\mathbb{N}^{\vg} : |u|\le M \text{ and } u\le d\}$. Clearly, both $F$ and $F\cup\{a\}$ are co-invariant. Set $F^+=(F\cup\{a\})\setminus\{\mathbf{0}\}$. The collection of non-negative numbers $\{v_u : u\in F\}$ satisfies \eqref{e:multi:necu-} by our inductive hypothesis. By Lemma~\ref{p:multi:need-}, it follows that 
\[
 \ell_a -\sum_{u<a} v_u k_{a-u} \ge 0.
\]
 Hence $v_a\ge 0$ so that  \eqref{i:someare0:1} holds for $a.$ Moreover, 
 if $a\notin S,$ then $v_a=\ell_a -\sum_{u<a} v_u k_{a-u}$ and
\[
 \ell_a - \sum_{u\le a} v_u k_{a-u} = \ell_a- \sum_{u<a} v_u k_{a-u} - v_a =0.
\]
On the other hand, if $a\in S$, then $v_a=0,$ in which case
\[
  \ell_a - \sum_{u\le a} v_u k_{a-u} = \ell_a- \sum_{u<a} v_u k_{a-u} \ge 0.
\] 
Thus, in either case, the inequality of item~\eqref{i:someare0:2} holds for $a,$ completing
 a proof by induction.
\end{proof}

\begin{proof}[Proof of Theorem~\ref{almostCCPtoShim}]
Our goal is to show 
\begin{equation}\label{dasgoal}
\ell_d - 
  \sum_{\mathbf{0}<u\le d} \mt_u \ell_{d-u} \ge 0
\end{equation}
for all non-zero $d\in\mathbb{N}^{\vg}.$ 
\par
 Assume first that $|d|=1$. 
 Choosing $F=\{\mathbf{0}\}$, $v_{\mathbf{0}}=1$ and $d=e_j$ in Lemma~\ref{p:multi:need-}, 
 it follows that 
\[
 \ell_{e_j} \ge k_{e_j} = \mt_{e_j}.
\]
 Thus,
\[
 (\ell(1-\mt))_{e_j} = \ell_{e_j} - \mt_{e_j} \ge 0
\]
 for $1\le j\le \vg.$  \par 
Now, assume $|d|\ge 2.$ Put $S=\{a\le d: \mt_{d-a}=0\}$  and define $v_a$ recursively as in Lemma~\ref{l:someare0}, for every $a\le d.$ We have $v_a\ge 0$ by construction. Set 
\[\alpha_a=\begin{cases} 0,  \hspace{0.6 cm} \mt_{d-a}=0\\
1, \hspace{0.6 cm}  \mt_{d-a}>0.
\end{cases}
\]
Thus, $v_{\mathbf{0}}=\alpha_{\mathbf{0}}$  and $\alpha_a \mt_{d-a}=\mt_{d-a}$  as well as,
\[
 v_a=\alpha_a\bigg(\ell_a-\sum_{u<a}v_uk_{a-u}\bigg),
\]
for all $|a|\ge 1$. In particular, $v_{\mathbf{0}}\theta_d=\theta_d.$ Moreover, 
\[
  \mt_{d-a}=\alpha_a\bigg(k_{d-a}-\sum_{\mathbf{0}<u<d-a}k_u\mt_{d-a-u} \bigg) 
\]
 assuming $|d-a|\ge 2$, while  $\mt_{e_j}=a_{d-e_j}k_{e_j}$ for all $j$, since $\theta_{e_j}=k_{e_j}>0$ and hence $a_{d-e_j}=1$ (recall also that $\mt_{\mathbf{0}}=0=a_d\cdot 0$). \par 
To prove \eqref{dasgoal}, we will show,
 by induction, that for all $d,$
\begin{equation}\label{interr}
 \sum_{v<d} v_a k_{d-a} = \sum_{\mathbf{0}<u\le d} \theta_u \ell_{d-u}.
\end{equation} 
First, observe that 
\begin{align} \label{neceq1}
\sum_{a<d}v_ak_{d-a}  =&\sum_{a+e_j=d}\alpha_a\bigg(\ell_a-\sum_{b<a}v_bk_{a-b} \bigg)k_{e_j}+\sum_{\substack{b<d, \\ |b|\le |d|-2}}v_bk_{d-b} \notag \\ 
=&\sum_{a+e_j=d}\alpha_a\ell_ak_{e_j}-\sum_{a+e_j=d}\sum_{b<a}\alpha_av_bk_{a-b}k_{e_j}+\sum_{\substack{b<d, \\ |b|\le |d|-3}}v_bk_{d-b}+\sum_{\substack{b<d, \\ |b|=|d|-2}}v_bk_{d-b} \notag \\ 
=&\sum_{a+e_j=d}\mt_{e_j}\ell_a-\sum_{a+e_j=d}\sum_{b<a}v_bk_{d-b-e_j}\mt_{e_j}+\sum_{\substack{b<d,\\ |b|\le |d|-3}}v_b k_{d-b}+\sum_{\substack{b<d, \\ |b|=|d|-2}}v_bk_{d-b} \notag \\ 
=&\sum_{a+e_j=d}\mt_{e_j}\ell_a-\sum_{a+e_j=d}\sum_{b+e_i=a}v_bk_{e_i}\mt_{e_j} -\sum_{a+e_j=d}\sum_{\substack{b<a, \\ |b|\le |a|-2=|d|-3}}v_bk_{d-b-e_j}\mt_{e_j}\notag \\ &\qquad+\sum_{\substack{b<d, \\ |b|\le |d|-3}}v_b k_{d-b}+\sum_{\substack{b<d, \\ |b|=|d|-2}}v_bk_{d-b}.
\end{align}
Summing the second and fifth terms on the right hand side in  \eqref{neceq1} gives
\begin{align} \label{neceq2}
&\sum_{\substack{b<d,\\ |b|=|d|-2}}v_bk_{d-b}-\sum_{a+e_j=d}\sum_{b+e_i=a}v_b k_{e_i}\mt_{e_j} \notag \\
=&\sum_{\substack{b<d, \\|b|=|d|-2}}v_bk_{d-b}-\sum_{\substack{b<d, \\ |b|=|d|-2}}v_b\sum_{\substack{b+e_i+e_j=\vg, \\ 1\le i, j\le \vg}}k_{e_i}\mt_{e_j} \notag \\
=&\sum_{\substack{b<d, \\ |b|=|d|-2}}v_b\bigg(k_{d-b}-\sum_{\substack{e_i+e_j=d-b, \\ 1\le i, j\le d}}k_{e_i}\mt_{e_j}\bigg) \notag \\
=&\sum_{\substack{b<d, \\ |b|=|d|-2}}\alpha_bv_b\bigg(k_{d-b}-\sum_{\substack{e_i+e_j=d-b, \\ 1\le i, j\le \vg}}k_{e_i}\mt_{e_j}\bigg) \notag \\
=&\sum_{\substack{b<d, \\ |b|=|d|-2}}v_b\mt_{d-b},
\end{align}
and summing the third and fourth terms on the right hand side in \eqref{neceq1} gives 
\begin{align} \label{neceq3}
&\sum_{\substack{b<d, \\ |b|\le |d|-3}}v_bk_{d-b}-\sum_{a+e_j=d}\sum_{\substack{b<a, \\ |b|\le |d|-3}}v_bk_{d-b-e_j}\mt_{e_j}  \notag \\
=&\sum_{\substack{b<d, \\ |b|\le |d|-3}}v_bk_{d-b}-\sum_{\substack{b<d,\\ |b|\le |d|-3}}\sum_{\substack{d-b-e_j>\mathbf{0},\\ 1\le i\le \vg }}v_bk_{d-b-e_j}\mt_{e_j} \notag \\
=&\sum_{\substack{b<d, \\ |b|\le |d|-3}}v_b\bigg(k_{d-b}-\sum_{\substack{d-b-e_j>\mathbf{0},\\ 1\le i\le \vg}}k_{d-b-e_j}\mt_{e_j}\bigg)
\notag \\ 
=&\sum_{\substack{b<d, \\ |b|\le |d|-3}}v_b\bigg(k_{d-b}-\sum_{\substack{d-b-w>\mathbf{0},\\ |w|=1}}k_{d-b-w}\mt_{w}\bigg).
\end{align}
Combining \eqref{neceq1}-\eqref{neceq2}-\eqref{neceq3}, we obtain
\[\sum_{a<d}v_ak_{d-a}\]
\begin{equation}\label{neceq4}
=\sum_{a+e_j=d}\mt_{e_j}\ell_a+\sum_{\substack{b<d, \\ |b|=|d|-2}}v_b\mt_{d-b}+\sum_{\substack{b<d,\\ |b|\le |d|-3}}v_b\bigg(k_{d-b}-\sum_{\substack{|w|=1, \\ w<d-b}}k_{d-b-w}\mt_w \bigg).
\end{equation}
Notice that if $|d|=2,$ then \eqref{neceq4} becomes (recall that $k_{\mathbf{0}}=\ell_{\mathbf{0}}=1$)
\begin{equation} \label{neceq5}
 \sum_{a<d}v_ak_{d-a}=\sum_{a+e_j=d}\mt_{e_j}\ell_a =\sum_{\mathbf{0}<u\le d}\mt_u \ell_{d-u}.
\end{equation}
Now, assume $|d|\ge 3.$ In order to arrive at $\eqref{interr}$, we will show that, for every $2\le m\le |d|-1$,  
\begin{align} \label{neceq6}
\sum_{a<d}v_ak_{d-a}=&\sum_{\substack{u<d,\\ 0<|u|\le m-1}}\mt_u\ell_{d-u}+\sum_{\substack{b<d,\\ |b|=|d|-m}}v_b\mt_{d-b} \notag\\ \quad &+\sum_{\substack{b<d, \\ |b|\le |d|-1-m}} v_b\bigg(k_{d-b}-\sum_{\substack{u< d-b, \\ 0<|u|<m}}\mt_uk_{d-b-u} \bigg).
\end{align}
In view of \eqref{neceq4},  we see that \eqref{neceq6} holds for $m=2.$ If $|d|=3,$ we are done. Thus we further assume $|d|\ge 4$.
We move to the inductive step. Accordingly, suppose  \eqref{neceq6} holds for some fixed $2\le m\le |d|-2$. We will show that \eqref{neceq6}  also holds for $m+1.$ First, observe that  
  \begin{align}\label{neceq7}   
 \sum_{\substack{b<d, \\ |b|=|d|-m}}v_b\mt_{d-b} 
 &=\sum_{\substack{b<d,\\ |b|=|d|-m}}\mt_{d-b}\alpha_b\bigg(\ell_b-\sum_{u<b}v_uk_{b-u} \bigg) \notag \\
&=\sum_{\substack{b<d,\\ |b|=|d|-m}}\alpha_b\mt_{d-b}\ell_b-\sum_{\substack{b<d,\\ |b|=|d|-m}}\sum_{u<b}\alpha_b\mt_{d-b}v_uk_{b-u} \notag \\
&=\sum_{\substack{b<d,\\ |b|=|d|-m}}\mt_{d-b}\ell_b-\sum_{\substack{b<d,\\ |b|=|d|-m}}\sum_{u<b}\mt_{d-b}v_uk_{b-u} \notag \\ 
&=\sum_{\substack{u<d,\\ |u|=m}}\mt_{u}\ell_{d-u}-\sum_{\substack{w<d,\\ |w|=m}}\sum_{u<d-w}\mt_{w}v_uk_{d-w-u} \notag \\ 
&=\sum_{\substack{u<d,\\ |u|=m}}\mt_{u}\ell_{d-u}-\sum_{\substack{u<d, \\ |u|\le |d|-1-m}}\sum_{\substack{w<d-u, \\ |w|=m}}\mt_{w}v_uk_{d-w-u}.
\end{align}  
Combining \eqref{neceq7} with \eqref{neceq6} (which holds for $m$ by our inductive hypothesis) yields 
\begin{align} \label{neceq8}
 \sum_{a<d}v_ak_{d-a}  
=&\sum_{\substack{u<d,\\ 0<|u|\le m-1}}\mt_u\ell_{d-u}+\sum_{\substack{u<d, \\ |u|=m}}\mt_u\ell_{d-u}-\sum_{\substack{u<d, \\ |u|\le |d|-1-m}}\sum_{\substack{w<d-u, \\ |w|=m}}v_u\mt_wk_{d-w-u} \notag \\
\quad &+\sum_{\substack{b<d,\\ |b|\le |d|-1-m}}v_b\bigg(k_{d-b}-\sum_{\substack{0<|u|<m, \\ u<d-b}}\mt_u k_{d-b-u}\bigg)  \notag \\
=&\sum_{\substack{u<d, \\ 0<|u|\le m}}\mt_u\ell_{d-u}-\sum_{\substack{u<d, \\ |u|\le |d|-1-m}}\sum_{\substack{w<d-u, \\ |w|=m}}v_u\mt_wk_{d-w-u} \notag \\ & +\sum_{\substack{b<d,\\ |b|\le |d|-2-m}}v_b\bigg(k_{d-b}-\sum_{\substack{u<d-b,\\ 0<|u|<m+1}}\mt_uk_{d-b-u} \bigg) \notag \\ 
&+\sum_{\substack{b<d, \\ |b|=|d|-1-m}}v_b\bigg(k_{d-b}-\sum_{\substack{u<d-b, \\ 0<|u|<m}}\mt_uk_{d-b-u} \bigg)
\notag \\ &+\sum_{\substack{b<d, \\ |b|\le |d|-2-m}}\sum_{\substack{u<d-b, \\ |u|=m}}v_b\mt_uk_{d-b-u}.
\end{align}  But since $|b|=|d|-1-m$ and $u<d-b$ imply $|u|<m+1,$ we have
\begin{align} 
&\sum_{\substack{b<d, \\ |b|=|d|-1-m}}v_b\bigg(k_{d-b}-\sum_{\substack{u<d-b, \\ 0<|u|<m}}\mt_uk_{d-b-u} \bigg) \notag \\  \pagebreak 
=&\sum_{\substack{b<d, \\ |b|=|d|-1-m}}v_b\bigg(k_{d-b}-\sum_{\mathbf{0}<u<d-b}\mt_uk_{d-b-u}\bigg)+\sum_{\substack{b<d, \\ |b|=|d|-1-m}}\sum_{\substack{u<d-b, \\ |u|=m}}v_b\mt_uk_{d-b-u} \notag \\
=&\sum_{\substack{b<d, \\ |b|=|d|-1-m}}v_b\mt_{d-b}+\sum_{\substack{b<d,\\ |b|=|d|-1-m}}\sum_{\substack{u<d-b, \\ |u|=m}}v_b\mt_uk_{d-b-u}.\notag
\end{align}
Thus, \eqref{neceq8} becomes
\begin{align}  
& \sum_{a<d}v_ak_{d-a}  \notag \\ 
=&\sum_{\substack{u<d, \\ 0<|u|\le m}}\mt_u\ell_{d-u}+\sum_{\substack{b<d, \\ |b|=|d|-1-m}}v_b\mt_{d-b}-\sum_{\substack{u<d, \\ |u|\le |d|-1-m}}\sum_{\substack{w<d-u, \\ |w|=m}}v_u\mt_wk_{d-w-u}\notag \\ &+\sum_{\substack{b<d,\\ |b|\le |d|-2-m}}v_b\bigg(k_{d-b}-\sum_{\substack{u<d-b,\\ 0<|u|<m+1}}\mt_uk_{d-b-u} \bigg)+\sum_{\substack{b<d,\\ |b|=|d|-1-m}}\sum_{\substack{u<d-b, \\ |u|=m}}v_b\mt_uk_{d-b-u}
\notag \\ &+\sum_{\substack{b<d, \\ |b|\le |d|-2-m}}\sum_{\substack{u<d-b, \\ |u|=m}}v_b\mt_uk_{d-b-u} \notag \\
=&\sum_{\substack{u<d,\\ 0<|u|\le m}}\mt_u\ell_{d-u}+\sum_{\substack{b<d,\\ |b|=|d|-1-m}}v_b\mt_{d-b} \notag \\ &+\sum_{\substack{b<d, \\ |b|\le |d|-2-m}} v_b\bigg(k_{d-b}-\sum_{\substack{u< d-b, \\ 0<|u|<m+1}}\mt_uk_{d-b-u} \bigg) \notag,
\end{align}
 concluding the proof of the inductive step. Thus, \eqref{neceq6} holds for every $2\le m\le |d|-1.$ Choosing $m=|d|-1$ then yields (as the 3rd term disappears)
\begin{align}
\sum_{a<d}v_ak_{d-a}&=\sum_{\mathbf{0}<u<d}\mt_u\ell_{d-u}+\sum_{|b|=0}v_b\mt_{d-b} \notag \\ 
&=\sum_{\mathbf{0}<u<d}\mt_u\ell_{d-u}+v_{\mathbf{0}}\mt_d \notag \\ &=\sum_{\mathbf{0}<u<d}\mt_u\ell_{d-u}+\mt_d \notag \\ 
&=\sum_{\mathbf{0}<u\le d}\mt_u\ell_{d-u},\notag 
\end{align}
and $\eqref{interr}$ is proved. We may now conclude
\[\ell_d - \sum_{\substack{u\le d, \\ u\neq \mathbf{0}}}\mt_u \ell_{d-u}=\ell_d-\sum_{a<d}v_ak_{d-a}=v_d\ge 0, 
\]
so \eqref{dasgoal} holds.
\end{proof}

\subsection{CP implies CC} \label{directCPgivesCC}

The last ingredient that will be needed for our proof of Theorem~\ref{introextmain} (contained in Section~\ref{mainmain}) is a direct passage from the CP to the CC property. Establishing such a passage will be our main objective for this subsection.  The idea is to apply the complete Pick pair assumption to
tuples of points near $\mathbf{0}$ and then take a limit - letting these points tend to $\mathbf{0}$.  Before proceeding we highlight an ingredient
in the proof, Proposition~\ref{autopolyn}.  Namely, 
for a $(k,\ell)$ pair with $k$ normalized, multiplication by a monomial $z^b$ defines a bounded multiplier from $\mathcal{H}_k$ to $\mathcal{H_{\ell}}$ 
and moreover, as pointed out by an anonymous referee, in fact the mutiplier norm of $z^b$ coincides with its Hilbert space norm in $\mathcal{H}_\ell.$
  \par 
We will require the following elementary Hilbert space lemma.
\begin{lemma}
\label{l:Pconverge:1}
 Let $H$ denote a Hilbert space and assume $M\subset H$ is a non-trivial finite-dimensional subspace. 
 Further, suppose $\gamma:[0,\delta]\to H$ is a continuous function  such that $\gamma(t)$ does not lie in $M$, for any $t.$  Then, letting 
  $P_t$ denote the projection onto $\textup{span}\big(M\cup\{\gamma(t)\}\big)$, we have $P_t\to P_0$ in operator norm as $t\to 0.$
\end{lemma}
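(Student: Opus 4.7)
The plan is to decompose each projection $P_t$ as an orthogonal sum $P_t = P_M + Q_t$, where $P_M$ is the fixed projection onto the finite-dimensional subspace $M$ and $Q_t$ is a rank-one projection onto the unit vector pointing in the direction of $\gamma(t)$ modulo $M$. Once this decomposition is in place, it suffices to show that the rank-one projections $Q_t$ converge to $Q_0$ in operator norm, since $\|P_t - P_0\| = \|Q_t - Q_0\|.$

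First I would set
\[
u_t := (I - P_M)\gamma(t), \qquad t\in[0,\delta].
\]
The map $t\mapsto u_t$ is continuous from $[0,\delta]$ to $H$ because $\gamma$ is continuous and $I - P_M$ is a bounded operator. By hypothesis, $\gamma(t)\notin M$ for every $t\in [0,\delta]$, so $u_t\ne 0$ for every such $t$. In particular $\|u_0\|>0$, and continuity gives $\|u_t\|\to\|u_0\|>0$ as $t\to 0$. Hence the unit vectors $e_t := u_t/\|u_t\|\in M^{\perp}$ are well-defined, and $e_t\to e_0$ in norm.

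Next I would observe that, because $e_t\in M^{\perp}$ and $\gamma(t)\in M\oplus\mathbb{C}\, e_t,$
\[
\operatorname{span}\bigl(M\cup\{\gamma(t)\}\bigr) = M \oplus \operatorname{span}(e_t),
\]
as an orthogonal direct sum. Consequently $P_t = P_M + Q_t$, where $Q_t$ is the rank-one projection $Q_t(x) = \langle x, e_t\rangle e_t.$ Writing
\[
Q_t(x) - Q_0(x) \;=\; \langle x, e_t\rangle(e_t - e_0) \;+\; \langle x,\, e_t - e_0\rangle e_0,
\]
and using $\|e_t\|=\|e_0\|=1$, we conclude that $\|Q_t - Q_0\| \le 2\|e_t - e_0\|.$ Since $e_t\to e_0$ in norm, this shows $Q_t \to Q_0$ in operator norm, and therefore $P_t\to P_0$ in operator norm, as desired.

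The argument is routine; the only place where the hypotheses are essential is in ensuring $u_0\ne 0$ so that the normalization $e_t = u_t/\|u_t\|$ is continuous at $t = 0$. This is precisely guaranteed by the assumption that $\gamma(t)\notin M$ for every $t\in[0,\delta]$ (including $t=0$), so no real obstacle arises.
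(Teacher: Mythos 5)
Your proof is correct: writing $P_t = P_M + Q_t$ with $Q_t$ the rank-one projection onto $e_t = (I-P_M)\gamma(t)/\|(I-P_M)\gamma(t)\|$ is legitimate (the span is closed since $M$ is finite-dimensional and $e_t \perp M$), the hypothesis $\gamma(t)\notin M$ for all $t$ guarantees $e_t$ is defined and $e_t \to e_0$, and the estimate $\|Q_t - Q_0\| \le 2\|e_t - e_0\|$ finishes the argument. The paper omits the proof of this lemma as elementary, so there is nothing to compare against; your argument is the standard one the authors presumably had in mind.
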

\begin{proof}[Proof sketch]
Fix an orthonormal basis  $\{v_1, \dots, v_m\}$ for $M$ and apply Gram-Schmidt to the basis
$\{v_1,\dots,v_m,\gamma(t)\}$ to obtain the orthonormal basis $\{v_1,\dots,v_m,\rho(t)\}$ 
for $\textup{span}\big(M\cup\{\gamma(t)\}\big)$, noting that $\rho(t)$ depends continuously on $t.$
Since, for $f\in H,$ 
\[
 (P_t-P_0)f = \langle f,\rho(t)\rangle (\rho(t)-\rho(0)) + \langle f, \rho(t)-\rho(0)\rangle \rho(0)
\]
 it follows that $\|P_t-P_0\|\le 2 \|\rho(t)-\rho(0)\|.$
\end{proof}

We will work with a lexicographic order on $\mathbb{N}^{\vg}$. Given $a, b\in\mathbb{N}^{\vg},$ the expression $a\prec b$ means that either $|a|<|b|$ or $|a|=|b|$ and there exists $0\le r\le \vg-1$ such that 
\[a_1=b_1\]
\[\vdots \]
\[a_rr=b_r \]
\[a_{r+1}<b_{r+1},\]
where $r=0$ simply means $a_1<b_1.$ Clearly, $(\mathbb{N}^{\vg}, \prec)$ is totally ordered. Set $\mathbb{N}^{\vg}=\{a^0, a^1, \dots\}$ accordingly.

Now, let $\ell=1+\sum_{a\in\mathbb{N}^{\vg}, \\ |a|>0}\ell_a(z\overline{w})^a$ be a normalized diagonal holomorphic kernel on some domain $\Omega\subseteq\mathbb{C}^{\vg}$ containing $\mathbf{0}$. Set $p_{a^j}(z)=z^{a^j},$ for all $j$. For $m\ge 0$ and $\Lambda \subseteq\Omega\setminus\{\mathbf{0}\}$, let $M_{m,\Lambda}$ and $M_{\Lambda}$
denote the spans of $\{p_{a^j}:0\le j\le m\}\cup \{\ell_{\lambda}: \lambda\in\Lambda\} $ and $\{\ell_{\lambda}: \lambda\in\Lambda\}$, respectively, with 
  $P_{m,\Lambda}$ and $P_{\Lambda}$ the associated projections. We will also write $M^0_{m,\Lambda}$ and $M^0_{\Lambda}$  for the spans of $\{p_{a^j}:1\le j\le m+1\}\cup \{\ell_{\lambda}-1: \lambda\in\Lambda\}$ and  $\{\ell_{\lambda}-1: \lambda\in\Lambda\}$, respectively,  with $P^0_{m, \Lambda}$ and $P^0_{\Lambda}$ denoting the associated projections.
 For what is to follow, it should be kept in mind that, since all coefficients $\ell_a$ are non-zero, the polynomials are  contained in $\mathcal{H}_{\ell}$. Hence, there exist no (finite) collections of linearly dependent kernel functions, and this continues to be the case even if we have vanishing derivatives up to a certain order.

 \begin{lemma}
 \label{l:Pconverge:2} Fix $m\ge -1$. With notation as above, for any normalized diagonal holomorphic kernel $\ell$  and any finite $\Lambda \subseteq\Omega\setminus\{\mathbf{0}\},$ there exists a continuous map $v: (0, \delta] \to \Omega$ such that, with $\Lambda_t=\Lambda\cup \{v(t)\}$, 
 we have \[P_{m,\Lambda_t}\to P_{m+1,\Lambda}\hspace{0.3 cm} \text{ and }\hspace{0.3 cm} P^0_{m,\Lambda_t}\to P^0_{m+1,\Lambda}\] in operator norm as $t\to 0$, where $P_{-1, \Lambda_t}\equiv P_{\Lambda_t}$ and $P^0_{-1, \Lambda_t}\equiv P^0_{\Lambda_t}$. 
 \end{lemma}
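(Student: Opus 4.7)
The plan is to take $v(t) = (t^{\alpha_1}, \dots, t^{\alpha_\vg})$ for carefully chosen exponents $\alpha = (\alpha_1, \dots, \alpha_\vg) \in (0,\infty)^\vg$, so that the ``new direction'' contributed by $\ell_{v(t)}$ to $M_{m,\Lambda_t}$, modulo the fixed subspace $M_{m,\Lambda}$, becomes a nonzero multiple of $p_{a^{m+1}}$ in the limit $t \to 0^+$.  Concretely, I would pick $\alpha$ with $\alpha_1 > \cdots > \alpha_\vg > 0$ such that the linear functional $a \mapsto a \cdot \alpha$ strictly respects the lex order $\prec$ on all $a \in \NN^\vg$ with $|a| \le |a^{m+2}|$, and such that $\alpha_1/\alpha_\vg < 1 + 1/|a^{m+2}|$ (which forces $a \cdot \alpha > a^{m+2}\cdot \alpha > a^{m+1} \cdot \alpha$ also for every $|a| > |a^{m+2}|$). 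A brief feasibility check, e.g.\ taking $\alpha_j = 1 + 2\epsilon^j$ for $\epsilon$ of order $1/|a^{m+2}|$, shows such $\alpha$ always exists.

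The payoff of this choice is clean control of the expansion
\[
\ell_{v(t)} \ = \ \sum_{a \in \NN^\vg} \ell_a\, t^{a \cdot \alpha}\, p_a,
\]
where $\{p_a\}$ is the orthogonal monomial basis of $\mathcal{H}_\ell$ with $\|p_a\|^2 = 1/\ell_a$.  Since $p_a \in M_{m,\Lambda}$ for every $a \prec a^{m+1}$, the projection $I - P_{M_{m,\Lambda}}$ annihilates those terms, leading me to set
\[
\gamma(t) := t^{-a^{m+1} \cdot \alpha}\bigl(I - P_{M_{m,\Lambda}}\bigr)\ell_{v(t)} \quad (t > 0), \qquad \gamma(0) := \ell_{a^{m+1}}\bigl(I - P_{M_{m,\Lambda}}\bigr)p_{a^{m+1}}.
\]
To see that $\gamma$ extends continuously through $t=0$, I would use the orthogonality of the $p_a$ together with the contractivity of $I - P_{M_{m,\Lambda}}$ to obtain
\[
\|\gamma(t) - \gamma(0)\|^2 \ \le\ \sum_{a \succ a^{m+1}} \ell_a\, t^{2(a \cdot \alpha - a^{m+1}\cdot \alpha)},
\]
and then observe that the construction of $\alpha$ gives a uniform lower bound $a \cdot \alpha - a^{m+1}\cdot \alpha \ge \eta > 0$ on $\{a \succ a^{m+1}\}$, while the identity $\sum_a \ell_a t^{2a \cdot \alpha} = \ell(v(t), v(t))\to 1$ provides the global tail control; together these yield $\|\gamma(t) - \gamma(0)\| = O(t^\eta) \to 0$.

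Under the mild genericity condition $p_{a^{m+1}} \notin M_{m,\Lambda}$, so that $\gamma(0) \ne 0$, one has $\operatorname{span}(M_{m,\Lambda}, \gamma(t)) = M_{m,\Lambda_t}$ for $t > 0$ (since scaling by a nonzero constant and subtracting vectors from $M_{m,\Lambda}$ does not alter the span) and $\operatorname{span}(M_{m,\Lambda}, \gamma(0)) = M_{m+1,\Lambda}$, and Lemma~\ref{l:Pconverge:1} then delivers the desired convergence $P_{m,\Lambda_t} \to P_{m+1,\Lambda}$ in operator norm.  The $P^0$-statement is obtained by running the argument verbatim with $\ell_{v(t)} - 1$ in place of $\ell_{v(t)}$, $M^0_{m,\Lambda}$ in place of $M_{m,\Lambda}$, and the target index shifted by one to $a^{m+2}$; crucially, the same $\alpha$ and the same curve $v$ serve both limits, because $\alpha$ was designed to handle lex comparisons through $a^{m+2}$.

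The main obstacle is confirming the feasibility of $\alpha$ in the opening step: the within-length lex-respecting conditions push the $\alpha_j$'s apart, while the across-length bound forces $\alpha_1/\alpha_\vg$ close to $1$, and one must verify these competing demands are jointly satisfiable (as they are, for any fixed $m$).  Everything else reduces to a controlled Parseval estimate together with the one-dimensional perturbation lemma.
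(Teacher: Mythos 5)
Your construction is essentially the paper's own proof: there, too, one takes $v(t)=(t^{r_1},\dots,t^{r_\vg})$ with exponents chosen so that the weighted degree $a\mapsto a\cdot r$ strictly respects the graded lexicographic order through $a^{m+2}$ and dominates it for all longer multi-indices (the paper uses $r_j=|a^{m+2}|^{\vg-j}+N$ with $N$ large, the additive analogue of your $\alpha_j=1+2\epsilon^j$), and the same curve serves both the $P$ and the $P^0$ limits via Lemma~\ref{l:Pconverge:1}. The only structural difference is that the paper subtracts from $\ell_{v(t)}$ just its component along the monomials $p_{a^0},\dots,p_{a^m}$ and normalizes, whereas you subtract the full projection onto $M_{m,\Lambda}$; your variant is fine and in fact makes the verification that $\gamma(t)\notin M_{m,\Lambda}$ for small $t>0$ slightly cleaner, since your $\gamma(t)$ is orthogonal to $M_{m,\Lambda}$.

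Two points need to be discharged rather than assumed. First, your ``mild genericity condition'' $p_{a^{m+1}}\notin M_{m,\Lambda}$ (and $p_{a^{m+2}}\notin M^0_{m,\Lambda}$) is not a hypothesis of the lemma, so you must prove it: it always holds, because a nonzero finite linear combination of kernel functions at distinct points of $\Omega\setminus\{\mathbf{0}\}$ cannot have all but finitely many Taylor coefficients equal to zero (all $\ell_a>0$; in one variable this is the standard generating-function/Vandermonde argument), which is exactly the non-degeneracy the paper records in the paragraph preceding the lemma. Second, in the tail estimate the uniform gap $\eta>0$ alone does not control the infinite sum $\sum_{a\succ a^{m+1}}\ell_a t^{2(a\cdot\alpha-a^{m+1}\cdot\alpha)}$: fix $t_0$ with $v(t_0)\in\Omega\subseteq\Omega_\ell$ and write, for $0<t\le t_0$, $t^{2(a\cdot\alpha-a^{m+1}\cdot\alpha)}\le (t/t_0)^{2\eta}\, t_0^{2(a\cdot\alpha-a^{m+1}\cdot\alpha)}$, so that the convergence of $\sum_a \ell_a t_0^{2a\cdot\alpha}=\ell(v(t_0),v(t_0))$ supplies the summability; also note that the uniform $\eta$ itself comes from splitting $\{a\succ a^{m+1}\}$ into the finitely many indices with $|a|\le|a^{m+2}|$ and the range $|a|>|a^{m+2}|$ covered by your ratio bound $\alpha_1/\alpha_\vg<1+1/|a^{m+2}|$. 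With these routine fill-ins, your argument proves the lemma along the same lines as the paper.
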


 \begin{proof} We assume $m\ge 0$ (the case $m=-1$ can be treated analogously).  Set $b=a^{m+1}, c=a^{m+2}$ and choose $r_j =|c|^{\vg-j} +N$, where $N> \sum (b_j+c_j) |c|^{\vg-j}$. Put $r=(r_1, \dots, r_{\vg}).$ Observe that,  for each $\rho>m+2$ and $|a^{\rho}|=|c|,$ the inequality $c\prec a^{\rho}$ yields
\[ 
  \sum c_j r_j = \sum c_j |c|^{\vg-j} \, + \, N \, |c|<  \sum a^\rho_j |c|^{\vg-j} \, + \, N \, |c|= \sum a^{\rho}_j r_j.    
\]
Our choice of $N$ guarantees that the same inequality continues to hold when $|c|<|a^{\rho}|.$ One can similarly show $\langle r, b\rangle < \langle r, c\rangle $, and thus we obtain
 \begin{equation} \label{expdiffrev}
  \langle r, b\rangle  <\langle r, c\rangle < \langle r, a^\rho \rangle, 
   \end{equation}
for all $\rho>m+2.$

 Now, fix $\delta>0$ sufficiently small and, for $0<t\le \delta,$ set  $v=v(t)=(t^{r_1}, \dots, t^{r_{\vg}})\in\Omega$. Define $\gamma: [0, \delta]\to \mathcal{H}_{\ell}$ and $\gamma^0: [0, \delta]\to \mathcal{H}_{\ell}$ as 
\[ 
\gamma(t)=\begin{cases}
\cfrac{\ell_{v}-\sum_{j=0}^m\ell_{a^j}p_{a^j}v^{a^j}}{\sqrt{\ell(v, v)-\sum_{j=0}^m\ell_{a^j}v^{2a^j}}}, \hspace{0.6 cm} 0<t\le \delta, \\ 
\sqrt{\ell_{a^{m+1}}}p_{a^{m+1}},  \hspace{2.3 cm} t=0,
\end{cases}
\]
and 
\[ 
\gamma^0(t)=\begin{cases}
\cfrac{\ell_{v}-\sum_{j=0}^{m+1}\ell_{a^j}p_{a^j}v^{a^j}}{\sqrt{\ell(v, v)-\sum_{j=0}^{m+1}\ell_{a^j}v^{2a^j}}}, \hspace{0.6 cm} 0<t\le \delta, \\ 
\sqrt{\ell_{a^{m+2}}}p_{a^{m+2}},  \hspace{2.3 cm} t=0.
\end{cases}
\]
Clearly, both $\gamma$ and $\gamma^0$ are continuous in $(0, \delta]$. We will show that they are continuous at $0$ as well. Since the proofs are essentially identical, we will only examine $\gamma$. For $0< t \le \delta,$ we have
\begin{align} \label{someratio}
\gamma(t)=\cfrac{\sum_{j=m+1}^{\infty}\ell_{a^j}p_{a^j}v^{a^j}}{\sqrt{\sum_{j=m+1}^{\infty}\ell_{a^j}v^{2a^j}}}
=\cfrac{\cfrac{\sum_{j=m+1}^{\infty}\ell_{a^j}p_{a^j}v^{a^j}}{v^{a^{m+1}}}}{\cfrac{\sqrt{\sum_{j=m+1}^{\infty}\ell_{a^j}v^{2a^j}}}{v^{a^{m+1}}}}. 
\end{align}
Now, \eqref{expdiffrev} tells us that 
\[\frac{v^{a^j}}{v^{a^{m+1}}}=\frac{t^{\langle r, a^j\rangle}}{t^{\langle r, a^{m+1}\rangle}}=t^{\langle r, a^j\rangle-\langle r, a^{m+1}\rangle}\]
has a positive exponent, for every $j>m+1.$ Thus, by elementary power series arguments, we have 
\begin{equation} \label{anotherlimit1}
  \cfrac{\sqrt{\sum_{j=m+1}^{\infty}\ell_{a^j}v^{2a^j}}}{v^{a^{m+1}}}\longrightarrow \sqrt{\ell_{a^{m+1}}} \hspace{0.3 cm} \text{ as } t\to 0.  
\end{equation} 
Further, we have
\begin{align*}
 \Bigg|\Bigg|\frac{\sum\limits_{j\ge m+1}\ell_{a^{j}} p_{a^j}v^{a^{j}}}{v^{a^{m+1}}}-\ell_{a^{m+1}}p_{a^{m+1}}    \Bigg|\Bigg|^2_{\mathcal{H}_{\ell}}=\Bigg|\Bigg|\frac{\sum\limits_{j\ge m+2}\ell_{a^{j}} p_{a^j}v^{a_{j}}}{v^{a^{m+1}}}   \Bigg|\Bigg|^2_{\mathcal{H}_{\ell}} =\frac{\sum\limits_{j\ge m+2}\ell_{a^{j}} v^{2a^{j}}}{v^{2a^{m+1}}}, \label{anotherlim}  
\end{align*}
which converges to $0$ as $t\to 0$, as we already saw. Thus, 
\begin{equation} \label{anotherlimit2}
\cfrac{\sum_{j=m+1}^{\infty}\ell_{a^j}p_{a^j}v^{a^j}}{v^{a^{m+1}}} \longrightarrow \ell_{a^{m+1}}p_{a^{m+1}} \hspace{0.3 cm} \text{ as } t\to 0 ,     
\end{equation}
where convergence is taken in the norm of $\mathcal{H}_{\ell}$. Combining \eqref{someratio} with \eqref{anotherlimit1}-\eqref{anotherlimit2}, we obtain that $\gamma$ is continuous at $0$.
   \par 
Now, let $\Lambda\subseteq\Omega\setminus \{\mathbf{0}\}$ be finite and set $\Lambda_t=\Lambda\cup \{v(t)\}.$ It is easily verified that, for all $t\in (0,\delta],$
\begin{equation*}
M_{m, \Lambda_t}=\textup{span}\big(M_{m, \Lambda}\cup\{\gamma(t)\}\big), 
\end{equation*} 
while $\textup{span}\big(M_{m, \Lambda}\cup\{\gamma(0)\}\big)=M_{m+1, \Lambda}$. Since we also have $\gamma(t)\notin M_{m, \Lambda}$ (for $\delta$ sufficiently small), Lemma \ref{l:Pconverge:1} implies $P_{m, \Lambda_t}\to P_{m+1, \Lambda}.$ Similarly, since
$M^0_{m, \Lambda_t}=\textup{span}\big(M^0_{m, \Lambda}\cup\{\gamma^0(t)\}\big)$ and $\textup{span}\big(M^0_{m, \Lambda}\cup\{\gamma^0(0)\}\big)=M^0_{m+1, \Lambda}$, we obtain $P^0_{m, \Lambda_t}\to P^0_{m+1, \Lambda},$ as desired.
 \end{proof}
 
   Let $F_n=\{a^0,\dots,a^n\}.$ In particular, $F_n$ is co-invariant in view of the lexicographic ordering.   Let \[L^n(z,w) = \sum_{a\notin F_n} \ell_a z^a\overline{w}^a.\]
 \begin{lemma}\label{l:Lnkpos}
    If $(k,\ell)$ is a CP pair of diagonal holomorphic kernels, then \[\frac{L^n}{k}\succeq 0.\]
 \end{lemma}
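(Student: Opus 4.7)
The plan is to approximate $L^n$ by kernels of the form $\ell^Y$ with $Y\subset\Omega$ finite, invoke Theorem~\ref{generalnec} to get $\ell^Y/k\succeq 0$ in each case, and then pass to the limit. The initial observation is that $L^n$ is the reproducing kernel of the closed subspace $\mathcal{M}_n:=\overline{\textup{span}}\{z^a : a\in \mathbb{N}^{\vg}\setminus F_n\}$ of $\mathcal{H}_\ell$; equivalently, letting $P$ denote the orthogonal projection onto the finite-dimensional complement $\textup{span}\{z^{a^j} : 0\le j\le n\}$, one has $L^n(z,w)=\ell(z,w)-\langle P\ell_w,\ell_z\rangle$.

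To produce the approximations, I would iteratively apply Lemma~\ref{l:Pconverge:2} to construct continuous maps $v_j:(0,\delta]\to\Omega$ for $j=1,\ldots,n+1$, yielding the finite set $Y(\mathbf{t}):=\{v_1(t_1),\ldots,v_{n+1}(t_{n+1})\}$ for $\mathbf{t}\in (0,\delta]^{n+1}$. The construction is telescoping: first $v_1(t_1)$ is obtained from Lemma~\ref{l:Pconverge:2} with $\Lambda=\varnothing$ and $m=n-1$; given small $t_1>0$, the map $v_2(t_2)$ is obtained with $\Lambda=\{v_1(t_1)\}$ and $m=n-2$; continuing, $v_{j}(t_{j})$ is built from $\Lambda=\{v_1(t_1),\ldots,v_{j-1}(t_{j-1})\}$ and $m=n-j$, ending with $v_{n+1}(t_{n+1})$ built from $\Lambda=\{v_1(t_1),\ldots,v_n(t_n)\}$ and $m=-1$. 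By design, the iterated limit $t_{n+1}\to 0^+, t_n\to 0^+, \ldots, t_1\to 0^+$ in that order successively replaces each kernel $\ell_{v_j(t_j)}$ in the span by a polynomial: after the $(n+2-j)$-th limit the projection equals $P_{n+1-j,\{v_1(t_1),\ldots,v_{j-1}(t_{j-1})\}}$, and the final limit yields $P_{n,\varnothing}=P$. Since $\ell^Y(z,w)=\ell(z,w)-\langle P_Y\ell_w,\ell_z\rangle$, operator-norm convergence $P_{Y(\mathbf{t})}\to P$ forces pointwise convergence $\ell^{Y(\mathbf{t})}(z,w)\to L^n(z,w)$ on $\Omega\times\Omega$.

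For each fixed $\mathbf{t}$, Theorem~\ref{generalnec} applied to the finite set $Y(\mathbf{t})$ produces a kernel $g_{\mathbf{t}}$ on $\Omega$ with $\ell^{Y(\mathbf{t})}=k\,g_{\mathbf{t}}$. At any $(z,w)$ with $k(z,w)\neq 0$ this forces $g_{\mathbf{t}}(z,w)=\ell^{Y(\mathbf{t})}(z,w)/k(z,w)\to L^n(z,w)/k(z,w)$. For any finite tuple $z_1,\ldots,z_N\in\Omega$ with $k(z_i,z_j)\neq 0$ for all $i,j$ (a dense collection, by holomorphicity of $k$ and the fact that $k(z,z)>0$), the positive semidefinite matrices $[g_{\mathbf{t}}(z_i,z_j)]$ converge to $[L^n(z_i,z_j)/k(z_i,z_j)]$, which is therefore positive semidefinite. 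Combined with the fact that $L^n(z,w)=0$ wherever $k(z,w)=0$ (since $\ell^{Y(\mathbf{t})}(z,w)=0$ there for every $\mathbf{t}$), this yields a kernel $G^n$ on $\Omega$ with $L^n=k\,G^n$, i.e., $L^n/k\succeq 0$.

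The main obstacle will be tracking the iterated-limit construction carefully, since each $v_j(t_j)$ is defined in terms of the previously introduced parameters $t_1,\ldots,t_{j-1}$: one must hold these earlier $t_i$ fixed at small positive values while $t_j$ tends to zero, and verify that the operator-norm convergence of $P_{Y(\mathbf{t})}$ genuinely arises from these nested limits. The continuity built into Lemma~\ref{l:Pconverge:2} makes this technical but mechanical.
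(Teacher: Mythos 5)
Your argument is correct and is essentially the paper's own proof: both rest on Theorem~\ref{generalnec} applied to finite zero sets (giving $\ell^{Y}/k\succeq 0$), on Lemma~\ref{l:Pconverge:2} to trade each kernel function for a monomial one limit at a time, and on the fact that pointwise limits of positive kernels are positive. The only difference is organizational: the paper runs an induction on $m$ in which the positivity statement $\frac{(I-P_{m,\{\lambda_{m+1},\dots,\lambda_n\}})\ell}{k}\succeq 0$ is quantified over \emph{all} choices of the remaining points (and anchors $\lambda_0=\mathbf{0}$, so only $n$ limits are needed), which replaces your nested family of curves $v_j$ depending on $t_1,\dots,t_{j-1}$ and the iterated limits in reverse order by a single one-parameter limit per step, eliminating exactly the bookkeeping you flag at the end.
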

 
 \begin{proof} 
   Fix a set $\Lambda =\{\lambda_j:0\le j\le n\}$ of distinct points near $\mathbf{0}$ with $\lambda_0=\mathbf{0}.$  Abusing notation slightly, we will write $P_{m, \Lambda}\ell$ to denote the reproducing kernel associated with the subspace $M_{m, \Lambda},$ which is obtained by applying the projection $P_{m, \Lambda}$ to the kernel functions $\ell_w.$ By Theorem \ref{generalnec}, \[\frac{(I-P_{0, \{\lambda_1, \dots, \lambda_n\}})\ell}{k}=\frac{\ell^{\Lambda}}{k}\succeq 0.\]  
We will now show, by induction on $m$, that 
\begin{equation} \label{indpos}
 \frac{(I-P_{m, \{\lambda_{m+1}, \dots, \lambda_n\}})\ell}{k} \succeq 0,  
\end{equation}
for all $0\le m\le n$ and any choice of distinct nonzero points $\lambda_{m+1}, \dots, \lambda_n$ near $\mathbf{0}.$ Assume that it holds for some fixed $0\le m\le n-1$ and let $\lambda_{m+2}, \dots, \lambda_n\in\Omega$ be distinct, non-zero. By Lemma \ref{l:Pconverge:2}, there exists $v: (0, \delta]\to\Omega$ continuous such that
\[P_{m,\{v(t), \lambda_{m+2}, \dots, \lambda_n\}}\to P_{m+1, \{\lambda_{m+2}, \dots, \lambda_n\}} \]
   in operator norm as $t\to 0.$ By our inductive hypothesis, 
   \[ \frac{(I-P_{m, \{v(t), \lambda_{m+2}, \dots, \lambda_n\}})\ell}{k}\succeq 0, 
   \]
 for all $t\in (0,\delta].$
   Since pointwise limits of PsD kernels are PSD kernels, we may let $t$ tend to $0$  to obtain
   \[ \frac{(I-P_{m+1, \{\lambda_{m+2}, \dots, \lambda_n\}})\ell}{k}\succeq 0.\]
   Thus, \eqref{indpos} holds for every $m\le n.$ Setting $m=n$ then yields
   \[\frac{L^n}{k}=\frac{(I-P_{n, \emptyset})\ell}{k}\succeq 0,\]
   as desired, where $P_{n, \emptyset}$ denotes the projection onto the span of $\{z^a: a\in F_n\}.$
 \end{proof}
 Next, we show that polynomials automatically yield bounded multipliers $\mathcal{H}_k\to\mathcal{H}_{\ell}$ whenever $(k, \ell)$ is a CP pair of diagonal holomorphic kernels. 
 
 \begin{proposition} \label{autopolyn}
    If $(k,\ell)$ is a CP pair of diagonal holomorphic kernels, then all $\mathbb{M}_N$-valued polynomials belong to $\text{Mult}(\mathcal{H}_k\otimes \mathbb{C}^N, \mathcal{H}_{\ell}\otimes \mathbb{C}^N)$, for all $N\ge 1.$ In particular, if $k$ is normalized, we have 
    \[||z^b||_{\text{Mult}(\mathcal{H}_{k}, \mathcal{H}_{\ell})}=||z^b||_{\mathcal{H}_{\ell}},\]
    for all $b\in\mathbb{N}^{\vg}.$
 \end{proposition}
 
 \begin{proof} Fix $n$ and set $b=a^n.$ 
   From Lemma~\ref{l:Lnkpos},  $\frac{L^n}{k}=g$ for some positive
    kernel $g.$ Since $L^n$ and $k$ are holomorphic diagonal kernels, so is $g.$ Writing 
   $L^n= g\, k,$ we find  $g_{a^j}=0$ for $j<n$ (that is $g_a=0$ whenever $a\prec b$ with respect to the lexicographic ordering) and 
\[
 \ell_b = \sum_{0\le u\le b} g_u k_{b-u}.
\]
Now, if $u\le b$ then $|u|\le |b|.$ If the inequality is strict, then $g_u=0.$ If $|u|=|b|$, then 
we must have $u=b.$  Hence, $\ell_b= g_b k_0$.  In particular, $g_b>0.$  Now, for $a\succeq  b,$ 
 \[
    \ell_a = \sum_{u\le a} g_u k_{a-u} =  g_b \, k_{a-b}   + \sum_{b\ne u \le a} g_u k_{a-u}
     \ge g_b \, k_{a-b}.
 \] 
 We conclude
 \[\frac{1}{g_b}\ell(z, w)-z^b\overline{w}^bk(z, w)\succeq 0.\]
 Hence, $||z^b||_{\text{Mult}(\mathcal{H}_{k}, \mathcal{H}_{\ell})}\le 1/\sqrt{g_b}$, and, consequently, multiplication by all matrix-valued polynomials is bounded. Finally, if $k$ is normalized, then $g_b=\ell_b$ and so we may write 
 \[||z^b||_{\mathcal{H}_{\ell}}=||z^b\cdot 1||_{\mathcal{H}_{\ell}}\le ||z^b||_{\text{Mult}(\mathcal{H}_{k}, \mathcal{H}_{\ell})}\le 1/\sqrt{\ell_b}=||z^b||_{\mathcal{H}_{\ell}},\]
which proves the second assertion.
 \end{proof}

The following lemma tells us that $(k, \ell)$ is CC if and only if it has the one-point extension property with respect to the lexicographic order. 
\begin{lemma}\label{lexiconvenient}
A pair   $(k, \ell)$  of diagonal holomorphic kernels
 has the CC property  if and only if for all positive integers $J$ and $n$  and collections $\{c_a  : a\in \{a^0, a^1, \dots, a^n\}\}\subseteq \mathbb{M}_J$ such that the block (upper-triangular) matrix $C$ indexed by $\{a^0, a^1, \dots, a^n\}\times \{a^0, a^1, \dots, a^n\}$ with block $J\times J$ entries,
\begin{equation}
  \label{e:Cablex}
   C_{a, b}=\begin{cases}
     c_{b-a}\sqrt{\cfrac{k_a}{\ell_b}}, \hspace{0.5 cm} b\ge a, \\
      \mathbf{0},  \hspace{1.05 cm} \text{otherwise,}
 \end{cases}
\end{equation}
is a contraction,  
the matrix $C^+$ indexed by $\{a^1, \dots, a^{n+1}\}\times \{a^1, \dots, a^{n+1}\}$, and given by 
\begin{equation}\label{onepointlex}
    C^+_{a, b}=\begin{cases}
     c_{b-a}\sqrt{\cfrac{k_a}{\ell_b}}, \hspace{0.5 cm} b\ge a, \\
      \mathbf{0},  \hspace{1.05 cm} \text{otherwise,}
 \end{cases} \end{equation}
 is also a contraction.
\end{lemma}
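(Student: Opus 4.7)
My plan is to prove the two directions separately, with the reverse direction being the substantive one.

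\noindent \emph{Forward direction.} I would invoke Proposition~\ref{convenient} directly. Any initial lex segment $\{a^0,\dots,a^n\}$ is co-invariant because the lex order $\prec$ refines the componentwise partial order: if $a\le b$ componentwise and $a\ne b$, then $|a|<|b|$, hence $a\prec b$. Moreover, $a^{n+1}$ satisfies conditions~(i) and~(ii) of Proposition~\ref{convenient}; the first is immediate, and the second holds because $|a|<|a^{n+1}|$ forces $a\prec a^{n+1}$, placing $a$ in the initial segment. A short index check (as in Remark~\ref{r:more-one-step}) confirms that the entries of $C^+$ on $\{a^1,\dots,a^{n+1}\}$ use only the original data $\{c_{a^0},\dots,c_{a^n}\}$, so Proposition~\ref{convenient}'s one-step extension conclusion delivers exactly the lex extension.

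\noindent \emph{Reverse direction.} By Proposition~\ref{convenient}, it suffices to establish the general one-step extension property: for every co-invariant $F$, admissible $d$, and contractive $C$ on $F$, the matrix $C^+$ on $F^+=(F\cup\{d\})\setminus\{\mathbf{0}\}$ is a contraction. The crucial combinatorial fact is that all entries of $C^+$ depend only on the original data: for $a,b\in F^+$ with $b\ge a$, the difference $b-a$ always lies in $F$ (co-invariance handles the case $b\in F\setminus\{\mathbf{0}\}$; condition~(ii) handles $b=d$, since then $|b-a|<|d|$ whenever $a\ne\mathbf{0}$). My plan is therefore to realize $C^+$ as a principal submatrix of a contractive completion of $C$ on a sufficiently large initial lex segment $G\supseteq F\cup\{d\}$, such a completion to be built via iterated applications of the lex extension hypothesis (which supplies the $C^+$-contractivity input needed by Parrott) and Parrott's Lemma (which produces the one-step extension at each lex index, exactly as in the proof of Proposition~\ref{convenient}).

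\noindent \emph{Main obstacle.} The delicate point is arranging the matrix completion to respect the original data on all of $F$, not merely on the largest initial lex segment $F_0\subseteq F$. When the iterated procedure passes through a sporadic element $a\in F\setminus F_0$, the value returned by Parrott's Lemma could a priori differ from the prescribed original $c_a$, and then the resulting principal submatrix on $F^+$ would no longer agree with the target $C^+$. The key claim to establish is that at each step whose index lies in $F$, the original value is itself a valid Parrott choice, so the extension can be run using original data at $F$-indices and free Parrott choices only at $G\setminus F$-indices. I expect to prove this inductively, leveraging the contractivity of $C$ on $F$ together with the combinatorial fact above: the upper-triangular structure combined with the vanishing of the below-diagonal blocks forced by co-invariance restricts the role of each prescribed value to a single scalar entry, so the compatibility check reduces to verifying that this entry lies in the permissible Parrott range. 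Once this compatibility is secured, the principal submatrix of the completion on $F^+$ coincides with $C^+$, contractivity follows, and Proposition~\ref{convenient} completes the proof.
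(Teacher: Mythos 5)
Your forward direction is correct and coincides with the paper's: graded lex initial segments are co-invariant, $a^{n+1}$ satisfies conditions (i) and (ii) of Proposition~\ref{convenient}, and (as in Lemma~\ref{compatib}) the entries of $C^+$ involve only the given data, so $C^+$ is a principal submatrix of the completed matrix. For the reverse direction, however, you take a genuinely different and more ambitious route than the paper. The paper's (omitted) argument is just the Parrott induction from the proof of Proposition~\ref{convenient} transcribed along the lexicographic order: given a contraction on $\{a^0,\dots,a^n\}$, the hypothesis supplies contractivity of $C^+$ on $\{a^1,\dots,a^{n+1}\}$, Lemma~\ref{l:parrotf} produces $c_{a^{n+1}}$, and one sweeps through the lex order; the two displayed observations are exactly what guarantees that every entry encountered uses only already-defined data. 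You instead try to derive the general one-step extension of Proposition~\ref{convenient} for an arbitrary co-invariant $F$ by completing $C$ over a large lex segment $G\supseteq F\cup\{d\}$ while forcing agreement with the prescribed data at the sporadic indices of $F$.

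That is where the gap lies. Your ``key claim'' --- that at each step whose index lies in $F$ the prescribed value is automatically an admissible Parrott choice --- is not proved, and it is false as stated, because the admissible set at a given step depends on the free values already chosen at indices of $G\setminus F$, about which the contractivity of $C$ on $F$ says nothing. Concretely, take $\vg=2$, scalar data, $F=\{(0,0),(1,0)\}$, $c_{(0,0)}=0$ and $|c_{(1,0)}|^2=\ell_{(1,0)}$; then $C_F=\left(\begin{smallmatrix}0&1\\0&0\end{smallmatrix}\right)$ is a contraction. At the earlier lex index $(0,1)$ Parrott permits the greedy choice $|\tilde c_{(0,1)}|^2=\ell_{(0,1)}$, but then the $\mathbf{0}$-row of the matrix on $\{(0,0),(0,1),(1,0)\}$ becomes $(0,1,1)$, of norm $\sqrt2$, once the prescribed $c_{(1,0)}$ is inserted; so the prescribed value is not in the Parrott range determined by that earlier choice. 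Hence the free values must be chosen with foresight about all later prescribed entries simultaneously; neither Parrott's Lemma (which is memoryless) nor the contractivity of $C_F$ supplies this, and proving that some compatible system of free choices exists is essentially the statement you set out to prove, so the plan as written is circular. It is true that your underlying worry --- the CC definition quantifies over all co-invariant $F$, not just lex segments --- is a fair reading, but the paper does not attempt the bridge you propose; if you wish to spell that case out, you need a different mechanism (for instance, adapting the limiting construction of Lemma~\ref{l:Pconverge:2} and Theorem~\ref{directpassage} so that monomials are adjoined in an arbitrary co-invariance-respecting order), not a greedy Parrott sweep with prescribed values spliced in afterwards.
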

\begin{proof}
We need two observations: $\{a^0, a^1, \dots, a^n\}\subseteq\mathbb{N}^{\vg}$ is co-invariant for every $n$, and also $b-a\in \{a^0, \dots ,a^n\}$ whenever $a\le b$ with $a, b\in\{a^1, \dots, a^{n+1}\}$. We omit the rest of the proof, since it is essentially identical to that of Proposition~\ref{convenient}.
\end{proof}
We are now ready to give a direct proof that the CP property implies the CC property.
\begin{theorem} \label{directpassage}
    If the normalized diagonal holomorphic pair $(k, \ell)$ is CP, then it is also CC. 
\end{theorem}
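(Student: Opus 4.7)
By Lemma \ref{lexiconvenient}, the plan is to reduce the CC property to a lexicographic one-step extension: given a contractive block matrix $C$ indexed by $F_n = \{a_0,\dots,a_n\}$ with entries determined by $\{c_a : a \in F_n\} \subset \mathbb{M}_J$, I would show that the shifted matrix $C^+$ indexed by $\{a_1,\dots,a_{n+1}\}$ is again a contraction. Observe first that, for any $a,b \in \{a_1,\dots,a_{n+1}\}$ with $a \le b$, one has $|b-a| < |a_{n+1}|$ since $|a| \ge 1$; hence $b - a \in F_n$ in the lex order, so $C^+$ involves only the already-prescribed coefficients $\{c_a : a \in F_n\}$ and no new coefficient $c_{a_{n+1}}$ need be produced. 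Setting $\Psi(z) = \sum_{a \in F_n} z^a \otimes c_a^*$, which is a bounded (if generally not contractive) multiplier by Proposition \ref{autopolyn}, the formula \eqref{adjexp} identifies $C$ with $M_\Psi^*|_{\mathcal{H}_{\ell,F_n} \otimes \mathbb{C}^J}$ (whose image actually lies in $\mathcal{H}_{k,F_n}\otimes\mathbb{C}^J$ because $F_n$ is co-invariant), and identifies $C^+$ with the compression of $M_\Psi^*$ to the shifted spans $\text{span}\{f_{a_j} : 1 \le j \le n+1\} \otimes \mathbb{C}^J$ and $\text{span}\{e_{a_j} : 1 \le j \le n+1\} \otimes \mathbb{C}^J$.

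The core idea is to upgrade the contractivity of the compression $C$ to the contractivity of an honest multiplier $\Phi \in \Mult(\mathcal{H}_k \otimes \mathbb{C}^J, \mathcal{H}_\ell \otimes \mathbb{C}^J)$ whose Taylor coefficients on $F_n$ coincide with the given $c_a$. Once such a $\Phi$ is produced, the compression of $M_\Phi^*$ to the shifted spans equals $C^+$ verbatim (since the relevant Taylor coefficients agree), and it inherits contractivity from $M_\Phi^*$.

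To construct $\Phi$, I would iterate Lemma \ref{l:Pconverge:2} to produce a continuous family of tuples $Z(t) = \{z_1(t),\dots,z_{n+1}(t)\} \subset \Omega \setminus \{\mathbf{0}\}$ tending to $\mathbf{0}$ as $t \downarrow 0$, so that the projections onto $\text{span}\{\ell_{z_j(t)}\}$ and $\text{span}\{k_{z_j(t)}\}$ converge in operator norm to the projections onto $\mathcal{H}_{\ell,F_n}$ and $\mathcal{H}_{k,F_n}$ respectively. For each small $t$, set $W_j(t) = (1-\eta(t))\,\Psi(z_j(t))$ with a small $\eta(t) \downarrow 0$; by Lemma \ref{multdef}, the Pick positivity condition is equivalent to the map $\ell_{z_j(t)} \otimes v \mapsto k_{z_j(t)} \otimes W_j(t)^* v$ being a contraction between the relevant finite-dimensional subspaces, and the norm of this operator tends to $(1-\eta(t))\|C\| \le 1-\eta(t)$ by the norm convergence of projections. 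Choosing $\eta(t)\downarrow 0$ slowly enough, the CP property of $(k,\ell)$ then produces contractive multipliers $\Phi_t$ interpolating this data.

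The main obstacle, and where I expect most care is needed, is the limiting argument and the identification of the Taylor coefficients of the limit. Since $\|M_{\Phi_t}\|\le 1$ bounds every Taylor coefficient uniformly, a diagonal extraction yields a subsequence $t_k \downarrow 0$ along which each Taylor coefficient of $\Phi_{t_k}$ converges to produce a contractive multiplier $\Phi$ in the limit (pointwise passage of the kernel $\ell(z,w)I - k(z,w)\Phi_{t_k}(z)\Phi_{t_k}(w)^*$ preserves positivity). The crux is then to verify $M_\Phi^*|_{\mathcal{H}_{\ell,F_n}} = M_\Psi^*|_{\mathcal{H}_{\ell,F_n}}$, equivalently that the Taylor coefficients of $\Phi$ on $F_n$ match the prescribed $c_a$. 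This should follow by combining the interpolation identity $M_{\Phi_{t_k}}^*|_{\text{span}\{\ell_{z_j(t_k)}\}} = (1-\eta(t_k))\,M_\Psi^*|_{\text{span}\{\ell_{z_j(t_k)}\}}$ with the operator-norm convergence of both the source and target projections furnished by Lemma \ref{l:Pconverge:2}, together with the convergence of $M_{\Phi_{t_k}}^*$ on the (finite-dimensional) approximating subspaces. With the identification in hand, $C^+$ is a compression of the contractive $M_\Phi^*$ and is therefore a contraction, completing the proof.
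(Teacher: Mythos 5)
Your argument is correct in substance but follows a genuinely different route from the paper's. The paper never constructs an interpolating multiplier: it fixes the polynomial $p=\sum_{a\in F}c_a^*z^a$ (bounded by Proposition~\ref{autopolyn}, which you also invoke), realizes both $C$ and $C^+$ as compressions of $M_p^*$ to monomial subspaces, and transports the compression-norm inequality $\|(Q_\Lambda\otimes I)M_p^*(P_\Lambda\otimes I)\|\ge\|(Q^0_\Lambda\otimes I)T_p^*(P^0_\Lambda\otimes I)\|$ --- which is how the CP hypothesis enters, via the one-point extension criterion of Lemma~\ref{boringonepoint} applied at $\mathbf{0}$ --- through $n+1$ successive limits, using Lemma~\ref{l:Pconverge:2} to trade one kernel function for one monomial at a time while the remaining nodes stay fixed. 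You instead use the CP hypothesis to solve actual Pick problems at auxiliary nodes with the damped data $(1-\eta)\Psi(z_j)$, extract a normal-families limit $\Phi$, identify its Taylor coefficients on $F_n$ with the prescribed ones, and read off $C^+$ as a compression of the contraction $M_\Phi^*$; the identification step does work, since $M_{\Phi_m}^*P_{Z_m}=(1-\eta_m)M_\Psi^*P_{Z_m}$, $P_{Z_m}\to P_{F_n}$ in operator norm, $\|M_{\Phi_m}^*\|\le 1$ and $M_{\Phi_m}^*\to M_\Phi^*$ strongly together give $M_\Phi^*P_{F_n}=M_\Psi^*P_{F_n}$. Your route buys more: the limit $\Phi$ is itself a Carath\'eodory interpolant for the data on $F$, so the CC property is obtained directly and the one-step reduction of Lemma~\ref{lexiconvenient} becomes superfluous; the paper's route buys economy, avoiding compactness and coefficient identification at the cost of tracking four families of projections and iterating the inequality. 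Two points in your write-up need tightening, both fillable by the paper's own devices: Lemma~\ref{l:Pconverge:2} moves only one point at a time with the others fixed, so it does not literally furnish a continuous family of $(n+1)$-tuples all tending to $\mathbf{0}$; instead freeze points one at a time to produce, for each $\varepsilon>0$, a tuple whose $\ell$-kernel span has projection within $\varepsilon$ of $P_{F_n}$ (note only the $\ell$-side projections are needed in your scheme). Also the quantifiers on the damping should be ordered explicitly: fix $\eta$, choose the tuple so that $(1-\eta)\|M_\Psi^*P_{Z}\|\le 1$, invoke the CP property, and only then let $\eta\downarrow 0$ along a sequence.
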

\begin{proof}
First, we establish some notation. Given $m\ge 0 $ and $\Lambda\subseteq \Omega\setminus \{\mathbf{0}\}$, as before let  $P_{\Lambda}$ and $P_{m,\Lambda}$ denote the projections onto the spans of $\{\ell_{\lambda} : \lambda\in\Lambda\}$ and $\{z^{a^j}:0\le j\le m\}\cup \{\ell_{\lambda} : \lambda\in\Lambda\}$, respectively. Define $P^0_{\Lambda}$ and $P^0_{m,\Lambda}$ likewise, with $\ell$ replaced by $\ell-1.$ Finally, define $Q_{\Lambda}, Q_{m,\Lambda}, Q^0_{\Lambda}$ and $Q^0_{m,\Lambda}$ in an analogous manner, with $\ell$ replaced by $k$ throughout.

  Now, fix $n\ge 1$, choose $J\ge 1$ and  $\{c_a : a\in\{a^0, \dots, a^n\}\}\subseteq \mathbb{M}_J$. Set $F=\{a^0, \dots, a^n\}$ and $p(z)=\sum_{a\in F}c^*_a z^a$ and assume that the block matrix $C=\big(C_{a, b} \big)_{a, b\in F}$ as in \eqref{e:Cablex} is a contraction. Further, set $F^+=\{a^1, \dots, a^{n+1}\}$ and define the $F^+\times F^+$ block matrix $C^+$ as in \eqref{onepointlex}.  Our goal is to show that $C^+$  is also a contraction. By Lemma \ref{autopolyn}, $p$ yields a bounded operator $M_p: \mathcal{H}_k\otimes\mathbb{C}^J\to \mathcal{H}_{\ell}\otimes\mathbb{C}^J$. Setting $\mathcal{H}^0_k=\mathcal{H}_{k-1}$ and $\mathcal{H}^0_{\ell}=\mathcal{H}_{\ell-1}$, let $T_p: \mathcal{H}^0_k\otimes \mathbb{C}^J \to \mathcal{H}^0_{\ell}\otimes \mathbb{C}^J$ denote the restricted operator $M_p|_{\mathcal{H}^0_k\otimes \mathbb{C}^J}$.   Since $(k,\ell)$ has the CP property, Lemma~\ref{boringonepoint} implies that
  the inequality of equation~\eqref{initial} implies the inequality of equation~\eqref{e:boringonepoint}.
  Here, we replace $\{z_1, \dots, z_n\}$ by $\Lambda$ and $z_{n+1}$ by $0,$ so that $\ell^{z_{n+1}}$ in equation~\eqref{e:boringonepoint} becomes $\ell-1.$ Hence
  \begin{equation} \label{primalCP}
    ||(Q_{\Lambda}\otimes I_{J\times J}) M^*_p (P_{\Lambda}\otimes I_{J\times J})|| \ge ||(Q^0_{\Lambda}\otimes I_{J\times J})  T^*_p (P^0_{\Lambda}\otimes I_{J\times J})||,
  \end{equation}
for any $(n+1)$-point subset $\Lambda=\{\lambda_0, \lambda_1, \dots, \lambda_n\}\subseteq \Omega\setminus \{\mathbf{0}\}.$ Set $\Lambda^m=\Lambda\setminus \{\lambda_0, \dots, \lambda_m\},$ for any $0\le m\le n.$ By Lemma \ref{l:Pconverge:2}, there exists a continuous map $v:(0, \delta]\to\Omega$ such that, with $\Lambda^0_t=\Lambda^0\cup\{v(t)\}$, we have $P_{\Lambda^0_t}\to P_{0, \Lambda^0}, \,  P^0_{\Lambda^0_t}\to P^0_{0, \Lambda^0}, \, Q_{\Lambda^0_t}\to Q_{0, \Lambda^0} $ and $Q^0_{\Lambda^0_t}\to Q^0_{0, \Lambda^0}$ in operator norm as $t\to 0$. Thus, we may replace $\Lambda$ by $\Lambda^0_t$ in \eqref{primalCP} and let $t\to 0$ to obtain
\begin{equation*}  
   ||(Q_{0, \Lambda^0}\otimes I_{J\times J}) M^*_p (P_{0, \Lambda^0}\otimes I_{J\times J})|| \ge ||(Q^0_{0, \Lambda^0}\otimes I_{J\times J})  T^*_p (P^0_{0, \Lambda^0}\otimes I_{J\times J})||,  
\end{equation*}
for any  $\Lambda^0=\{\lambda_1, \dots, \lambda_n\}\subseteq \Omega\setminus\{\mathbf{0}\}.$ One may now proceed by induction; successive uses of Lemma \ref{l:Pconverge:2} yield
\begin{equation*}  
   ||(Q_{m, \Lambda^m}\otimes I_{J\times J}) M^*_p (P_{m, \Lambda^m}\otimes I_{J\times J})|| \ge ||(Q^0_{m, \Lambda^m}\otimes I_{J\times J})  T^*_p (P^0_{m, \Lambda^m}\otimes I_{J\times J})||,  
\end{equation*}
for every $0\le m \le n$ and any $\Lambda^m=\{\lambda_m, \dots, \lambda_n\}\subseteq \Omega\setminus\{\mathbf{0}\}.$ Setting $m=n$ then yields 
\begin{equation} \label{thatsit}
||(Q_{n}\otimes I_{J\times J}) M^*_p (P_{n}\otimes I_{J\times J})|| \ge ||(Q^0_{n}\otimes I_{J\times J})  T^*_p (P^0_{n}\otimes I_{J\times J})||,  
\end{equation}
where $P_n$ and $P^0_n$ are the projections onto $\textup{span}\{z^{a^j}: 0\le j\le n\}$ and $\textup{span}\{z^{a^j}: 1\le j\le n+1\},$ respectively, with $Q_n$ and $Q^0_n$ defined analogously. But $C$ being a contraction is equivalent to $||(Q_{n}\otimes I_{J\times J}) M^*_p (P_{n}\otimes I_{J\times J})|| \le 1$, which, in view of \eqref{thatsit}, yields $||(Q^0_{n}\otimes I_{J\times J})  T^*_p (P^0_{n}\otimes I_{J\times J})||\le 1$. Thus, $C^+$ is a contraction and our proof is complete. 
\end{proof}

\begin{remark}
We point out that our approach here is different from that of \cite{McCulloughcarath} and \cite{Mikethesis}; in those papers, the equivalence of the CC and CP properties was established by first showing that both conditions are characterized by the same positivity condition on the kernel $k$ (i.e. \eqref{singleCPnec}), while our proof is based on a direct passage from the complete Pick to the complete Carath\'eodory problem.
\end{remark}

\normalsize
\section{A Complete Characterization} \label{mainmain}
\normalsize
In this section, Theorems~\ref{introextmain} and \ref{CPisCC} are established as a consequence of Theorem~\ref{mainextmain}. 
 The main issue involves domains; that is,  passing from a master (formal) Shimorin certificate for a CC pair in Theorem~\ref{main:formal:CCP} to
 a strong Shimorin certificate.   The results are then illustrated with examples of Bergman-like kernels.

\subsection{The Shimorin certificate characterization of  diagonal holomorphic CP pairs}
Let $k, \ell$ be normalized diagonal holomorphic kernels in $\vg$ variables and recall the definitions of the domains of convergence
 $\Omega_\ell,\Omega_k,$ which, by assumption, are non-empty.
Suppose  $t(x)=\sum_{|a|>0}t_a x^a$ is a formal Shimorin certificate for $(k, \ell)$ and set 
\begin{align*}
\Omega_t^1=\{x\in\Omega_t: \sum_{|a|>0}t_a |x|^{2a}<1\}.
\end{align*}
In Section~\ref{theCCsection} we investigated how the existence of $t$ is related with the CC property for $(k, \ell).$  However, our calculations revolved exclusively around the matrix-completion version of the Carath\'eodory problem and no attention was payed to the domains where $k, \ell, t$ are actually defined. We shall settle this issue with Proposition~\ref{l:tlg} below.  We will write $(k, \ell, X)$ in place of $(k, \ell)$ to signify that the common domain of the kernels $k, \ell$ is $X$. We also update Definition~\ref{strcertdef} as follows.

\begin{definition}\label{updatestrcertdef}
 Assume $k, \ell$ are kernels on $X.$ A kernel $s$ on $X$ is a \df{strong Shimorin certificate for} 
 $(k, \ell, X)$ if $s$ is a complete Pick kernel on $X$ such that \eqref{intshimfact} holds with $\bh$ and $g$ defined on $X.$   
\end{definition}

\begin{proposition}
\label{l:tlg}
Suppose $t$ is a formal Shimorin certificate for $(k,\ell),$ a pair of normalized diagonal holomorphic kernels. 
Thus, by assumption,  $g=(1-t)\ell$ and $\hb=1-(1-t)k$ 
are formal (diagonal) power series with non-negative coefficients.   
 With notations as above,   
 \[
   \Omega_\ell \subseteq \Omega_t^1\subseteq \Omega_k,
     \hspace*{0.7 cm} \Omega_\ell \subseteq \Omega_g, \, \Omega_h.
\]
 Further, define $s$ as the formal power series reciprocal of $1-t.$
 Then, $\Omega_t^1=\Omega_s$ and the function $s:\Omega_t^1 \times\Omega_t^1 \to \CC$ given by
\[
   s(z, w)=s(z\overline{w})=\frac{1}{1-t(z\overline{w})}, 
\]
 is a strong Shimorin certificate for $(k, \ell, \Omega_{\ell})$  with 
 \[
   \ell(z,w)=g(z,w)\, s(z,w)  \hspace*{0.7 cm} k(z,w)=(1-\hb(z,w))\, s(z,w).
 \]
 as functions on $\Omega_\ell\times\Omega_\ell.$
 %
%
\end{proposition}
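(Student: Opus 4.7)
My plan is to exploit the two formal identities built into the hypothesis,
\[
\ell_a=g_a+\sum_{0<u\le a}t_u\,\ell_{a-u}\quad\text{and}\quad k_a+h_a=\sum_{0<u\le a}t_u\,k_{a-u}\quad(|a|>0),
\]
together with $g_{\mathbf{0}}=\ell_{\mathbf{0}}=k_{\mathbf{0}}=1$ and $h_{\mathbf{0}}=t_{\mathbf{0}}=0$.  Because every quantity appearing here is a non-negative real, truncations and rearrangements are unconditional, and so formal equalities will pass to numerical ones whenever both sides converge.

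For $\Omega_\ell\subseteq \Omega_t^1\cap \Omega_g$, I would fix $x\in \Omega_\ell$, sum the first identity over $|a|\le N$ against $|x|^{2a}$, and note that each of the three non-negative terms is then bounded by $\ell(|x|^2)<\infty$.  In particular $g(|x|^2)$ converges, and the cross term (a truncated Cauchy product) bounds $\bigl(\sum_{0<|u|\le N}t_u|x|^{2u}\bigr)\cdot\bigl(\sum_{|b|\le N}\ell_b|x|^{2b}\bigr)$ by $\ell(|x|^2)$; dividing by $\ell_{\mathbf{0}}=1$ shows $t(|x|^2)$ converges.  Passing to the limit and rearranging yields
\[
g(|x|^2)+t(|x|^2)\,\ell(|x|^2)=\ell(|x|^2),
\]
hence $(1-t(|x|^2))\ell(|x|^2)=g(|x|^2)\ge 1>0$ and thus $t(|x|^2)<1$.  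Openness of $\Omega_\ell$ combined with Abel's Lemma~\ref{Abellemma} promotes these pointwise facts to $x\in\Omega_t\cap\Omega_g$, and the strict inequality places $x$ in $\Omega_t^1$.

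Next I would handle the remaining inclusions via $s$.  The formal reciprocal $s=(1-t)^{-1}=\sum_{n\ge 0}t^n$ has non-negative coefficients, so for $x\in\Omega_t^1$ the series $s(|x|^2)=\sum_n t(|x|^2)^n=(1-t(|x|^2))^{-1}$ converges; conversely, $s(|x|^2)<\infty$ dominates each partial geometric sum $\sum_{n=0}^N t(|x|^2)^n$, forcing $t(|x|^2)<1$.  Hence $\Omega_s=\Omega_t^1$.  The formal identity $s=k+hs$ (using $h_{\mathbf{0}}=0$) gives $s_a\ge k_a$ by induction, so $\Omega_t^1\subseteq\Omega_k$.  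Finally, $h_a\le(tk)_a$ for $|a|>0$ together with $\Omega_\ell\subseteq\Omega_t\cap\Omega_k$ gives $h(|x|^2)\le t(|x|^2)k(|x|^2)<\infty$, yielding $\Omega_\ell\subseteq\Omega_h$.

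To conclude, I would verify that $s(z,w)=1/(1-t(z\overline{w}))$ is a strong Shimorin certificate for $(k,\ell,\Omega_\ell)$.  The function $\phi(z,w):=t(z\overline{w})=\sum_{|a|>0}t_a z^a\overline{w}^a$ is a positive kernel (a non-negative combination of rank-one kernels), and the AM-GM bound $|z^a\overline{w}^a|\le\tfrac12(|z|^{2a}+|w|^{2a})$ yields $|\phi(z,w)|\le\tfrac12(t(|z|^2)+t(|w|^2))<1$ on $\Omega_\ell\times\Omega_\ell$.  Hence $s=\sum_n\phi^n$ converges to a positive kernel; writing $\phi(z,w)=\langle b(z),b(w)\rangle$ with $b(z)=(\sqrt{t_a}\,z^a)_{|a|>0}:\Omega_\ell\to\mathbb{B}_\infty$ identifies $s$ as a pull-back of the Drury--Arveson kernel, so $s$ is complete Pick by~\eqref{univCP1}.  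The pointwise factorizations $\ell=gs$ and $k=(1-h)s$ on $\Omega_\ell\times\Omega_\ell$ follow from the formal identities $\ell(1-t)=g$ and $k(1-t)=1-h$ evaluated at $u=z\overline{w}$, noting that all series involved converge absolutely at $u$ (again by the AM-GM inequality $|u|^a\le\tfrac12(|z|^{2a}+|w|^{2a})$), so formal multiplications become pointwise products of holomorphic functions.  The main obstacle I anticipate is the domain bookkeeping---carefully keeping track of when a pointwise convergence fact suffices to place $x$ in an \emph{open} interior-of-convergence domain such as $\Omega_t^1$ or $\Omega_s$, rather than merely in the convergence set $\mathcal{C}$ itself, so that openness-based promotions and Abel's Lemma can be applied cleanly at each step.
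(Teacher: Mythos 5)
Your argument is correct, and it reaches the conclusion by a genuinely different route at the two places where the paper does its real work. On the easy inclusions you and the paper run in parallel: coefficient domination gives $\Omega_\ell\subseteq\Omega_g,\Omega_t,\Omega_h$, the identity $g(|x|^2)=(1-t(|x|^2))\,\ell(|x|^2)$ with $g(|x|^2)\ge 1$ forces $t(|x|^2)<1$, and $s$ is complete Pick on $\Omega_t^1$ via the universal form \eqref{univCP1} with $b(z)=(\sqrt{t_a}\,z^a)_{|a|>0}$, exactly as in the paper. The divergence is in proving $\Omega_s=\Omega_t^1\subseteq\Omega_k$. The paper obtains $\Omega_t^1\subseteq\Omega_s$ from Proposition~\ref{prop:Pringsheim}, gets $\Omega_s\subseteq\Omega_t^1$ by viewing $t$ as a formal certificate for $(k,s)$, and—its heaviest step—proves $\Omega_s\subseteq\Omega_k$ functional-analytically: it factors $\hb=\Phi\Phi^*$ on $\Omega_\ell$, observes that $\Phi$ is a contractive multiplier of $\mathcal{H}_s(\Omega_\ell)$, extends it to $\Omega_s$ through the restriction isomorphism (Lemma~\ref{restrbasic}), and then applies Proposition~\ref{prop:Pringsheim} to the extended kernel $(1-\widetilde{\Phi}\widetilde{\Phi}^*)s$. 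You replace all of this with coefficient bookkeeping: $\Omega_s=\Omega_t^1$ comes from rearranging the non-negative multi-series $s(|x|^2)=\sum_n t(|x|^2)^n$, and $\Omega_s\subseteq\Omega_k$ from the formal identity $s=k+\hb\,s$, i.e.\ $s_a=k_a+\sum_{0<u\le a}\hb_u\,s_{a-u}\ge k_a$. That inequality is valid and makes this step purely formal; it also lets you bypass the paper's appeal to Remark~\ref{r:ka<=la} (whose stated hypothesis is the CC property, not a formal certificate) for $k_a\le\ell_a$, since your chain $\Omega_\ell\subseteq\Omega_t^1=\Omega_s\subseteq\Omega_k$ together with $\hb_a\le(tk)_a$ yields $\Omega_\ell\subseteq\Omega_h$ directly. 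What the paper's heavier route buys as a by-product is the factorization $k=(1-\widetilde{\Phi}\widetilde{\Phi}^*)s$ as kernels on all of $\Omega_s$, not merely on $\Omega_\ell$; but the proposition only asserts the factorizations on $\Omega_\ell\times\Omega_\ell$, which you recover by absolute convergence, so nothing required by the statement is lost, and your openness/Abel promotions from pointwise convergence to membership in the open domains $\Omega_t,\Omega_t^1,\Omega_s$ are exactly the right bookkeeping.
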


\begin{proof} 
 By assumption, $\Omega_k$  and $\Omega_\ell$ contain
 some open neighborhood of the origin. Moreover,  $t_a, k_a, \ell_a\ge 0$ for all $|a|\ge 0.$ 
 By definition of a formal Shimorin certificate, there exist formal power series $g(x)=\sum_a g_a x^a$ and  
 $\hb(x)=\sum_a \hb_a x^a$ with non-negative coefficients (and also $g_{\mathbf{0}}=1$ and $\hb_{\mathbf{0}}=0$) 
 such that $\ell(1-t)=g$ and $1-k(1-t)= \hb$ as formal power series. In particular,
\begin{equation}
\label{e:ltg:1}
 g_a +t_a = \ell_a-\sum_{0<u<a} t_u \ell_{a-u}  \le \ell_a.
\end{equation}
 It follows that
\[
 \{x\in\CC^\vg: \sum \ell_a |x|^{2a} <\infty\}\subseteq \{x\in\CC^\vg: \sum g_a |x|^{2a} <\infty\}
\]
 and similarly with $t$ in place of $g.$ Hence 
 $\Omega_\ell\subseteq \Omega_g,\Omega_t.$  
   \par
  According to Remark~\ref{r:ka<=la}, 
 $k_a\le \ell_a$ and hence, just as above,
 $\Omega_\ell\subseteq \Omega_k.$ Further, from equation~\eqref{e:ltg:1},
 \[
  g_a+ \sum_{0<u\le a} t_uk_{a-u} \le g_a+ \sum_{0<u\le a} t_u \ell_{a-u} =\ell_a.
 \]
 Hence, $\sum_{0<u\le a} t_u k_{a-u} \le \ell_a$ and therefore,
 \[
  \hb_a  \le (kt)_a =\sum_{0<u\le a} t_u k_{a-u} \le \ell_a.
 \]
 Thus, $\Omega_\ell\subseteq \Omega_h.$
 Consequently, all of  $k,\ell,t,g,h$ determine holomorphic kernel functions on at least $\Omega_\ell$ 
 and, as functions 
 on $\Omega_\ell\times\Omega_\ell,$ they satisfy $\ell(z,w)(1-t(z,w))=g(z,w)$ and $1-k(zw)(1-t(z,w))=\hb(z,w).$
\par 

  Let $z\in \Omega_\ell$ be given.
  Since $g(z,z)=\ell(z,z)(1-t(z,z))$  for $z\in\Omega_\ell\subset\Omega_t,$  and since
 both $g(z,z)$ and $\ell(z,z)$ are positive, it follows that $0\le t(z,z)<1.$ Thus, $\Omega_\ell\subseteq\Omega_t^1$ as
 claimed. Also, $t$ being PsD implies $|t(z, w)|<1$ for all $z, w\in\Omega^1_t,$ thus, by construction,  $s(z,w)=\frac{1}{1-t(z,w)}$ is a CP kernel on
 $\Omega_t^1$. We conclude that $s$ is a strong Shimorin certificate for $(k,\ell, \Omega_{\ell}).$
   \par

 By Lemma~\ref{Omegafsup1}, $\Omega_t^1$ is a domain containing $\mathbf{0}.$
 Since  $s$ is a diagonal holomorphic kernel on 
 $\Omega_t^1,$ Proposition~\ref{prop:Pringsheim} gives
 $\Omega_t^1\subseteq \Omega_s.$ 
 On the other hand, $t$ is a formal
 Shimorin certificate for the pair $(k,s)$ (with the same $h$ but $g=1$).
 Hence, $\Omega_s\subseteq \Omega_t^1,$ and so $\Omega_s=\Omega_t^1.$
   \par
 We will now show that $\Omega^1_t=\Omega_s \subseteq \Omega_k.$ Since  $\hb$ is positive on $\Omega_{\ell}\subset \Omega_s \cap \Omega_k\cap \Omega_h,$ there exists a Hilbert space $E$ and a holomorphic $\Phi: \Omega_{\ell}\to\mathcal{B}(E, \mathbb{C})$ such that $\hb=\Phi\Phi^*$ on $\Omega_{\ell}\times\Omega_{\ell}$. Since 
\[
 \frac{1-\Phi\Phi^*}{1-t}=(1-\hb)s=k\succeq 0 \hspace{0.1 cm}\text{ on } \hspace{0.1 cm} \Omega_{\ell}\times \Omega_{\ell},
 \]
$\Phi$ is a contractive multiplier $\mathcal{H}_s(\Omega_{\ell})\otimes E\to \mathcal{H}_s(\Omega_{\ell})$.  Combining the Identity Principle with results from \cite[Section 5.4]{PaulsenRagh} (the CP property of $s$ is not needed here), we obtain the existence of a (unique) holomorphic $\widetilde{\Phi}: \Omega^1_t\to\mathcal{B}(E, \mathbb{C})$ such that $\widetilde{\Phi}\equiv \Phi$ on $\Omega_{\ell}$ and $\widetilde{\Phi}$ is a contractive multiplier $\mathcal{H}_s\otimes E\to\mathcal{H}_s$.
Thus, $(1-\widetilde{\Phi}\widetilde{\Phi}^*)s$ is a holomorphic kernel on $\Omega_s$ that agrees with $k$ on $\Omega_{\ell}$. Applying Proposition~\ref{prop:Pringsheim} with $f=k$ and  $\tilde{f}=(1-\widetilde{\Phi}\widetilde{\Phi}^*)s$ on the domain  $\Omega=\Omega_s,$ we conclude $\Omega_s\subseteq \Omega_k,$ as desired.
\end{proof}

Specializing Proposition~\ref{l:tlg} to the case $t$ is the master certificate $\mt$ for $k$
gives the following corollary.

\begin{corollary}
\label{c:tlg}
  Suppose $k$ is a normalized diagonal holomorphic kernel and let $\mt$ denote
  its associated master certificate. Letting $s:\Omega_s\times\Omega_s$ denote the  
   diagonal holomorphic kernel $s=\frac{1}{1-\mt},$ we have
  \[
     \Omega_s=\Omega_\mt^1 \subseteq \Omega_k \subseteq \Omega_\mt
  \]
 and the function $\hb:\Omega_s\times\Omega_s\to \CC$ defined by $\hb(z,w)=1-k(z,w)(1-\mt(z,w))$
 is a diagonal holomorphic kernel.   
 
  Further, if $(k,\ell,\Omega)$ is a complete Carath\'eodory pair, then $\Omega\subseteq \Omega_\ell\subseteq \Omega_s$
  and $s$ is a strong Shimorin certificate for $(k,\ell,\Omega_\ell).$ 
\end{corollary}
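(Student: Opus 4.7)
The proof is essentially a specialization of Proposition~\ref{l:tlg} to the choice $t=\mt,$ combined with Theorem~\ref{main:formal:CCP} for the CC portion. The only wrinkle is that the first half of the corollary concerns only the single kernel $k$ (no pair), so Proposition~\ref{l:tlg} cannot be invoked literally: one has to extract its single-kernel content, using Theorem~\ref{1-k(1-mt)} in place of the formal Shimorin identity that drove the proof of Proposition~\ref{l:tlg}. The plan therefore splits into a single-kernel part (the chain $\Omega_s = \Omega_\mt^1 \subseteq \Omega_k \subseteq \Omega_\mt$ and the claim about $\hb$) and a pair part (the conclusions about $\ell$).

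For the single-kernel part, I would first invoke Theorem~\ref{1-k(1-mt)} to obtain $\mt_b\le k_b$ for all $b$ and to ensure that all Taylor coefficients of $\hb=1-k(1-\mt)$ are non-negative. The inequality $\mt_b\le k_b$ gives the inclusion $\Omega_k\subseteq\Omega_\mt$ by coefficient comparison. Next, Lemma~\ref{Omegafsup1} shows $\Omega_\mt^1$ is a genuine domain, and on $\Omega_\mt^1\times\Omega_\mt^1$ one has $|\mt(z,w)|<1$ by positivity of $\mt,$ so the geometric series $s=\sum_{n\ge 0}\mt^n=\frac{1}{1-\mt}$ converges absolutely to a diagonal holomorphic kernel with non-negative coefficients $s_a.$ That $s$ is a complete Pick kernel is immediate from the Agler--McCarthy form in Subsection~\ref{CPsubsec}. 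Proposition~\ref{prop:Pringsheim} applied to $s$ then forces $\Omega_\mt^1\subseteq\Omega_s,$ and the reverse inclusion follows since on $\Omega_s$ one has $\mt=1-1/s<1.$ Finally, the formal identity $k=(1-\hb)s$ with $\hb,s$ having non-negative coefficients yields $k_a\le s_a,$ hence $\Omega_s\subseteq\Omega_k,$ completing the chain. For the kernel statement about $\hb,$ the bound $\hb_a\le (k\mt)_a$ for $a\ne \mathbf{0}$ (read off from $\hb=1-k+k\mt$), combined with absolute convergence of $k$ and $\mt$ on $\Omega_s\subseteq\Omega_k\cap\Omega_\mt,$ gives convergence of $\sum\hb_a|x|^{2a}$ on $\Omega_s;$ positive semi-definiteness is automatic from $\hb_a\ge 0.$

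For the pair part, assume $(k,\ell,\Omega)$ is a complete Carath\'eodory pair. Proposition~\ref{prop:Pringsheim} applied to $\ell$ gives $\Omega\subseteq\Omega_\ell$ at once. By Theorem~\ref{main:formal:CCP}, the CC assumption is equivalent to $\mt$ being a formal Shimorin certificate for $(k,\ell).$ One can then invoke Proposition~\ref{l:tlg} directly with $t=\mt$ to conclude $\Omega_\ell\subseteq\Omega_\mt^1=\Omega_s$ and that $s=\frac{1}{1-\mt}$ is a strong Shimorin certificate for $(k,\ell,\Omega_\ell),$ with the companion factorizations $\ell=gs$ and $k=(1-\hb)s$ holding as functions on $\Omega_\ell\times\Omega_\ell.$

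The only real obstacle in this plan is the familiar one of passing from formal power series identities to genuine functional identities on the correct domains; all the heavy lifting there is already done by Proposition~\ref{l:tlg} (via Proposition~\ref{prop:Pringsheim} and Lemma~\ref{Omegafsup1}), so the corollary itself is largely a bookkeeping exercise once those tools and Theorem~\ref{1-k(1-mt)} and Theorem~\ref{main:formal:CCP} are in hand.
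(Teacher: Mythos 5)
Your proposal is correct, but it parts ways with the paper on the single-kernel half. The paper's own proof does invoke Proposition~\ref{l:tlg} literally there: since $s(1-\mt)=1$ and, by Theorem~\ref{1-k(1-mt)}, $\hb=1-k(1-\mt)$ has non-negative coefficients, $\mt$ is a formal Shimorin certificate for the \emph{pair} $(k,s)$ (witnessed by $g\equiv 1$ and $\hb$), so applying Proposition~\ref{l:tlg} with $\ell=s$ immediately yields $\Omega_s=\Omega_\mt^1\subseteq\Omega_k$ together with the fact that $\hb$ defines a kernel on $\Omega_s\times\Omega_s$ and $k=(1-\hb)s$ there. You instead re-derive this content by hand; your substitute works, and your route to $\Omega_s\subseteq\Omega_k$ via the coefficient inequality $k_a\le s_a$ (read off from $k=(1-\hb)s$ with $\hb_a,s_a\ge 0$) is in fact more elementary than the multiplier-extension-plus-Pringsheim argument used inside the proof of Proposition~\ref{l:tlg}, at the cost of repeating work the paper has already packaged. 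One spot to tighten: for $\Omega_s\subseteq\Omega_\mt^1$ you write $\mt=1-1/s<1$ on $\Omega_s$, which presupposes that the series for $\mt$ converges there; record first that $\mt_a\le s_a$ (since $s_a=\sum_{0<u\le a}\mt_u s_{a-u}$ for $a\ne\mathbf{0}$ and $s_{\mathbf{0}}=1$), which gives $\Omega_s\subseteq\Omega_\mt$, after which the diagonal identity $s(z,z)(1-\mt(z,z))=1$ yields $\mt(z,z)<1$. The pair half of your argument (Theorem~\ref{main:formal:CCP} followed by Proposition~\ref{l:tlg}, and Proposition~\ref{prop:Pringsheim} for $\Omega\subseteq\Omega_\ell$) coincides with the paper's proof.
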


\begin{proof}  
The inclusions $\Omega_k\subseteq \Omega_\mt$ and $\Omega\subseteq \Omega_{\ell}$ follow from Theorems~\ref{1-k(1-mt)} and ~\ref{prop:Pringsheim}, respectively.
 
  By Proposition~\ref{1-k(1-mt)}, the formal power series $\hb=1-k(1-\mt)$ has 
  non-negative coefficients. Evidently, $s(1-\mt)=g=1.$ 
   Hence $s$ is a formal Shimorin certificate for $(k,s)$ as witnessed
   by $g$ and $\hb.$  An application of  Proposition~\ref{l:tlg} with $\ell=s$ 
    gives  $\Omega_s =  \Omega_\mt^1\subseteq \Omega_k$ and
     says that $s,\hb,k$ all define kernels on $\Omega_s\times\Omega_s$
   and  $k(z,w)=(1-\hb(z,w))\, s(z,w)$  on $\Omega_s\times \Omega_s.$
     
     Finally, if $(k,\ell,\Omega)$ is a complete Carath\'eodory pair, then $\mt$ is a formal
    certificate for $(k,\ell)$ by Theorem~\ref{main:formal:CCP}. Hence, by Proposition~\ref{l:tlg},
     $\Omega_\ell\subseteq \Omega_\mt^1=\Omega_s$ 
      and  $s$ is a strong Shimorin certificate for $(k,\ell,\Omega_\ell).$
\end{proof}

All the results needed for the proof of Theorems~\ref{introextmain} and~\ref{CPisCC} are now in place.

\begin{theorem} \label{mainextmain}
 Let $(k, \ell)$ be a pair of normalized diagonal holomorphic kernels on a domain 
 $\mathbf{0}\in \Omega \subseteq \mathbb{C}^\vg.$ Let $\mt$ denote the master certificate associated 
 with $k,$ let $s$ denote the kernel
 \[
   s=\frac{1}{1-\mt}
 \]
 and set  $\Omega_{\mt}^1=\{z\in\CC^{\vg}: \mt(z\overline{z})<1 \}$. 
 If $(k,\ell)$ is a complete Pick pair, then 
\begin{equation}
\label{inclusions}
  \Omega \subseteq \Omega_\ell\subseteq
  \Omega_s = \Omega^1_\mt \subseteq  \Omega_k \subseteq \Omega_\mt.
\end{equation}
 Further,  the following assertions are equivalent, independent of the  domain\footnote{As always, it is assumed that $\mathbf{0}\in \Omega.$}  $\Omega\subseteq \Omega_\ell.$ 
 \begin{enumerate}[(i)]
     \item \label{i:mm:1} $(k, \ell, \Omega)$ is a complete Pick pair;
      \item \label{i:mm:3} $(k, \ell)$ is a complete Carath\'eodory pair;
    \item \label{i:mm:7} 
     $s$ is a strong Shimorin certificate for $(k, \ell, \Omega);$
     \item \label{i:mm:8} there exists a positive kernel $g$ on $\Omega$ such that 
      $\ell=g\, s$ on $\Omega;$
 \item \label{i:mm:6}  $(k, \ell, \Omega)$ has a diagonal holomorphic strong Shimorin certificate;
      \item \label{i:mm:5}  $(k, \ell, \Omega)$ has a strong Shimorin certificate;
     \item \label{i:mm:4}  $(k, \ell, \Omega)$ has a Shimorin certificate.
 \end{enumerate}
\end{theorem}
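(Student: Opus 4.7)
The plan is to first dispense with the chain of domain inclusions under the hypothesis that $(k,\ell)$ is a CP pair, and then to prove the equivalence of (i)--(vii) by closing a cycle of implications in the order (i)~$\Rightarrow$~(ii)~$\Rightarrow$~(iii)~$\Rightarrow$~(iv)~$\Rightarrow$~(v)~$\Rightarrow$~(vi)~$\Rightarrow$~(vii)~$\Rightarrow$~(i). All the substantial technical work has been done already in Sections~\ref{ShimnstrShim}--\ref{theCCsection}; what remains is essentially bookkeeping, with the only delicate issue being the interplay between the various natural domains $\Omega,\Omega_\ell,\Omega_s,\Omega_k,\Omega_\mt$.

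For the inclusions, $\Omega\subseteq\Omega_\ell$ is Proposition~\ref{prop:Pringsheim} applied to $\ell,$ and the string $\Omega_s=\Omega_\mt^1\subseteq\Omega_k\subseteq\Omega_\mt$ is independent of $\ell$ and is precisely the content of Corollary~\ref{c:tlg} (together with Theorem~\ref{1-k(1-mt)} for $\Omega_k\subseteq\Omega_\mt$). To obtain the remaining inclusion $\Omega_\ell\subseteq\Omega_s$ under the CP assumption, I will first invoke Theorem~\ref{directpassage} to conclude that $(k,\ell)$ is CC, and then apply the second half of Corollary~\ref{c:tlg}, which gives $\Omega\subseteq\Omega_\ell\subseteq\Omega_s$ for every CC pair.

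For the equivalences, (i)~$\Rightarrow$~(ii) is exactly Theorem~\ref{directpassage}. For (ii)~$\Rightarrow$~(iii), Corollary~\ref{c:tlg} asserts that $s=\frac{1}{1-\mt}$ is a strong Shimorin certificate for $(k,\ell,\Omega_\ell);$ restricting to the subdomain $\Omega\subseteq\Omega_\ell$ preserves CP-ness of $s$ and the two kernel factorizations $\ell=gs$ and $k=(1-h)s,$ yielding (iii). The implication (iii)~$\Rightarrow$~(iv) is immediate from the definition of strong Shimorin certificate. For (iv)~$\Rightarrow$~(v), the hypothesis already provides $s$ (diagonal holomorphic and CP on $\Omega\subseteq\Omega_s$) together with a kernel $g$ witnessing $\ell=gs,$ so only the factorization $k=(1-h)s$ on $\Omega$ remains, and for this I will quote Corollary~\ref{c:tlg}, which produces the diagonal holomorphic kernel $h=1-k(1-\mt)$ on $\Omega_s.$ Thus $s$ itself serves as a diagonal holomorphic strong Shimorin certificate for $(k,\ell,\Omega),$ and in fact (iv) and (iii) are tautologically equivalent once one notes that the $k$-side factorization is automatic.

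The remaining steps are essentially free: (v)~$\Rightarrow$~(vi) is trivial, (vi)~$\Rightarrow$~(vii) is Proposition~\ref{betweencerts}~\eqref{i:btw:1}, and (vii)~$\Rightarrow$~(i) is Theorem~\ref{generalsuff}. The principal obstacle throughout is not conceptual but notational: one must keep careful track of the several domains in play and of the distinction between ``$s$ is a strong Shimorin certificate for $(k,\ell,\Omega)$'' versus for $(k,\ell,\Omega_\ell)$, since the kernel $s$ produced by the formal-power-series machinery of Section~\ref{theCCsection} naturally lives on $\Omega_s,$ and one must use Proposition~\ref{prop:Pringsheim} and the Identity Principle (as inside the proof of Proposition~\ref{l:tlg}) to transport factorizations between these domains without loss of positivity.
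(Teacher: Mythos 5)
Your proposal is correct and follows essentially the same route as the paper: the inclusions via Proposition~\ref{prop:Pringsheim}, Theorem~\ref{directpassage} and Corollary~\ref{c:tlg}, and the equivalences via Theorem~\ref{directpassage}, Corollary~\ref{c:tlg}, Proposition~\ref{betweencerts} and Theorem~\ref{generalsuff}. The only difference is cosmetic: you arrange all seven items in a single cycle with (iv) placed inside it, whereas the paper proves the biconditional (iii)$\Leftrightarrow$(iv) separately and cycles through the remaining items, and your observation that the $k$-side factorization $k=(1-h)s$ is automatic from Corollary~\ref{c:tlg} is exactly the paper's argument for the reverse implication.
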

\begin{proof}   
Since, by assumption, $\ell$ is a kernel over the domain $\mathbf{0}\in \Omega,$ Proposition~\ref{prop:Pringsheim} implies $\Omega\subseteq \Omega_\ell.$  
Corollary~\ref{c:tlg} gives the remaining inclusions of equation~\eqref{inclusions}.
Assuming $(k,\ell,\Omega)$ is a complete Pick pair, Theorem~\ref{directpassage} implies that $(k,\ell)$ is a complete Carathéodory  pair. Here what is needed, beyond $(k,\ell,\Omega)$ being a complete Pick pair,  is only that $\Omega$ contains a neighborhood of $\mathbf{0}.$   Hence item~\eqref{i:mm:1} implies item~\eqref{i:mm:3}.
Corollary~\ref{c:tlg} gives the implication item~\eqref{i:mm:3} implies item~\eqref{i:mm:7}. 
By the definition of strong Shimorin certificate, item~\eqref{i:mm:7} implies
 item~\eqref{i:mm:8}.  The reverse implication is true too,  since by 
  Corollary~\ref{c:tlg}, $\hb(z,w)=1-k(z,w)(1-\mt(z,w))$ is a positive kernel on $\Omega_\ell$ and
 $k=(1-\hb)s$ in addition to $\ell=g\, s.$
 It is evident that item~\eqref{i:mm:7} implies item~\eqref{i:mm:6} implies item~\eqref{i:mm:5}.
The implication item~\eqref{i:mm:5} implies \eqref{i:mm:4} is a consequence of Proposition~\ref{newcerttt}. 
Finally, Theorem~\ref{generalsuff} says item~\eqref{i:mm:4} implies \eqref{i:mm:1}.
\end{proof}

Corollary~\ref{canoncorol} below, which is an immediate consequence of Theorem~\ref{mainextmain}, says $s(z, w)=\frac{1}{1-\mt(z\overline{w})}$ is the ``minimum" among all \df{diagonal holomorphic} strong Shimorin certificates for $(k, \ell),$ for any diagonal holomorphic $\ell$ such that $(k, \ell)$ is CP. In subsection~\ref{bergcerts}, we will see that this minimality property of $s$ does not survive if we also consider non-diagonal certificates.

\begin{corollary} \label{canoncorol}
Let $k$ be a normalized diagonal holomorphic kernel with master certificate $\mt.$ Assume $\Omega\subseteq \Omega_{\mt}^1$, set  
\[
   s(z, w)=\frac{1}{1-\mt(z\overline{w})}
\] 
and let $\ell$ be any normalized diagonal holomorphic kernel on $\Omega$ such that $(k, \ell, \Omega)$ is CP. 
If $\tilde{s}$ is a strong Shimorin certificate for $(k, \ell, \Omega)$, then there exists a kernel $g$ on $\Omega$ such that 
\[\tilde{s}=sg. \] 
\end{corollary}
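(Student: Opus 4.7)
The plan is to apply Theorem~\ref{mainextmain} to the pair $(k,\tilde{s},\Omega)$ in place of $(k,\ell,\Omega)$, exploiting the fact that the master certificate $\mt$, and hence the kernel $s=(1-\mt(z\overline{w}))^{-1}$, is constructed from $k$ alone via Definition~\ref{mastercert}. As is made clear in the paragraph preceding the corollary, $\tilde{s}$ is implicitly understood to be a \emph{diagonal holomorphic} strong Shimorin certificate, so the machinery of Section~\ref{mainmain} is available.

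The first step is to observe that $\tilde{s}$ is itself a strong Shimorin certificate for the pair $(k,\tilde{s},\Omega)$: the hypothesis furnishes a CP kernel $\tilde{s}$ on $\Omega$ and a kernel $h$ on $\Omega$ with $k=(1-h)\tilde{s}$, and the trivial factorization $\tilde{s}=\tilde{s}\cdot 1$ supplies the second required factorization with the constant (positive) kernel $g_{\mathrm{triv}}\equiv 1$. Theorem~\ref{mainShim} then implies that $(k,\tilde{s},\Omega)$ is a complete Pick pair.

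Next, since $\tilde{s}$ is diagonal holomorphic with $\tilde{s}_{\mathbf{0}}>0$, the rescaling $\tilde{s}/\tilde{s}_{\mathbf{0}}$ is a normalized diagonal holomorphic kernel, and multiplying $\ell$ by a positive constant visibly preserves the CP property (rescale the $W_i$ by the square root of the constant in Definition~\ref{CPdef}). Thus $(k,\tilde{s}/\tilde{s}_{\mathbf{0}},\Omega)$ is a CP pair of normalized diagonal holomorphic kernels, and Theorem~\ref{mainextmain} applies to it. Because the master certificate $\mt$, and therefore $s$, depends only on the coefficients of $k$, this is the \emph{same} kernel $s$ as in the corollary. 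The equivalence of items~\eqref{i:mm:1} and~\eqref{i:mm:8} of Theorem~\ref{mainextmain} then yields a positive kernel $g_0$ on $\Omega$ with $\tilde{s}/\tilde{s}_{\mathbf{0}}=s\, g_0$; setting $g=\tilde{s}_{\mathbf{0}}g_0$ gives the desired $\tilde{s}=sg$.

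There is no substantive technical obstacle: the whole argument amounts to applying Theorem~\ref{mainextmain} to $(k,\tilde{s},\Omega)$ rather than to the original $(k,\ell,\Omega)$, which is precisely what the paper signals by describing the corollary as ``an immediate consequence of Theorem~\ref{mainextmain}''. The only points requiring verification are the trivial self-certification of $\tilde{s}$ for the pair $(k,\tilde{s})$ and the fact that $s$ is intrinsic to $k$; both are immediate from the relevant definitions.
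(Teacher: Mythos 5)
Your argument is correct and is essentially the paper's own proof: the paper likewise applies Theorem~\ref{mainextmain} to the pair $(k,\tilde{s},\Omega)$, using the trivial factorization $\tilde{s}=\tilde{s}\cdot 1$ (together with $k=(1-\widetilde{h})\tilde{s}$) to see that $\tilde{s}$ is a strong Shimorin certificate for $(k,\tilde{s})$, and then invokes the equivalence with item~\eqref{i:mm:8} to obtain $\tilde{s}=sg$. Your explicit rescaling to $\tilde{s}/\tilde{s}_{\mathbf{0}}$ and your remark that $\tilde{s}$ must be taken diagonal holomorphic (as the surrounding text and Corollary~\ref{nonmin} make clear) are minor clarifications of details the paper leaves implicit, not a different route.
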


\begin{proof}
Assume $(k, \ell, \Omega)$ and  $\tilde{s}$ are as above. 
Thus, by assumption, there exists a kernel $\widetilde{\bh}$ such that  $k=(1-\widetilde{\bh})\tilde{s}$ on $\Omega.$  
Since  $(k, \tilde{s}, \Omega)$ is a CP pair (having $\tilde{s}$ as a strong Shimorin certificate),
Theorem~\ref{mainextmain} item~\eqref{i:mm:5} implies item~\eqref{i:mm:8},  says there exists $g\succeq 0$ such that, on $\Omega\times\Omega,$ 
\[
  \tilde{s}(z, w)=\frac{g(z, w)}{1-\mt(z\overline{w})}=s(z, w)g(z, w). \qedhere
\]
\end{proof}

\subsection{Bergman-like kernel examples}
In this subsection, the master certificate for some Bergman-like kernels are computed, with
special attention focused on the domains of these certificates.

\begin{example}\label{finallyanex}
Fix $\vg \ge 1$ and suppose  $p_1, \dots, p_{\vg}$ are positive integers. Set \[\mathfrak{b}_{p}(z, w)=\prod_{i=1}^{\vg}\frac{1}{(1-z_i\overline{w}_i)^{p_i}}, \hspace{0.3 cm} z, w\in\mathbb{D}^{\vg}.\]    
Given a domain $\Omega\subseteq\mathbb{D}^{\vg}$ and a diagonal holomorphic kernel $\ell$ on $\Omega,$ when does $(\mathfrak{b}_{p}, \ell, \Omega)$ have the CP property? We will calculate the master certificate associated with $\mathfrak{b}_{p}$. The following lemma will be crucial.

\begin{lemma} \label{multcertlem}
$(\mathfrak{b}_p)_a\le p_j \, (\mathfrak{b}_p)_{a-e_j},$ for any $j\in\{1, \dots, \vg\}$ and $a\in\NN^{\vg}$ with $e_j\le a.$
\end{lemma}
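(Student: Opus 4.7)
The plan is to prove the inequality by direct computation from the explicit power-series expansion of $\mathfrak{b}_p.$ Because the kernel is a product over coordinates, its coefficients factor, and the claim reduces to a one-variable binomial identity.

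First, I would expand each factor geometrically: for every $1 \le i \le \vg,$
\[
\frac{1}{(1-z_i\overline{w}_i)^{p_i}} = \sum_{n_i \ge 0} \binom{n_i + p_i - 1}{n_i} z_i^{n_i}\overline{w}_i^{n_i},
\]
and then multiply through to read off
\[
(\mathfrak{b}_p)_a = \prod_{i=1}^{\vg}\binom{a_i+p_i-1}{a_i}, \qquad a \in \NN^{\vg}.
\]
This identifies the coefficients of $\mathfrak{b}_p$ as a normalized diagonal holomorphic kernel on $\mathbb{D}^\vg.$

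Next, assuming $a_j \ge 1,$ I would form the ratio and note that all factors indexed by $i \ne j$ cancel:
\[
\frac{(\mathfrak{b}_p)_a}{(\mathfrak{b}_p)_{a-e_j}} = \frac{\binom{a_j+p_j-1}{a_j}}{\binom{a_j+p_j-2}{a_j-1}} = \frac{a_j + p_j - 1}{a_j}.
\]
The inequality $(\mathfrak{b}_p)_a \le p_j\,(\mathfrak{b}_p)_{a-e_j}$ is therefore equivalent to $a_j + p_j - 1 \le p_j\,a_j,$ i.e.\ $(p_j-1)(a_j-1) \ge 0,$ which holds trivially since $p_j \ge 1$ and $a_j \ge 1.$

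There is no real obstacle here: the entire lemma is a short calculation once the coefficients are written down. The only thing to be mindful of is the edge case $p_j = 1,$ where the ratio equals $1$ and the inequality becomes an equality; this is handled automatically by the argument above.
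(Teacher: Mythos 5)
Your proof is correct. The coefficient formula $(\mathfrak{b}_p)_a=\prod_{i=1}^{\vg}\binom{a_i+p_i-1}{a_i}$ is right, the factors with $i\ne j$ cancel in the ratio, and the reduction of the inequality to $(p_j-1)(a_j-1)\ge 0$ is valid since the $p_i$ are positive integers and $a_j\ge 1$.

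The route differs from the paper's only in organization, not in substance. The paper first proves the one-variable case by computing the difference $(\mathfrak{b}_p)_{n+1}-p\,(\mathfrak{b}_p)_n=-n(p-1)\frac{(n+p-1)!}{(p-1)!(n+1)!}\le 0$ and then handles several variables by induction on $\vg$, peeling off the last coordinate using the product structure of the kernel. You instead write down the closed-form product formula for the coefficients in all variables at once, so the multivariable statement collapses immediately to the same one-variable binomial fact, expressed as a ratio rather than a difference. Your version is slightly more direct, since it avoids the induction on the number of variables entirely; the paper's inductive phrasing buys nothing extra here beyond not having to display the explicit coefficient formula. Both arguments hinge on exactly the same elementary binomial computation, and your treatment of the boundary case $p_j=1$ (equality) is consistent with the paper's, where the difference vanishes when $p=1$ or $n=0$.
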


\begin{proof}
 First, assume $\vg=1$ and set $p_1=p.$ The statement of the lemma is then equivalent to    
 \[(\mathfrak{b}_p)_{n+1}\le p (\mathfrak{b}_p)_{n}, \hspace{0.2 cm} n\ge 0.\]
This inequality clearly holds when $n=0$ (for any $p\ge 1),$ so we may assume $n\ge 1.$ We then have
\[
(\mathfrak{b}_p)_{n+1}-p (\mathfrak{b}_p)_{n}=
\begin{pmatrix}
n+p \\
p-1
\end{pmatrix}
-
p\begin{pmatrix}
n+p-1 \\
p-1
\end{pmatrix}  
=-n(p-1)\frac{(n+p-1)!}{(p-1)!(n+1)!}\le 0,
\]
as desired. Now, fix $m\ge 1$ and suppose that we have proved Lemma~\ref{multcertlem} for any $\vg\le m.$ We will show that it also holds for $\vg=m+1.$ Indeed, let $p_1, \dots, p_{m+1}$ be positive integers and choose $a\in\mathbb{N}^{\vg}$ and $j\in\{1, \dots, m+1\}$ with $e_j\le a$. It is not hard to see that $(\mathfrak{b}_p)_{e_j}=p_j=p_j(\mathfrak{b}_p)_{\mathbf{0}},$ for all $j,$ so we may assume $|a|\ge 2.$ Without loss of generality,  we assume  $j\le m.$ Let $\widetilde{p}=(p_1, \dots, p_m)$ and write $a=(a_1, \dots, a_m, 0)+ke_{m+1}=\widetilde{a}+ke_{m+1}$, so we have $e_j\le \widetilde{a}.$ Since 
\[\mathfrak{b}_p(z, w)=\frac{\mathfrak{b}_{\widetilde{p}}(z, w)}{(1-z_{m+1}\overline{w}_{m+1})^{p_{m+1}}},\]
an application of  our inductive hypothesis to $\mathfrak{b}_{\widetilde{p}}$ yields
\[(\mathfrak{b}_p)_a=
(\mathfrak{b}_{\widetilde{p}})_{\widetilde{a}}\begin{bmatrix}
   k+p_{m+1} -1  \\
   p_{m+1} -1
\end{bmatrix} \le p_j(\mathfrak{b}_{\widetilde{p}})_{\widetilde{a}-e_j} \begin{bmatrix}
   k+p_{m+1} -1  \\
   p_{m+1} -1
\end{bmatrix} =p_j (\mathfrak{b}_{p})_{a-e_j},\]
as desired.
\end{proof}
We now compute $\mt$. If $1\le j\le \vg,$ then
\[\mt_{e_j}=(\mathfrak{b}_{p})_{e_j}=p_j. \]
We will show that $\mt_{a}=0$ whenever $|a|\ge 2.$ Indeed, let $a\in \NN^{\vg}$ with $|a|\ge 2$ and choose $e_j$ with $e_j\le a.$ Applying Lemma~\ref{multcertlem}, we have
\begin{equation}
 \label{mt-mfb}
 (\mathfrak{b}_p)_a-\sum_{\substack{0<u<a}}\mt_{u}(\mathfrak{b}_p)_{a-u}\le   (\mathfrak{b}_p)_a-\mt_{e_j}(\mathfrak{b}_p)_{a-e_j}=(\mathfrak{b}_p)_a-p_j(\mathfrak{b}_p)_{a-e_j}\le 0.
\end{equation}
Since (see equation~\eqref{def:mtb}) $\theta_a$ is the larger of $0$ and the expression on the left hand side
 of equation~\eqref{mt-mfb}, it follows that $\theta_a=0$ for $|a|\ge 2.$ 
Thus, we obtain
\[\mt(x)=\sum_{i=1}^{\vg}p_ix_i.\]
Hence, $\Omega_{\mt}^1=\{z\in\mathbb{C}^{\vg} : \sum_ip_i|z_i|^2< 1\}$, and Theorem~\ref{mainextmain} tells us that $(\mathfrak{b}_{p}, \ell, \Omega)$ is CP if and only if it is CC if and only if $\Omega\subseteq \{z\in\mathbb{C}^{\vg} : \sum_ip_i|z_i|^2< 1\}$ and 
\[\big(1-\sum_ip_iz_i\overline{w}_i\big)\ell(z, w)\]
is a positive kernel on $\Omega.$ This last positivity condition is equivalent to saying that 
\[\Phi(z):=\begin{bmatrix}
    \sqrt{p_1}z_1 &
    \cdots &
     \sqrt{p_{\vg}}z_{\vg}
\end{bmatrix} \]
is a contractive multiplier from $\mathcal{H}_{\ell}\otimes\mathbb{C}^{\vg}$ to $\mathcal{H}_{\ell}$.
\end{example}
\begin{example} \label{finallyanex2}
Fix $\vg \ge 1$, set $\mathbb{B}_{\vg}=\{z\in\CC^{\vg}: \sum_i|z_i|^2<1\}$ and let $\alpha$ be a positive integer. Define 
\[\mathfrak{b}_{\alpha}(z, w)=\frac{1}{(1-\langle z, w\rangle)^{\alpha}}, \hspace{0.2 cm} z, w\in \mathbb{B}_{\vg}.\]
From Example~\ref{finallyanex} with $\vg=1$ and setting $x=z\overline{w},$ 
 there exists for positive integers $i,$ non-negative $b_i$ such that,  setting  $B_{\alpha}(x)=\sum_i b_ix^i$  for $x\in \frac{1}{{\alpha}}\DD$, 
\[\frac{1}{(1-x)^{\alpha}}=\frac{1-B_{\alpha}(x)}{1-\alpha x}, \hspace{0.2 cm} x \in\frac{1}{{\alpha}}\DD.\]
Setting $\widetilde{B}_{\alpha}(z, w)=\sum_i b_i\langle z, w\rangle^i$, we obtain that $\widetilde{B}_{\alpha}$ is a kernel such that 
\begin{equation}\label{Ba}
\frac{1}{(1-\langle z, w\rangle)^{\alpha}}=\frac{1-\widetilde{B}_{\alpha}(z, w)}{1-\alpha\langle z, w\rangle}, \hspace{0.2 cm} z, w\in \frac{1}{\sqrt{\alpha}}\mathbb{B}_{\vg}.
\end{equation}
The identity of equation~\eqref{Ba} is equivalent to 
\[1-\mathfrak{b}_{\alpha}(z, w)(1-\alpha\langle z, w\rangle) = \widetilde{B}_\alpha(z,w) \succeq 0,\]
which yields
\begin{equation}\label{Ba:2}
(\mathfrak{b}_{\alpha})_{u}\le \alpha \sum_{\substack{1\le i\le \vg, \\ e_i\le u}}(\mathfrak{b}_{\alpha})_{u-e_i},
\end{equation}
for every $u\in\NN^{\vg}$ with $|u|\ge 2.$ We can now compute the master certificate $\mt$ associated with $\mathfrak{b}_{\alpha}.$ If $1\le j\le \vg,$ then 
\[\mt_{e_j}=(\mathfrak{b}_{\alpha})_{e_j}=\alpha.\]
Next, assume $|u|=2.$ We have 
\[(\mathfrak{b}_{\alpha})_u-\sum_{\substack{1\le i\le \vg, \\ e_i\le u}}(\mathfrak{b}_{\alpha})_{u-e_i}\mt_{e_i}=(\mathfrak{b}_{\alpha})_u-\alpha\sum_{\substack{1\le i\le \vg, \\ e_i\le u}}(\mathfrak{b}_{\alpha})_{u-e_i}\le 0,\]
which yields $\mt_u=0.$ Proceeding by induction, we  obtain  $\mt_u=0$ for all $|u|\ge 2.$ 
Thus, 
\[\mt(x)=\alpha\sum_{i=1}^{\vg}x_i,\]
and so $\Omega^1_{\mt}=\frac{1}{\sqrt{\alpha}}\mathbb{B}_{\vg}$. Theorem~\ref{mainextmain} now tells us that $(\mathfrak{b}_{\alpha}, \ell, \Omega)$ is CP if and only if   $\Omega\subseteq \frac{1}{\sqrt{\alpha}}\mathbb{B}_{\vg}$ and 
\[(1-\alpha\langle z, w\rangle)\ell(z, w)\]
is a positive kernel on $\Omega,$ which is the same as saying that 
\[\Psi(z):=\sqrt{\alpha}\begin{bmatrix}
z_1 & \cdots & z_{\vg}
\end{bmatrix}\]
is a contractive multiplier from $\mathcal{H}_{\ell}\otimes\CC^{\vg}$ to $\mathcal{H}_{\ell}$. 
\end{example}

\normalsize

\section{General Necessary Conditions} 
  \label{generalnecsecti}
\normalsize
Theorem~\ref{Schurpreserves} and its corollary Theorem~\ref{generalnec} from the introduction along with several other necessary conditions for a pair to be a complete Pick pair are established in this section. We assume once more that all pairs $(k, \ell) $ consist of kernels that are non-vanishing along the diagonal.

\subsection{Zero-based restrictions} 
In this subsection, it is seen that analogs of the zero-based restriction results, Lemma~\ref{l:zeroslemma} and Proposition~\ref{Scertforhol}, hold for CP pairs $(k,\ell)$
without the assumption of a Shimorin certificate.
\begin{proposition} \label{CPsplit}
Assume $(k, \ell)$ is a CP pair on $X$ and $z,w,v\in X$ are distinct. If  $k(z, w)=0,$ then, $\ell(z, w)=0$ and at least one of the following assertions holds:
\begin{enumerate}[(i)]
    \item $\ell(z, v)=\ell(w, v)=0$;
    \item  either $k(z, v)=0$ or $k(w, v)=0$.
\end{enumerate}
\end{proposition}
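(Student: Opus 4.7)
My plan is to treat the two conclusions of the proposition separately. The equality $\ell(z,w)=0$ will follow at once from Shimorin's factorization theorem; the dichotomy will then be established by applying the complete Pick property to a carefully chosen one-parameter family of two-point interpolation problems and extracting a forced incompatibility unless (i) holds. For the first part, Theorem~\ref{Shimnec} supplies a positive kernel $g$ on $X$ with $\ell = k\,g$, so $\ell(z,w) = k(z,w)\,g(z,w) = 0$.

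For the dichotomy I would assume that (ii) fails, i.e. $k(z,v)\neq 0$ and $k(w,v)\neq 0$, and aim to deduce (i). Put $\rho_z = \sqrt{\ell(z,z)/k(z,z)}>0$ and $\rho_w = \sqrt{\ell(w,w)/k(w,w)}>0$. For each $\theta\in\R$, consider the scalar Pick data $W_1 = \rho_z$, $W_2 = \rho_w\,e^{i\theta}$ at the points $\{z,w\}$. Using $k(z,w)=\ell(z,w)=0$ together with the definitions of $\rho_z,\rho_w$, the associated $2\times 2$ Pick matrix is identically zero, hence PSD, so the CP hypothesis produces a contractive multiplier $\Phi_\theta: X \to \CC$ with $\Phi_\theta(z)=\rho_z$ and $\Phi_\theta(w) = \rho_w e^{i\theta}$; set $W_3^\theta = \Phi_\theta(v)$. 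Since $\Phi_\theta$ is a contractive multiplier, the $3\times 3$ Pick matrix at $\{z,w,v\}$ must also be PSD. By construction its $(z,z)$ and $(w,w)$ diagonal entries vanish, which forces the $(z,v)$ and $(w,v)$ off-diagonal entries to vanish as well (a PSD $2\times 2$ block with a zero diagonal entry has a zero off-diagonal). Reading off these two forced vanishings yields
\[
\overline{W_3^\theta} \;=\; \frac{\ell(z,v)}{\rho_z\,k(z,v)} \;=\; e^{-i\theta}\,\frac{\ell(w,v)}{\rho_w\,k(w,v)},
\]
and this equality must hold for every $\theta\in\R$. The middle expression is independent of $\theta$, whereas the rightmost expression traces a circle of radius $|\ell(w,v)/(\rho_w k(w,v))|$ as $\theta$ varies, so the two can coincide for every $\theta$ only if both vanish. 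Hence $\ell(z,v)=0=\ell(w,v)$, establishing (i).

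The hard part will be seeing how to set up this interpolation problem in the first place. The idea is to saturate the pointwise contractive norm bound at both $z$ and $w$ simultaneously, since it is precisely the vanishing of the $(z,z)$ and $(w,w)$ Pick entries that forces the $(z,v)$ and $(w,v)$ off-diagonals to vanish in any CP-extending multiplier. The phase $e^{i\theta}$ at $w$ costs nothing in the two-point Pick matrix because $k(z,w)=0$ kills the cross-term, but it introduces a $\theta$-dependent constraint on $W_3^\theta$ whose incompatibility with the $\theta$-independent constraint coming from $z$ is what ultimately pins down $\ell(w,v)=0$ and hence $\ell(z,v)=0$.
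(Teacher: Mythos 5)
Your proof is correct and follows essentially the same route as the paper: the heart of the argument — saturating the pointwise bound at $z$ and $w$ with a free unimodular phase, extending by the CP property to $v$, and using the zero diagonal entries of the resulting Pick matrix to force the $(z,v)$ and $(w,v)$ entries to vanish for every phase — is exactly the paper's argument (the paper allows independent phases at both points, you rotate only at $w$, which suffices). The only cosmetic difference is that you obtain $\ell(z,w)=0$ by invoking Theorem~\ref{Shimnec} ($\ell=kg$), whereas the paper derives it directly from a one-point-to-two-point extension; both are legitimate and non-circular.
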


\begin{proof}
Fix $z, w\in X$ and assume $k(z, w)=0$. Define $\psi: \{z\}\to \mathbb{C}$ by 
\[\psi(z)=\sqrt{\frac{\ell(z, z)}{k(z, z)}}.\]
Clearly, $[\ell(z,z)-|\psi(z)|^2k(z, z)]=[0]\succeq 0,$ thus, by the CP property, $\psi$ extends to a multiplier on $\{z, w\}$ satisfying 
\begin{equation*}
\begin{split}
 0 &\preceq \begin{bmatrix}
  \ell(z,z)-|\psi(z)|^2k(z, z)  & \ell(z,w)-\psi(z)\overline{\psi(w)}k(z, w) \\
  \ell(w,z)-\psi(w)\overline{\psi(z)}k(w, z)  & \ell(w,w)-|\psi(w)|^2k(w, w)
\end{bmatrix} \\ 
&=\begin{bmatrix}
  0 & \ell(z,w)\\
  \ell(w,z) & \ell(w,w)-|\psi(w)|^2k(w, w)
\end{bmatrix}.   
\end{split}
\end{equation*}
This gives us $\ell(z,w)=0$, as desired.  \par 
Now, let $v\in X\setminus\{z, w\}$ and assume $k(z, v)\ne 0 \ne k(w, v).$ We will show that $\ell(z, v)=\ell(w, v)=0.$ Define $\phi: \{z, w\}\to\mathbb{C}$ by 
$\phi(x)=e^{i\mt_x}\sqrt{\frac{\ell(x, x)}{k(x, x)}},$ 
where each $\mt_x\in \mathbb{R}$ is chosen arbitrarily with the dependency of $\phi$ on $\mt_z,\mt_w$  suppressed. 
Clearly, $\ell(x,x)-|\phi(x)|^2k(x, x)=0$ for $x\in \{z, w\}.$ Since we also have $\ell(z, w)=k(z, w)=0,$ the Pick matrix for $\phi$ over $\{z, w\}$ is identically zero. By the CP property, $\phi$ can then be extended to a multiplier, still denoted $\phi$  on $\{z, w, v\}\to \CC$ satisfying 
\[
0 \preceq \begin{bmatrix}
  0  & 0 & \ell(z,v)-\phi(z)\overline{\phi(v)}k(z, v)
  \\
  0  & 0 & \ell(w,v)-\phi(w)\overline{\phi(v)}k(w, v) \\ 
  * & * &  \ell(v,v)-\psi(v)\overline{\psi(v)}k(v, v) 
\end{bmatrix}.
\]
Thus, we obtain $\ell(z,v)-\psi(z)\overline{\psi(v)}k(z, v)=\ell(w,v)-\psi(w)\overline{\psi(v)}k(w, v)=0.$ Since $\ell(z, v)\neq 0,$ we also have $k(z, v)\neq 0$. Solving for $\overline{\psi(v)}$ twice, 
\begin{equation*}
\begin{split}
e^{-i\mt_z}\frac{\ell(z, v)}{k(z, v)}\frac{\sqrt{k(z, z)}}{\sqrt{\ell(z, z)}}=\overline{\psi(v)}=e^{-i\mt_w}\frac{\ell(w, v)}{k(w, v)}\frac{\sqrt{k(w, w)}}{\sqrt{\ell(w, w)}}.
\end{split}
\end{equation*}
Thus,
\[\ell(w, v)e^{-i(\mt_w-\mt_z)}=k(w,v)\frac{\ell(z, v)}{k(z, v)}\frac{\sqrt{k(z, z)}}{\sqrt{\ell(z, z)}}\frac{\sqrt{\ell(w, w)}}{\sqrt{k(w, w)}}.\]
Since $\mt_z, \mt_w\in\mathbb{R}$ are arbitrarily,  $\ell(z, v)=\ell(w, v)=0,$ as desired.
\end{proof}

In Proposition~\ref{Scertforhol}, we saw that, for a holomorphic pair $(k, \ell)$ over a connected domain to possess a Shimorin certificate, it is necessary that $k$ be non-vanishing.  This condition continues to be necessary even if we only require $(k, \ell)$ to be a CP pair. 
\begin{proposition}
If $(k, \ell)$ is a CP pair of holomorphic kernels on the connected domain $\Omega\subseteq\mathbb{C}^{\vg}$ that are not identically zero, then $k$ is non-vanishing.
\end{proposition}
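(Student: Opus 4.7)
The plan is to argue by contradiction using the two-kernel zero dichotomy already established in Proposition~\ref{CPsplit} together with the ``too many zero sets cannot cover a domain'' principle from Lemma~\ref{toomany0}. This parallels (and simplifies) the first part of the proof of Proposition~\ref{Scertforhol}, but avoids the Shimorin certificate hypothesis by feeding the weaker dichotomy of Proposition~\ref{CPsplit} (rather than Lemma~\ref{l:zeroslemma}(vii)) into Lemma~\ref{toomany0}.

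Suppose, for contradiction, that $k(z_0, w_0) = 0$ for some $z_0, w_0 \in \Omega$. The standing assumption that kernels never vanish on the diagonal gives $k(z_0, z_0), k(w_0, w_0) > 0$, so $z_0 \neq w_0$. Now invoke Proposition~\ref{CPsplit}: for every $v \in \Omega \setminus \{z_0, w_0\}$, at least one of the conditions
\[
 \ell(z_0, v) = \ell(w_0, v) = 0, \qquad k(z_0, v) = 0, \qquad k(w_0, v) = 0
\]
holds. Retaining only $\ell(z_0, v) = 0$ from the first alternative, every such $v$ lies in $Z(\ell_{z_0}) \cup Z(k_{z_0}) \cup Z(k_{w_0})$, where $Z(\cdot)$ denotes the zero set in $\Omega$ and $\ell_{z_0}(v) = \ell(v, z_0)$, $k_{z_0}(v) = k(v, z_0)$, $k_{w_0}(v) = k(v, w_0)$. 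The remaining two points are accounted for as well: $z_0 \in Z(k_{w_0})$ and $w_0 \in Z(k_{z_0})$, because $k(z_0, w_0) = 0$. Hence
\[
 \Omega \;\subseteq\; Z(\ell_{z_0}) \cup Z(k_{z_0}) \cup Z(k_{w_0}).
\]

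Each of $\ell_{z_0}, k_{z_0}, k_{w_0}$ is holomorphic on $\Omega$, since $k$ and $\ell$ are holomorphic in their first argument, and each is a non-zero holomorphic function because $\ell_{z_0}(z_0) = \ell(z_0, z_0) > 0$, $k_{z_0}(z_0) = k(z_0, z_0) > 0$, and $k_{w_0}(w_0) = k(w_0, w_0) > 0$ (here we use that neither kernel vanishes on the diagonal, which follows from the standing assumption and, for $\ell$, the hypothesis that $\ell$ is not identically zero combined with irreducibility considerations or simply the standing convention). The display above then contradicts Lemma~\ref{toomany0}, completing the proof.

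The only potential subtlety is making sure the three functions $\ell_{z_0}, k_{z_0}, k_{w_0}$ are genuinely non-zero as holomorphic functions on $\Omega$; this is immediate from the diagonal non-vanishing convention, so no real obstacle arises. The proof is entirely a short consequence of the earlier preparatory results.
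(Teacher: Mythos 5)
Your argument is exactly the proof the paper intends: it reproduces the first part of the proof of Proposition~\ref{Scertforhol}, covering $\Omega$ by the zero sets $Z(\ell_{z_0})$, $Z(k_{z_0})$, $Z(k_{w_0})$ via Proposition~\ref{CPsplit} (in place of Lemma~\ref{l:zeroslemma}) and contradicting Lemma~\ref{toomany0}, merely filling in the details the paper omits (the handling of $v\in\{z_0,w_0\}$ and the non-vanishing of the three functions via the diagonal convention). Correct, and essentially the same approach as the paper.
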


\begin{proof}
The proof is completed by  arguing  as in the first part of the proof of Proposition~\ref{Scertforhol}, the only difference being the substitution of Proposition~\ref{CPsplit} for  Lemma~\ref{l:zeroslemma}. We omit the details.
\end{proof}

\subsection{Positivity conditions}
We will now show that taking Schur complements with respect to $\ell$ (i.e. replacing $\ell$ by $\ell^w$) preserves the CP property for $(k, \ell).$ 
Recall, for $Y\subseteq X$ is a finite set $\ell^Y$ denotes the kernel for the subpace of $\mathcal{H}_\ell$ consisting of those functions that
 vanish on $Y.$ 

\begin{theorem}\label{Schurpreserves}
 Let $(k, \ell)$ be a CP pair of kernels on a set $X.$ Given any finite $Y\subseteq X$, the pair $(k, \ell^Y)$ is a CP pair.    
\end{theorem}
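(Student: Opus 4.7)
The plan is to reduce the theorem to its one-point version (the case $Y=\{w\}$) by induction on $|Y|,$ using the identity $\ell^{Y\cup\{w\}}=(\ell^Y)^w.$ This reduction is purely structural: imposing vanishing on $\mathcal{H}_\ell$ successively at the points of $Y$ and then at $w$ cuts out the same closed subspace of $\mathcal{H}_\ell$ as imposing them at $Y\cup\{w\}$ simultaneously.

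For the one-point case, the first step would be to translate the question into an extended interpolation problem for the original pair $(k,\ell).$ The key observation is that the inclusion $\mathcal{H}_{\ell^w}\hookrightarrow \mathcal{H}_\ell$ is \emph{isometric}, so that a contractive multiplier $\Phi\in\Mult(\mathcal{H}_k\otimes\CC^N,\mathcal{H}_{\ell^w}\otimes\CC^N)$ is exactly a contractive multiplier in $\Mult(\mathcal{H}_k\otimes\CC^N,\mathcal{H}_\ell\otimes\CC^N)$ whose image lies in $\mathcal{H}_{\ell^w}\otimes\CC^N.$ Taking $f=k_w\otimes v$ and letting $v$ range over $\CC^N$ shows that the latter range condition is equivalent to the single pointwise condition $\Phi(w)=0.$ Hence, given Pick data $(z_1,W_1),\dots,(z_n,W_n)$ for $(k,\ell^w)$ (where we may assume all $z_i\ne w,$ since $z_i=w$ forces $W_i=0$ by the diagonal block $-k(w,w)W_iW_i^*\succeq 0$), the task becomes to find a contractive multiplier for $(k,\ell)$ interpolating the \emph{enlarged} data $(z_1,W_1),\dots,(z_n,W_n),(w,0).$

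The second step is to verify the Pick condition for the enlarged data. Here I would compute the Schur complement of the $(w,w)$ block $\ell(w,w)I_{N\times N}$ in the enlarged Pick matrix. Because $\ell(w,w)>0,$ positivity of the enlarged matrix is equivalent, by the Schur complement formula, to
\[
\bigl[\,\ell(z_i,z_j)I-k(z_i,z_j)W_iW_j^*\,\bigr]_{i,j=1}^{n}-\tfrac{1}{\ell(w,w)}\bigl[\,\ell(z_i,w)\ell(w,z_j)I\,\bigr]_{i,j=1}^n\succeq 0,
\]
which, in view of Lemma~\ref{l:compress:z}, is precisely the Pick condition for $(k,\ell^w)$ that we are assuming. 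Invoking the CP property of $(k,\ell)$ on the enlarged data then produces a contractive $\Phi\in\Mult(\mathcal{H}_k\otimes\CC^N,\mathcal{H}_\ell\otimes\CC^N)$ with $\Phi(z_i)=W_i$ and $\Phi(w)=0,$ and the initial isometric identification promotes $\Phi$ to the desired multiplier into $\mathcal{H}_{\ell^w}\otimes\CC^N.$

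I do not expect a serious obstacle: the argument is essentially a one-line Schur-complement enlargement dressed up with the isometric identification $\mathcal{H}_{\ell^w}\hookrightarrow\mathcal{H}_\ell.$ The only point requiring mild care is the inductive step, where one must verify that the intermediate kernels $\ell^{Y'}$ remain well-behaved enough for the base case to apply; the standing assumption that kernels be non-vanishing on the diagonal need not literally hold for $\ell^Y$ (indeed $\ell^Y(w,w)=0$ for $w\in Y$), but this is cosmetic and does not affect any step of the argument.
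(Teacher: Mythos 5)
Your proposal is correct and follows essentially the same route as the paper: reduce to the singleton case, adjoin the node $(w,0)$ to the data, use the Schur complement (Lemma~\ref{l:compress:z}) to identify the enlarged Pick condition with the hypothesis for $(k,\ell^w)$, and note that a contractive multiplier into $\mathcal{H}_\ell\otimes\CC^N$ vanishing at $w$ lands in $\mathcal{H}_{\ell^w}\otimes\CC^N$. The paper handles the case $w\in\{z_1,\dots,z_n\}$ as a separate case exactly as you do (the diagonal block forces $W_i=0$), so there is no substantive difference.
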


\begin{proof}
It suffices to prove the statement with $Y=\{w\}$ a singleton and we will show that $(k, \ell^w)$ is a CP pair by using the definition. Accordingly, suppose $Z=\{x_{1}, \dots, x_n\}\subset X$ is a finite set of points,  $W_{1}, \dots, W_n\in\mathbb{M}_J$ and 
\begin{equation}\label{inter}
    \big[\ell^w(x_i, x_j)-k(x_i, x_j)W_iW_j^*\big]_{1\le i, j\le n}\succeq 0.
\end{equation}
We are required to establish the existence of a contractive multiplier $\Phi\in\Mult(\mathcal{H}_k\otimes\mathbb{C}^J,\mathcal{H}_{\ell^w}\otimes\mathbb{C}^J)$ such that $\Phi(x_i)=W_i$ for all $1\le i\le n.$ \par 
Assume first that $w\notin Z.$ Set $w=x_0$ and 
consider the matrix 
\begin{equation} \label{pickcond}
    \big[\ell(x_i, x_j)-k(x_i, x_j)W_iW_j^*\big]_{0\le i, j\le n},
\end{equation}
where we have set $W_0=\mathbf{0}.$ Taking the Schur complement with respect to the $(0, 0)$ entry, we obtain that \eqref{pickcond} is positive if and only if 
\begin{equation*} 
   \big[\ell^{x_0}(x_i, x_j)-k(x_i, x_j)W_iW_j^*\big]_{1\le i, j\le n} \succeq 0,
\end{equation*}
which is simply \eqref{inter}. Since $(k, \ell)$ is a CP pair, we obtain the existence of a contractive multiplier $\Phi:\mathcal{H}_k\otimes\mathbb{C}^J\to \mathcal{H}_{\ell}\otimes\mathbb{C}^J$ such that $\Phi(x_i)=W_i$ for all $1\le i\le n$ and $\Phi(x_0)=\mathbf{0}.$ Thus, the range of $\Phi$ is actually contained in \[\{f\in \mathcal{H}_{\ell}\otimes\mathbb{C}^J: f(x_0)=\mathbf{0} \}=\mathcal{H}_{\ell^w}\otimes\mathbb{C}^J,\] as desired. \par 
Assume now that $w\in Z.$ Without loss of generality,  $w=x_1.$ In this case, the $(1, 1)$ entry of  \eqref{inter} is $-W_1W_1^*k(x_1, x_1),$ which has to be non-negative. Since  $k(x_1, x_1)>0,$ we obtain $W_1=W_1W_1^*=\mathbf{0}$. Thus, the first row and column of \eqref{inter} are identically zero and so that positivity  condition is equivalent to 
\[\big[\ell^{x_1}(x_i, x_j)-k(x_i, x_j)W_iW_j^*\big]_{2\le i, j\le n}\succeq 0\]
The existence of an interpolating multiplier $\Phi$ can now be obtained via the CP property for $(k, \ell)$ as in the previous case.
\end{proof}

We can now prove Theorem~\ref{generalnec}.

\begin{proof}[Proof of Theorem~\ref{generalnec}]
Combine Theorem~\ref{Schurpreserves} with Theorem~\ref{Shimnec}.
\end{proof}
 
Let $k, \ell, \tilde{s}$ be kernels on $X$ such that $k$ is non-vanishing, $\tilde{s}$ is a non-vanishing CP kernel and $\frac{\tilde{s}}{k}, \frac{\ell}{\tilde{s}}$ are both positive kernels. Further, let $M$ be any closed $\Mult(\mathcal{H}_{\ell})$-invariant subspace of $\mathcal{H}_{\ell}$ and denote its kernel by $\ell_M.$ By Lemma~\ref{invfactors}, we have 
\[\frac{\ell_M}{\tilde{s}}\succeq 0.\]
Taking the Schur product with $\frac{\tilde{s}}{k}$ gives
\begin{equation} \label{somewhatbetternec}
   \frac{\ell_M}{k}\succeq 0. 
\end{equation}
Thus, the existence of a CP kernel $\tilde{s}$ such that both $\frac{\tilde{s}}{k}, \frac{\ell}{\tilde{s}}$ are positive kernels forces $(k, \ell)$ to satisfy a much stronger positivity condition than the one obtained in the conclusion of equation~\eqref{generalnecpos} in  Theorem~\ref{generalnec}.  This stronger condition does not, however, imply $\tilde{s}$ is a strong Shimorin certificate as Proposition~\ref{necbutnotsuf} below shows. The obstruction is that,  to be a strong Shimorin certificate for $(k, \ell)$, the equality $k=(1-h)\tilde{s}$ requires (for $k, \tilde{s}$ non-vanishing)
\[\frac{\tilde{s}}{k}=\frac{1}{1-h},\]
and so $\frac{\tilde{s}}{k}$ needs to be not just a kernel, but a CP kernel as well. 

\begin{proposition}\label{necbutnotsuf}
There exist diagonal holomorphic kernels $k,\ell,\tilde{s}$ such that $\tilde{s}$ is a CP kernel and $\frac{\tilde{s}}{k}, \frac{\ell}{\tilde{s}}$ are positive kernels, but $(k,\ell)$ is not a CP pair.
\end{proposition}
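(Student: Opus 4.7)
The plan is to produce an explicit counterexample consisting of three diagonal holomorphic kernels on a disc, and to use Theorem~\ref{mainextmain} to certify that the pair fails the CP property. Taking $\ell=\tilde s$ makes $\ell/\tilde s=1\succeq 0$ automatic, so the task reduces to finding a normalized diagonal holomorphic $k$ and a diagonal holomorphic CP kernel $\tilde s$ with $\tilde s/k\succeq 0$ such that $(k,\tilde s)$ fails to be a CP pair. By Theorem~\ref{mainextmain}, the latter amounts to arranging for $\tilde s(1-\mt_k)$ to have a strictly negative Taylor coefficient, where $\mt_k$ is the master certificate of $k$.

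I would set $\tilde s(z,w)=(1-2z\overline{w})^{-1}$, which is CP since $\tilde{\mt}=2z\overline{w}$ is a positive kernel of diagonal modulus less than one on $\{|z|<1/\sqrt{2}\}$, and then define $k$ implicitly by
\[
\frac{\tilde s}{k} \;=\; 1 + z^2\overline{w}^2,
\]
the right-hand side being manifestly a positive kernel (a sum of two rank-one positive kernels). Solving the identity yields
\[
k(z,w) \;=\; \frac{1}{1-2z\overline{w}+z^2\overline{w}^2-2z^3\overline{w}^3}.
\]
The next step is to verify $k$ is a normalized diagonal holomorphic kernel. Its Taylor coefficients satisfy the recurrence $k_n=2k_{n-1}-k_{n-2}+2k_{n-3}$ coming from the denominator, whose characteristic polynomial $(r-2)(r^2+1)$ yields the closed form $k_n=\tfrac{4}{5}2^n+\tfrac{1}{5}\cos(n\pi/2)+\tfrac{2}{5}\sin(n\pi/2)$. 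Because $(4/5)2^n\ge 8/5$ dominates the bounded oscillation $|(1/5)\cos+(2/5)\sin|\le 1/\sqrt{5}$ for $n\ge 1$, every $k_n$ is strictly positive, so $k$ is a valid normalized diagonal holomorphic kernel on $\{|z\overline{w}|<1/2\}$.

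I would then compute $\mt_k$ directly from Definition~\ref{mastercert}. The first few recursions give $\mt_{k,1}=2$, $\mt_{k,2}=\mt_{k,3}=0$, and $\mt_{k,4}=k_4-\mt_{k,1}k_3=13-12=1$. A short induction using the recurrence for $k_n$ then shows $k_n-2k_{n-1}-k_{n-4}=-2k_{n-5}<0$ for $n\ge 5$, so $\mt_{k,n}=0$ for all $n\ge 5$; thus $\mt_k=2z\overline{w}+z^4\overline{w}^4$ as a polynomial. With $\tilde s_n=2^n$, the coefficient of $(z\overline{w})^4$ in $\tilde s(1-\mt_k)$ is
\[
\tilde s_4 - \mt_{k,1}\tilde s_3 - \mt_{k,4}\tilde s_0 \;=\; 16-16-1 \;=\; -1 \;<\; 0.
\]
Since a diagonal kernel is PsD iff all of its Taylor coefficients are non-negative, $\tilde s(1-\mt_k)$ is not a positive kernel, and so by Theorem~\ref{mainextmain} the pair $(k,\tilde s)$ fails to be CP. Setting $\ell=\tilde s$ completes the construction.

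The main obstacle, and the reason for the specific choice $\tilde s/k=1+z^2\overline{w}^2$, is the tension highlighted in the discussion preceding the proposition: had $\tilde s/k$ been of the form $1/(1-H)$ for a positive kernel $H$, then $k=(1-H)\tilde s$ would automatically make $\tilde s$ a strong Shimorin certificate for $(k,\tilde s)$, forcing the pair to be CP via Theorem~\ref{mainShim}. The polynomial $1+z^2\overline{w}^2$ is a positive kernel whose formal reciprocal $1-z^2\overline{w}^2+z^4\overline{w}^4-\cdots$ has alternating Taylor signs, precisely obstructing such a Shimorin factorization and allowing the pair to fail CP. Every ``natural'' positive ratio $\tilde s/k$ arising from factorization-style constructions turns out to be of the form $1/(1-H)$ with $H\succeq 0$, so fabricating $\tilde s/k$ as a positive polynomial with a non-PsD reciprocal is the essential move.
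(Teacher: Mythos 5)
Your proposal is correct and follows essentially the same route as the paper's proof: take $\ell=\tilde s$ a Szeg\H{o}-type complete Pick kernel, manufacture $k$ by dividing $\tilde s$ by an explicit positive polynomial kernel so that $\tilde s/k\succeq 0$ is automatic, and then invoke Theorem~\ref{mainextmain} after checking that $\ell(1-\mt)$ has a negative Taylor coefficient (the paper uses $\tilde s=(1-3x)^{-1}$, $\tilde s/k=1+x+4x^2$ and a negative cubic coefficient, whereas you use $\tilde s=(1-2x)^{-1}$, $\tilde s/k=1+x^2$ and a negative quartic coefficient). Your computations (the recurrence $k_n=2k_{n-1}-k_{n-2}+2k_{n-3}$, positivity of the $k_n$, $\mt=2x+x^4$, and the coefficient $16-16-1=-1$) all check out; the only difference is that you determine the master certificate completely, which is more than the argument strictly needs.
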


\begin{proof}
 For convenience, let $x=z\overline{w}.$ 
Set
\[
 k(z, w)= \frac{1}{(1+x+4x^2)(1-3x)} = \frac{1}{1-2x+x^2-12x^3} =\frac{1}{(1-x)^2[1-\frac{12x^3}{(1-x)^2}]} \succeq 0.
\]
 The first four coefficients of the power series expansion for $k$ are $1,2,3,16,$ starting with $k_0.$  Thus, 
 computing the master certificate,  $\mt_1=k_1=2;$ since $k_2-k_1\mt_1=-1,$ we have $\mt_2=0;$ and next, since
 $k_3-k_2\mt_1 = 16-6=10>0,$ we have $\mt_3=10.$ Thus $\mt = 2x+10x^3 + \dots \, .$ 
Set $s=\frac{1}{1-\mt}$ and 
\[
 \tilde{s}(z, w)=\frac{1}{1-3x}.
\]
 Thus,
\[
 \frac{\tilde{s}}{k} = 1+x+4x^2 \succeq 0.
\]
On the other hand, the coefficient of the cubic term in the power series expansion of 
\[
f(z,w)= \frac{\tilde{s}(z, w)}{s(z, w)}=\frac{1-2x-10x^3 -\dots}{1-3x}
\]
 is $-1$ and thus $f$ is not positive.  Hence, by Theorem~\ref{mainextmain}, $(k, \tilde{s})$ is not a CP pair, even though the CP kernel $\tilde{s}$ has $k$ as a factor. 
\end{proof}

\normalsize
\section{A Closer Look at the Bergman Kernel} \label{closerl}
\normalsize
This section investigates pairs of kernel $(\mathfrak{b},\ell),$ where $\mathfrak{b}$ is the Bergman kernel and $\ell$ is not necessarily diagonal.
It  also contains a proof of Theorem~\ref{bergnotmin}.

\subsection{Non-diagonal certificates} \label{bergcerts}
Let $\mathfrak{b}(z, w)=(1-z\overline{w})^{-2}$ denote the Bergman kernel on $\DD.$ As seen in Example~\ref{finallyanex}, the master certificate $\mt$ associated with $\mathfrak{b}$ is
\[s(z, w)=\frac{1}{1-\mt(z\overline{w})}=\frac{1}{1-2z\overline{w}},\hspace{0.2 cm} z, w\in\Omega^1_{\mt}=\frac{1}{\sqrt{2}}\DD.\]
This kernel  $s$ satisfies $\mathfrak{b}=(1-h)s,$ where $h(z, w)=(z\overline{w})^2\mathfrak{b}(z, w).$ By Theorem~\ref{mainextmain}, given any diagonal holomorphic kernel $\ell$ on a domain $\Omega\subseteq \CC,$ the pair $(\mathfrak{b}, \ell, \Omega)$ is CP if and only if $\Omega\subseteq\frac{1}{\sqrt{2}}\DD$ and $\frac{\ell}{s}\succeq 0$. In this subsection, we will compute further examples of CP kernels $\tilde{s}$ such that $\mathfrak{b}=(1-\tilde{h})\tilde{s}$ for some $\tilde{h}.$ In particular, we will utilize these examples to show that Theorem~\ref{mainextmain} fails if we do not assume $\ell$ is diagonal.

\begin{example}\rm
\label{eg:0} Let $\lambda\in\CC$ with $0<|\lambda|<2.$ Define $g:\DD\to\CC$ by
\[
\begin{split}
 g_\lambda(z)= g(z) = &\frac{1-\frac{1}{\mathfrak{b}(z,\lambda)}}{\sqrt{1-\frac{1}{\mathfrak{b}(\lambda,\lambda)}}}
\\  = & \frac{2z\overline{\lambda} -(z\overline{\lambda})^2}{\sqrt{2\lambda\overline{\lambda} -(\lambda\overline{\lambda})^2}}
\\ = & \frac{\overline{\lambda} z}{|\lambda|} \frac{(2-z\overline{\lambda})}{\sqrt{2 -\lambda\overline{\lambda}}}.
\end{split}
\]
We have $|g|\le \frac{2+|\lambda|}{\sqrt{2-|\lambda|^2.}}$ on $\DD.$ We want $g$ to be a contractive multiplier, so restrict $\lambda$ to $0<|\lambda|<1$ and set \[\tOmega_{\lambda}=\{z\in \CC: |g_{\lambda}(z)|<1\}.\]
Notice that $\frac{1}{3}\DD\subseteq\tOmega_{\lambda}$ for every $0<|\lambda|<1$. Now, for $z, w\in\tOmega_{\lambda,}$ we have
\[
\begin{split}
 (g(z)\overline{g(w)}-1)\mathfrak{b}(z,w)+1
 = & \bigg [\frac{z\overline{w} (2 -z\overline{\lambda})\, (2-\lambda \overline{w})}
  {2 -\lambda\overline{\lambda}} -1\bigg ] \mathfrak{b}(z,w)+1.
\end{split}
\]
The common denominator above is 
\[
 (1-z\overline{w})^2 \,  (2 -\lambda\overline{\lambda}).
\]
The numerator is,
\[
z\overline{w}(2-z\overline{\lambda})(2-\lambda\overline{w}) -  (2-\lambda\overline{\lambda})  +  (2-\lambda\overline{\lambda})(1-z\overline{w})^2 = 2z\overline{w}(z-\lambda)\overline{(w-\lambda)}.
\]
Thus,
\begin{equation}
 \label{e:eg0:1}
(g(z)\overline{g(w)}-1)\mathfrak{b}(z,w)+1
  = 2 z\overline{w}
  \frac{(z-\lambda)\overline{(w-\lambda)}}{(1-z\overline{w})^2\, (2 -\lambda\overline{\lambda})}=h_\lambda(z,w),
\end{equation}
 which is positive. In other words, for every $0<|\lambda|<1,$ there exists a positive kernel $h_{\lambda}$ on $D_{\lambda}$ such that
 \begin{equation}\label{anothereg}
     \mathfrak{b}(z, w)=\frac{1-h_{\lambda}(z, w)}{1-g_{\lambda}(z)\overline{g_{\lambda}(w)}}=(1-h_{\lambda}(z, w))s_{\lambda}(z, w),
 \end{equation}
where $s_{\lambda}(z, w)=(1-g_{\lambda}(z)\overline{g_{\lambda}(w)})^{-1}$ defines a CP kernel on $D_{\lambda}$.
\end{example}

\begin{remark}\rm
\label{non-inclusion}
   Choosing $0<\lambda<1$ in Example \ref{eg:0}, it can be verified that $|g_{\lambda}(\frac{1}{\sqrt{2}})|<1$. Thus, $\frac{1}{\sqrt{2}}\DD$ does not contain  
   $\tOmega_\lambda$, even though $(\mathfrak{b}, s_\lambda, D_{\lambda})$ is a CP pair.  Comparing with 
    the inclusions \eqref{inclusions}, we see that the conclusion of Theorem~\ref{mainextmain} does not apply to $(\mathfrak{b}, s_\lambda, D_{\lambda})$. At the same time, observe that given $0\neq \lambda=e^{i\theta}|\lambda|,$ we have $|g_{\lambda}(-e^{i\theta}\frac{1}{\sqrt{2}})|>1,$ hence $D_{\lambda}$ can never contain $\frac{1}{\sqrt{2}}\DD$.
\qed
\end{remark}

We now prove Theorem~\ref{bergnotmin}, which we restate here for the reader's convenience. 

\begin{theorem}\label{bergnotminrest}
Let  $\mathfrak{b}$ denote the Bergman kernel on $\DD$. There exists an $0<r<1$, and a one-parameter family of CP kernels $\{s_{\lambda}\}_{\lambda\in\Lambda}$
on $r\DD$ with the following properties:
\begin{enumerate}[(i)]
    \item \label{i:bergman:1} For every $\lambda\in\Lambda,$ there exists a kernel $h_{\lambda}$ on $r\mathbb{D}$ such that 
\[\mathfrak{b}=(1-h_{\lambda})s_{\lambda};\]

\item \label{i:bergman:2}  There do not exist kernels $s,h,f_\lambda$ on $r\mathbb{D}$  such that $s$ is a CP kernel, $\mathfrak{b}=(1-h)s$  and,
 for every $\lambda\in \Lambda,$ 
\begin{equation}\label{namexx}
 s_{\lambda}=sf_{\lambda}.
\end{equation}
\end{enumerate}
\end{theorem}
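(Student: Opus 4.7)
For part~(i) I will take $r = 1/3$ and $\Lambda = \{\lambda \in \CC : 0 < |\lambda| < 1\}$: the bound $|g_\lambda(z)| \le |z|(2 + |z||\lambda|)/\sqrt{2 - |\lambda|^2}$ for $|z| < 1/3$ shows $r\DD \subseteq \tOmega_\lambda$ for every such $\lambda$, so $s_\lambda$ is CP on $r\DD$, and equation~\eqref{anothereg} of Example~\ref{eg:0} supplies $h_\lambda$ on $r\DD$ with $\mathfrak{b} = (1-h_\lambda)s_\lambda$.

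For part~(ii) I will argue by contradiction; the plan is to show that any hypothesized $s, h, \{f_\lambda\}$ forces $s_{p, q} = \mathfrak{b}_{p, q}$ for all $p, q$, which is absurd since $\mathfrak{b}$ is not CP. (The latter follows from Theorem~\ref{mainextmain} applied to $(\mathfrak{b}, \mathfrak{b})$: the master certificate $s_0 = (1 - 2z\overline{w})^{-1}$ fails to divide $\mathfrak{b}$, since $\mathfrak{b}/s_0 = (1 - 2z\overline{w})(1-z\overline{w})^{-2}$ has negative coefficient $1-k$ on $(z\overline{w})^k$ for $k \ge 2$.) I begin by normalizing: since $\mathfrak{b}$ and each $s_\lambda$ is non-vanishing on $r\DD$, $s$ must be non-vanishing (Lemma~\ref{CPPsplits}), so restricting to its irreducible component I write $s(z, w) = \delta(z)\overline{\delta(w)}/(1 - \langle b(z), b(w)\rangle)$ with $\delta(0) = 1$, $b(0) = 0$. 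The identity $1 = \mathfrak{b}(z, 0) = (1 - h(z, 0))\delta(z)$, combined with $h(z, 0) = 0$ (which follows from $h \succeq 0$ and $h(0, 0) = 0$ via Cauchy--Schwarz on a $2 \times 2$ principal minor), forces $\delta \equiv 1$; hence $s(z, 0) = 1$ and $h(z, 0) = h(0, w) = 0$.

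The core of the argument is an induction on $N$ with inductive hypothesis $s_{p, q} = \mathfrak{b}_{p, q}$ for $\max(p, q) \le N - 1$. Under this hypothesis $h_{k, k} = 0$ for $k \le N-1$, so PsD of $h$ forces $h_{p, q} = 0$ whenever $\min(p, q) \le N-1$; then the identity $\mathfrak{b} = (1-h)s$ at $(N, N)$ reduces to $s_{N, N} = N + 1 + h_{N, N} \ge N + 1$. The matching upper bound $s_{N, N} \le N+1$, together with the vanishing $s_{p, N} = 0$ for $p < N$, is extracted from $s_\lambda = sf_\lambda$ by computing the Taylor coefficients $(f_\lambda)_{p, q}$ for $p, q \le N$ via the explicit expansion $s_\lambda = \sum_{m \ge 0}(g_\lambda\overline{g_\lambda})^m$ and imposing PsD of the $N \times N$ principal minor of $[(f_\lambda)_{p, q}]$, expanded to leading order in $r = |\lambda| \to 0$ with the phase of $\lambda$ chosen adversarially. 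For example: at $N = 2$ the $2\times 2$ determinant behaves like $(3 - s_{2, 2})r^2 + O(r^4)$, giving $s_{2, 2} \le 3$; at $N = 3$, after the reductions $s_{2, 3} = 0$ and $(f_\lambda)_{1, 3} = 0$ from the previous step, the $3 \times 3$ determinant behaves like $(8 - 2s_{3, 3})r^4 + O(r^6)$, giving $s_{3, 3} \le 4$.

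The main obstacle is the general $N$ step: showing that the leading-order coefficient of the $N \times N$ principal determinant is proportional to $N + 1 - s_{N, N}$ with a positive constant. Under the inductive hypothesis the minor is nearly diagonal with each off-diagonal entry carrying an explicit power of $\lambda$, but the combinatorial bookkeeping grows with $N$. I anticipate handling this either by explicit determinant expansion or, more cleanly, by iterated Schur-complement reduction against the smaller principal minors, which should isolate the $(N, N)$-entry of the final Schur complement and yield the bound directly.
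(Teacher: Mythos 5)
Part~(i) of your proposal is essentially the paper's argument (Example~\ref{eg:0} together with the containment $\tfrac13\DD\subseteq \tOmega_\lambda$), and is fine. The problems are in part~(ii), where there are three genuine gaps. First, the normalization step fails: you claim $h(z,0)=0$ ``follows from $h\succeq 0$ and $h(0,0)=0$,'' but $h(0,0)=0$ is not a hypothesis and does not follow. From $1=\mathfrak{b}(0,0)=(1-h(0,0))s(0,0)$ one only gets $s(0,0)=1/(1-h(0,0))\ge 1$, and nothing forces $s(0,0)=1$; nor can you rescale without loss of generality, since replacing $s$ by $s/(\delta\overline{\delta})$ turns $1-h$ into $(1-h)\delta\overline{\delta}$, which need not be of the form $1-\tilde h$ with $\tilde h\succeq 0$. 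Concretely, if $(s_\mu,h_\mu)$ is any admissible pair from Example~\ref{eg:0}, then $s=2s_\mu$ and $h=\tfrac12(1+h_\mu)$ satisfy $\mathfrak{b}=(1-h)s$ with $s$ a CP kernel, $s(z,0)\equiv 2$ and $h(0,0)=\tfrac12$; so the normalized form $\delta\equiv1$, $s(z,0)=1$, $h(z,0)=0$ cannot be deduced from the identities you actually use, and your induction (which rests on $h_{p,q}=0$ for $\min(p,q)\le N-1$ and on $s_{N,N}=N+1+h_{N,N}$) collapses. Relatedly, the hypotheses only force $s$ to be a positive constant multiple of $\mathfrak{b}$, not $s_{p,q}=\mathfrak{b}_{p,q}$ exactly, so your stated target is too strong. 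Second, even granting the normalization, the general-$N$ inductive step --- that the leading coefficient of the $N\times N$ minor is a positive multiple of $N+1-s_{N,N}$ --- is exactly the part you have not proved; as written you have only $N=2,3$. Third, a scope issue: item~(ii) quantifies over arbitrary kernels $s,h,f_\lambda$ on $r\DD$, with no holomorphy assumed, so the Taylor coefficients $s_{p,q}$ and $(f_\lambda)_{p,q}$ on which your whole coefficient-matrix strategy rests need not exist.

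For comparison, the paper's proof of (ii) is short and avoids all of this. Since $\mathfrak{b}$ is non-vanishing, so is $s$, and hence $f_\lambda=s_\lambda/s=(1-h)/(1-h_\lambda)$. The kernel $h_\lambda$ of equation~\eqref{e:eg0:1} vanishes at $(0,0)$, $(0,\lambda)$ and $(\lambda,\lambda)$, so positivity of $f_\lambda$ at the two points $\{0,\lambda\}$ gives the inequality \eqref{somat}: the $2\times 2$ compression of $h$ is dominated by the rank-one all-ones matrix. Combined with $h\succeq 0$, this forces all four entries of that compression to be equal; letting $\lambda$ range over $\Lambda$ (taken inside $\tfrac13\DD$) and using equality in Cauchy--Schwarz, $h$ is a constant $c\in[0,1)$, whence $\mathfrak{b}=(1-c)s$ would be a complete Pick kernel on $\tfrac13\DD$, a contradiction. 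This uses only pointwise positivity at two points --- no power series, no holomorphy of $s$, and no induction --- so if you want a complete argument you should either adopt this route or find a way to (a) justify a normalization, (b) close the general-$N$ step, and (c) remove the holomorphy assumption, each of which is currently missing.
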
 
\begin{proof}
For  $\lambda\in\CC$ with $0<|\lambda|<\frac13,$ define $g_{\lambda}, s_{\lambda}, h_{\lambda}$ as in Example~\ref{eg:0}.  Further, set $\Lambda=\{0<|\lambda|<\frac13\}$ and $r=\frac{1}{3}.$ It is straightforward to verify that $\frac13\DD\subseteq D_\lambda.$ Thus $\{s_{\lambda}\}_{\lambda\in\Lambda}$ is a family of CP kernels defined on $\frac{1}{3}\DD$ and such that \eqref{anothereg} holds, giving item~\eqref{i:bergman:1}. 
    \par
 Now suppose there is a kernel $s$ defined on  $\frac{1}{3}\DD$ such that 
\begin{equation} \label{outofnames}
\mathfrak{b}=(1-h)s
\end{equation}
and, for each $\lambda,$ there exists a (PsD) kernel $f_\lambda$ on $\frac{1}{3}\DD$ such that $s_\lambda = sf_\lambda.$
Fix $\lambda\in \Lambda.$ 
Since $\mathfrak{b}$ is non-vanishing, so is $s.$ Since $b=(1-h_\lambda)s_\lambda,$ it follows that
\[
0\preceq  f_\lambda = \frac{s_\lambda}{s} =\frac{1-h}{1-h_\lambda}.
\]
 From $h_\lambda(0,0)=h_\lambda(0,\lambda)=h_\lambda(\lambda,\lambda)=0,$ it follows that 
\begin{equation}\label{somat}
0\preceq   \begin{pmatrix} f_\lambda(0,0) & f_\lambda(\lambda,0)\\f_\lambda(0,\lambda) & f_\lambda(\lambda,\lambda)
   \end{pmatrix} = \begin{pmatrix} 1-h(0,0) & 1-h(\lambda,0)\\ 1-h(0,\lambda) & 1-h(\lambda,\lambda)\end{pmatrix}.
\end{equation}
But $h$ is a kernel (PsD), so the resulting inequality
\[
 \begin{pmatrix} h(0,0) & h(\lambda,0) \\ h(0,\lambda)  & h(\lambda,\lambda) \end{pmatrix}
   \preceq \begin{pmatrix} 1\\ 1 \end{pmatrix} \, \begin{pmatrix} 1 & 1 \end{pmatrix}
\]
implies $h(0,0)=h(\lambda,0)=h(0,\lambda)=h(\lambda,\lambda).$
  Since $\lambda$ is arbitrary in $\Lambda$, it follows that $h(\lambda,\mu)=c$ for all $\lambda,\mu\in \frac{1}{3}\DD,$ where $0\le c=B(0,0)<1.$  Thus
$\mathfrak{b}=(1-c)s$ is a complete Pick kernel over $\frac{1}{3}\DD$, which is easily seen to be false. Thus, item~\eqref{i:bergman:2} holds and the proof is complete.
\end{proof}
Thus, \eqref{inclusions} and the implication (i)$\Rightarrow$ (iv) in Theorem \ref{mainextmain} might both fail if $\ell$ is non-diagonal. 

Example~\ref{eg:0} can also be used to show that Corollary~\ref{canoncorol} fails if we consider general (possibly non-diagonal) strong Shimorin certificates.
\begin{corollary} \label{nonmin}
Letting  \[\ell(z, w)=\frac{1}{1-9z\overline{w}},\hspace{0.3 cm} z, w\in\frac{1}{3}\mathbb{D}, \]
$(\mathfrak{b}, \ell, \frac{1}{3}\DD)$ is a CP pair, but there does not exist a strong Shimorin certificate $s$ for $(\mathfrak{b}, \ell,  \frac{1}{3}\DD)$ such that, if $\tilde{s}$ is any strong Shimorin certificate for  $(\mathfrak{b}, \ell,  \frac{1}{3}\DD)$, then $s$ is a factor of $\tilde{s}$.
\end{corollary}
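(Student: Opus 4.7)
The plan is to first confirm that $(\mathfrak{b},\ell,\tfrac13\DD)$ is a complete Pick pair via Theorem~\ref{mainextmain}, and then exhibit a family of strong Shimorin certificates for this pair that admits no common certificate factor. For the first step, Example~\ref{finallyanex} gives the master certificate of $\mathfrak{b}$ as $\mt(x)=2x,$ so $\Omega^1_\mt=\tfrac{1}{\sqrt 2}\DD\supseteq \tfrac13\DD$ and the associated kernel is $s_\mathfrak{b}(z,w)=(1-2z\overline w)^{-1}.$ Expanding as a diagonal power series, the coefficient of $(z\overline w)^n$ in $(1-2x)(1-9x)^{-1}$ equals $7\cdot 9^{n-1}\ge 0$ for $n\ge 1,$ so $\ell/s_\mathfrak{b}\succeq 0$ and Theorem~\ref{mainextmain} yields the CP property.

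For the family, I will reuse the kernels $s_\lambda(z,w)=(1-G_\lambda(z)\overline{G_\lambda(w)})^{-1}$ from Example~\ref{eg:0} and Theorem~\ref{bergnotmin} with $\lambda\in\Lambda:=\{0<|\lambda|<\tfrac13\}.$ Each satisfies $\mathfrak{b}=(1-h_\lambda)s_\lambda$ on $\tfrac13\DD,$ with $h_\lambda$ vanishing on $\{0,\lambda\}\times\{0,\lambda\}.$ To promote $s_\lambda$ to a strong Shimorin certificate for the pair $(\mathfrak{b},\ell,\tfrac13\DD)$ it remains to check that $\ell=s_\lambda\, g_\lambda$ with $g_\lambda\succeq 0,$ i.e.\ that $(1-G_\lambda(z)\overline{G_\lambda(w)})\ell(z,w)\succeq 0.$ Under the rescaling $z\mapsto 3z,$ the space $\mathcal H_\ell(\tfrac13\DD)$ is isometrically isomorphic to $H^2(\DD)$ (Lemma~\ref{resc}), so a holomorphic function on $\tfrac13\DD$ is a contractive multiplier of $\mathcal H_\ell$ precisely when its sup-norm there is at most one. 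A direct estimate gives
\[
|G_\lambda(z)|=\frac{|z|\,|2-z\overline\lambda|}{\sqrt{2-|\lambda|^2}}\le \frac{(1/3)(2+1/9)}{\sqrt{2-1/9}}=\frac{19}{9\sqrt{17}}<1
\]
for all $z,\lambda\in\tfrac13\DD,$ confirming that each $s_\lambda$ is a strong Shimorin certificate for $(\mathfrak{b},\ell,\tfrac13\DD).$

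Finally, suppose for contradiction that some strong Shimorin certificate $s$ for $(\mathfrak{b},\ell,\tfrac13\DD)$ factors every other such certificate, so in particular $s_\lambda=s\, f_\lambda$ with $f_\lambda\succeq 0$ for every $\lambda\in\Lambda.$ Writing $\mathfrak{b}=(1-h)s$ with $s$ non-vanishing (as $\mathfrak{b}$ is), the argument in the proof of Theorem~\ref{bergnotmin} applies verbatim: the identity $f_\lambda=(1-h)/(1-h_\lambda)\succeq 0,$ evaluated on $\{0,\lambda\}\times\{0,\lambda\}$ where $h_\lambda$ vanishes, forces the PsD $2\times 2$ matrix of $h$ there to be dominated by the rank-one all-ones matrix; combined with $h\succeq 0,$ all four of its entries coincide. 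Varying $\lambda\in\Lambda$ and using the standard Cauchy--Schwarz collapse for a PsD kernel with constant diagonal and constant row/column through $0,$ I conclude $h\equiv c$ on all of $\tfrac13\DD\times\tfrac13\DD,$ so $\mathfrak{b}=(1-c)s$ is a scalar multiple of the CP kernel $s,$ hence itself CP on $\tfrac13\DD.$ But Example~\ref{finallyanex} yields $\mathfrak{b}/s_\mathfrak{b}=(1-2x)(1-x)^{-2}=1+\sum_{n\ge 1}(1-n)(z\overline w)^n,$ whose coefficient $1-n$ is negative for $n\ge 2;$ applying Theorem~\ref{mainextmain} to the pair $(\mathfrak{b},\mathfrak{b},\tfrac13\DD)$ rules out CP of $\mathfrak{b}$ there, the desired contradiction. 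The main obstacle is the promotion step in the middle paragraph: ensuring that the Bergman-tailored certificates $s_\lambda$ also respect the factor $\ell,$ since the remaining steps are either a routine power-series check or a direct transplant of the Theorem~\ref{bergnotmin} argument.
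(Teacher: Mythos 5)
Your proof is correct and follows essentially the same route as the paper: you use the same family $s_\lambda$ from Example~\ref{eg:0}, the same verification that $\ell/s_\lambda\succeq 0$ via a sup-norm bound for $g_\lambda$ on $\tfrac13\DD$ (the paper phrases this through the identity $\ell(\tfrac z3,\tfrac w3)/s_\lambda(\tfrac z3,\tfrac w3)=\bigl(1-g_\lambda(\tfrac z3)\overline{g_\lambda(\tfrac w3)}\bigr)\sz(z,w)$), and the same contradiction, which the paper gets by citing Theorem~\ref{bergnotmin} directly while you re-run its argument inline and also spell out why $\mathfrak{b}$ is not CP on $\tfrac13\DD$. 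One small repair: the identification of the contractive multipliers of $\mathcal{H}_\ell(\tfrac13\DD)$ with the unit ball of $H^\infty(\tfrac13\DD)$ comes from the change of variables $z\mapsto 3z$ (a composition unitary onto $H^2(\DD)$), not from Lemma~\ref{resc}, which rescales a kernel by $\delta(z)\overline{\delta(w)}$.
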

\begin{proof}
Define $g_{\lambda}:\frac{1}{3}\DD\to\CC$ as in Example~\ref{eg:0}, with $0<|\lambda|<\frac13$. 
Thus
\[s_{\lambda}(z, w)=\frac{1}{1-g_{\lambda}(z)\overline{g_{\lambda}(w)}}\]
defines a CP kernel on $\frac{1}{3}\mathbb{D}$, for any $0<|\lambda|<\frac13.$ Further, note that
\[ \frac{\ell(\frac{1}{3}z, \frac{1}{3}w)}{s_{\lambda}(\frac{1}{3}z, \frac{1}{3}w)}=\frac{1-g_{\lambda}(\frac{1}{3}z)\overline{g_{\lambda}(\frac{1}{3}w)}}{1-z\overline{w}}\]
is a kernel on $\mathbb{D}.$ Hence, $\frac{\ell}{s_{\lambda}}$ is a kernel on $\frac{1}{3}\mathbb{D}$, for any $0<|\lambda|<\frac13.$ Combined with \eqref{anothereg}, it follows that $s_{\lambda}$ is a strong Shimorin certificate for $(\mathfrak{b}, \ell,  \frac{1}{3}\DD)$. Assuming that there exists a strong Shimorin certificate $s$ as described in the statement of the corollary, we obtain $\mathfrak{b}=(1-h)s$ and that $s$ is a factor of $s_{\lambda}$ for every $0<|\lambda|<\frac{1}{3}.$ Theorem~\ref{bergnotminrest} then yields a contradiction.
\end{proof}

We conclude this subsection with another example of a (necessarily non-diagonal) certificate for $\mathfrak{b}$ that violates the conclusions of Theorem~\ref{introextmain}.
\begin{example}\rm \label{lastex}

 Let 
\[
\begin{split}
 P(z,w) & = z\overline{w} \left [ 3-2z-2\overline{w} + 2 z\overline{w} \right ] \, 
   + \,  8 \frac{\zw^3}{1-z\overline{w}}
\\ & =  z\overline{w} [1 + 2(1-z)(1-\overline{w})] 
   \, 
   + \,  8 \frac{(z\overline{w})^3}{1-z\overline{w}} \succeq 0.
\end{split}
\]
Set
\[
 s=\frac{1}{1-P}.
\]
 Thus, $s$ is a CP kernel with $s(z,0)=1.$ 
 We claim
\begin{equation}
\label{e:funnyP}
 1-\frac{k}{s} \succeq 0,
\end{equation}
but 
\[
  \frac{1}{k}-\frac{1}{s}\not\succeq 0,
\]
 in contrast to the kernels $G_\lambda=g_{\lambda}\overline{g_{\lambda}}$ from Example~\ref{eg:0},
 where $s_\lambda=(1-G_\lambda)^{-1}$ satisfies 
 $s_\lambda(z,0)=1$ and, by equation~\eqref{e:eg0:1}, 
\[
 \frac{1}{k}- \frac{1}{s_\lambda} \succeq 0,
\]
 (which, of course, implies $1-\frac{k}{s_\lambda}\succeq 0$).
 Observe,
\[
 \frac{1}{k}-\frac{1}{s} =  z\overline{w} [ 1 -2z -2\overline{w} +3 z\overline{w} ] 
   \, 
   + \,  8 \frac{\zw^3}{1-z\overline{w}}  \not\succeq 0.
\]
 To establish the inequality of equation~\eqref{e:funnyP}, 
 compute,  letting $x=z\overline{w},$
\[
\begin{split}
 1- \frac{k}{s} &  = \frac{(1-x)^2 - (1-P)}{(1-x)^2}
\\ & = (P-2x+x^2) \, [1+2x+3x^2 +\cdots]
\\ & = x (1-2z-2\overline{w} + 3x + 8x^2+8x^3 +\cdots) \, [1+2x+3x^2 +\cdots]
\\ & = x(1+3x+8x^2+ 8x^3 + \cdots) \,  [1+2x+3x^2 +\cdots]
 \, - \, x (2z+2\overline{w}) [1+2x+3x^2 +\cdots]
\\ & = x \sum_{n=0}^\infty [(n+1)+3n + 8((n-1)+(n-2)+\dots 1)] x^n  \, - \,
   2 x\sum_{n=0}^\infty (z+\overline{w}) (n+1)x^n
\\ & = x \sum_{n=0}^\infty (4n^2+1) x^{n} + \, - \,
   2 x\sum_{n=1}^\infty (z+\overline{w})n x^{n-1} 
\\ & =  x \left [ \sum_{n=1}^\infty [4n^2 x^n + x^{n-1}] \right ] 
 \, - \,  2 x\sum_{n=1}^\infty (z+\overline{w})n x^{n-1} 
\\ & =   x \left [ \sum_{n=1}^\infty   (1-2nz)(1-2n\overline{w}) \zw^{n-1}  \right ] \succeq 0.
\end{split} 
\]
 
 To see that  $(1-2z\overline{w})^{-1}$ is not a strong Shimorin certificate for $(k,s),$ 
 note that the coefficient matrix of  $(1-2z\overline{w})s$ up to degree $3$ is given by 
\begin{equation*}  
 \begin{blockarray}{ccccc}
 & 1 & z & z^2 & z^3\\ 
\begin{block}{c(cccc)}
1 & 1 & 0 & 0 & 0 \\
\overline{w} & 0 & 1 & -2 & 0 \\ 
\overline{w}^2 & 0 & -2 & 5 & -8 \\ 
\overline{w}^3 & 0 & 0 & -8 & 33 \\ 
\end{block}
\end{blockarray} \, , 
\end{equation*}
which is not positive.
\qed
\end{example}

\subsection{The largest domain disc for the Bergman kernel} \label{maxdom}
If  $\ell$ is a diagonal holomorphic kernel and  $(\mathfrak{b}, \ell, \Omega)$ is a CP pair, then  $\Omega\subseteq \frac{1}{\sqrt{2}}\mathbb{D}$ by Theorem~\ref{mainextmain}.  In this subsection, we prove that $\frac{1}{\sqrt{2}}\mathbb{D}$ is actually the maximal disc that can serve as the domain of \textit{any} holomorphic (but not necessarily diagonal) $\ell$ such that $(\mathfrak{b}, \ell)$ is a CP pair. Compare with Remark~\ref{non-inclusion}. For convenience, we will impose a few mild regularity assumptions on $\ell.$

\begin{definition}\label{regdef}
Let $\ell(z, w)=\sum_{i, j\ge 0}\ell_{ij}z^i\overline{w}^j$ be a holomorphic kernel defined on an open neighborhood of the origin. We  will say $\ell$ is \df{regular} if 
\begin{enumerate}[(i)]
    \item  \label{i:reg:1} every collection of kernel functions $\{\ell_{z_1}, \dots, \ell_{z_n}\}$ with $z_1, \dots, z_n$ distinct and sufficiently close to $0$ forms a linearly independent set and
    \item  \label{i:reg:2} all principal determinants of the coefficient matrix $\big(\ell_{ij}\big)^{\infty}_{i, j\ge 0}$ are non-zero. 
\end{enumerate}
\end{definition}
We point out that all diagonal holomorphic kernels with non-zero coefficients are regular.

 \begin{theorem}
\label{t:max:disc}
Let $\mathfrak{b}$ denote the Bergman kernel on $\DD$ and fix $0<r\le 1$. If $\ell$ is a holomorphic kernel on $r\DD$ such that $(\mathfrak{b}, \ell) $ is CP, then
\[r\le \frac{1}{\sqrt{2}}.\]
 \end{theorem}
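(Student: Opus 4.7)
The plan is to argue by contradiction: assume $r > 1/\sqrt{2}$ and that $\ell$ is a regular holomorphic kernel on $r\DD$ with $(\mathfrak{b},\ell,r\DD)$ a complete Pick pair. The goal is to reduce to the diagonal case, where Theorem~\ref{mainextmain} combined with the computation in Example~\ref{finallyanex} (which gives $\Omega_{\vartheta}^1(\mathfrak{b})=\frac{1}{\sqrt{2}}\DD$) immediately forces $r\le 1/\sqrt{2}$.

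First, I would exploit regularity condition \eqref{i:reg:2} of Definition~\ref{regdef} to Gram--Schmidt the monomials $\{z^n\}_{n\ge 0}$ inside $\mathcal{H}_\ell$, producing an orthonormal sequence $\{e_n\}$ with $e_n$ a polynomial of exact degree $n$ and leading coefficient $c_n>0$. I would then associate to $\ell$ the diagonal holomorphic kernel
\[
\hat{\ell}(z,w)=\sum_{n\ge 0} c_n^2\,(z\overline{w})^n
\]
on $r\DD$, whose coefficients encode the successive orthogonal distances of $z^n$ from $\mathrm{span}\{1,z,\dots,z^{n-1}\}$ in $\mathcal{H}_\ell$.

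Next, I would combine Theorem~\ref{generalnec} with the limiting techniques of Section~\ref{directCPgivesCC}. Starting from the fact that $(\mathfrak{b},\ell^Y)$ is a CP pair for every finite $Y\subseteq r\DD$, I would run the projection-convergence arguments of Lemmas~\ref{l:Pconverge:1}--\ref{l:Pconverge:2} (whose hypotheses are satisfied thanks to regularity condition~\eqref{i:reg:1}, which keeps the relevant kernel functions and polynomial perturbations linearly independent) to upgrade Schur complementation at points to Schur complementation against the polynomial subspaces $M_N=\mathrm{span}\{1,z,\dots,z^N\}\subseteq\mathcal{H}_\ell$. This yields, for every $N\ge 0$, the positivity $\ell^{M_N}/\mathfrak{b}\succeq 0$ on $r\DD$, where $\ell^{M_N}=\sum_{n>N} e_n\overline{e_n}$. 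This is the non-diagonal analogue of Lemma~\ref{l:Lnkpos}, and by the argument of Proposition~\ref{autopolyn} it also implies that every polynomial is a bounded multiplier $\mathcal{H}_{\mathfrak{b}}\to\mathcal{H}_{\ell}$.

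The crux of the argument is to promote these positivity statements to the assertion that $(\mathfrak{b},\hat{\ell},r\DD)$ is itself a CP pair. The plan here is to translate Pick data $(z_i,W_i)$ for $(\mathfrak{b},\hat{\ell})$ into Pick data for $(\mathfrak{b},\ell)$ by using the change-of-basis matrix implemented by Gram--Schmidt: Pick matrices for $\hat{\ell}$ involve $\sum c_n^2(z_i\overline{z_j})^n$, while Pick matrices for $\ell$ involve $\sum e_n(z_i)\overline{e_n(z_j)}$, and the block-diagonal majorization afforded by the CP property of each $(\mathfrak{b},\ell^{M_N})$ allows one to pass from contractivity of the former to the existence of an interpolant for the latter, which can then be projected onto the relevant subspace and re-diagonalized. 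Once $(\mathfrak{b},\hat{\ell},r\DD)$ is recognized as a diagonal holomorphic CP pair, Theorem~\ref{mainextmain} with Example~\ref{finallyanex} delivers $r\DD\subseteq\frac{1}{\sqrt{2}}\DD$, the desired contradiction. The principal obstacle is precisely this transfer step: one must verify that regularity is strong enough to guarantee that the Gram--Schmidt basis behaves compatibly with the interpolation problem, and in particular that the diagonal CC-style constraints extracted from the $\ell^{M_N}$ really do encode the full CP property for $\hat{\ell}$ rather than only a weaker positivity condition.
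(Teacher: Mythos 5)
Your plan stands or falls on the transfer step you yourself flag as the ``principal obstacle,'' and that step is a genuine gap -- not just an unproven technicality, but something that cannot be true in the domain-blind form you describe. Your sketched mechanism (Gram--Schmidt change of basis, majorization by the pairs $(\mathfrak{b},\ell^{M_N})$, project and re-diagonalize) nowhere uses that the domain is a disc centered at $0$; it would apply verbatim to any domain $\Omega\ni\mathbf{0}$ on which $(\mathfrak{b},\ell,\Omega)$ is a CP pair with $\ell$ regular. But the paper's own Example~\ref{eg:0} and Remark~\ref{non-inclusion} provide regular, non-diagonal kernels $s_\lambda$ with $(\mathfrak{b},s_\lambda,\tOmega_\lambda)$ a CP pair and $\mathbf{0}\in \tOmega_\lambda\not\subseteq\frac{1}{\sqrt{2}}\DD$. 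If your transfer produced a diagonal kernel $\widehat{s_\lambda}$ with $(\mathfrak{b},\widehat{s_\lambda},\tOmega_\lambda)$ CP, Theorem~\ref{mainextmain} would force $\tOmega_\lambda\subseteq\Omega^1_\mt=\frac{1}{\sqrt{2}}\DD$, a contradiction. So any correct argument must exploit the rotational symmetry of $r\DD$ in an essential way, and your proposal never does. (There is also a smaller inaccuracy: the limiting procedure of Lemmas~\ref{l:Pconverge:1}--\ref{l:Pconverge:2} applied to point Schur complements produces the kernels of the \emph{vanishing-derivative} subspaces, i.e.\ the coefficient-matrix Schur complements $\ell^{(n)}$, not the kernels $\sum_{n>N}e_n\overline{e_n}$ of the orthocomplements of the polynomial subspaces $M_N$; for non-diagonal $\ell$ these two families of subspaces differ, so the Gram--Schmidt data $c_n^2$ are not what the limit delivers.)

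The paper avoids any reduction to a diagonal CP pair. It uses Theorem~\ref{generalnec} to see that Schur complementation of $\ell$ at finitely many points preserves the CP property, lets those points tend to $\mathbf{0}$ (Lemma~\ref{lemma1}) to obtain the necessary conditions $\ell^{(n)}/\mathfrak{b}\succeq 0$ (Lemma~\ref{lemma2}), and then reads off from a single $2\times 2$ minor of each such kernel the inequality $\ell^{(n)}_{nn}\ge 2\,\ell^{(n-1)}_{(n-1)(n-1)}$ (Lemma~\ref{lemma3}), hence the diagonal growth $\ell_{nn}\ge 2^n\ell_{00}$. The circular symmetry enters only at the very end: if $r>\frac{1}{\sqrt{2}}$, averaging $\theta\mapsto\ell\bigl(\tfrac{1}{\sqrt2}e^{i\theta},\tfrac{1}{\sqrt2}e^{i\theta}\bigr)$ over $[0,2\pi)$ annihilates the off-diagonal terms and shows $\sum_n \ell_{nn}2^{-n}<\infty$, contradicting the growth estimate. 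In other words, the paper extracts only weak, linear (cone-type) consequences of the CP hypothesis -- far less than CP of a diagonalized pair -- and these, combined with symmetry of the disc, already suffice; this is exactly the point made in Remark~\ref{twokeys}. To salvage your approach you would need either a proof of the transfer that genuinely uses rotation invariance (for instance via the rotated pairs $(\mathfrak{b},\ell(e^{i\theta}z,e^{i\theta}w))$), or a way to relate your $c_n^2$ back to the diagonal Taylor coefficients $\ell_{nn}$ and to convergence on $|z|=\frac{1}{\sqrt2}$ -- at which point you are essentially rebuilding the paper's argument.
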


 We need a few preliminary lemmas. Let $\ell(z, w)=\sum_{i, j\ge 0} \ell_{ij}z^i\overline{w}^j$ be any regular holomorphic kernel on $r\DD$. Given $n\ge 1$ and $0<u_1<u_2<\dots<u_n$ sufficiently close to $0$, define the kernels
$$\ell_{[1]}(z, w):=\ell(z, w)-\frac{\ell(z, u_1)\ell(u_1, w)}{\ell(u_1, u_1)} $$
and $$\ell_{[1, \dots, n]}(z, w):=\ell_{[1, \dots, n-1]}(z, w)-\frac{\ell_{[1, \dots, n-1]}(z, u_n) \ell_{[1, \dots, n-1]}(u_n, w)}{\ell_{[1, \dots, n-1]}(u_n, u_n)},$$
for every $n\ge 2.$ Item~\ref{i:reg:1} of Definition~\ref{regdef} ensures that all of the above denominators are non-vanishing.  We also define, for every $n \ge 0,$ the functions
\[\ell^{(n)}(z, w)=\sum_{i, j\ge n}\ell^{(n)}_{ij}z^i\overline{w}^j \]
recursively by setting  $\ell^{(0)}\equiv \ell$ and, for $n\ge 1$, 
\[\ell^{(n)}_{ij}=\ell^{(n-1)}_{ij}-\frac{\ell^{(n-1)}_{i(n-1)}\ell^{(n-1)}_{(n-1)j}}{\ell^{(n-1)}_{(n-1)(n-1)}}.\]
Note that the denominators are non-vanishing because of item~\eqref{i:reg:2} from Definition~\ref{regdef}. \par 
Observe that the kernels $\ell_{[1, \dots, n]}$ are generated by considering Schur complements of matrices of the form $\big[\ell(z_i, z_j)\big]$, while each $\ell^{(n)}$ is generated by taking Schur complements of the coefficient matrix $\big(\ell_{ij}\big)$. The connection between these two types of Schur complements is given by the following lemma.
\begin{lemma} \label{lemma1}
 $\lim_{u_n\to 0}\cdots \lim_{u_1\to 0}\ell_{[1, \dots, n]}(z, w)=\ell^{(n)}(z, w)$ pointwise on $r\DD\times r\DD.$
\end{lemma}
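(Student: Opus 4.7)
The plan is to proceed by induction on $n.$

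For the base case $n=1,$ the Schur-complement formula gives
\[
\ell_{[1]}(z,w) = \ell(z,w) - \frac{\ell(z,u_1)\ell(u_1,w)}{\ell(u_1,u_1)},
\]
and regularity guarantees $\ell_{00} = \ell(0,0) \ne 0,$ so we can let $u_1\to 0$ continuously to obtain
\[
\ell(z,w) - \frac{\ell(z,0)\,\ell(0,w)}{\ell(0,0)} = \sum_{i,j\ge 0}\bigg(\ell_{ij} - \frac{\ell_{i0}\ell_{0j}}{\ell_{00}}\bigg)z^i\overline{w}^j.
\]
The coefficient vanishes whenever $i=0$ or $j=0,$ leaving exactly $\sum_{i,j\ge 1}\ell^{(1)}_{ij}z^i\overline{w}^j = \ell^{(1)}(z,w)$ by the recursive definition.

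For the inductive step, I would assume the result for $n-1$ and apply it to the recursive formula
\[
\ell_{[1,\dots,n]}(z,w) = \ell_{[1,\dots,n-1]}(z,w) - \frac{\ell_{[1,\dots,n-1]}(z,u_n)\,\ell_{[1,\dots,n-1]}(u_n,w)}{\ell_{[1,\dots,n-1]}(u_n,u_n)}
\]
with $u_n$ held fixed but small and nonzero while $u_1,\dots,u_{n-1}\to 0.$ Passing these inner limits through (continuity of sum, product, and quotient) yields
\[
\lim_{u_{n-1}\to 0}\cdots\lim_{u_1\to 0}\ell_{[1,\dots,n]}(z,w) = \ell^{(n-1)}(z,w) - \frac{\ell^{(n-1)}(z,u_n)\,\ell^{(n-1)}(u_n,w)}{\ell^{(n-1)}(u_n,u_n)}.
\]
This requires the denominator to be nonzero for small nonzero $u_n.$ A standard fact about iterated Schur complements identifies $\ell^{(n-1)}_{(n-1)(n-1)}$ with the ratio of successive principal determinants of $(\ell_{ij}),$ so item (ii) of Definition~\ref{regdef} gives $\ell^{(n-1)}_{(n-1)(n-1)}\ne 0,$ and hence $\ell^{(n-1)}(u_n,u_n) = \ell^{(n-1)}_{(n-1)(n-1)}|u_n|^{2(n-1)} + O(|u_n|^{2n-1})\ne 0$ for $u_n$ small and nonzero.

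To complete the induction I would expand each factor in powers of $u_n,\overline{u_n}:$
\[
\ell^{(n-1)}(z,u_n) = \overline{u_n}^{n-1}\bigl(\phi(z)+O(|u_n|)\bigr),\quad \ell^{(n-1)}(u_n,w) = u_n^{n-1}\bigl(\psi(w)+O(|u_n|)\bigr),
\]
where $\phi(z)=\sum_{i\ge n-1}\ell^{(n-1)}_{i(n-1)}z^i,$ $\psi(w)=\sum_{j\ge n-1}\ell^{(n-1)}_{(n-1)j}\overline{w}^j,$ with a similar expansion in the denominator. The crucial cancellation $\overline{u_n}^{n-1}u_n^{n-1}/|u_n|^{2(n-1)} = 1$ makes the limit as $u_n\to 0$ exist and be independent of the direction of approach, equal to $\phi(z)\psi(w)/\ell^{(n-1)}_{(n-1)(n-1)}.$ Subtracting from $\ell^{(n-1)}(z,w),$ rows and columns with $i=n-1$ or $j=n-1$ cancel identically, while for $i,j\ge n$ the surviving coefficient is precisely $\ell^{(n-1)}_{ij}-\ell^{(n-1)}_{i(n-1)}\ell^{(n-1)}_{(n-1)j}/\ell^{(n-1)}_{(n-1)(n-1)}=\ell^{(n)}_{ij},$ producing $\ell^{(n)}(z,w)$ exactly.

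The only real subtlety is keeping the denominators nonzero as the iterated limits unfold, and confirming that the final $u_n\to 0$ limit is path-independent; both are resolved by the Schur-complement/determinant identification (which is where the regularity hypothesis pulls its weight) together with the phase cancellation noted above. Everything else is continuity plus bookkeeping on the power-series coefficients.
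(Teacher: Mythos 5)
Your proof is correct and follows essentially the same route as the paper: induction on $n$, passing the inner iterated limits through the Schur-complement formula with the outermost variable fixed, and then computing the final limit coefficient-wise, with regularity item (ii) guaranteeing the non-vanishing of $\ell^{(n-1)}_{(n-1)(n-1)}$. Your extra care about path-independence of the last limit is a harmless refinement rather than a different argument, since the paper takes $0<u_1<\cdots<u_n$ real so the phase cancellation you note is automatic there.
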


\begin{proof}
We proceed by induction. The base case $n=1$ follows from observing that 
\[\lim_{u_1\to 0}\frac{\ell(z, u_1)\ell(u_1, w)}{\ell(u_1, u_1)}=\sum_{i, j\ge 0} \frac{\ell_{i0}\ell_{0j}}{\ell_{00}}z^i\overline{w}^j.\]
Now, assume that the statement of the lemma holds for $n\ge 1.$ We then have 
\begin{align*}
  &\lim_{u_{n+1}\to 0}\lim_{u_n\to 0}\cdots \lim_{u_1\to 0}\ell_{[1, \dots, n+1]}(z, w) \\  
=&\lim_{u_{n+1}\to 0}\lim_{u_n\to 0}\cdots \lim_{u_1\to 0}\bigg(\ell_{[1, \dots, n]}(z, w)-\frac{\ell_{[1, \dots, n]}(z, u_{n+1}) \ell_{[1, \dots, n]}(u_{n+1}, w)}{\ell_{[1, \dots, n]}(u_{n+1}, u_{n+1})} \bigg) \\
=&\lim_{u_{n+1}\to 0}\bigg(\ell^{(n)}(z, w)-\frac{\ell^{(n)}(z, u_{n+1}) \ell^{(n)}(u_{n+1}, w)}{\ell^{(n)}(u_{n+1}, u_{n+1})} \bigg)  \\
=&\sum_{i, j\ge n}\bigg(\ell^{(n)}_{ij}-\frac{\ell^{(n)}_{in}\ell^{(n)}_{nj}}{\ell^{(n)}_{nn}} \bigg)z^i\overline{w}^j \\
=&\sum_{i, j\ge n+1}\bigg(\ell^{(n)}_{ij}-\frac{\ell^{(n)}_{in}\ell^{(n)}_{nj}}{\ell^{(n)}_{nn}} \bigg)z^i\overline{w}^j \\
=&~\ell^{(n+1)}(z, w). \qedhere
\end{align*}
\end{proof}
We now combine Lemma~\ref{lemma1} with Theorem~\ref{generalnec} to obtain new necessary conditions for a pair of kernels $(k, \ell),$ with $\ell$ holomorphic, to be CP.

 \begin{lemma}\label{lemma2}
    Assume $k, \ell$ are two kernels such that $\ell$ is regular holomorphic on $r\DD.$ If  $(k, \ell, r\DD)$ is CP, then, with notation as above and $n$ a non-negative integer,
\[
 \frac{\ell^{(n)}}{k}\succeq  0. 
\]
 \end{lemma}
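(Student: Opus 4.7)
The plan is to recognize each $\ell_{[1,\dots,n]}$ as the reproducing kernel of a zero-based subspace of $\mathcal{H}_\ell$, invoke Theorem~\ref{generalnec} to obtain positivity of the ratio $\ell_{[1,\dots,n]}/k$, and then push this positivity through the pointwise limit provided by Lemma~\ref{lemma1}.

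First, iterating the Schur-complement formula of Lemma~\ref{l:compress:z} identifies $\ell_{[1,\dots,n]}$ with the reproducing kernel $\ell^Y$ of the subspace $\{f\in\mathcal{H}_\ell:f(u_i)=0,\ 1\le i\le n\}$, where $Y=\{u_1,\dots,u_n\}$; the inductive step reads $(\ell_{[1,\dots,i-1]})^{u_i}=\ell_{[1,\dots,i]}$, which is exactly the kernel of the functions in $\mathcal{H}_{\ell_{[1,\dots,i-1]}}$ vanishing at $u_i$. The linear-independence clause (item~\eqref{i:reg:1}) of the regularity hypothesis ensures $\ell_{[1,\dots,i-1]}(u_i,u_i)>0$ for all sufficiently small distinct $u_i$, so every Schur complement in sight is well defined.

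Since $(k,\ell,r\DD)$ is a CP pair and $Y$ is finite, Theorem~\ref{generalnec} supplies a PsD kernel $g_Y$ on $r\DD$ with $\ell^Y=k\,g_Y$; equivalently
\[
\frac{\ell_{[1,\dots,n]}}{k}=g_Y\succeq 0
\]
for every admissible tuple $(u_1,\dots,u_n)$. Taking the iterated limit $u_1\to 0,\dots,u_n\to 0$, Lemma~\ref{lemma1} yields $\ell_{[1,\dots,n]}(z,w)\to\ell^{(n)}(z,w)$ pointwise on $r\DD\times r\DD$. Because $k$ is non-vanishing on $r\DD\times r\DD$ in the setting of interest (this is built into reading ``$\ell^{(n)}/k\succeq 0$'' as the existence of a PsD $g^{(n)}$ with $\ell^{(n)}=k\,g^{(n)}$, and it is automatic in the Bergman application via the non-vanishing results for CP pairs of holomorphic kernels, e.g.\ Proposition~\ref{CPsplit}), the ratios $g_Y=\ell_{[1,\dots,n]}/k$ also converge pointwise to $g^{(n)}:=\ell^{(n)}/k$. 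A pointwise limit of PsD kernels is PsD, so $g^{(n)}\succeq 0$ and $\ell^{(n)}=k\,g^{(n)}$.

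The only real step that requires care is the last one: transferring positivity across an iterated pointwise limit. Once $k$ is non-vanishing, this reduces to the standard observation that PsD kernels are closed under pointwise limits. Should one want to avoid any assumption on $k$, a compactness argument on the cone of PsD matrices restricted to arbitrary finite subsets of $r\DD$ would do the job, but in the application that feeds Theorem~\ref{t:max:disc} this complication does not arise.
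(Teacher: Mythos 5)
Your proposal is correct and follows essentially the same route as the paper: obtain positivity of $\ell_{[1,\dots,n]}/k$ from Theorem~\ref{generalnec} (the paper peels off one point at a time and finishes with Theorem~\ref{Shimnec}, while you identify $\ell_{[1,\dots,n]}$ with $\ell^Y$ and apply Theorem~\ref{generalnec} once to the finite set $Y$ — a cosmetic difference), then pass to the limit via Lemma~\ref{lemma1} and the closedness of PsD kernels under pointwise limits. Your extra care about $k$ being non-vanishing in the limit step is fine and, if anything, slightly more explicit than the paper's treatment.
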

\begin{proof}
The case $n=0$ follows immediately from Theorem~\ref{Shimnec}. Now, assume $n\ge 1$ and let $0<u_1<\dots<u_n.$  By Theorem~\ref{generalnec}, we know that $(k, \ell)$ being CP implies that $(k, \ell^z)$ is also CP, for every $z\in r\DD$. Thus, $(k, \ell^{u_1})=(k, \ell_{[1]})$ must be CP as well. Continuing inductively, we obtain that $(k, \ell_{[1, \dots, n]})$ is CP, hence, by Theorem~\ref{Shimnec}, 
 \[\frac{\ell_{[1, \dots, n]}}{k}\succeq  0.\]
Since pointwise limits preserve positive kernels, this last condition yields (in combination with Lemma~\ref{lemma1})
\[\frac{\ell^{(n)}}{k}=\lim_{u_n\to 0}\cdots\lim_{u_1\to 0} \frac{\ell_{[1, \dots, n]}}{k}\succeq 0.\]
\end{proof}
We will now apply Lemma~\ref{lemma2} to pairs of the form $(\mathfrak{b}, \ell).$
\begin{lemma} \label{lemma3}
If  $(\mathfrak{b}, \ell, r\DD)$ is a CP pair and  $\ell$ regular holomorphic,  then, with notation as above and $n$ a positive integer,
 \[
\ell^{(n)}_{nn}\ge 2\ell^{(n-1)}_{(n-1)(n-1)}. 
\]
\end{lemma}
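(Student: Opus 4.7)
\textbf{Proof proposal for Lemma~\ref{lemma3}.}

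The plan is to apply Lemma~\ref{lemma2} with $n$ replaced by $n-1$ and then extract the desired inequality from the positivity of the coefficient matrix of $\ell^{(n-1)}/\mathfrak{b}$. Specifically, since $(\mathfrak{b},\ell,r\DD)$ is a CP pair and $\ell$ is regular holomorphic, Lemma~\ref{lemma2} gives
\[
  K(z,w) \; := \; \frac{\ell^{(n-1)}(z,w)}{\mathfrak{b}(z,w)} \; = \; (1-z\overline{w})^{2}\ell^{(n-1)}(z,w) \; \succeq \; 0
\]
on a neighborhood of $0$. Writing $K(z,w)=\sum_{i,j\ge 0} K_{ij}\, z^{i}\overline{w}^{j}$, a standard argument (differentiating at the origin) shows that the infinite coefficient matrix $(K_{ij})_{i,j\ge 0}$ is positive semidefinite.

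Next, I would compute the entries $K_{(n-1)(n-1)}$, $K_{n(n-1)}$, $K_{nn}$ explicitly. Expanding $(1-z\overline{w})^{2}=1-2z\overline{w}+z^{2}\overline{w}^{2}$ and recalling that $\ell^{(n-1)}_{ij}=0$ whenever $i<n-1$ or $j<n-1$, only the "$1$" and "$-2z\overline{w}$" contributions survive at these indices, giving
\[
  K_{(n-1)(n-1)}=\ell^{(n-1)}_{(n-1)(n-1)}, \qquad
  K_{n(n-1)}=\ell^{(n-1)}_{n(n-1)},
\]
\[
  K_{nn}=\ell^{(n-1)}_{nn}-2\,\ell^{(n-1)}_{(n-1)(n-1)}.
\]
Positivity of the $2\times 2$ principal submatrix of $(K_{ij})$ indexed by $\{n-1,n\}$ yields
\[
  \bigl(\ell^{(n-1)}_{nn}-2\,\ell^{(n-1)}_{(n-1)(n-1)}\bigr)\,\ell^{(n-1)}_{(n-1)(n-1)} \; \ge \; \bigl|\ell^{(n-1)}_{n(n-1)}\bigr|^{2}.
\]

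Finally, I would invoke the recursive definition
\[
  \ell^{(n)}_{nn} \; = \; \ell^{(n-1)}_{nn}-\frac{\bigl|\ell^{(n-1)}_{n(n-1)}\bigr|^{2}}{\ell^{(n-1)}_{(n-1)(n-1)}},
\]
(the denominator is non-zero by the regularity assumption \eqref{i:reg:2}). Dividing the previous inequality by $\ell^{(n-1)}_{(n-1)(n-1)}>0$ and rearranging gives exactly $\ell^{(n)}_{nn}\ge 2\,\ell^{(n-1)}_{(n-1)(n-1)}$, completing the argument. The main conceptual step — and really the only non-routine one — is the reduction from the kernel inequality $\ell^{(n-1)}/\mathfrak{b}\succeq 0$ to the PSD-ness of its coefficient matrix, which is what makes the very simple form $(1-z\overline{w})^{2}$ of $1/\mathfrak{b}$ visible in the bookkeeping; once this is noted, the rest is linear algebra combined with the Schur-complement recursion defining $\ell^{(n)}_{nn}$.
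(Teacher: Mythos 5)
Your proposal is correct and follows essentially the same route as the paper: apply Lemma~\ref{lemma2} with $n-1$, read off the $2\times 2$ coefficient submatrix of $\ell^{(n-1)}/\mathfrak{b}$ at indices $n-1,n$ (whose entries are exactly $\ell^{(n-1)}_{(n-1)(n-1)}$, $\ell^{(n-1)}_{(n-1)n}$, $\ell^{(n-1)}_{n(n-1)}$, $\ell^{(n-1)}_{nn}-2\ell^{(n-1)}_{(n-1)(n-1)}$), and use its positivity together with the Schur-complement recursion for $\ell^{(n)}_{nn}$. You merely spell out the last step (dividing by $\ell^{(n-1)}_{(n-1)(n-1)}>0$ and invoking the recursive definition), which the paper leaves implicit.
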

\begin{proof}
 Let $n\ge 1$ be given. 
 The coefficients of $z^n\overline{w}^n, z^n\overline{w}^{n+1}, z^{n+1}\overline{w}^n, z^{n+1}\overline{w}^{n+1} $ in the expansion of $\frac{\ell^{(n-1)}}{\mathfrak{b}}$ form the $2\times 2$ matrix
 \[\begin{bmatrix}
    \ell^{(n-1)}_{(n-1)(n-1)} & \ell^{(n-1)}_{(n-1)n} \\ \ell^{(n-1)}_{n(n-1)} & \ell^{(n-1)}_{nn}-2 \ell^{(n-1)}_{(n-1)(n-1)}
\end{bmatrix} \]
Since, by Lemma~\ref{lemma2},  $\frac{\ell^{(n-1)}}{\mathfrak{b}}$  is positive, the same must be true for the above $2\times 2$ minor. Thus,$$\big(\ell^{(n-1)}_{nn}-2 \ell^{(n-1)}_{(n-1)(n-1)}\big) \ell^{(n-1)}_{(n-1)(n-1)}\ge \ell^{(n-1)}_{n(n-1)}\ell^{(n)}_{(n-1)n},$$
which is precisely what we want.
\end{proof}

We can now prove the main result of this subsection. 
\begin{proof}[Proof of Theorem~\ref{t:max:disc}]
     Assume $\ell$ is regular holomorphic on $r\DD$ with $r>\frac{1}{\sqrt{2}}$ and $(k, \ell)$ is a CP pair. By Lemma~\ref{lemma3}, we know that 
     \[\ell^{(n+1)}_{(n+1)(n+1)}\ge 2\ell^{(n)}_{nn}\ge \dots \ge 2^{n+1} \ell_{00},\]
     for all $n\ge 1,$ where $\ell_{00}>0.$ 
It is evident that, for all $n$, \[\ell^{(n+1)}_{(n+1)(n+1)}\le \ell^{(n)}_{(n+1)(n+1)}\le \dots \le \ell_{(n+1)(n+1)}.\]
     Combining this with the previous inequalities, we obtain
     \begin{equation}\label{badpowerseries}
         \ell_{nn}\ge 2^n\ell_{00},
     \end{equation}
     for all $n\ge 1.$ Now, since $r>\frac{1}{\sqrt{2}}$, we obtain  the continuous function $F: [0, 2\pi)\to\mathbb{C}$ given by 
          \[F(\theta)=\ell\big(\frac{1}{\sqrt{2}}e^{i\theta}, \frac{1}{\sqrt{2}}e^{i\theta}\big)=\sum_{r, p\ge 0}\ell_{rp}\frac{e^{i(r-p)\theta}}{(\sqrt{2})^{r+p}}.\] Integrating term-wise with respect to $\theta$  annihilates the  off-diagonal terms and thus 
     \[\int_{0}^{2\pi}F(\theta)\ d\theta=\sum_{i\ge 0}\frac{\ell_{ii}}{2^i}\]
     is finite, contradicting (\ref{badpowerseries}).
\end{proof}

\begin{remark} \label{twokeys}
There are two key points in the proof of Theorem~\ref{t:max:disc}. 
 First, given a CP pair $(\mathfrak{b}, \ell)$ with $\ell$ diagonal holomorphic, it is possible to arrive at the conclusion that $s=(1-\mt)^{-1}$ is a factor of $\ell$ by only considering conditions of the form \[\frac{\ell_M}{\mathfrak{b}}\succeq 0,\] where $M$ is a subspace of $\mathcal{H}_{\ell}$ determined by the vanishing of all derivatives up to some order. In particular, the necessary conditions of Lemma \ref{lemma2} coincide with \eqref{nicc} if we assume that $\ell$ is diagonal holomorphic. 
 Second, the disc is circularly symmetric, so the off diagonal terms of $\ell$
 can be annihilated by averaging.  

\end{remark}

\begin{remark} \label{verygen}
Theorem~\ref{t:max:disc} extends to more general Bergman-like kernels. In particular, given a holomorphic CP pair $(b, \ell, \Omega),$ where $b$ is as in Examples~\ref{finallyanex}, \ref{finallyanex2}, one obtains that
$\Omega$ does not properly contain the Reinhardt domain $\Omega_\mt,$
where $\mt$ is, as usual, the master certificate associated with $b$. To get there, we observe again that the conclusion ``$s=(1-\mt)^{-1}$ is a factor of $\ell$" can be reached by only considering conditions of the form $\ell_M/b\succeq 0,$  where $M$ is a subspace of $\mathcal{H}_{\ell}$ determined by the vanishing of all Taylor coefficients up to some order. Thus, the two observations given in Remark \ref{twokeys} carry over to the $(b, \ell, \Omega)$ setting and the proof method of Theorem~\ref{t:max:disc} generalizes accordingly. We omit the details. 
\end{remark}

\normalsize
\section{A Reformulation of Theorem~\ref{introextmain}} \label{reform}
\normalsize

The algorithm from the proof of Theorem~\ref{almostCCPtoShim} gives us the following set of necessary and sufficient conditions for 
 $(k, \ell)$ to be a CP pair that make no explicit reference to the master certificate associated with $k$. It is our hope that this version of the theorem will lend itself more naturally to generalization.
For simplicity, we treat the single variable case; that is $\vg=1.$

\begin{theorem} \label{sometheorem}
     A pair $(k,\ell)$ of normalized diagonal holomorphic kernels on an open disc centered at $0$, so that
    \[k(z, w)=1+\sum_{j\ge 1}k_j(z\overline{w})^j, \hspace*{0.3 cm} \ell(z, w)=1+\sum_{j\ge 1}\ell_j(z\overline{w})^j,\]
    is a CP pair if and only if \[\frac{\ell}{k}\succeq 0\] and also the following conditions are satisfied: 
    For every strictly increasing (infinite) sequence $0\le m_0<m_1<m_2<\dots$ and for all $j\ge 0,$ 
    \[
      \frac{\ell_{(m_0, m_1, \dots, m_j)}}{k}\succeq 0,  
    \]
    where the kernels $\ell_{(m_0, m_1, \dots, m_j)}$ are defined inductively as follows:
    \begin{enumerate}[(i)]
        \item First, set $$\ell_{(m_0)}(z, w):=\ell(z, w)-\bigg(\frac{\ell}{k}\bigg)_{m_0}(z\overline{w})^{m_0};$$
        \item  Assuming $\ell_{(m_0, m_1, \dots, m_j)}$ has been defined, put 
        $$\ell_{(m_0, m_1, \dots, m_{j+1})}(z, w):=\ell_{(m_0, m_1, \dots, m_j)}(z, w)-\bigg(\frac{\ell_{(m_0, m_1, \dots, m_j)}}{k}\bigg)_{m_{j+1}}(z\overline{w})^{m_{j+1}}.$$
    \end{enumerate}
\end{theorem}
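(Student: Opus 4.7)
The plan is to deduce Theorem~\ref{sometheorem} from the coefficient-level characterization of the CP property contained in Theorems~\ref{mainextmain} and~\ref{main:formal:CCP}: namely, $(k,\ell)$ is CP exactly when the master certificate $\mt$ associated with $k$ is a formal Shimorin certificate for $(k,\ell)$, which in the diagonal setting reduces to $g_n := \ell_n - \sum_{0<u\le n}\mt_u\ell_{n-u} \ge 0$ for every $n\ge 1$.

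For the sufficiency direction, I would fix $n\ge 1$ and mimic the proof of Theorem~\ref{almostCCPtoShim}: set $S_n = \{a\le n : \mt_{n-a}=0\}$, and let $m_0<m_1<\cdots$ be any strictly increasing sequence whose intersection with $\{0,1,\ldots,n\}$ is the complement of $S_n$ (extended arbitrarily beyond $n$). The hypothesis gives $\ell_{(m_0,\ldots,m_j)}/k\succeq 0$ for all $j$, and in particular the coefficient at position $n$ is non-negative. The key claim is that this coefficient equals $g_n$ for this specific choice of sequence; granting it, we get $g_n\ge 0$ for all $n$, and Theorem~\ref{mainextmain} yields CP.

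For the necessity direction, I would use Theorem~\ref{mainextmain}(v) to write $\ell = g s$ and $k = (1-h)s$ with $g,h$ positive diagonal holomorphic kernels and $s = 1/(1-\mt)$, so that $\ell/k = g/(1-h)$ is a product of power series with non-negative coefficients, hence $\ell/k\succeq 0$. Iteratively, the hope is to show by induction on $j$ that the algorithm of Lemma~\ref{l:someare0} (applied to $(k,\ell)$ with $S$ chosen as the complement of $\{m_0,\ldots,m_j\}$ in an appropriate finite range) produces entries that, after re-indexing, are exactly the non-negative coefficients of $\ell_{(m_0,\ldots,m_j)}/k$. Since $(k,\ell)$ is CC by Theorem~\ref{directpassage}, Lemma~\ref{l:someare0} then guarantees all these entries are non-negative.

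The main obstacle in both directions is the algebraic identity that links the iterated power-series quotient $\ell_{(m_0,\ldots,m_j)}/k$ with the recursive quantities produced by the algorithm of Lemma~\ref{l:someare0}. The two recursions look quite different --- one performs standard formal-power-series division (which mixes $\ell$ with the Bezout-type inverse of $k$), while the other is a direct Schur-style elimination in the raw coefficients $\ell_a$ and $k_b$ --- yet they must encode the same positivity information when the sequence $(m_0,m_1,\ldots)$ is chosen compatibly with the support of $\mt$. I expect to establish this identity by an induction paralleling the block of computations \eqref{neceq1}--\eqref{neceq8} in the proof of Theorem~\ref{almostCCPtoShim}; once it is in hand, the rest of the argument is essentially assembly.
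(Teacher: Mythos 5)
Your overall strategy is the paper's own: reduce to the coefficient criterion $g_n=\ell_n-\sum_{0<u\le n}\mt_u\ell_{n-u}\ge 0$ supplied by Theorem~\ref{mainextmain}, and tie the coefficients of the iterated quotients $\ell_{(m_0,\dots,m_j)}/k$ to the elimination scheme of Lemma~\ref{l:someare0}. However, there are two problems. First, the combinatorial matching you propose is reversed, and with it your ``key claim'' is false as stated. Unwinding the definitions, performing the subtraction at index $m$ in the construction of $\ell_{(m_0,\dots,m_j)}$ corresponds to \emph{zeroing} $v_m$ in Lemma~\ref{l:someare0}; that is, the zero set $S$ must be $\{m_0,\dots,m_j\}$ itself, and for sufficiency the sequence must enumerate $\{a<n:\mt_{n-a}=0\}$, not its complement. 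Concretely, in one variable with $n=2$ and $\mt_2=0$ (so $g_2=\ell_2-k_1\ell_1$), your prescription forces $m_0=1$, and then $\bigl(\ell_{(1)}/k\bigr)_2=\ell_2-k_2$, which is not $g_2$ and can be non-negative while $g_2<0$ (take $k$ the Szeg\H{o} kernel, $\ell_1=10$, $\ell_2=5$); the correct choice $m_0=0$ gives $\bigl(\ell_{(0)}/k\bigr)_2=\ell_2-k_1\ell_1=g_2$. The same reversal infects your necessity direction: there the set $S$ fed to Lemma~\ref{l:someare0} must again be $\{m_0,\dots,m_j\}$ (in the appropriate range), since only for that choice do the recursively produced quantities coincide with the quotient coefficients.

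Second, even after this swap, what you call ``the main obstacle'' --- the identity relating $\bigl(\ell_{(m_0,\dots,m_j)}/k\bigr)_n$ to $\ell_n-\sum_{m<n}v_mk_{n-m}$ for the matched zero pattern --- is not a routine afterthought: it is essentially the entire content of the paper's proof (equation~\eqref{coeff}, established by the lengthy double induction that occupies most of Section~\ref{reform}), and for sufficiency one must additionally invoke the identity~\eqref{interr} from the proof of Theorem~\ref{almostCCPtoShim} to recognize $\ell_n-\sum_{m<n}v_mk_{n-m}$ as $g_n$ when the zero pattern is the one adapted to $\mt$. Asserting that this should follow by an induction ``paralleling \eqref{neceq1}--\eqref{neceq8}'' does not discharge it. As it stands, your proposal is a plan whose central computation is missing and whose stated form of that computation is incorrect; with the set/complement error fixed and the identity actually proved, it would reproduce the paper's argument rather than offer an alternative to it.
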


\begin{proof}
Define the sequence $\{t_n\}_{n\ge 1}$ as follows: choose either $t_1=0$ or $t_1=k_1$ and then define, inductively,
\begin{equation}\label{maybeneg}
\text{ either}\hspace*{0.2 cm} t_n=k_n-\sum_{j=1}^{n-1}t_j k_{n-j} \hspace*{0.2 cm} \text{ or} \hspace*{0.2 cm} t_n=0.    
\end{equation}
Thus, we have a family of possible sequences here, one of which is the ``distinguished" one originating from the master certificate $\mt$, as in Definition~\ref{mastercert}. In the general case, we might have negative $t_i$'s too. 
\par 
Now, fix such a sequence $\{t_j\}_{j\ge 1}^{n+1}$, where $n\ge 1$. Set $t_0=1$ and define $v_m$ for $0\le m\le n+1$ inductively as follows. First, set
$$v_0= \begin{cases}
    0, \hspace*{1 cm} \text{ if }t_{n+1}=0 \\ 
    1, \hspace*{1 cm} \text{ if }t_{n+1}\neq 0 
\end{cases} $$
and then, for $1\le m\le n+1,$ put

$$v_m= \begin{cases}
    0, \hspace*{4 cm} \text{ if }t_{n+1-m}=0 \\ 
    \ell_m -\sum_{j=0}^{m-1}v_jk_{m-j}, \hspace*{1.2 cm} \text{ if }t_{n+1-m}\neq 0. 
\end{cases} $$
Finally, we also set 
$$a_m= \begin{cases}
    0, \hspace*{1 cm} \text{ if }t_{n+1-m}=0 \\ 
    1, \hspace*{1 cm} \text{ if }t_{n+1-m}\neq 0 
\end{cases} $$
and $\tilde{a}_m=1-a_m$.
Thus, 
$$v_m=a_m\bigg(\ell_m-\sum_{j=0}^{m-1}v_jk_{m-j} \bigg).$$
An application of Lemma~\ref{l:someare0} (with $d=n+1$ and $S=\{0\le k\le n+1 : t_{n+1-k}=0\}$) then yields $v_0, v_1, \dots, v_{n+1}\ge 0$ and, in particular, 

\begin{equation}\label{fund}
    v_{n+1}=\ell_{n+1}-\sum_{m=0}^nv_m k_{n+1-m}\ge 0.
\end{equation}
We now establish new notation. Set 
$$\ell_{[0]}=\ell -\tilde{a}_0\bigg(\frac{\ell}{k}\bigg)_0 $$
and then, inductively,
\begin{equation}\label{newnot}
 \ell_{[0, 1, \dots, m]}(z, w)=\ell_{[0, 1, \dots, m-1]}(z, w)-\tilde{a}_m \bigg(\frac{\ell_{[0, 1, \dots, m-1]}}{k}\bigg)_m (z\overline{w})^m,    
\end{equation}
for all $0\le m\le n$ (remember that $n$ is fixed). Our goal will be to show that 
\begin{equation}\label{coeff}
 \bigg(\frac{\ell_{[0, 1, \dots, n]}}{k}\bigg)_{n+1}=\ell_{n+1}-\sum_{m=0}^nv_m k_{n+1-m}.  
\end{equation}

\par 
We proceed with complete induction. First, take $n=0$. If $t_1=0,$ we have 
$$\bigg(\frac{\ell_{[0]}}{k}\bigg)_1=\bigg(\frac{\ell-\big(\frac{\ell}{k}\big)_0}{k}\bigg)_1=\bigg(\frac{\ell-1}{k}\bigg)_1=\ell_1=\ell_1-v_0k_1.$$
Similarly, if $t_1\neq 0$, we have 
$$\bigg(\frac{\ell_{[0]}}{k}\bigg)_1=\bigg(\frac{\ell}{k}\bigg)_1=\ell_1-k_1=\ell_1-v_0k_1,$$
as desired. Now, for the inductive step, assume we have showed
\begin{equation}\label{indstep}
\bigg(\frac{\ell_{[0, 1, \dots, q]}}{k}\bigg)_{q+1}=\ell_{q+1}-\sum_{m=0}^qv_m k_{q+1-m}, \hspace*{0.2 cm} \forall\text{ } 0\le q\le \rho,
\end{equation}
Setting \[\frac{1}{k(z, w)}=1+\sum_{j\ge 1}b_j (z\overline{w})^j,\] we  compute 
\begin{align*}
 \bigg(\frac{\ell_{[0, 1, \dots, \rho+1]}}{k}\bigg)_{\rho+2}=&\Bigg(\frac{\ell_{[0, 1, \dots, \rho]}-\tilde{a}_{\rho+1}\Big(\cfrac{\ell_{[0, 1, \dots, \rho]}}{k} \Big)_{\rho+1}(z\overline{w})^{\rho+1}}{k} \Bigg)_{\rho+2} \\ 
=&\big(\ell_{[0, 1, \dots, \rho]}\big)_{\rho+2}+b_1\bigg(\big(\ell_{[0, 1, \dots, \rho]}\big)_{\rho+1}-\tilde{a}_{\rho+1}\bigg(\cfrac{\ell_{[0, 1, \dots, \rho]}}{k} \bigg)_{\rho+1} \bigg) \\ &+b_2\big(\ell_{[0, 1, \dots, \rho]}\big)_{\rho}+b_3\big(\ell_{[0, 1, \dots, \rho]}\big)_{\rho-1}+\dots +b_{\rho+2}\big(\ell_{[0, 1, \dots, \rho]}\big)_{0} \\
    =&\big(\ell_{[0, 1, \dots, \rho]}\big)_{\rho+2}+b_1\bigg(\big(\ell_{[0, 1, \dots, \rho]}\big)_{\rho+1}-\tilde{a}_{\rho+1}\bigg(\cfrac{\ell_{[0, 1, \dots, \rho]}}{k} \bigg)_{\rho+1} \bigg)\\ 
        &+b_2\bigg(\big(\ell_{[0, 1, \dots, \rho-1]}\big)_{\rho}-\tilde{a}_{\rho}\bigg(\cfrac{\ell_{[0, 1, \dots, \rho-1]}}{k} \bigg)_{\rho} \bigg)\\
    &+b_3\big(\ell_{[0, 1, \dots, \rho-1]}\big)_{\rho-1}+\dots+b_{\rho+1}\big(\ell_{[0, 1]}\big)_{1}+b_{\rho+2}\big(\ell_{[0]}\big)_{0}     \\
=&\big(\ell_{[0, 1, \dots, \rho]}\big)_{\rho+2}+b_1\bigg(\big(\ell_{[0, 1, \dots, \rho]}\big)_{\rho+1}-\tilde{a}_{\rho+1}\bigg(\cfrac{\ell_{[0, 1, \dots, \rho]}}{k} \bigg)_{\rho+1} \bigg)\\
&+b_2\bigg(\big(\ell_{[0, 1, \dots, \rho-1]}\big)_{\rho}-\tilde{a}_{\rho}\bigg(\cfrac{\ell_{[0, 1, \dots, \rho-1]}}{k} \bigg)_{\rho} \bigg)\\
    &+b_3\bigg(\big(\ell_{[0, 1, \dots, \rho-2]}\big)_{\rho-1}-\tilde{a}_{\rho-1}\bigg(\cfrac{\ell_{[0, 1, \dots, \rho-2]}}{k} \bigg)_{\rho-1} \bigg)\\
    &+\dots
    +b_{\rho+1}\bigg(\big(\ell_{[0]}\big)_{1}-\tilde{a}_{1}\bigg(\cfrac{\ell_{[0]}}{k} \bigg)_{1} \bigg) 
    +b_{\rho+2}(1-\tilde{a}_0)\\
=&\ell_{\rho+2}+b_1\bigg(\ell_{\rho+1}-\tilde{a}_{\rho+1}\bigg(\ell_{\rho+1}-\sum_{m=0}^{\rho} v_m k_{\rho+1-m}\bigg) \bigg)\\ 
&+b_2\bigg(\ell_{\rho}-\tilde{a}_{\rho}\bigg(\ell_{\rho}-\sum_{m=0}^{\rho-1} v_m k_{\rho-m} \bigg)  \bigg)\\
    &+b_3\bigg(\ell_{\rho-1}-\tilde{a}_{\rho-1}\bigg(\ell_{\rho-1}-\sum_{m=0}^{\rho-2}v_m k_{\rho-1-m} \bigg)  \bigg)\\
    &+\dots \\
    &+b_{\rho+1}\big(\ell_{1}-\tilde{a}_{1}\big(\ell_1-v_0k_1\big) \big) +b_{\rho+2}v_0.
\end{align*}
\noindent
Since $b_{\rho+2}+k_1b_{\rho+1}+\dots+ b_1k_{\rho+1} +k_{\rho+2}=0,$ this last equality yields
\begin{align*}
 \bigg(\frac{\ell_{[0, 1, \dots, \rho+1]}}{k}\bigg)_{\rho+2}=&\ell_{\rho+2}-v_0k_{\rho+2}+b_1\bigg(\ell_{\rho+1}-v_0k_{\rho+1}-\tilde{a}_{\rho+1}\bigg(\ell_{\rho+1}-\sum_{m=0}^{\rho} v_m k_{\rho+1-m}\bigg) \bigg) \\
&+b_2\bigg(\ell_{\rho}-v_0k_{\rho}-\tilde{a}_{\rho}\bigg(\ell_{\rho}-\sum_{m=0}^{\rho-1} v_m k_{\rho-m} \bigg)  \bigg) \\
&+b_3\bigg(\ell_{\rho-1}-v_0k_{\rho-1}-\tilde{a}_{\rho-1}\bigg(\ell_{\rho-1}-\sum_{m=0}^{\rho-2}v_m k_{\rho-1-m} \bigg)  \bigg) \\
&+\dots \\ 
 &+b_{\rho+1}\big(\ell_{1}-v_0k_1-\tilde{a}_{1}\big(\ell_1-v_0k_1\big) \big).
\end{align*}
\noindent
Thus,
\begin{align*}
\bigg(\frac{\ell_{[0, 1, \dots, \rho+1]}}{k}\bigg)_{\rho+2}=&\ell_{\rho+2}-v_0k_{\rho+2}+b_1\bigg(a_{\rho+1}\big(\ell_{\rho+1}-v_0k_{\rho+1}\big)+\tilde{a}_{\rho+1}\sum_{m=1}^{\rho} v_m k_{\rho+1-m}  \bigg) \\
&+b_2\bigg(a_{\rho}\big(\ell_{\rho}-v_0k_{\rho}\big)+\tilde{a}_{\rho}\sum_{m=1}^{\rho-1} v_m k_{\rho-m}   \bigg) \\
&+\dots+b_{\rho+1}v_1.    
\end{align*}
\noindent
Proceeding by induction, assume we have showed for $0\le t\le \rho-1,$
\begin{align*}
\bigg(\frac{\ell_{[0, 1, \dots, \rho+1]}}{k}\bigg)_{\rho+2}=&\ell_{\rho+2}-\sum_{m=0}^tv_mk_{\rho+2-m}  \\
 &+b_1\bigg(a_{\rho+1}\bigg(\ell_{\rho+1}-\sum_{m=0}^t v_m k_{\rho+1-m}\bigg)+\tilde{a}_{\rho+1}\sum_{m=t+1}^{\rho} v_m k_{\rho+1-m}  \bigg) \\
&+b_2\bigg(a_{\rho}\bigg(\ell_{\rho}-\sum_{m=0}^t v_m k_{\rho-m}\bigg)+\tilde{a}_{\rho}\sum_{m=t+1}^{\rho-1} v_m k_{\rho-m}   \bigg) \\
    &+\dots \\
 &+b_{\rho-t}\bigg(a_{t+2}\bigg(\ell_{t+2}-\sum_{m=0}^t v_m k_{t+2-m}\bigg)+\tilde{a}_{t+2}v_{t+1}k_1  \bigg) \\
 &+b_{\rho+1-t}v_{t+1}.    
\end{align*}
\noindent
Since $b_{\rho+1-t}+k_1b_{\rho-t}+\dots+k_{\rho+1-t}=0,$ this last equality allows us to write 

\begin{align*}
&\bigg(\frac{\ell_{[0, 1, \dots, \rho+1]}}{k}\bigg)_{\rho+2} \\ 
 =&\ell_{\rho+2}-\sum_{m=0}^{t+1}v_mk_{\rho+2-m} \\
 &+b_1\bigg(a_{\rho+1}\bigg(\ell_{\rho+1}-\sum_{m=0}^{t} v_m k_{\rho+1-m}\bigg)-v_{t+1}k_{\rho-t}+\tilde{a}_{\rho+1}\sum_{m=t+1}^{\rho} v_m k_{\rho+1-m}  \bigg) \\
&+b_2\bigg(a_{\rho}\bigg(\ell_{\rho}-\sum_{m=0}^t v_m k_{\rho-m}\bigg)-v_{t+1}k_{\rho-1-t}+\tilde{a}_{\rho}\sum_{m=t+1}^{\rho-1} v_m k_{\rho-m}   \bigg) \\
    &+\dots \\
 &+b_{\rho-t}\bigg(a_{t+2}\bigg(\ell_{t+2}-\sum_{m=0}^t v_m k_{t+2-m}\bigg)-v_{t+1}k_1+\tilde{a}_{t+2}v_{t+1}k_1  \bigg)\\
 =&\ell_{\rho+2}-\sum_{m=0}^{t+1}v_mk_{\rho+2-m}\\
 &+b_1\bigg(a_{\rho+1}\bigg(\ell_{\rho+1}-\sum_{m=0}^{t+1} v_m k_{\rho+1-m}\bigg)+\tilde{a}_{\rho+1}\sum_{m=t+2}^{\rho} v_m k_{\rho+1-m}  \bigg) \\
&+b_2\bigg(a_{\rho}\bigg(\ell_{\rho}-\sum_{m=0}^{t+1} v_m k_{\rho-m}\bigg)+\tilde{a}_{\rho}\sum_{m=t+2}^{\rho-1} v_m k_{\rho-m}   \bigg)\\
    &+\dots \\
 &+b_{\rho-t}v_{t+2}.   
\end{align*}
\noindent
 Setting $t=\rho-1$ yields 
\begin{align*}
 \bigg(\frac{\ell_{[0, 1, \dots, \rho+1]}}{k}\bigg)_{\rho+2} 
&=\ell_{\rho+2}-\sum_{m=0}^{\rho}v_mk_{\rho+2-m}
 +b_1v_{\rho+1} \\
 &=\ell_{\rho+2}-\sum_{m=0}^{\rho}v_mk_{\rho+2-m}
 -k_1v_{\rho+1} \\
 &=\ell_{\rho+2}-\sum_{m=0}^{\rho+1}v_mk_{\rho+2-m},\end{align*}
 which is \eqref{indstep} for $q=\rho+1.$ Thus, \eqref{indstep} holds for all $0\le q\le n.$ Setting $q=n$ then yields \eqref{coeff}, which, in view of \eqref{fund}, implies 
 \begin{equation}\label{runout}
 \bigg(\frac{\ell_{[0, 1, \dots, n]}}{k}\bigg)_{n+1}\ge 0,
     \end{equation}
for all $n\ge 0.$ \par 
We are now ready to conclude our proof. Let $0\le m_0<m_1<\dots<m_j$ and fix $n\ge 0.$ We want to show 
\begin{equation*}  
    \bigg(\frac{\ell_{(m_0, \dots, m_j)}}{k}\bigg)_n\ge 0.
\end{equation*}
If $n<m_0,$ then 
\[\bigg(\frac{\ell_{(m_0, \dots, m_j)}}{k}\bigg)_n= \bigg(\frac{\ell}{k} \bigg)_n\ge 0.\]
Otherwise, let $m_i$ denote the largest integer in $\{m_0, \dots, m_j\}$ such that $m_i\le n.$ Since 
\[   
 \bigg(\frac{\ell_{(m_0, \dots, m_j)}}{k}\bigg)_n=\bigg(\frac{\ell_{(m_0, \dots, m_i)}}{k}\bigg)_n,
\]
it suffices to show 
\begin{equation} \label{secgoal}
    \bigg(\frac{\ell_{(m_0, \dots, m_i)}}{k}\bigg)_n\ge 0.
\end{equation}
If $m_i=n,$ one obtains 
\[\bigg(\frac{\ell_{(m_0, \dots, n)}}{k}\bigg)_n=\bigg(\frac{\ell_{(m_0, \dots, m_{i-1})}}{k} -
\frac{1}{k}\bigg(\frac{\ell_{(m_0, \dots, m_{i-1})}}{k}\bigg)_n(z\overline{w})^n\bigg)_n=0,
\]
so \eqref{secgoal} is satisfied. Next, assume $m_i\le n-1.$  Define the sequence $\{t_{\rho}\}^n_{\rho=1}$ as in \eqref{maybeneg} and in such a way that $\tilde{a}_{\rho}=1$ if $\rho\in\{m_0, \dots, m_i\}$ and $0$ otherwise. In particular, we set \[t_{n-\rho}=k_{n-\rho}-\sum_{r=1}^{n-\rho-1}t_rk_{n-\rho-r}\]
if $\rho\notin\{m_0, \dots, m_j\}$ and $t_{n-\rho}=0$ otherwise. This way, we obtain  (using notation as in \eqref{newnot})
\[\frac{\ell_{(m_0, \dots, m_i)}}{k}=\frac{\ell_{[0, 1, \dots, n-1]}}{k}.\]
Equation~\eqref{runout} now yields
\[\bigg(\frac{\ell_{(m_0, \dots, m_i)}}{k}\bigg)_{n}=\bigg(\frac{\ell_{[0, 1, \dots, n-1]}}{k}\bigg)_n\ge 0,\]
as desired.
\end{proof}

\begin{remark} \label{remark81}
Assume $(k, \ell)$ is a normalized diagonal holomorphic CP pair. Choosing $m_k=k$, for all $k$, in Theorem~\ref{sometheorem}, one obtains 
    \begin{equation}\label{nicc}
        \frac{\ell_{[0, 1, \dots, j]}}{k}=\frac{\ell-\sum_{k=0}^j\ell_k (z\overline{w})^k}{k}\succeq 0,
    \end{equation}
    for all $j\ge 0.$ Since \eqref{nicc} is  of the form $\ell_M/k\succeq 0,$ where $M$ is the $\Mult(\mathcal{H}_{\ell})$-invariant subspace of $\mathcal{H}_{\ell}$ that is determined by the vanishing of the first $j$ derivatives, we actually have the stronger conclusion that \begin{equation} \label{pair}
      \big(k, \ell-\sum_{k=0}^j\ell_k (z\overline{w})^k\big)  
    \end{equation}
    is a CP pair for every $j\ge 0.$ Indeed, letting $s$ denote any strong Shimorin certificate for $(k, \ell)$, Lemma~\ref{invfactors} tells us that $s$ is a strong Shimorin certificate for \eqref{pair} as well. Note that \eqref{pair} being CP implies \eqref{nicc} because of Theorem~\ref{Shimnec}.
\end{remark}
\begin{remark}  \label{remark82}
Unfortunately, the conclusion of Remark~\ref{remark81} no longer holds for more general conditions of the form
\[\frac{\ell_{(m_0, \dots, m_j)}}{k}\succeq 0. \]
Indeed, consider the Bergman kernel $\mathfrak{b}$ and set $\ell(z, w)=(1-\mt(z\overline{w}))^{-1}=(1-2z\overline{w})^{-1}$. Since $(\mathfrak{b}, \ell)$ is a CP pair,  Theorem~\ref{sometheorem} tells us that \[\frac{\ell_{(2)}}{\mathfrak{b}}\succeq 0,\] where  
\begin{align*}
\ell_{(2)}(z, w)&=\ell(z, w)-\bigg(\frac{\ell}{\mathfrak{b}}\bigg)_2(z\overline{w})^2 \\ 
&=\ell(z, w)-(\ell_2-\ell_1\mathfrak{b}_1+(\mathfrak{b}^2_1-\mathfrak{b}_2))(z\overline{w})^2 \\
&=1+2(z\overline{w})+3(z\overline{w})^2+\dots.
\end{align*}
    However, $(\mathfrak{b}, \ell_{(2)})$ is not a CP pair; if it were, then we would have 
    $$\frac{ \ell_{(2)}-1}{\mathfrak{b}}\succeq 0,$$
    which is easily seen to be false. 
\end{remark}
\begin{remark} \label{remark83}
It is important that the sequence $m_0, m_1, m_2, \dots$ be strictly increasing. Indeed, choosing $k=\mathfrak{b}$ and $\ell$ as in Remark~\ref{remark82}, we have:
  \[\ell_{(2, 0)}=\ell_{(2)}-\bigg(\frac{\ell_{(2)}}{\mathfrak{b}}\bigg)_0=2(z\overline{w})+3(z\overline{w})^2+\dots,\] 
  but $\frac{\ell_{(2, 0)}}{\mathfrak{b}}$ is not positive.
\end{remark}

\normalsize
\section{Open Questions} \label{openq}
\normalsize
We have seen that if a pair $(k, \ell)$ possesses a Shimorin certificate, then it is a CP pair (Theorem~\ref{generalsuff}). Does the converse hold?
\begin{question} \label{q1}
Let $(k, \ell)$ be a CP pair of kernels on a set $X.$ Does there exist a Shimorin certificate for $(k, \ell)$?
\qed \end{question}
Specializing to holomorphic pairs over connected domains, Question~\ref{q1} becomes (in view of Proposition~\ref{Scertforhol}) the following.
\begin{question} \label{q25}
Let $(k, \ell)$ be a CP pair of holomorphic  kernels on a connected domain $\Omega.$ Does there exist a strong Shimorin certificate for $(k, \ell)$?
\qed \end{question}
Instead of tackling Questions \ref{q1}-\ref{q25} directly, it might be easier to first attempt to address certain consequences of the existence of Shimorin certificates. For instance, if the pair $(k, \ell)$ has a strong Shimorin certificate, the same holds for $(k, \ell_M),$ for any multiplier-invariant subspace $M$ of $\mathcal{H}_{\ell}$ (recall that $\ell_M$ denotes the reproducing kernel of $M$). As such, we may ask:
\begin{question}\label{extraque}
Let $(k, \ell)$ be a CP pair of kernels on $X$. Given a $\text{Mult}(\mathcal{H}_{\ell})$-invariant subspace $M$ of $\mathcal{H}_{\ell}$, is it necessary that $(k, \ell_M)$ is also CP? 
\qed  \end{question}
\noindent One could also observe that, if the pair $(k, \ell)$ has a (strong) Shimorin certificate, the same must be true for $(k, \ell h)$, for any kernel $h$, which leads to:
\begin{question} \label{q2}
Let $(k, \ell)$ be a CP pair of kernels on $X$. Given an arbitrary kernel $h$ on $X$, is it necessary that $(k, \ell h)$ is also CP? 
\qed \end{question}

\begin{question}
\label{q45}
Let $(k, \ell, \Omega)$ be a CP pair of holomorphic kernels with $\mathbf{0}\in\Omega$ and such that $k$ is also (normalized) diagonal. 
Let $\mt$ denote the master certificate associated with $k.$ Can $\Omega$ contain the closure of $\Omega_\mt^1?$  
\qed
\end{question} If no restriction is placed on $\ell$ (e.g. analyticity), then $\Omega$ can be anything; see Example \ref{anykernel}. We also point out that it is possible to have $\Omega\not\subset \Omega_\mt^1$ in the above setting, see Remark \ref{non-inclusion}.  However, we believe that the answer to the first (and thus also to the second) part of Question~\ref{q45} is no.  This is indeed the case when $k$ is Bergman-like (as in Examples \ref{finallyanex}, \ref{finallyanex2}); see Remark \ref{verygen}. The main obstacle in generalizing the proof of Theorem \ref{t:max:disc} to arbitrary (holomorphic) pairs $(k, \ell)$ lies in the fact that the necessary conditions of Lemma \ref{lemma2} will not, in general, be sufficient for the CP property, even if both kernels are diagonal (Proposition \ref{necbutnotsuf}).

We can also formulate a version of Question \ref{q45} where the assumption of the CP property is replaced by the (a priori stronger) assumption of the existence of a Shimorin certificate. Our motivation stems from the observation that all non-diagonal certificates associated with the Bergman kernel in Section \ref{closerl} have domains that do not contain $\Omega^1_{\mt}=\frac{1}{\sqrt{2}}\mathbb{D}$; see Remark \ref{non-inclusion} and Example \ref{lastex}. 
\begin{question}
\label{q:475}
 Suppose $k$ is a normalized diagonal holomorphic kernel 
 with master certificate $\mt.$ If $k=(1-h)s$ for a kernel $h$ and a holomorphic CP kernel $s$ on $\mathbf{0}\in\Omega$ with $\Omega^1_{\mt}\subseteq \Omega$ that is also normalized at $\mathbf{0}$, must $s$ be diagonal (in which case $s=(1-\mt)^{-1}$, by definition of $\mt$, and $\Omega\equiv\Omega^1_{\mt}$)?  What if $s$ is not holomorphic? 
 \qed
\end{question}  A pair $(k,\ell)$ of non-vanishing holomorphic kernels on a domain $\mathbf{0}\in \Omega$
  has a Shimorin certificate if and only if there is a kernel $p$ such that 
   $\ell^{\mathbf{0}} \succeq p\, \ell$ and $k^{\mathbf{0}}\preceq p\, k$, by Proposition~\ref{betweencerts} and Proposition~\ref{Scertforhol}.    Motivated by this, we offer the following definition: given a (non-vanishing) holomorphic kernel $k$ and another kernel $p$ on $\Omega,$ we will say that $p$ is a \textit{pre-certificate for $k$} if $k^{\mathbf{0}}\preceq p\, k$. Observe that if $p, q$ are both pre-certificates for $k$ with  $q\preceq p$ and $p$ is also a Shimorin certificate for $(k, \ell)$, then $q$ has to be a Shimorin certificate as well. We will say that the pre-certificate $p$ for $k$ is \textit{minimal} if, whenever  $q$ is a pre-certificate with $q\preceq p,$ then $q=p$.     
In the case $k=\mathfrak{b}$ is the Bergman kernel, the kernels $p_\lambda=g_\lambda(z)\overline{g_\lambda(w)}$ are minimal pre-certificates, where we set $p_0(z, w)=2z\overline{w}$.  Indeed, if $q$ is any pre-certificate for $\mathfrak{b}$ satisfying $q\preceq p_\lambda$ for some $\lambda$, then $p_{\lambda}$ being rank $1$ implies the existence of $0\le c\le1$ such that $q=cp_{\lambda}$. It is easy to see that $2c^2z\overline{w}=c^2p_0(z, w)$ cannot satisfy $\mathfrak{b}=(1-h)/(1-c^2p_0)$ if $c<1$. Also, for $\lambda\neq 0$ and $c<1$, we have
\[(c^2p_{\lambda}(\lambda, \lambda)-1)\mathfrak{b}(\lambda, \lambda)+1<0,\]
 and minimality has been established. 
   
 \begin{question}
\label{q:6}
Do there exist minimal pre-certificates for the Bergman kernel $\mathfrak{b}$ that are not of the form $p_\lambda=g_\lambda(z)\overline{g_\lambda(w)}$?
  \end{question}  
   \begin{question}
\label{q:65}
 As a variation on Question~\ref{q:6} and still with $\mathfrak{b}$ the Bergman kernel, if $\ell$ is holomorphic and $(\mathfrak{b},\ell)$ has a strong Shimorin certificate, then does it have a rank one certificate; that is,  a certificate  of the form $\frac{1}{1-g(z)\overline{g(w)}}$ for a function $g:\Omega\to\mathbb{C}$? 
 \qed
\end{question}
There is a competing notion of minimal certificate. Given holomorphic kernels $k$ and $s$ on $\mathbf{0}\in\Omega$  with $s$ CP, we could define $s$ to be a minimal certificate for $k$ over $\Omega$ if $k=(1-h)s$ for some kernel $h$ and also the following is satisfied: whenever $\tilde{s}$ is a holomorphic CP kernel on $\Omega$ satisfying $k=(1-\tilde{h})\tilde{s}$ for some kernel $\tilde{h}$ and also  $s/\tilde{s}\succeq 0$, we have that $s$ and $\tilde{s}$ are equal up to a dyad. We point out that, given any sub-domain $\mathbf{0}\in\Omega'\subset\Omega$, $s$ is a minimal certificate for $k$ over $\Omega'$ if and only if it is one over $\Omega.$ This can be proved using standard reproducing kernel arguments; see e.g. the proof of the inclusion $\Omega^1_{\mt}\subset \Omega_k$ in Proposition \ref{l:tlg}.

\begin{question} \label{q:7}
Are the certificates $(1-g_{\lambda}(z)\overline{g_{\lambda}(w)})^{-1}$ (where $|\lambda|<1$) minimal for $\mathfrak{b}$ over $\frac{1}{3}\DD$ with respect to this competing definition of minimality?
\end{question}
So, we can find minimal certificates, but what about minimum ones? Given a kernel $k$ and a CP kernel $s$ on a set $X$ with $k=(1-h)s$ over $X,$ we will say that $s$ is a \textit{minimum} certificate for $k$ over $X$ if, whenever $\tilde{s}$ is another CP kernel on $X$ with $k=(1-\tilde{h})\tilde{s}$, then $\tilde{s}/s\succeq 0.$  It is not hard to see that such a certificate will be unique up to a dyad. Now, if $k$ is CP, then, for any kernel $\ell,$ $(k, \ell)$ is CP if and only if $\ell/k\succeq 0$ (this follows from Theorems \ref{mainShim} and \ref{Shimnec}). Thus, $k$ is a minimum certificate for itself, and this continues to hold over any sub-domain $X'\subset X.$ This motivates the following question.
\begin{question} \label{strongcP}
 Let $k$ be a kernel on a non-empty set $X$. Suppose there exists a CP kernel $s$ on $X$ with $k=(1-h)s$ and such that, for any non-empty $X'\subset X,$ the kernel $s|_{X'\times X'}$ is a minimum certificate for $k|_{X'\times X'}$. Does that imply that $k$ is CP (in which case $k=s$ up to a dyad)?
 \qed
\end{question}
Now, Corollary \ref{canoncorol} tells us that no minimum certificate exists for $k=\mathfrak{b}$ over $\frac{1}{3}\mathbb{D}.$ What if we insist on a larger domain?
\begin{question} \label{num}
Does a minimum certificate exist for $\mathfrak{b}$ over $\Omega^1_{\mt}=\frac{1}{\sqrt{2}}\DD$?  
\qed
\end{question}

\par\textit{Acknowledgements}. The second author is grateful to his advisor, John M\raise.5ex\hbox{c}Carthy, for his advice and support. He would also like to thank Michael Hartz and Greg Knese for helpful discussions.

\newpage 
\appendix 
\section{One-point Pick extensions} \label{1stap}

\begin{proof}[Proof of Lemma~\ref{boringonepoint}]

\label{a:boringponepoint}
First, define three projection operators $P, L\in\mathcal{B}(\mathcal{M}_{n+1}^{\ell})$ and  $Q\in\mathcal{B}(\mathcal{M}^k_{n+1})$ that satisfy
\begin{equation*}
  \begin{split}
  \text{ran} P &=\mathcal{M}^{\ell}_{n} \\
 \text{ran} Q &=\{{k_{z_{n+1}}}\}^{\perp} \\
 \text{ran} L &=\{{\ell_{z_{n+1}}}\}^{\perp}
  \end{split}  
\end{equation*}
 and decompose $R_W$ as 
 \begin{equation} \label{RWdecomp}
\begin{blockarray}{ccc}
 & P & P^{\perp} \\ 
\begin{block}{c(cc)}
Q & QR_WP & QR_WP^{\perp} \\
Q^{\perp} & Q^{\perp}R_WP & Q^{\perp} R_WP^{\perp} \\ 
\end{block}
\end{blockarray}\ .
 \end{equation}
Further, let $\{\xi_i\}$ be a dual basis to $\{\ell_{z_i}\}$, so
\[\langle \ell_{z_i}, \xi_j\rangle =\delta_{ij}, \hspace{0.4 cm} i, j=1,\dots,n+1.\]
Write $\xi_{n+1}=\sum_{i=1}^{n+1}c_i\ell_{z_i}.$ Then,
\[R_WP^{\perp}\big(\xi_{n+1}\otimes u^{\alpha}\big)=R_W\big(\xi_{n+1}\otimes u^{\alpha}\big)=\sum_{i=1}^{n}c_ik_{z_i}\otimes W_i^*u^{\alpha}+c_{n+1}k_{z_{n+1}}\otimes W^*u^{\alpha},\]
so 
\begin{equation} \label{QperpRPperp}
Q^{\perp}R_WP^{\perp}\big(\xi_{n+1}\otimes u^{\alpha}\big)=k_{z_{n+1}}\otimes \bigg[\sum_{i=1}^nc_i\frac{k(z_{n+1}, z_i)}{k(z_{n+1}, z_{n+1})}W_i^*+c_{n+1}W^*\bigg]u^{\alpha}.
\end{equation}
Now, observe that, since $\ell_{z_1}, \dots, \ell_{z_{n+1}}$ are linearly independent, $c_{n+1}$ must be non-zero. As we are free to choose $W,$ this means that the expression in brackets in \eqref{QperpRPperp} can be made equal to whatever we want. Thus, the $(2, 2)$ entry of \eqref{RWdecomp} can be chosen arbitrarily. \par 
Further, we see that 
\[QR_WP^{\perp}\big(\xi_{n+1}\otimes u^{\alpha}\big)=Q\bigg(\sum_{i=1}^nc_ik_{z_i}\otimes W_i^*u^{\alpha}\bigg).\]
Thus $QR_WP^{\perp}$ does not depend on $W.$ Since the same obviously holds for $QR_WP$ and $Q^{\perp}R_WP$, we conclude that the smallest norm of \eqref{RWdecomp} coincides with the smallest norm of a matrix completion of 
\begin{equation} \label{RWcompl}
 \begin{blockarray}{ccc}
 & P & P^{\perp} \\ 
\begin{block}{c(cc)}
Q & QR_WP & QR_WP^{\perp} \\ 
Q^{\perp} & Q^{\perp}R_WP & * \\ 
\end{block}
\end{blockarray}\ .
\end{equation}
By Parrott's Lemma, we obtain
\begin{equation}
 \inf_{W\in M_{N}(\mathbb{C})}\|R_W\|=\max\big\{{\|R_WP\|, \|QR_W\|}\big\}=\max\big\{\|R\|, \|QR_WL\|\big\},   
\end{equation} 
where the last equality holds because $R_WP=R$ and $QR_WL^{\perp}=R_WL^{\perp}-Q^{\perp}R_WL^{\perp}=0,$ thus 
$QR_W=QR_WL.$ \par 
Since we have $\|R\|\le 1$ by assumption, all that remains is to show $\|QR_WL\|\le 1,$ which is equivalent to 
\[L-LR_W^*QQR_WL \succeq 0.\]
Thus, we have to verify that this $nN\times nN$ matrix is positive. We calculate
\[
\begin{split}
\big\langle \big[L&-LR_W^*QQR_WL\big]\ell_{z_j}\otimes u^{\beta}, \ell_{z_i}\otimes u^{\alpha} \big\rangle 
=\Big\langle \Big(\ell_{z_j}-\frac{\langle \ell_{z_j}, \ell_{z_{n+1}}\rangle}{\big|\big|\ell_{z_{n+1}}\big|\big|^2}\ell_{z_{n+1}}\Big)\otimes u^{\beta}, \ell_{z_i}\otimes u^{\alpha}\Big\rangle  \\
&-\Big\langle LR^*_WQQ\Big(k_{z_j}\otimes W^*_ju^{\beta}
-\frac{\langle \ell_{z_j}, \ell_{z_{n+1}}\rangle}{\big|\big|\ell_{z_{n+1}}\big|\big|^2}k_{z_{n+1}} \otimes W^*_{n+1}u^{\beta}\Big), \ell_{z_i}\otimes u^{\alpha}\Big\rangle \\ 
=&\Big\langle \Big(\ell_{z_j}-\frac{\langle \ell_{z_j}, \ell_{z_{n+1}}\rangle}{\big|\big|\ell_{z_{n+1}}\big|\big|^2}\ell_{z_{n+1}}\Big)\otimes u^{\beta}, \ell_{z_i}\otimes u^{\alpha}\Big\rangle  -\Big\langle QQ\Big(k_{z_j}\otimes W^*_ju^{\beta}
\Big), R_WL\big(\ell_{z_i}\otimes u^{\alpha}\big)\Big\rangle \\
=&\Big\langle \Big(\ell_{z_j}-\frac{\langle \ell_{z_j}, \ell_{z_{n+1}}\rangle}{\big|\big|\ell_{z_{n+1}}\big|\big|^2}\ell_{z_{n+1}}\Big)\otimes u^{\beta}, \ell_{z_i}\otimes u^{\alpha}\Big\rangle  \\
&-\Big\langle  QQ\Big(k_{z_j}\otimes W^*_ju^{\beta}
\Big), k_{z_i}\otimes W^*_iu^{\alpha}
-\frac{\langle \ell_{z_i}, \ell_{z_{n+1}}\rangle}{\big|\big|\ell_{z_{n+1}}\big|\big|^2}k_{z_{n+1}} \otimes W^*_{n+1}u^{\alpha}\Big\rangle \\
=&\Big\langle \Big(\ell_{z_j}-\frac{\langle \ell_{z_j}, \ell_{z_{n+1}}\rangle}{\big|\big|\ell_{z_{n+1}}\big|\big|^2}\ell_{z_{n+1}}\Big)\otimes u^{\beta}, \ell_{z_i}\otimes u^{\alpha}\Big\rangle \\ &-\Big\langle  \Big(k_{z_j}-\frac{\langle k_{z_j}, k_{z_{n+1}}\rangle}{\big|\big|k_{z_{n+1}}\big|\big|^2}k_{z_{n+1}}\Big)\otimes W^*_ju^{\beta}, k_{z_i}\otimes W^*_iu^{\alpha}
\Big\rangle \\ 
=&\ \ell^{z_{n+1}}(z_i, z_j)\langle u^{\beta}, u^{\alpha}\rangle-k^{z_{n+1}}(z_i, z_j)\big\langle W_iW^*_ju^{\beta}, u^{\alpha}\big\rangle.
\end{split}
\]
Thus, $L-LR_W^*QQR_WL$ is positive if and only if 
\begin{equation*}  
\big[\ell^{z_{n+1}}(z_i, z_j) I_{N\times N}-k^{z_{n+1}}(z_i, z_j)W_iW_j^* \big]_{i, j=1}^n\succeq 0,
\end{equation*}
as desired.

\end{proof}

\section{More on Shimorin certificates and zero sets}
 Further consequences of  the existence of a Shimorin certificate for $(k,\ell)$ appear in this appendix.

 \begin{remark}
    In Section \ref{ShimnstrShim}, we saw examples of Shimorin certificates with the property that, for any $w, v\in X$ with $w\neq v,$ the sets $X^w_0, X^v_0$ are always disjoint. Unfortunately, this will not be true in general. To see why, observe that, given any two Shimorin certificates $\{p[z]\}$ and $\{q[z]\}$ and an arbitrary decomposition $X=X_1\cup X_2,$ one can always consider the collection $\{p[z]\}_{z\in X_1}\cup\{q[z]\}_{z\in X_2}$, which will be a Shimorin certificate with potential overlap between $X^w_0$ and $X^v_0$ . 
\end{remark} 

The next two lemmas will  shed further light on the relation between $X^w_0, X^w_1$ appearing in equation~\eqref{certdecomp} and $(k, \ell)$ and so might be of independent interest. 
\begin{lemma} \label{noname:1}
Let $w\in X.$ For any $z, v\in X^w_1,$ we either have $\ell(z, v)=0$ or $p[w](z, v)=1.$
\end{lemma}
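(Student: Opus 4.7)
The plan is to recognize Lemma~\ref{noname:1} as essentially a direct corollary of Lemma~\ref{l:zeroslemma}\eqref{i:pz:3}, which has already been established in the paper. The work has been done; all that remains is to apply it to the indexed kernel $p[w]$ from the Shimorin certificate.

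First, I would fix $w \in X$ and recall that, because $\{p[z]\}_{z \in X}$ is a Shimorin certificate for $(k,\ell)$, the kernel $p[w]$ satisfies the positivity condition $\ell^w \succeq p[w]\,\ell.$ This is precisely the hypothesis needed to invoke the first several parts of Lemma~\ref{l:zeroslemma} with $p$ replaced by $p[w]$ and the distinguished point ``$z$'' of that lemma replaced by our $w.$

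Next, pick $z, v \in X^w_1,$ so that by definition $p[w](z,z) = 1$ and $p[w](v,v) = 1.$ Applying Lemma~\ref{l:zeroslemma}\eqref{i:pz:3} with its ``$w$'' taken to be $z$ and its ``$v$'' taken to be $v,$ the condition $p[w](z,z) = 1$ yields the dichotomy
$$\ell(z,v) = 0 \quad \text{or} \quad p[w](v,v) = 1 = p[w](z,v).$$
Since $v \in X^w_1$ automatically gives $p[w](v,v) = 1,$ the second alternative collapses to the single equation $p[w](z,v) = 1,$ finishing the proof.

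The only potential obstacle is bookkeeping: Lemma~\ref{l:zeroslemma} is stated for an arbitrary auxiliary kernel $p,$ and one must be careful to identify its ``$z$'' (the subscript in $\ell^z$) with our $w$ (the index of the Shimorin certificate), while its roving points ``$w, v$'' become our $z, v \in X^w_1.$ Once the notational matching is straightened out, the statement is immediate, and no further computation is required.
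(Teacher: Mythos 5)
Your argument is correct, but it is not the route the paper takes. You recycle Lemma~\ref{l:zeroslemma}\eqref{i:pz:3} directly: with $p=p[w]$ and the distinguished point of that lemma being $w$ (so the hypothesis $\ell^w\succeq p[w]\,\ell$ comes from the definition of a Shimorin certificate), the condition $p[w](z,z)=1$ forces $\ell(z,v)=0$ or $p[w](v,v)=1=p[w](z,v)$, and the second alternative reduces to $p[w](z,v)=1$. The paper instead argues via the restricted matrix: it first invokes Lemma~\ref{ellsplit} (using $w\in X^w_0$) to see that $\ell^w$ and $\ell$ agree on $X^w_1\times X^w_1$, and then observes that the positive matrix $\big(\ell-p[w]\ell\big)\big|_{X^w_1\times X^w_1}$ has vanishing diagonal, hence vanishes identically, giving $\ell(z,v)\big(1-p[w](z,v)\big)=0$. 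The two proofs encode the same $2\times 2$ positivity computation — your version simply reuses the lemma in which that computation was already packaged, which makes it shorter, while the paper's version treats all of $X^w_1$ at once. One small bookkeeping point you should add: Lemma~\ref{l:zeroslemma} is stated for three \emph{distinct} points. Distinctness of your $z,v$ from $w$ is automatic since $w\in X^w_0$ by Lemma~\ref{easy}, but the case $z=v$ must be disposed of separately; it is trivial, since then $p[w](z,v)=p[w](z,z)=1$ by the definition of $X^w_1$.
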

\begin{proof}
Note that, since $w\in X^w_0$, Lemma~\ref{ellsplit} implies that all diagonal entries of \[0\preceq \big(\ell^w-p[w]\ell\big)_{X^w_1\times X^w_1}=\big(\ell-p[w]\ell\big)_{X^w_1\times X^w_1} \] are  equal to zero. Thus, every other entry has to be zero as well, so  \[\ell(z, v)-p[w](z,v)\ell(z, v)=0\] for every $z,v \in X^w_1,$ and the conclusion follows. 
\end{proof}

\begin{lemma}\label{noname:2}
Let $w\in X$, $v\in X^w_1$ and $z\in X^w_0$. If $k(v, z)=0,$ then $\big(p[w](v,u)-1\big)k(v, u)=0$ for all $u\in X, u\neq v.$ In particular, $k(v, u)=0$ for all $u\in X^w_0.$
\end{lemma}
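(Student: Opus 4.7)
The plan is to first upgrade the vanishing hypothesis $k(v,z)=0$ into the stronger statement $k(w,v)=0$, and then invoke Lemma~\ref{l:zeroslemma}~\eqref{i:pz:5} to finish the first assertion, before deducing the ``in particular'' clause via Cauchy--Schwarz for the kernel $p[w]$.

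For the first step, consider the kernel $A := p[w]k - k^w$, which is PsD by the Shimorin certificate condition $k^w \preceq p[w]k$. Using the explicit form $k^w(x,y)=k(x,y)-k(x,w)k(w,y)/k(w,w)$ from Lemma~\ref{l:compress:z}, one has
\[
  A(x,y) \;=\; \bigl(p[w](x,y)-1\bigr)k(x,y) \;+\; \frac{k(x,w)k(w,y)}{k(w,w)}.
\]
Since $v\in X^w_1$ gives $p[w](v,v)=1$, the diagonal entry is $A(v,v)=|k(w,v)|^2/k(w,w)$; since $k(v,z)=0$, the off-diagonal is $A(v,z)=k(v,w)k(w,z)/k(w,w)$. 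The positivity of the $2\times 2$ principal minor of $A$ at $\{v,z\}$ yields $A(v,v)A(z,z)\ge |A(v,z)|^2$. If one assumes $k(v,w)\ne 0$, dividing by $|k(v,w)|^2/k(w,w)$ reduces this to $A(z,z)\ge |k(w,z)|^2/k(w,w)$, i.e.\ $(p[w](z,z)-1)k(z,z)\ge 0$; as $k(z,z)>0$, this forces $p[w](z,z)\ge 1$, contradicting $z\in X^w_0$. Hence $k(w,v)=0$.

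Now that $k(w,v)=0$ is in hand, apply Lemma~\ref{l:zeroslemma}~\eqref{i:pz:5} (with the kernel $p[w]$, base point $w$, and points $v$, $u$): for each $u\ne w,v$, either $k(v,u)=0$, in which case $(p[w](v,u)-1)k(v,u)=0$ trivially, or $p[w](u,v)=1$, hence $p[w](v,u)=\overline{p[w](u,v)}=1$ and the same equality holds. The case $u=w$ follows from $k(v,w)=0$ directly. This establishes the first assertion. For the ``in particular'', if $u\in X^w_0$ then $p[w](u,u)<1$; Cauchy--Schwarz for the PsD kernel $p[w]$ combined with $p[w](v,v)=1$ gives $|p[w](v,u)|^2\le p[w](v,v)\,p[w](u,u)<1$, so $p[w](v,u)\ne 1$, and the first assertion forces $k(v,u)=0$.

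The main obstacle is precisely the first step: extracting $k(w,v)=0$ from the seemingly weaker data $k(v,z)=0$. Everything hinges on recognizing that the normalization $p[w](v,v)=1$ reduces the diagonal entry $A(v,v)$ to an expression entirely in terms of $k(w,v)$, after which the $2\times 2$ positivity of $A$ is strong enough to propagate the vanishing. Once this is accomplished, the rest of the proof is a routine combination of results already proved in the paper.
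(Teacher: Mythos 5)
Your proof is correct, and its crucial step is the same as the paper's: both arguments rest on the $2\times 2$ positivity of the kernel $p[w]k-k^w$ at the pair $\{v,z\}$, exploiting $p[w](v,v)=1$ and $p[w](z,z)<1$ (together with $k(z,z),k(w,w)>0$) to force $k(w,v)=0$; you phrase this via the determinant inequality and a contradiction, while the paper expands the same positivity into an explicit scalar inequality and reads off the equivalence $k(z,v)=0\iff k(w,v)=0$. Where you diverge is in the propagation step: the paper substitutes an arbitrary $u$ for $z$ in its displayed inequality and, using $k(w,v)=0$, concludes $\big(p[w](u,v)-1\big)k(u,v)=0$ directly, whereas you delegate this to the already-proved Lemma~\ref{l:zeroslemma}~\eqref{i:pz:5} applied with base point $w$; this is legitimate (the hypothesis $k^w\preceq p[w]k$ and the newly obtained $k(w,v)=0$ are exactly what that lemma needs, and the excluded case $u=w$ is handled trivially), and it buys a shorter argument that reuses existing machinery at the cost of being less self-contained than the paper's single-inequality computation. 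You also spell out the ``in particular'' clause via Cauchy--Schwarz for $p[w]$, which the paper leaves implicit; that part is fine as well.
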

\begin{proof}
Let $w\in X,$ with $z\neq w$ (not necessarily in $X^w_0$) and $v\in X^w_1$, $v\neq z$. Since $k^w\preceq p[w] k,$ we have
\[\begin{bmatrix}
k^w(z, z) & k^w(z, v) \\ 
k^w(v, z) & k^w(v, v)
\end{bmatrix}  \preceq  \begin{bmatrix}
p[w](z, z)k(z, z) & p[w](z, v)k(z, v) \\ 
p[w](v, z)k(v, z) & k(v, v)
\end{bmatrix},\]
which, in turn, implies 
\[\frac{|k(v, w)|^2}{k(w, w)}k(z, z)\big(p[w](z, z)-1\big)\]
\begin{equation} \label{pw}
     \ge \big|k(z, v)\big(p[w](z, v)-1\big)\big|^2+2\Re\bigg(\big(p[w](z, v)-1\big)\frac{k(z, v)k(w, z)k(v, w)}{k(w, w)}\bigg).
\end{equation}
Assume now that $z\in X^w_0$. From this last inequality, and since $|p[w](z, v)|\le p[w](z, z)<1$, we easily obtain that $k(z, v)=0$ if and only if $k(w, v)=0.$ Thus, if we assume $k(z, v)=0,$ then we may replace $z$ by $u\in X$ (with $u\neq w, v$) in \eqref{pw} to obtain  $k(u, v)\big(p[w](u, v)-1\big)=0$. Clearly, this equality continues to hold if we set $u=w$ (since $v\in X^w_1$). 
\end{proof}

 \printbibliography

 \printindex

\end{document}